%% file: wg_calderon.tex
\documentclass[a4paper]{article} 
\usepackage[pdfauthor={%
Martin Halla%
},pdftitle={%
The Calder\'{o}n operator for time-harmonic Maxwell's equations in closed waveguides%
}, unicode,colorlinks=true,linkcolor=black,citecolor=black,urlcolor=black,pagebackref=false,breaklinks]{hyperref}
\input{header-paper.tex}
\input{macros-general.tex}

\input{macros-emwg.tex}
\title{%
The \Calderon operator for time-harmonic Maxwell's equations in closed waveguides%
\thanks{%
The author acknowledges funding from Deutsche Forschungsgemeinschaft (DFG, German Research Foundation), projects 541433971 and 258734477 - SFB 1173.
}
}
\author{%
Martin Halla%
\thanks{%
Karlsruhe Institute of Technology,
Institute for Applied and Numerical Mathematics,
Englerstra\ss e 2,
76131 Karlsruhe, Germany
(martin.halla@kit.edu)
}
}

\begin{document}

\date{}
\maketitle
\begin{abstract}
\noindent
We consider the time-harmonic Maxwell's equations in a closed, cylindrical waveguide with longitudinal invariant but heterogeneous permittivity and permeability.
We consider minimal smoothness assumptions, i.e., the material parameters have to be in $L^\infty$ and the cross section has to be Lipschitz with a possibly non-trivial topology.
First, we show that the associated \Calderon operator is well-defined and bounded in the natural trace spaces $\Hspace_0^{-1/2}(\curlg;\geocs) \to \Hspace_0^{-1/2}(\divg;\geocs)$.
Second, we consider interior Maxwell problems obtained from using the \Calderon operator and analyze their well-posedness.
Furthermore, we report some additional results on the location and group velocities of the waveguide wavenumbers.
\\

\noindent
\textbf{MSC:} 35P25, 35R05, 78M35
\\

\noindent
\textbf{Keywords:} waveguide, non-smooth heterogeneous media, Maxwell's equations, transparent boundary conditions, \Calderon operator, stability analysis
\end{abstract}


\section{Introduction}\label{sec:introduction}


Electromagnetic waveguides serve as a mathematical model for a wide range of applications such as antennas, power cables, telecommunication, and optical fiber amplifiers \cite{JaureguiLimpertTunnermann13,ImperialeJoly14,cmacs2003baseline,Pozar11,Marcuse13,SheSmirSmo22}.
The analysis of such models constitutes a very fertile field of mathematical research with several open problems, in particular for non-smooth media as suchch usually appear in engineering applications.

\subsection{Homogeneous waveguides}

When one studies electromagnetic wave propagation in a cylindrical waveguide with homogeneous permittivity and permeability one soon realizes that the modal equations can be reduced to two decoupled scalar equations, and many authors have done so independently.
In particular, the modal eigenvalue problem (EVP) reduces to shifted Laplace EVPs with Dirichlet and Neumann boundary conditions respectively.
We can find this result, e.g., in \cite[Cha.~4]{Cessenat} (see also \cite{IlinskiiKravtsovSveshnikov91,GalishnikovaIlinskii13}), where in addition the \Calderon operator is analyzed and the exterior problem is studied as an evolution problem.
Concerning the well-posedness of the interior problem formulated by means of the \Calderon operator \cite[Cha.~4]{Cessenat} only hints that for smooth enough data a $\div$-stabilized formulation is possible, which satisfies the Fredholm alternative.
A detailed analysis of the latter can be found in \cite{Delitsyn05}.
The Fredholmness of the natural formulation (i.e., without $\div$-stabilization) is proven in \cite{Kim17}.
The case of a truncated semi-cylinder without any geometry and material perturbations is considered in \cite{MelenkDemkowiczHenneking25}, where the stability constant is explicitly estimated in terms of the waveguide length.

\subsection{Heterogeneous waveguides}

In the case of heterogeneous materials the modal EVP does not decouple into two separate problems and the EVP does not admit the structure of the EVP for a selfadjoint operator in a Hilbert space.
Indeed, the original modal EVP involves both the full electric field $(\Et,\En)$ and the full magnetic field $(\Ht,\Hn)$ and is linear in the wavenumber $\beta$.
Several authors have applied distinct reformulations of the EVP for their analysis.
In \cite{Delitsyn00} the material is assumed to be piece-wise $C^\infty$ and the completeness of the fields $(\mathbf{B}_T:=\mu\Ht,\En)$ in a subspace of $\Hspace_0(\divg;\geocs)\times H^1_0(\geocs)$ was established.
Later, this result was leveraged to the completeness of the various components in energy spaces \cite{Delitsyn11b} and $L^2$ spaces \cite{Delitsyn11a}.
A different approach is taken in \cite{SheSmi14a,SheSmi14b} where it is assumed that the permeability is constant and the permittivity is piece-wise constant (with two layers) and a quartic (w.r.t.\ the wavenumber) operator stencil is derived.
The authors prove the completeness of the fields $(\Et,\Ht)$ in $(\Lspace^2(\geocs))^2$, while showing that in general they do not form a Schauder basis.
%
The approach of \cite{HallaMonk24} is to derive a linear EVP (with the squared wavenumber as spectral parameter) for the field $\Et$ and to obtain the completeness in $\Hspace_0(\curlg;\geocs)$.
The common technique of \cite{Delitsyn00,SheSmi14a,SheSmi14b,HallaMonk24} is to apply the Keldysh perturbation theorem, see, e.g., \cite{Markus88}, which provides the asymptotic location of eigenvalues and the completeness of the root space.
For the case of a coaxial step index fiber with small perturbation from the homogeneous case the $\Et$ $(\Ht)$ components of the modes form indeed a Riesz basis in $\Hspace_0(\curlg;\geocs)$ $(\Hspace(\curlg;\geocs))$ and $\Hspace_0^{-1/2}(\curlg;\geocs)$ $(\Hspace^{-1/2}(\curlg;\geocs))$ \cite{Halla26a}.



Concerning the well-posedness/Fredholmness of Maxwell problems we mention the following articles.
In \cite{Delitsyn07,Delitsyn19} and earlier works \cite{BogolyubovDelitsynSveshnikov99,BogolyubovDelitsynSveshnikov00,BogolyubovDelitsynSveshnikov03} the material is assumed to be piece-wise smooth and a solution $(\mathbf{B}_T=\mu \Ht, \En)$ is proved to exist in $L^2_\mathrm{loc}(\mathbb{R},\Hspace_0(\divg,\geocs)\times H^1_0(\geocs))$ for smooth enough data.
%
A different approach is taken in \cite{PlamenevskiPoretski18}, which assumes $W^{1,\infty}$-materials, $\div$-stabilizes the Maxwell system to an elliptic system (which assumes smooth enough data) and applies the framework of elliptic systems to obtain stability in weighted Sobolev spaces.

\subsection{Trace spaces of $\Hspace(\curlF;\geo)$}

In order to construct a \Calderon (or equivalently, the electric-to-magnetic) operator we have to handle the occurring trace spaces of $\Hspace(\curlF;\geo)$.
A complete analysis of those for Lipschitz domains is conducted in \cite{Tartar97} and \cite{BuffaCostabelSheen02}.
For our waveguide problems at hand we have to deal with traces on parts of the boundary of function spaces with partial boundary condition.
An analysis of such for Lipschitz polyhedra can be found in \cite{BuffaCiarlet01a,BuffaCiarlet01b} and \cite{Buffa01} (see also, \cite[Cha.~3.1]{AssousCiarletLabrunie18}).
Additional results on mixed boundary conditions for Hilbert complexes are reported in \cite{BauerPaulySchomburg16,PaulySchomburg22} (see also, \cite{HiptmairPaulySchulz23}).

\subsection{Results and structure of the article}

In the present article we consider very minimal assumptions on the waveguide parameters:
The permittivity and permeability are only required to have $L^\infty$ regularity, the frequency may be complex, and the cross section is only required to be Lipschitz with arbitrary topology.
Under these assumptions we establish in \Cref{thm:sol-ext} the existence and boundedness of the \Calderon operator and prove additional properties in \Cref{thm:calop-decomp}.
This allows us to prove in \Cref{thm:wTc} the Fredholmness of interior problems.
Complementory results on the wavenumbers and modes are reported in \Cref{thm:ev-strip,thm:ev-groupvel,thm:completeness-Et}.

The remainder of this article is structured as follows.
In \Cref{sec:setting} we introduce the considered setting, assumptions and problems to investigate.
In \Cref{sec:modalEVP} we study the eigenvalue problem for the waveguide modes.
In \Cref{sec:Calderon} we derive and analyze the \Calderon operator.
Finally, in \Cref{sec:interior} we study the well-posedness of interior Maxwell problems.
In \Cref{sec:ttrspaces} we review the concept of (partial) tangential traces and carry out certain technical details which we could not find in the literature.
In \Cref{sec:app_operatortheory} we review certain aspects of the functional calculus for sectorial operators \cite{Haase06book} and the relation between unbounded operators and unbounded sesquilinear forms \cite{Kato95}.

\section{Setting}\label{sec:setting}

The notation for Sobolev spaces and traces is specified in \Cref{sec:ttrspaces}.
The notation concerning operator theory is specified in \Cref{sec:app_operatortheory}.

\subsection{Geometry}\label{subsec:geometry}

Let $\geocs\subset\bbR^2$ be a bounded Lipschitz domain.
We allow $\geocs$ to have a non-trivial topology, i.e., it's boundary $\geocsbd:=\partial\geocs$ consists of $\Ngeocsbd+1$, $\Ngeocsbd\in\Nz$ connected components with $\geocsbd_0$ being the outer component.
We set $\geobdtrans:=\geocs\times\{0\}$ and $\geoext:=\geocs\times\Rp$.
Further let $\geoint\subset\bbR^3$ be a bounded Lipschitz domain such that $\geobdtrans=\ol{\geoint}\cap\ol{\geoext}$ and $\geo:=\geoint\cup\geobdtrans\cup\geoext$ is Lipschitz.
We set $\geobddir:=\partial\geoint\setminus\ol{\geobdtrans}$.
We use the notation $\xf=(x,y,z)=(\xt,z)\in\mathbb{R}^3$, $\xt\in\mathbb{R}^2$, $z\in\bbR$.
Let $\nvf$ and $\nvt$ be the outward unit normal vectors of $\partial\geo$ and $\partial\geocs$ respectively, which are related by $\nvf=(\nvt,0)^\top$ on $\partial\geocs\times\Rp$.

\subsection{Parameters}\label{subsec:parameters}

We consider the permittivity and permeability $\eps,\mu\in L^\infty(\geo,\Rp)$ with $\epsmin:=\inf_{\xf\in\geo}\eps(\xf)>0$, $\epsmax:=\sup_{\xf\in\geo}\eps(\xf)$, $\mumin:=\inf_{\xf\in\geo}\mu(\xf)>0$, $\mumax:=\sup_{\xf\in\geo}\mu(\xf)$ and assume that inside $\geoext$ they are independent of the longitudinal variable $z$, i.e., $\eps|_{\geoext}(\xf)=:\epscs(\xt)$, $\epscs\in L^\infty(\geocs)$ and $\mu|_{\geoext}(\xf)=:\mucs(\xt)$, $\mucs\in L^\infty(\geocs)$.
Further let $\ome\in\Cx:=\bbC\setminus\{0\}$ be the angular frequency.

\subsection{Maxwell problems}

First we will study the following exterior problem, in order to justify that the forthcoming radiation condition to be introduced in \Cref{def:RC} leads to a mathematically meaningful problem.
\begin{problem}[exterior]\label{prob:MWext}
Given $\bfE_{T,0}\in\Hspace^{-1/2}_0(\curl_{\geobdtrans};\geobdtrans)$,
find $\Eff\in\Hspace_{\tloc}(\curlF;\geoext)$ such that
\begin{subequations}\label{eq:MWext}
\begin{align}
\label{eq:MWextPDE}
\hspace{30mm}&&\curlF\muinv\curlF\Eff-\ome^2\eps\Eff&=0 &&\text{in }\geoext,&&\hspace{30mm}\\
\label{eq:MWextBC}
&&\Eff_T &= 0 &&\text{on }\partial\geocs\times\Rp,\\
\label{eq:MWextIC}
&&\Eff_T &= \bfE_{T,0} &&\text{on }\geobdtrans,\\
\label{eq:MWextRC}
&&\Eff &&&\text{is outgoing}.
\end{align}
\end{subequations}
\end{problem}
Our initial problem of interest is the following.
\begin{problem}[global]\label{prob:MWradiation}
Given $\Fff\in\Lspace^2(\geo)$ with $\supp\Fff\subset\geoint$,
find $\Eff\in\Hspace_{\tloc}(\curlF;\geo)$
such that
\begin{subequations}\label{eq:MWradiation}
\begin{align}
\label{eq:MWradiationPDE}
\hspace{30mm}&&\curlF\muinv\curlF\Eff-\ome^2\eps\Eff&=\Fff &&\text{in }\geo,&&\hspace{30mm}\\
\label{eq:MWradiationBC}
&&\Eff_T &= 0 &&\text{on }\partial\geo,\\
\label{eq:MWradiationRC}
&&\Eff|_{\geoext} &&&\text{is outgoing}.
\end{align}
\end{subequations}
\end{problem}
By means of a \Calderon operator $\calop$ studied in \Cref{sec:Calderon} we reduce \Cref{prob:MWradiation} to a bounded domain.
\begin{problem}[interior]\label{prob:MWint}
Given $\Fff\in\Lspace^2(\geoint)$,
find $\Eff\in\Hspace(\curlF;\geoint)$ such that
\begin{subequations}\label{eq:MWint}
\begin{align}
\label{eq:MWintPDE}
\hspace{30mm}&&\curlF\muinv\curlF\Eff-\ome^2\eps\Eff&=\Fff &&\text{in }\geoint,&&\hspace{30mm}\\
\label{eq:MWintHBC}
&&\Eff_T&= 0 &&\text{on }\geobddir,\\ 
\label{eq:MWintTBC}
&&\nvf\times\muinv\curlF\Eff&=\calop\Eff_T &&\text{on } \geobdtrans.
\end{align}
\end{subequations}
\end{problem}
The weak form of the previous is the following.
\begin{problem}[interior, variational]\label{prob:MWintweak}
Given $\Fff\in\Lspace^2(\geoint)$,
find $\Eff\in\Hspace_{\bci}(\curlF;\geoint)$ such that
\begin{align}\label{eq:MaxwellVar}
\spl \muinv\curlF\Eff,\curlF\Eff\tf \spr_{\geoint}
-\ome^2\spl \eps\Eff,\Eff\tf \spr_{\geoint}
+\spl \calop \Eff_T,\Eff_T\tf \spr_{\geobdtrans}
=\spl \Fff,\Eff\tf \spr_{\geoint}
\end{align}
for all $\Eff\tf\in\Hspace_{\bci}(\curlF;\geoint)$.
\end{problem}
Note that for simplicity in the above problems the right-hand side $\Fff$ is assumed to be in $\Lspace^2$, but of course our analysis will hold for general $\Fff$ contained in the respective dual space.

\subsection{The waveguide cross section}

Bold face spaces over $\geocs$ are defined to consist of two-dimensional vectors, e.g., $\Lspace^2(\geocs)=(L^2(\geocs))^2$, $\Hspace^1(\geocs)=(H^1(\geocs))^2$, etc..
Since $\geobdtrans$ is flat we use the identification $\geobdtrans\sim\geocs$ and identify the space of tangential vectors of $\geobdtrans$ with $\bbC^2$.
Thus we have the identification of tangential spaces, e.g., $\Lspace^2_T(\geobdtrans)\sim\Lspace^2(\geocs)$, $\Hspace^{1/2}_{T}(\geobdtrans)=\Hspace^{1/2}_{RT}(\geobdtrans)~\sim\Hspace^{1/2}(\geocs)$, etc..
On $\geocs$ we introduce the rotation operator
\[
\rotm:=\left(\begin{array}{cc} 0&-1\\1&0\end{array}\right),
\]
and the surface differential operators
\begin{align*}
\curlg(u_x,u_y)^\top&:=\partial_x u_{y}-\partial_y u_{x},&
\Curlg u&:=(\partial_y u,-\partial_x u)^\top,\\
\divg(u_x,u_y)^\top&:=\partial_x u_{x}+\partial_y u_{y},&
\gradg u&:=(\partial_x u,\partial_y u)^\top,
\end{align*}
for which we note that
\begin{align*}
\rotm^2=-I,\hspace{2mm}
\gradg u=\rotm\Curlg u,\hspace{2mm}
\rotm\gradg u=-\Curlg u,\hspace{2mm}
\curlg (\rotm\bfu)=\divg \bfu,\quad
\divg (\rotm\bfu)=-\curlg \bfu
\end{align*}
for $\bfu=(u_x,u_y)^\top$.
In addition, let $\tv:=\rotm\nvt$ be the unit tangential vector on $\partial\geocs$.
\begin{definition}\label{def:cut-off}
The values $\omega\in\bbC$ for which one of the following equations
\begin{subequations}
\begin{align}
\label{eq:cut-off-dir}
-\divg\mucsinv\gradg u-\omega^2\epscs u&=0 \quad\text{in }\geocs, &u&=0 \quad\text{on }\partial\geocs,\\
\label{eq:cut-off-neu}
-\divg\epscs^{-1}\gradg u-\omega^2 \mucs u&=0 \quad\text{in }\geocs, &\partial_{\nvt}u&=0 \quad\text{on }\partial\geocs,
\end{align}
\end{subequations}
admits a non-trivial solution $u\in H^1(\geocs)$ are called cut-off frequencies of the first and second kind respectively.
\end{definition}
To justify the forthcoming definition \eqref{eq:DefSop} and the remainder of the manuscript subsequent to \eqref{eq:DefSop}, we make the following assumption from \eqref{eq:DefSop} onward.
\begin{assumption}\label{ass:cut-off}
The frequency $\omega$ is not a cut-off frequency of the first kind.
\end{assumption}

\section{The modal eigenvalue problem}\label{sec:modalEVP}

Let $\Eff, \Hff$ denote the three-dimensional electric and magnetic fields, solving the time-harmonic Maxwell's equations (with time dependency convention $e^{-i\ome t}$)
\begin{subequations}\label{eq:MW-3D}
\begin{align}
\hspace{30mm}&&-i\ome \epsilon \Eff &= \phantom{-}\curlF \Hff &\text{in }\geoext,&&\hspace{30mm}\\
&&-i\ome \mu \Hff &= -\curlF \Eff &\text{in }\geoext,
\end{align}
together with the boundary conditions on $\partial\geocs\times\Rp$
\begin{align}
\hspace{30mm}&&\nvf\times\Eff &= 0 &\text{on }\partial\geocs\times\Rp,&&\hspace{30mm}\\
&&\nvf\cdot\mu\Hff &= 0 &\text{on }\partial\geocs\times\Rp.
\end{align}
\end{subequations}
As solution to \eqref{eq:MW-3D} of the form
\begin{align*}
\Eff(\xf)=\Ef(\xt)\exp(i\beta z)\sum_{n=0}^N e_nz^n,
\qquad
\Hff(\xf)=\Hf(\xt)\exp(i\beta z)\sum_{n=0}^N h_nz^n,
\end{align*}
with $N\in\Nz$, $(e_n,h_n)\in\bbC^2$, $n=0,\dots,N$, $(e_N,h_N)\neq(0,0)$, $\beta\in\bbC$ is called a mode of length $N$ with wavenumber $\beta$.

\subsection{The modal eigenvalue problem for $(\Et,\En,\Ht,\Hn)$}

Note that if
$\Uff(\xf)=\Uf(\xt)\exp(i\beta z)$
with $\Uf=(\Ux,\Uy,\Uz)^\top$ and $\Ut:=(\Ux,\Uy)^\top$, $\Un:=\Uz$, then
\begin{align}\label{eq:curlofTPfunction}
\curlF\Uff=\bpm \partial_y \Uz-i\beta \Uy\\-\partial_x \Uz+i\beta \Ux\\ \partial_x \Uy-\partial_y \Ux\epm \exp(i\beta z)
=\bpm \Curlg \Un+i\beta R\Ut  \\ \curlg\Ut\epm \exp(i\beta z).
\end{align}
So with $\Ef=(\Et,\En)^\top$, $\Hf=(\Ht,\Hn)^\top$ the problem to find modes of length zero is equivalent to
\begin{subequations}\label{eq:modal-full}
\begin{align}
\label{eq:modal-full-Et}
\hspace{30mm}&&-i\omega\epscs \Et &= \phantom{-}\Curlg \Hn+i\beta\rotm\Ht &\text{in }\geocs,&&\hspace{30mm}\\
\label{eq:modal-full-Ez}
&&-i\omega\epscs \En &= \phantom{-}\curlg\Ht &\text{in }\geocs,\\
\label{eq:modal-full-Ht}
&&-i\omega\mucs \Ht &=-\Curlg \En-i\beta \rotm\Et &\text{in }\geocs,\\
\label{eq:modal-full-Hz}
&&-i\omega\mucs \Hn &= -\curlg\Et &\text{in }\geocs,\\
\label{eqmodal-full-Ez-bc}
&&\En &=0 &\text{on }\partial\geocs,\\
&&\tv\cdot\Et &=0 &\text{on }\partial\geocs,\\
&&\nvt\cdot\mucs\Ht &=0 &\text{on }\partial\geocs.
\end{align}
\end{subequations}
The modes of non-trivial length arise as the elements of the Jordan chains of the corresponding eigenvalue problem, which reads
\begin{evproblem}\label{evprob:evp-full}
Find
$(\beta,(\Et,\En,\Ht,\Hn)) \in\bbC\times(\Hspace(\curlg;\geocs)\times H^1(\geocs)\times\Hspace(\curlg;\geocs)\times H^1(\geocs))\setminus\{0\}$
which solves \eqref{eq:modal-full}.
\end{evproblem}

\subsection{The modal eigenvalue problem for $(\Et,\Ht)$}

To obtain an observation concerning the location of the wavenumbers we derive a problem for the $(\Et,\Ht)$-components.
Combining \eqref{eq:modal-full-Ez} with \eqref{eqmodal-full-Ez-bc} yields $\curlg\Ht=0$ on $\partial\geocs$.
Now, we eliminate $\En$ and $\Hn$ from \eqref{eq:modal-full} to obtain
\begin{subequations}\label{eq:EH-curl}
\begin{align}
\label{eq:Et}
\hspace{10mm}&&\Curlg(\mucs^{-1} \curlg \Et) -\omega^2\epscs \Et &= \phantom{-}\omega\beta \rotm \Ht &\text{in }\geocs,&&\hspace{10mm}\\
\label{eq:Ht}
&&\Curlg(\epscs^{-1} \curlg \Ht) -\omega^2\mucs \Ht &= -\omega\beta \rotm \Et &\text{in }\geocs,\\
\label{eq:Et-bc}
&&\tv\cdot \Et &=0 &\text{on }\partial\geocs,\\
&&\curlg\Ht &=0 &\text{on }\partial\geocs,\\
\label{eq:Ht-bc}
&&\nvt\cdot\mucs\Ht &=0 &\text{on }\partial\geocs.
\end{align}
\end{subequations}
Now, applying $\divg$ to \eqref{eq:Et} and \eqref{eq:Ht} we compute that
\begin{subequations}\label{eq:curlgs}
\begin{align}
\label{eq:curlgHt}
\hspace{10mm}&&
-\omega^2 \divg(\epscs\Et)&=-\omega \beta\curlg\Ht &\text{in }\geocs,&&\hspace{10mm}\\
\label{eq:curlgEt}
&&-\omega^2 \divg(\mucs\Ht)&=\phantom{-}\omega \beta\curlg\Et &\text{in }\geocs.
\end{align}
\end{subequations}
We multiply \eqref{eq:Ht} and \eqref{eq:Et} with $-\frac{\beta}{\omega}\epscs R$ and $\frac{\beta}{\omega}\mucs\rotm$ respectively and insert \eqref{eq:curlgs} to obtain
\begin{subequations}\label{eq:EH-divg}
\begin{align}
\label{eq:Et-divg}
\hspace{00mm}&&-\epscs\nabla \epscs^{-1} \divg \epscs\Et+\omega \beta\epscs\mucs\rotm\Ht+\beta^2\epscs\Et&=0 &\text{in }\geocs,&&\hspace{00mm}\\
\label{eq:Ht-divg}
&&-\mucs\nabla \mucs^{-1} \divg \mucs\Ht-\omega \beta\epscs\mucs\rotm\Et+\beta^2\mucs\Ht&=0 &\text{in }\geocs.
\end{align}
Furthermore, \eqref{eq:Ht-bc} and \eqref{eq:curlgHt} yield the boundary condition
\begin{align}\label{eq:E-divg-bc}
\divg(\epscs\Et) &=0 \quad\text{on }\partial\geocs.
\end{align}
\end{subequations}
Note that, provided $\beta,\ome\neq0$, the previous steps can all be reversed, whereas the forthcoming relation is in general only one way.
Consider arbitrary constants $\eps_0,\mu_0\geq0$,
Multiplying \eqref{eq:Et}-\eqref{eq:Ht} with $\eps_0\mu_0$ and adding \eqref{eq:Et-divg}-\eqref{eq:Ht-divg} gives that a solution $(\beta,(\Et,\Ht))$ to \eqref{eq:EH-curl} satisfies
\begin{subequations}\label{eq:EH-curl-div}
\begin{align}
\begin{split}
\eps_0\mu_0\Curlg(\mucs^{-1} \curlg \Et) -\epscs\gradg \epscs^{-1} \divg \epscs\Et
+(\beta^2-\omega^2\eps_0\mu_0)\epscs\Et \\
+\omega \beta(\epscs\mucs-\eps_0\mu_0) \rotm\Ht &= 0
\quad\text{in }\geocs,
\end{split}
\\
\begin{split}
\eps_0\mu_0\Curlg(\epscs^{-1} \curlg \Ht) -\mucs\gradg \mucs^{-1} \divg \mucs\Ht
+(\beta^2-\omega^2\eps_0\mu_0)\mucs\Ht \\
-\omega \beta(\epscs\mucs-\eps_0\mu_0)\rotm\Et &= 0
\quad\text{in }\geocs,
\end{split}
\\
\divg(\epscs\Et) = \tv\cdot \Et &=0\quad\text{on }\partial\geocs,\\
\curlg\Ht = \nvt\cdot\Ht &=0\quad\text{on }\partial\geocs.
\end{align}
\end{subequations}
The allure of \eqref{eq:EH-curl-div} is that $\beta^2$ and $\omega^2$ only appear as the combined term $(\beta^2-\omega^2\eps_0\mu_0)$.
For (quasi) homogeneous materials $\epscs\mucs=\eps_0\mu_0$ the equations \eqref{eq:EH-curl-div} for $\Et$ and $\Ht$ decouple, and thus we recognize that solutions to \eqref{eq:EH-curl-div} do not necessarily satisfy \eqref{eq:EH-curl}.
Thus the relation of \eqref{eq:EH-curl} to \eqref{eq:EH-curl-div} is only one-way.
Nevertheless, this suffices to establish the following results.
\begin{theorem}\label{thm:ev-strip}
Let the assumptions of \Cref{subsec:geometry,subsec:parameters} be satisfied.
If $\ome\in\bbR$, then the wavenumbers $\beta$ are symmetric with respect to the real axis and satisfy
\begin{align*}
\hspace{20mm}&&|\Re\beta| &\leq |\omega|\|(\epscs\mucs-\eps_0\mu_0)/\sqrt{\epscs\mucs}\|_{L^\infty(\geocs)}, &\text{if }\Im\beta\neq0,&&\hspace{20mm}\\
&&|\Re\beta| &\leq |\omega|\|\sqrt{\epscs\mucs}\|_{L^\infty(\geocs)}, &\text{if }\Im\beta=0,
\end{align*}
with arbitrary constants $\eps_0,\mu_0\geq0$.
\end{theorem}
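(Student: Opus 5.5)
The plan is to exploit the fact, noted before the theorem, that every solution $(\beta,(\Et,\Ht))$ of \eqref{eq:EH-curl} also solves \eqref{eq:EH-curl-div}. I would test the two equations of \eqref{eq:EH-curl-div} with $\ol{\Et}$ and $\ol{\Ht}$ respectively and add them. This gives a scalar identity whose coefficients are real except for $\beta$, and its real and imaginary parts yield the two bounds.

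\textbf{Symmetry.} Since $\ome,\epscs,\mucs$ are real, I would take complex conjugates in \eqref{eq:modal-full}. This shows that if $(\beta,(\Et,\En,\Ht,\Hn))$ is an eigenpair, then so is $(\ol\beta,(\ol{\Et},\ol{\En},-\ol{\Ht},-\ol{\Hn}))$. The sign flip of the magnetic components absorbs the conjugated factor $-i$, the boundary conditions are preserved, and the same map transports Jordan chains. Hence the wavenumbers are symmetric with respect to the real axis.

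\textbf{Energy identity.} The case $\beta=0$ is trivial, so assume $\beta\neq0$; then \eqref{eq:curlgs} gives $\divg(\epscs\Et),\divg(\mucs\Ht)\in L^2(\geocs)$. I test the first equation of \eqref{eq:EH-curl-div} with $\ol{\Et}$. The curl term integrates by parts, using $\tv\cdot\Et=0$, to $\eps_0\mu_0\int\mucs^{-1}|\curlg\Et|^2$. The gradient--divergence term integrates to $\int\epscs^{-1}|\divg(\epscs\Et)|^2$, since the boundary term vanishes by $\divg(\epscs\Et)=0$ on $\partial\geocs$. The $\Ht$-equation is treated analogously, using $\curlg\Ht=0$ and $\nvt\cdot\mucs\Ht=0$ on $\partial\geocs$. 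Put $d:=\epscs\mucs-\eps_0\mu_0$, $X:=\int_{\geocs}d\,\rotm\Ht\cdot\ol{\Et}$ and $N:=\|\sqrt{\epscs}\Et\|^2+\|\sqrt{\mucs}\Ht\|^2$. Because $\rotm^\top=-\rotm$, the two coupling terms combine to $X+\ol X=2\Re X$. Adding the tested equations then yields
\begin{align*}
a+(\beta^2-\ome^2\eps_0\mu_0)N+2\ome\beta\Re X=0,\qquad a\geq0 ,
\end{align*}
with $a,N,\Re X\in\bbR$. We need $N>0$: if $\Et=\Ht=0$, then \eqref{eq:modal-full-Hz} gives $\Hn=0$. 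Likewise \eqref{eq:modal-full-Ht} gives $\Curlg\En=0$, which together with $\En=0$ on $\partial\geocs$ forces $\En=0$, so the eigenfunction would be trivial. Pointwise $|d||\Et||\Ht|\leq\|d/\sqrt{\epscs\mucs}\|_{L^\infty}\tfrac12(\epscs|\Et|^2+\mucs|\Ht|^2)$, hence $|X|\leq\tfrac12\|d/\sqrt{\epscs\mucs}\|_{L^\infty}N$.

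\textbf{Conclusion.} If $\Im\beta\neq0$, the imaginary part of the identity is $2\Re\beta\,\Im\beta\,N+2\ome\,\Im\beta\,\Re X=0$. Dividing by $\Im\beta$ gives $|\Re\beta|\leq|\ome||X|/N\leq|\ome|\|d/\sqrt{\epscs\mucs}\|_{L^\infty}$, even with an extra factor $1/2$. If $\beta\in\bbR$, I choose $\eps_0\mu_0=0$. Then $\beta^2N\leq 2|\ome||\beta||X|\leq|\ome||\beta|\|\sqrt{\epscs\mucs}\|_{L^\infty}N$, which gives the second bound.

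The main obstacle is rigorously justifying the integrations by parts under only $L^\infty$ coefficients and Lipschitz boundary. The expressions $\epscs\gradg(\epscs^{-1}\divg\epscs\Et)$ and $\Curlg(\mucs^{-1}\curlg\Et)$ must be interpreted weakly. I would avoid this by deriving the identity directly from the weak form of \eqref{eq:EH-curl} and \eqref{eq:curlgs}, testing \eqref{eq:Et} and \eqref{eq:Ht} in $\Hspace_0(\curlg)$ and $\Hspace(\curlg)$. The div terms are then produced via \eqref{eq:curlgs} rather than by integrating second-order operators by parts.
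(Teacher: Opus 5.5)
Your treatment of the two bounds is the paper's argument. You test \eqref{eq:EH-curl-div} with $(\Et,\Ht)$, take the imaginary part and divide by $\Im\beta$ when $\beta\notin\bbR$, and use the case $\eps_0\mu_0=0$, i.e.\ \eqref{eq:EH-divg}, when $\beta\in\bbR$. Your bookkeeping there is correct and slightly more careful than the paper's: the identity $X+\ol X=2\Re X$, the pointwise AM--GM bound (which gives an extra factor $1/2$), the check $N>0$, and producing the divergence terms weakly via \eqref{eq:curlgs}.

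The symmetry step, however, fails as written. Conjugate \eqref{eq:modal-full-Et} (with $\ome,\epscs$ real) and insert $\Ht'=-\ol{\Ht}$, $\Hn'=-\ol{\Hn}$. This gives $-i\ome\epscs\ol{\Et}=\Curlg\Hn'+i(-\ol\beta)\rotm\Ht'$, and the same happens in \eqref{eq:modal-full-Ht}. So your tuple is an eigenpair for $-\ol\beta$, not for $\ol\beta$; tested against $\ol\beta$ it leaves residuals proportional to $\ol\beta\rotm\ol{\Ht}$ and $\ol\beta\rotm\ol{\Et}$. The heuristic ``conjugate $\Eff$, flip $\Hff$'' is right in three dimensions, but it overlooks $\ol{e^{i\beta z}}=e^{-i\ol\beta z}$. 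It therefore only gives symmetry with respect to the imaginary axis.

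To fix this, combine your map with the reflection $z\mapsto-z$, i.e.\ $(\beta,(\Et,\En,\Ht,\Hn))\mapsto(-\beta,(\Et,-\En,-\Ht,\Hn))$. The composition is the correct map $(\ol\beta,(\ol{\Et},-\ol{\En},\ol{\Ht},-\ol{\Hn}))$. Equivalently, observe that for real $\ome$ the reduced system \eqref{eq:EH-curl} is invariant under $(\beta,\Et,\Ht)\mapsto(\ol\beta,\ol{\Et},\ol{\Ht})$; this is what the paper's one-line argument amounts to. The paper's literal claim that conjugating all four components works is itself off by the sign of the longitudinal components. With this repair your proof is complete.
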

\begin{proof}
First, note that if $(\beta,(\Et,\En,\Ht,\Hn))$ is an eigenpair, then $(\overline{\beta},(\overline{\Et},\overline{\En},\overline{\Ht},\overline{\Hn}))$ is so too.
Now, testing \eqref{eq:EH-curl-div} with $(\Et,\Ht)$, taking the imaginary part thereof and dividing by $2\Im(\beta)$ yields
\begin{align*}
0&=|\omega\Re(\spl (\epscs\mucs-\eps_0\mu_0)\rotm\Ht,\Et\spr_{\geocs})
+\Re(\beta)(\spl \epscs\Et,\Et\spr_{\geocs}+\spl\mucs\Ht,\Ht\spr_{\geocs})|\\
&\geq (|\Re(\beta)|-|\omega| \|(\epscs\mucs-\eps_0\mu_0)/\sqrt{\epscs\mucs}\|_{L^\infty(\geocs)})(\spl \epscs\Et,\Et\spr_{\geocs}+\spl\mucs\Ht,\Ht\spr_{\geocs}),
\end{align*}
from which the second claim follows.
For $\Im(\beta)=0$ we test \eqref{eq:EH-divg} with $(\Et,\Ht)$, which yields
\begin{align*}
0&\geq\spl\epscs^{-1}\div(\epscs\Et),\div(\epscs\Et)\spr_\geocs
+\spl\mucs^{-1}\div(\mucs\Ht),\div(\mucs\Ht)\spr_\geocs\\
&-|\beta\omega2\Re(\spl\epscs\mucs\rotm\Ht,\Et\spr_\geocs)|
+\beta^2(\spl\epscs\Et,\Et\spr_\geocs+\spl\mucs\Ht,\Ht\spr_\geocs)\\
&\geq |\beta|(|\beta|-|\omega|\|\sqrt{\epscs\mucs}\|_{L^\infty(\geocs)})
(\spl \epscs\Et,\Et\spr_{\geocs}+\spl\mucs\Ht,\Ht\spr_{\geocs})
\end{align*}
from which the last claim follows.
\end{proof}
In addition, we have the following.
\begin{theorem}\label{thm:ev-groupvel}
Let the assumptions of \Cref{subsec:geometry,subsec:parameters} be satisfied.
Then, for each $\ome\in\Rx$, $\eps_0,\mu_0\geq0$ and simple wavenumber $\beta\in\bbR$ there exists $\delta>0$ such that if $\|\eps_0-\eps\|_{L^\infty(\geocs)}+\|\mu_0-\mu\|_{L^\infty(\geocs)}<\delta$, then $\beta$ is differentiable with respect to $\ome$ and satisfies
\begin{align}\label{eq:group-phase-vel}
\frac{\beta}{\omega\epsilon_0\mu_0} \frac{d\beta(\omega)}{d\omega}
&=\frac{\spl\epsilon\Et,\Et\spr_\geocs+\spl\mu\Ht,\Ht\spr_\geocs - \frac{\beta}{\omega\epsilon_0\mu_0} \Re(\spl(\epsilon\mu-\epsilon_0\mu_0)\rotm\Ht,\Et\spr_\geocs)}%
{\spl\epsilon\Et,\Et\spr_\geocs+\spl\mu\Ht,\Ht\spr_\geocs +\frac{\omega}{\beta} \Re(\spl(\epsilon\mu-\epsilon_0\mu_0)\rotm\Ht,\Et\spr_\geocs)} >0,
\end{align}
where $(\Et,\Ht)$ is the associated eigenelement to $\beta$.
Also, if $\beta$ is only semi-simple, but admits locally differentiable branches $\ome\mapsto (\beta_n(\ome),(\Ef_{T,n}(\ome),\Hf_{T,n}(\ome))$ of eigenpairs, then \eqref{eq:group-phase-vel} holds respectively.
\end{theorem}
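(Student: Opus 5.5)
The plan is to differentiate the modal eigenvalue problem in the decoupled form \eqref{eq:EH-curl-div} (equivalently \eqref{eq:EH-curl}) with respect to $\ome$ along the eigenvalue branch. I will use the adjoint/Hellmann--Feynman type identity that comes from the symmetric structure of the problem. First I will write \eqref{eq:EH-curl-div} as an operator pencil $\mathcal{T}(\beta,\ome)(\Et,\Ht)=0$ acting on the pair $(\Et,\Ht)$. This operator is selfadjoint for real $\beta,\ome$ up to the sign of the coupling term. The coupling term $\ome\beta(\epscs\mucs-\eps_0\mu_0)\rotm$ is skew in the sense that $\spl \rotm\Ht,\Et\spr$ and $-\spl\rotm\Et,\Ht\spr$ are conjugate. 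With the sign pattern $+$ in the $\Et$ row and $-$ in the $\Ht$ row, the form $t(\beta,\ome):=\spl\mathcal{T}(\beta,\ome)(\Et,\Ht),(\Et,\Ht)\spr$ is real. It equals
\begin{align*}
t=\eps_0\mu_0(\|\mucs^{-1/2}\curlg\Et\|^2+\|\epscs^{-1/2}\curlg\Ht\|^2)+\|\epscs^{-1/2}\divg\epscs\Et\|^2+\|\mucs^{-1/2}\divg\mucs\Ht\|^2\\
+(\beta^2-\ome^2\eps_0\mu_0)(\spl\epscs\Et,\Et\spr+\spl\mucs\Ht,\Ht\spr)+2\ome\beta\Re\spl(\epscs\mucs-\eps_0\mu_0)\rotm\Ht,\Et\spr .
\end{align*}

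Second, for a simple real $\beta$ I will obtain differentiability of $\ome\mapsto\beta(\ome)$ and of the eigenvector. This comes from analytic perturbation theory for Fredholm holomorphic pencils (Keldysh/Gohberg--Sigal). It needs a Fredholm formulation: after multiplying by $\beta$, I use the equivalence with the linear problem \eqref{eq:modal-full}, which is a holomorphic Fredholm pencil in both variables. Simplicity then gives a smooth branch by the implicit function theorem applied to the finite-dimensional reduction. I will differentiate $\mathcal{T}(\beta(\ome),\ome)\Uf(\ome)=0$ and test with $\Uf$. Selfadjointness in the above sense kills the $\Uf'$ term, giving $\partial_\beta t\,\beta'+\partial_\ome t=0$. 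Explicitly, with $N:=\spl\epscs\Et,\Et\spr+\spl\mucs\Ht,\Ht\spr$ and $C:=\Re\spl(\epscs\mucs-\eps_0\mu_0)\rotm\Ht,\Et\spr$,
\begin{align*}
\beta'(2\beta N+2\ome C)=2\ome\eps_0\mu_0N-2\beta C.
\end{align*}
Dividing by $2\ome\eps_0\mu_0$ and by $\beta N/\beta$ rearranges exactly into \eqref{eq:group-phase-vel}. The same computation applies branchwise in the semi-simple case, once differentiable branches are assumed, because testing with the branch's own eigenvector still annihilates the derivative term.

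Third, I will prove positivity for small $\delta$. By Cauchy--Schwarz, $|C|\le\|(\epscs\mucs-\eps_0\mu_0)/\sqrt{\epscs\mucs}\|_{L^\infty}N/2$, and this coefficient is $O(\delta)$. Hence both numerator and denominator equal $N(1+O(\delta)\cdot r)$, where $r$ is a ratio involving $\beta/\ome$ or $\ome/\beta$. For the ratio $\beta/(\ome\eps_0\mu_0)$ I will use \Cref{thm:ev-strip}: with $\Im\beta=0$ it bounds $|\beta|\le|\ome|\|\sqrt{\epscs\mucs}\|_{L^\infty}$.

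The main obstacle is the factor $\ome/\beta$ in the denominator, which needs $\beta$ bounded away from $0$. This requires a dependence of $\delta$ on $\beta$, which the statement permits since $\delta$ may depend on $\beta$. Since $\beta$ is fixed, I will take $\delta$ small relative to $|\beta|/|\ome|$; note $\beta\ne0$ is required for the formula anyway. A second subtle point is justifying the Fredholm/differentiability framework with $L^\infty$ coefficients. For this I will rely on the equivalence of \eqref{eq:EH-curl} with \eqref{eq:modal-full} for $\beta,\ome\ne0$, noted before \eqref{eq:EH-curl-div}, and perform the Hellmann--Feynman computation on \eqref{eq:EH-curl}, whose solutions satisfy \eqref{eq:EH-curl-div}.
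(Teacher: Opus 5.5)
Your proposal is correct and follows the paper's proof. Differentiating the combined system \eqref{eq:EH-curl-div}, i.e.\ $\eps_0\mu_0$ times \eqref{eq:EH-curl} plus \eqref{eq:EH-divg}, along the branch and testing with $(\Et,\Ht)$ is exactly what the paper does; your Hermitian-symmetry argument plays the role of the paper's choice of a real eigenelement $(\Et,\Ht)=(\Re\Et,\Re\Ht)$. Your Fredholm/implicit-function justification of differentiability and your Cauchy--Schwarz positivity estimate, with $\delta$ depending on $|\beta|/|\ome|$ for the denominator, make explicit steps that the paper only asserts.
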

\begin{proof}
Note that if $\ome,\beta\in\bbR$, then the eigenelement $(\Et,\Ht)$ can be chosen such that $(\Et,\Ht)=(\Re\Et,\Re\Ht)$.
Deriving \eqref{eq:EH-curl} and \eqref{eq:EH-divg}, testing \eqref{eq:EH-curl} and \eqref{eq:EH-divg} each with $(\Et,\Ht)$, and adding both quantities yields \eqref{eq:group-phase-vel} where our assumptions ensure that the nominator and denominator therein are positive.
In the semi-simple case, the proof proceed identically.
\end{proof}
%
%
\begin{remark}\label{rem:beta-real-ome-compl}
If $\ome\in\bbC\setminus\bbR$, then it follows from \eqref{eq:modal-EtEz} that each wavenumber $\beta$ satisfies $\beta\notin\bbR$.
\end{remark}

\subsection{The modal eigenvalue problem for $\Et$}

We plug \eqref{eq:modal-full-Hz}, \eqref{eq:modal-full-Ht} into \eqref{eq:modal-full-Et}; and apply $\curlg\mu^{-1}$ to \eqref{eq:modal-full-Ht} and plug in \eqref{eq:modal-full-Ez}.
Thus we obtain
\begin{subequations}
\label{eq:modal-EtEz}
\begin{align}
\Curlg\mucs^{-1}\curlg\Et-\omega^2\epscs\Et+\beta^2\mucs^{-1}\Et+i\beta\mucs^{-1}\gradg\En &= 0 \quad\text{in }\geocs,\\
-\divg\mucs^{-1}\gradg\En-\omega^2\epscs\En+i\beta\divg(\mucs^{-1}\Et) &= 0 \quad\text{in }\geocs,\\
\tv\cdot\Et &=0 \quad\text{on }\partial\geocs,\\
\En &=0 \quad\text{on }\partial\geocs.
\end{align}
\end{subequations}
We formulate \eqref{eq:modal-EtEz} in weak form:
\begin{align}\label{eq:modal-EtEz-weak}
\begin{aligned}
&(\Et,\En)\in \Hspace_0(\curlg;\geocs)\times H^1_0(\geocs) \quad\text{solves}\\
&\qquad\spl\mucs^{-1}\curlg\Et,\curlg\Et\tf\spr_{\geocs}
-\omega^2\spl\epscs\Et,\Et\tf\spr_{\geocs}
+\beta^2\spl\mucs^{-1}\Et,\Et\tf\spr_{\geocs}\\
&\qquad+\spl\mucs^{-1}\gradg\En,\gradg\En\tf\spr_{\geocs}
-\omega^2\spl\epscs\En,\En\tf\spr_{\geocs}
-i\beta\spl\mucs^{-1}\Et,\gradg\En\tf\spr_{\geocs}
+i\beta\spl\mucs^{-1}\gradg\En,\Et\tf\spr_{\geocs} = 0\\
&\text{for all } (\Et\tf,\En\tf)\in \Hspace_0(\curlg;\geocs)\times H^1_0(\geocs).
\end{aligned}
\end{align}
We introduce
\begin{align*}
\Vspace^0&:=\{\bfv\in\Lspace^2(\geocs)\colon \divg\mucsinv\bfv=0 \text{ in }\geocs\},\\
\Vspace^{1/2}&:=\{\bfv\in\Lspace^2(\geocs)\colon \divg\mucsinv\bfv=0 \text{ in }\geocs,\quad \curlg\bfv\in H^{-1/2}(\geocs),\quad \tv\cdot\bfv=0\text{ on }\partial\geocs\},\\
\Vspace^1&:=\{\bfv\in\Lspace^2(\geocs)\colon \divg\mucsinv\bfv=0 \text{ in }\geocs,\quad \curlg\bfv\in L^2(\geocs),\quad\hspace{6mm} \tv\cdot\bfv=0\text{ on }\partial\geocs\},\\
\Wspace^0&:=L^2(\geocs),\qquad\hspace{5mm}
\Wspace^{1/2}:=H^{1/2}_{00}(\geocs),\qquad\hspace{6.6mm}
\Wspace^1:=H^1_0(\geocs),\\
\Xspace^0&:=\Vspace^0\times \Wspace^0,\qquad
\Xspace^{1/2}:=\Vspace^{1/2}\times \Wspace^{1/2},\qquad
\Xspace^1:=\Vspace^1\times \Wspace^1.
\end{align*}
Next, due to \Cref{lem:tracespace-decomp-orth-Hone} we have the regular decomposition $\Hspace_0(\curlg;\geocs)=\Vspace^1\oplus\gradg\Wspace^1$ and employ henceforth the notation $\Et=\bfv+\ome^{-1}\gradg w$, $\Et\tf=\bfv\tf+\ome^{-1}\gradg w\tf$, $\bfv,\bfv\tf\in\Vspace^1, w,w\tf\in\Wspace^1$.
Thence \eqref{eq:modal-EtEz-weak} becomes
\begin{align}\label{eq:modal-EtwEz-weak}
\begin{aligned}
&(\bfv,w,\En)\in \Vspace^1 \times \Wspace^1 \times \Wspace^1 \quad\text{solves}\\
&\hspace{10mm}\spl\mucs^{-1}\curlg\bfv,\curlg\bfv\tf\spr_{\geocs}
-\omega^2\spl\epscs(\bfv+\ome^{-1}\gradg w),(\bfv\tf+\ome^{-1}\gradg w\tf)\spr_{\geocs}\\
&\hspace{10mm}+\beta^2\spl\mucs^{-1}\bfv,\bfv\tf\spr_{\geocs}
+\beta^2\spl\mucs^{-1}\ome^{-1}\gradg w,\ome^{-1}\gradg w\tf\spr_{\geocs}\\
&\hspace{10mm}+\spl\mucs^{-1}\gradg\En,\gradg\En\tf\spr_{\geocs}
-\omega^2\spl\epscs\En,\En\tf\spr_{\geocs}\\
&\hspace{10mm}-i\beta\spl\mucs^{-1}\ome^{-1}\gradg w,\gradg\En\tf\spr_{\geocs}
+i\beta\spl\mucs^{-1}\gradg\En,\ome^{-1}\gradg w\tf\spr_{\geocs} = 0\\
&\text{for all } (\bfv\tf,w\tf,\En\tf)\in \Vspace^1\times\Wspace^1\times\Wspace^1.
\end{aligned}
\end{align}
Note that \eqref{eq:modal-EtwEz-weak} is quadratic in the eigenvalue parameter $\beta$.
Hence the next step is to regain a problem, which is affine in $\beta^2$.
To this end let
\begin{align*}
\hspace{5mm}&&\Dcurl &\in\BLO(\Vspace^1,L^2(\geocs)),& \Dcurl\Et&:=\curlg\Et,&&\hspace{5mm}\\
&&\mathbf{M}_\sigma&\in\BLO(\Lspace^2(\geocs)),& \mathbf{M}_\sigma\Et&:=\sigma\Et,&\text{ for }\sigma\in L^\infty(\geocs),\\
&&\embLVec&\in\BLO(\Vspace^1,\Lspace^2(\geocs)),& \embLVec\Et&:=\Et,\\
&&\DGrad&\in\BLO(\Wspace^1,\Lspace^2(\geocs)),& \DGrad\En&:=\gradg\En,\\
&&\embL&\in\BLO(\Wspace^1,L^2(\geocs)),& \embL\En&:=\En,\\
&&M_\sigma&\in\BLO(L^2(\geocs)),& M_\sigma\En&:=\sigma\En,&\text{ for }\sigma\in L^\infty(\geocs).
\end{align*}
Then \eqref{eq:modal-EtwEz-weak} reads in operator form:
\begin{align}\label{eq:modal-EtwEz-operator}
\begin{aligned}
\bpm \mathbf{0}\\0\\0 \epm &=
\left[\bpm
\Dcurl^* \Mmui \Dcurl-\ome^2 \embLVec^* \MepsVec \embLVec
& -\ome \embLVec^*\MepsVec\DGrad
& 0\\
-\ome \DGrad^*\MepsVec\embLVec
& -\DGrad^* \MepsVec \DGrad
& 0\\
0 & 0
& \DGrad^* \MmuiVec \DGrad -\omega^2 \embL^* \Meps \embL
\epm \right.\\
&\hspace{-5mm} \left.-\frac{i\beta}{\ome}
\bpm
0
& 0
& 0\\
0
& 0
& -\DGrad^* \MmuiVec \DGrad\\
0 & \DGrad^* \MmuiVec \DGrad
& 0
\epm 
+\beta^2
\bpm
\embLVec^* \MmuiVec \embLVec
& 0
& 0\\
& 
\DGrad^* \MmuiVec \DGrad
& 0\\
0 & 0
& 0
\epm \right]
\bpm \bv\\ w\\\En \epm.
\end{aligned}
\end{align}
Invoking \Cref{ass:cut-off} we can build in \eqref{eq:modal-EtwEz-operator} the Schur complement with respect to $\En$ and \eqref{eq:modal-EtwEz-operator} becomes
\begin{align}\label{eq:modal-Etw-operator}
\begin{aligned}
&\bpm
\Dcurl^* \Mmui \Dcurl-\omega^2 \embLVec^* \MepsVec \embLVec
& -\ome \embLVec^*\MepsVec\DGrad\\
-\ome \DGrad^*\MepsVec\embLVec
& -\DGrad^* \MepsVec \DGrad
\epm
\bpm \bv\\ w \epm=
-\beta^2 \bpm
\embLVec^* \MmuiVec \embLVec &\\
& \Sop
\epm
\bpm \bv\\ w \epm,
\end{aligned}
\end{align}
where
\begin{align}\label{eq:DefSop}
\Sop:=\frac{1}{\ome^2}\Big(\DGrad^* \MmuiVec \DGrad-\DGrad^* \MmuiVec \DGrad (\DGrad^* \MmuiVec \DGrad -\omega^2 \embL^* \Meps \embL)^{-1}
\DGrad^* \MmuiVec \DGrad\Big).
\end{align}
Since the operator $\Sop$ is not quite handsome we aim to get rid of it.
To this end we abbreviate $\Aoptmp:=\DGrad^* \MmuiVec \DGrad$
and $\Koptmp=\ome^2 \embL^* \Meps \embL$.
Applying the formula
\begin{align*}
\Aop-\Aop(\Aop-\Kop)^{-1}\Aop=\Aop\big(I-(\Aop-\Kop)^{-1}\Aop\big)=-\Aop(\Aop-\Kop)^{-1}\Kop
\end{align*}
to $\Aop=\Aoptmp$ and $\Kop=\Koptmp$ the operator $\Sop$ simplifies to
\begin{align*}
\Sop=\frac{-1}{\ome^2}\Aoptmp (\Aoptmp-\Koptmp)^{-1} \Koptmp=
-\DGrad^* \MmuiVec \DGrad
(\DGrad^* \MmuiVec \DGrad-\omega^2 \embL^* \Meps \embL)^{-1}
\embL^* \Meps \embL.
\end{align*}
Note that also
\begin{align*}
\Aop-\Aop(\Aop-\Kop)^{-1}\Aop=-\Kop-\Kop(\Aop-\Kop)^{-1}\Kop,
\end{align*}
which is the expression used in \cite{HallaMonk24}.
However, at this point we deviate from \cite{HallaMonk24}.
Now we multiply \eqref{eq:modal-Etw-operator} with 
\begin{align*}
\diag\Big(I,-(\Aoptmp (\Aoptmp-\Koptmp)^{-1})^{-1}\Big)
=\diag\Big(I,-(\Aoptmp-\Koptmp) \Aoptmp^{-1}\Big)
=\diag\Big(I,-I +\Koptmp \Aoptmp^{-1}\Big)
\end{align*}
to obtain
\begin{align}
\label{eq:evp-tmp}
\begin{aligned}
\bpm
\Dcurl^*\Mmui\Dcurl-\omega^2 \embLVec^* \MepsVec \embLVec
& -\omega \embLVec^*\MepsVec\DGrad\\
\omega (I-
\ome^2 \embL^*\Meps\embL \DGrad^*\Mmui\DGrad) \DGrad^*\MepsVec\embLVec
& (I-
\ome^2 \embL^*\Meps\embL \DGrad^*\Mmui\DGrad) \DGrad^* \MepsVec \DGrad
\epm
&\bpm \bv\\ w \epm \\
=-\beta^2
\bpm
\embLVec^* \MmuiVec \embLVec &\\
& \embL^* \Meps \embL
\epm
&\bpm \bv\\ w \epm.
\end{aligned}
\end{align}
With
\begin{subequations}
\label{eq:def_operators}
\begin{align}
\Aopb&:=\bpm
\Dcurl^* \Mmui \Dcurl +\embLVec^* \embLVec
& \\
& \DGrad^* \MepsVec \DGrad
\epm,\\
\Bopb&:=\bpm
-\ome^2 \embLVec^* \MepsVec \embLVec +\embLVec^* \embLVec
& -\ome \embLVec^*\MepsVec\DGrad\\
\ome \DGrad^*\MepsVec\embLVec
-\ome^3 \embL^* \Meps \embL (\DGrad^* \MmuiVec \DGrad)^{-1} \DGrad^*\MepsVec\embLVec
& \Bopb_{22}
\epm,\\
\Bopb_{22}&:=-\ome^2 \embL^* \Meps \embL (\DGrad^* \MmuiVec \DGrad)^{-1} \DGrad^* \MepsVec \DGrad,\\
\Kopb&:=\bpm
\embLVec^* \MmuiVec \embLVec &\\
& \embL^* \Meps \embL\epm,\\
\lam&:=-\beta^2,
\end{align}
\end{subequations}
\eqref{eq:evp-tmp} reads
\begin{align}\label{eq:modal-Etw-operator-II}
(\Aopb+\Bopb) \bpm \bv\\  w \epm &= \lam \Kopb \bpm \bv\\  w \epm.
\end{align}
Hence we end up with the following eigenvalue problem.
\begin{evproblem}\label{evprob:evp-bounded}
Find $(\lam,(\bv,w)^\top)\in\bbC\times\Xspace^1\setminus\{0\}$ which solves \eqref{eq:modal-Etw-operator-II}.
\end{evproblem}
We collect our equivalence transformations of this section in the following lemma.
\begin{lemma}\label{lem:evp-equi}
If $(\beta,(\Et,\En,\Ht,\Hn))$ 
is a solution to \Cref{evprob:evp-full}, then $(\lam:=-\beta^2,(\bv,w)^\top)$ 
with $\bv+\ome^{-1}\gradg w=\Et$ is a solution to \Cref{evprob:evp-bounded}.
Vice-versa, if $(\lam,(\bv,w)^\top)$ 
is a solution to \Cref{evprob:evp-bounded}, then there exist two linear independent solutions $(\beta:=\pm\sqrt{\lam},(\Et,\En^{(\pm)},\Ht^{(\pm)},\Hn^{(\pm)}))$ 
to \Cref{evprob:evp-full} with $\Et=\bv+\ome^{-1}\gradg w$, and $(\En^{(\pm)},\Ht^{(\pm)},\Hn^{(\pm)})$ being uniquely determined by $\Et$ and the sign of $\beta$.
In particular,
\begin{align}
\En=\frac{\pm i\beta}{\ome} \Big(I +(\DGrad^*\MmuiVec\DGrad-\ome^2\embL^*\Meps\embL)^{-1} \ome^2\embL^*\Meps\embL \Big) w.
\end{align}
\end{lemma}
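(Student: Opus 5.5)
The proof traces the chain of reformulations \eqref{eq:modal-full} $\to$ \eqref{eq:modal-EtEz} $\to$ \eqref{eq:modal-EtEz-weak} $\to$ \eqref{eq:modal-EtwEz-weak} $\to$ \eqref{eq:modal-Etw-operator} $\to$ \eqref{eq:evp-tmp} $=$ \eqref{eq:modal-Etw-operator-II}. At each step we check that it is an equivalence, or invertible up to the choice of sign of $\beta$.

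\emph{Forward direction.} Let $(\beta,(\Et,\En,\Ht,\Hn))$ solve \Cref{evprob:evp-full}. Since $\omega\neq0$, \eqref{eq:modal-full-Hz} and \eqref{eq:modal-full-Ht} express $\Hn$ and $\Ht$ through $\Et$ and $\En$. Inserting these into \eqref{eq:modal-full-Et} and \eqref{eq:modal-full-Ez} gives \eqref{eq:modal-EtEz}. Testing with $(\Et\tf,\En\tf)\in\Hspace_0(\curlg;\geocs)\times H^1_0(\geocs)$ and integrating by parts yields \eqref{eq:modal-EtEz-weak}. The boundary terms vanish because $\tv\cdot\Et\tf=0$ and $\En\tf=0$.

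The regular decomposition $\Hspace_0(\curlg;\geocs)=\Vspace^1\oplus\gradg\Wspace^1$ of \Cref{lem:tracespace-decomp-orth-Hone} is direct. Hence $\Et$ splits uniquely as $\bv+\ome^{-1}\gradg w$, and testing with all $(\bv\tf,w\tf)$ is the same as testing with all $\Et\tf$. This gives \eqref{eq:modal-EtwEz-weak}, i.e.\ \eqref{eq:modal-EtwEz-operator}. Here we use $\curlg\gradg=0$ and $\divg\mucsinv\bv=0$; the latter removes the coupling $\langle\mucsinv\bv,\gradg\En\tf\rangle$.

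By \Cref{ass:cut-off}, the operator $\DGrad^*\MmuiVec\DGrad-\ome^2\embL^*\Meps\embL$ is invertible on $\Wspace^1$. The third row can therefore be solved for $\En$:
\begin{align*}
\En=-\tfrac{i\beta}{\ome}(\DGrad^*\MmuiVec\DGrad-\ome^2\embL^*\Meps\embL)^{-1}\DGrad^*\MmuiVec\DGrad\, w .
\end{align*}
Substituting this gives \eqref{eq:modal-Etw-operator} with the Schur complement $\Sop$. Writing $\Aop=(\Aop-\Kop)+\Kop$ turns this expression for $\En$ into the stated formula, up to the sign convention. Finally, $\Aoptmp(\Aoptmp-\Koptmp)^{-1}$ is invertible, being a product of two isomorphisms ($\Aoptmp$ is coercive on $H^1_0$). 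So multiplying by the block-diagonal factor is an equivalence, and we arrive at \eqref{eq:modal-Etw-operator-II} with $\lam=-\beta^2$.

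We must also check that $(\bv,w)\neq0$. If $\Et=0$, then $w=0$ and hence $\En=0$ by the formula above, and then $\Ht=\Hn=0$, a contradiction.

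\emph{Converse.} Let $(\lam,(\bv,w))$ solve \Cref{evprob:evp-bounded}, and pick $\beta$ with $\beta^2=-\lam$, with either sign. Every step above is reversible. Define $\En$ by the Schur formula with this $\beta$, set $\Et=\bv+\ome^{-1}\gradg w$, and define $\Ht,\Hn$ via \eqref{eq:modal-full-Ht} and \eqref{eq:modal-full-Hz}. These reproduce \eqref{eq:modal-EtEz-weak} and hence \eqref{eq:modal-full} in the distributional sense.

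The boundary condition $\nvt\cdot\mucs\Ht=0$ is encoded weakly. It follows from the $\En$-equation tested with $H^1$ functions, interpreting \eqref{eq:modal-full-Ez} via $\curlg\Ht$. This step needs care with the traces: $\Ht\in\Hspace(\curlg;\geocs)$ requires $\Curlg\En$ to lie in $\Hspace(\curlg)$, which \eqref{eq:modal-full-Ez} provides. I expect this regularity and boundary-condition bookkeeping in the converse to be the main obstacle.

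Linear independence of the $\pm$ solutions: the two solutions share $\Et$, while $\En^{(\pm)}$, $\Ht^{(\pm)}$, $\Hn^{(\pm)}$ depend on the sign of $\beta$. If $\lam\neq0$ they are distinct, since their wavenumbers differ. The degenerate case $\lam=0$ has to be handled or excluded separately.
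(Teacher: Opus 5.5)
Your route is the paper's: its proof is the one-line remark that everything follows by basic algebraic manipulations along \eqref{eq:modal-full}$\to$\eqref{eq:modal-EtEz}$\to\dots\to$\eqref{eq:modal-Etw-operator-II}. Your forward direction carries this out correctly, apart from one sign slip.

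Your check that $(\bv,w)\neq0$ and your caveat about $\lam=0$ are genuine additions. For $\beta=0$ one gets $\En=0$ and then $\Ht=0$, so the two candidates coincide. This case does occur when $\ome$ is a cut-off frequency of the second kind (take $\Et=\ome^{-2}\epscs^{-1}\Curlg u$ with $u$ from \eqref{eq:cut-off-neu}), and \Cref{ass:cut-off} does not exclude that. You also rightly use $\beta^2=-\lam$ rather than the statement's $\pm\sqrt{\lam}$.

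The slip: the $\En$-row of \eqref{eq:modal-EtEz-weak} reads $(\DGrad^*\MmuiVec\DGrad-\ome^2\embL^*\Meps\embL)\En=\frac{i\beta}{\ome}\DGrad^*\MmuiVec\DGrad w$, so your formula must carry $+\frac{i\beta}{\ome}$. This is the sign that makes the substitution into the second row produce $\Sop$. Then $\Aoptmp=(\Aoptmp-\Koptmp)+\Koptmp$ gives the stated formula exactly, with no sign convention involved.

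Where your argument does not work as written is the converse paragraph you flag as the obstacle, although the repair is short. Put $\Hn:=(i\ome\mucs)^{-1}\curlg\Et$ and $\Ht:=(i\ome\mucs)^{-1}(\Curlg\En+i\beta\rotm\Et)$, so that \eqref{eq:modal-full-Ht} and \eqref{eq:modal-full-Hz} hold by definition. Three points then need care.

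\emph{Regularity of $\Hn$.} Testing \eqref{eq:modal-EtEz-weak} with $(\Et\tf,0)$, $\Et\tf\in(C^\infty_0(\geocs))^2$, gives $\Curlg(\mucs^{-1}\curlg\Et)=\ome^2\epscs\Et-\beta^2\mucs^{-1}\Et-i\beta\mucs^{-1}\gradg\En\in\Lspace^2(\geocs)$ distributionally. Hence $\Hn\in H^1(\geocs)$ and \eqref{eq:modal-full-Et} holds; you do not address this.

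\emph{Regularity of $\Ht$.} Testing with $(0,\En\tf)$ gives $\curlg\Ht=(i\ome)^{-1}\big(-\divg\mucs^{-1}\gradg\En+i\beta\divg(\mucs^{-1}\Et)\big)=-i\ome\epscs\En\in L^2(\geocs)$. This is \eqref{eq:modal-full-Ez} and shows $\Ht\in\Hspace(\curlg;\geocs)$. It is not a statement about $\Curlg\En$ alone, since both divergence terms are a priori only in $H^{-1}(\geocs)$. Nor can you invoke \eqref{eq:modal-full-Ez}, because that is what is being proved.

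\emph{The boundary condition.} $\nvt\cdot\mucs\Ht=0$ does not follow from testing the $\En$-equation with $H^1$ functions. Problem \eqref{eq:modal-EtEz-weak} only provides $\En\tf\in H^1_0(\geocs)$, and integrating $\curlg\Ht$ against $H^1$ functions would produce $\tv\cdot\Ht$, not the normal component. The condition is in fact automatic: $\mucs\Ht=(i\ome)^{-1}(\Curlg\En+i\beta\rotm\Et)$, where $\Curlg H^1_0(\geocs)\subset\Hspace_0(\divg;\geocs)$ and $\rotm\Hspace_0(\curlg;\geocs)=\Hspace_0(\divg;\geocs)$, because $\nvt\cdot\rotm\Et=-\tv\cdot\Et$.

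With these three points the converse is complete, including the uniqueness of $(\En,\Ht,\Hn)$ via \Cref{ass:cut-off}.
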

\begin{proof}
Follows with basic algebraic manipulations.
\end{proof}
The forthcoming statements are a refinement of \cite[Thm.~4]{HallaMonk24}.
\begin{theorem}\label{thm:completeness-X}
Let the assumptions of \Cref{subsec:geometry,subsec:parameters} and \Cref{ass:cut-off} be satisfied.
Then the spectrum of the \Cref{evprob:evp-bounded} consists of an infinite sequence of eigenvalues $(\lambda_n)_{n\in\bbN}$, each with finite algebraic multiplicity, which do not accumulate in $\bbC$, for each $\delta>0$ only finitely many eigenvalues lie outside the sector $\{z\in\bbC\colon |\arg z|<\delta\}$, and the space spanned by the generalized eigenspaces is dense in each space $\Xspace^s$, $s=0,1/2,1$.
%
\end{theorem}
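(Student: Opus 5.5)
Reading \eqref{eq:modal-Etw-operator-II} as a Keldysh-type perturbation problem is the natural route, following \cite[Thm.~4]{HallaMonk24}: we want an unbounded positive selfadjoint operator with compact resolvent, perturbed by something of lower order. The operator $\Aopb$ is bounded and coercive on $\Xspace^1$. Indeed, $\Dcurl^*\Mmui\Dcurl+\embLVec^*\embLVec$ is coercive on $\Vspace^1\subset\Hspace_0(\curlg;\geocs)$, and $\DGrad^*\MepsVec\DGrad$ is coercive on $H^1_0(\geocs)$ by Poincar\'e's inequality. Moreover, $\Aopb$ is selfadjoint with respect to the $\Xspace^1$ pairing. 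The operator $\Kopb$ is the $\Xspace^0$-type weighted $L^2$ Gram operator. It is selfadjoint and positive, and it is compact as a map $\Xspace^1\to(\Xspace^1)^*$. For the $\Wspace$ component this is Rellich. For the $\Vspace$ component we need the compact embedding $\Vspace^1\hookrightarrow\Lspace^2(\geocs)$. This holds because $\Vspace^1$ consists of fields with $\curlg\in L^2$, zero tangential trace and $\divg\mucsinv\bfv=0$, and Weber's compactness theorem holds on bounded Lipschitz domains with $L^\infty$ coefficients and arbitrary topology. I would verify it with the tools of \Cref{lem:tracespace-decomp-orth-Hone} and \Cref{sec:ttrspaces}.

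With this setup, the pencil $\lam\mapsto \Aopb-\lam\Kopb$ corresponds, via the form method of \Cref{sec:app_operatortheory}, to a positive selfadjoint operator $\mathcal{A}$ in the Hilbert space $\Xspace^0$. We equip $\Xspace^0$ with the $\Kopb$-inner product, which is equivalent to the $L^2$ one and makes $\Vspace^0\times\Wspace^0$ closed. The form domain of $\mathcal{A}$ is $\Xspace^1$, and $\mathcal{A}$ has compact resolvent. The eigenvalues $\nu_n$ of $\mathcal{A}$ satisfy Weyl-type asymptotics for a two-dimensional second order problem, i.e. $\nu_n\sim c\,n$. The interpolation space $[\Xspace^0,\Xspace^1]_{1/2}$ equals the domain of $\mathcal{A}^{1/4}$, and I would identify it with $\Xspace^{1/2}$. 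For the $\Wspace$ part this is the standard identity $[L^2,H^1_0]_{1/2}=H^{1/2}_{00}$. For the $\Vspace$ part it needs the characterization via $\curlg\bfv\in H^{-1/2}$ together with the decomposition results. This step would rely on the functional calculus of \cite{Haase06book} as reviewed in \Cref{sec:app_operatortheory}.

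Next I would treat $\Bopb$ as a perturbation. The only unbounded-looking pieces are $(\DGrad^*\MmuiVec\DGrad)^{-1}$ composed with $\DGrad^*\MepsVec\embLVec$ or $\DGrad^*\MepsVec\DGrad$, and these map $\Xspace^1$ boundedly into $\Xspace^1$-duals. The key claim is that $\Bopb$ is compact from $\Xspace^1$ to $(\Xspace^1)^*$. The reason is that every entry factors through an $L^2$ embedding or through $\embL^*\Meps\embL$, which is compact. The exception is the entry $-\ome\embLVec^*\MepsVec\DGrad$, which factors through $\embLVec$ on the test side and is therefore also compact. In the $\Xspace^0$ realization, this says $\Bopb$ corresponds to an operator $\mathcal{B}$ with $\mathcal{B}\mathcal{A}^{-1/2}$ compact, and in fact better: $\mathcal{B}$ is bounded from $\Xspace^{1/2}$ to $(\Xspace^{1/2})^*$. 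This relative smallness, together with $\mathcal{A}^{-1}$ being of Schatten class $\mathfrak{S}_p$ for every $p>1$, is exactly the hypothesis of the Keldysh theorem \cite{Markus88}.

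Applying the Keldysh theorem then gives four conclusions: discrete spectrum consisting of eigenvalues of finite algebraic multiplicity, infinitely many eigenvalues, all but finitely many lying in any sector $|\arg z|<\delta$, and completeness of the root vectors in $\Xspace^0$. To get completeness in $\Xspace^1$ and $\Xspace^{1/2}$, I would use the standard refinement that completeness also holds in the form domains $\mathrm{dom}(\mathcal{A}^{\theta})$ for $\theta\le 1/2$, applied to the similar operators $\mathcal{A}^{\theta}(\mathcal{A}+\mathcal{B})\mathcal{A}^{-\theta}$, whose perturbation remains relatively compact. The main obstacle I expect is establishing the correct relative compactness of $\mathcal{B}$ at the $\Xspace^1$ and $\Xspace^{1/2}$ levels, because of the nonsymmetric coupling entries. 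The second difficulty is the identification of $\Xspace^{1/2}$ as an interpolation space under merely $L^\infty$ coefficients. For the latter, I would first try the orthogonal decomposition from \Cref{lem:tracespace-decomp-orth-Hone}, so that the $\Vspace$ part reduces to a scalar problem for $\curlg$.
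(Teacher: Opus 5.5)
\textbf{The gap: the completeness argument runs in the wrong direction.} Your ingredients are the right ones: coercivity of $\Aopb$ on $\Xspace^1$, positivity and compactness of $\Kopb$, and the observation that every term of $\Bopb$ carries a factor $\embLVec$ or $\embL$ on one side, so that $\bses$ is a compact form on $\Xspace^1$. However, you obtain completeness in $\Xspace^0$ first and then try to lift it. The lifting step, which you yourself flag as the main obstacle, does not go through.

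Completeness in $\Xspace^0$ says nothing about completeness in the smaller spaces $\Xspace^{1/2}$ and $\Xspace^1$. Making $\mathcal{A}^{\theta}\mathcal{B}\mathcal{A}^{-\theta}$ an admissible Keldysh perturbation of $\mathcal{A}$ at the level $\theta=1/2$ requires, whichever version of the theorem you use, that $\mathcal{B}$ maps the domain of some power of $\mathcal{A}$ into $\Xspace^0$. With $\epscs,\mucs\in L^\infty$ this is false.

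\begin{itemize}
\item The first term $\ome\DGrad^*\MepsVec\embLVec$ of the $(2,1)$ entry of $\Bopb$, i.e.\ the form $\ome\spl\epscs\bfv,\gradg w\tf\spr$, acts as $-\ome\divg(\epscs\bfv)$. Only $\divg(\mucsinv\bfv)=0$ is known, so this is merely in $H^{-1}(\geocs)$.
\item For example, let $\epscs\mucs$ be piecewise constant with a jump across a curve $\gamma$. Then $\divg(\epscs\bfv)=[\epscs\mucs]\,(\mucsinv\bfv\cdot\nu)\,\delta_\gamma$. The normal trace of the divergence-free field $\mucsinv\bfv$ on $\gamma$ is continuous on $\Vspace^0$ and not identically zero, and each domain of a power of $\mathcal{A}$ is dense. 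Hence this measure is nonzero for suitable $\bfv$ in every such domain.
\item The same example shows that your side claim $\mathcal{B}\in\BLO(\Xspace^{1/2},(\Xspace^{1/2})')$ is false. The form $\spl\epscs\gradg w,\bfv\tf\spr$ reduces to an interface pairing of $w\in H^{1/2}_{00}(\geocs)$ with $\bfv\tf\cdot\nu$, $\bfv\tf\in\Vspace^{1/2}$. This pairing is unbounded, since there are no traces on curves at the $H^{1/2}$ level.
\item For the same reason, Keldysh in $\Xspace^0$ in operator form, $\mathcal{T}^{-1}=\mathcal{A}^{-1}(I+\mathcal{B}\mathcal{A}^{-1})^{-1}$, is not available, because $\mathcal{B}\mathcal{A}^{-1}$ is not even bounded on $\Xspace^0$.
\end{itemize}

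\textbf{The fix, which is the paper's route.} Apply the Keldysh theorem directly on $\Xspace^1$ to the stencil $I+\Aopb^{-1/2}\Bopb\Aopb^{-1/2}-\lam\Aopb^{-1/2}\Kopb\Aopb^{-1/2}$. Equivalently, apply it in $\Xspace^0$ to $I+\mathcal{A}^{-1/2}\mathcal{B}\mathcal{A}^{-1/2}-\lam\mathcal{A}^{-1}$, whose root vectors are $\mathcal{A}^{1/2}\cf_n$. The hypotheses are exactly what you already verified:
\begin{itemize}
\item $\mathcal{A}^{-1/2}\mathcal{B}\mathcal{A}^{-1/2}$ is compact, i.e.\ $\bses$ is compact on $\Xspace^1$;
\item $\mathcal{A}^{-1}$ is selfadjoint, positive, injective and of finite order.
\end{itemize}
These hypotheses give completeness at once in the \emph{strongest} space $\Xspace^1$. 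Density in $\Xspace^{1/2}$ and $\Xspace^0$ then follows from the continuous dense embeddings $\Xspace^1\hookrightarrow\Xspace^{1/2}\hookrightarrow\Xspace^0$. Neither the interpolation identity $[\Xspace^0,\Xspace^1]_{1/2}=\Xspace^{1/2}$ nor any regularity of $\mathcal{B}$ beyond $\Xspace^1$ is needed.

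\textbf{A second point that needs proof.} You only assert, via Weyl asymptotics, that $\mathcal{A}^{-1}$ is of finite order. On the $\Vspace$-block, with $L^\infty$ coefficients and non-trivial topology, this needs an actual argument, though any $p$ suffices. The paper obtains it in \Cref{lem:Vfiniteorder}, together with \Cref{lem:HoneLtwoFO}. It uses a min--max comparison based on the decomposition of \Cref{lem:tracespace-decomp-orth-Hone}, which reduces the $\Vspace$-block to a scalar problem on $H^1_*(\geocs)$.
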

\begin{proof}
We apply the Keldysh perturbation theorem, see, e.g., \cite[Appendix]{Markus88}, in the form stated in \cite[Thm.~2.1]{Halla23MSteklov} to the operator stencil $I-\Aopb^{-1/2}\Bopb\Aopb^{-1/2}-\lambda\Aopb^{-1/2}\Kopb\Aopb^{-1/2}$.
The dimension of the space $X$ is infinite.
It follows from it's definition that $\Aopb^{-1/2}\Kopb\Aopb^{-1/2}$ is selfadjoint, positive and injective.
Due to \Cref{lem:HoneLtwoFO}, \Cref{lem:Vfiniteorder} and \cite[Lem.~2.3]{Halla23MSteklov} the operators $\Aopb^{-1/2}\Bopb\Aopb^{-1/2}$ and $\Aopb^{-1/2}\Kopb\Aopb^{-1/2}$ are of finite order.
Thus the first part of the claims follow.
The remaining part, i.e., the completeness in $\Xspace^s$, $s=0,1/2$ follows from the completeness in $X^1$ and the denseness of $C_{00}^\infty(\geocs)$ and $\bfC_{00}^\infty(\geocs)$ respectively.
\end{proof}
\begin{theorem}\label{thm:completeness-Et}
Let the assumptions of \Cref{subsec:geometry,subsec:parameters} and \Cref{ass:cut-off} be satisfied.
Then the spectrum of the \Cref{evprob:evp-full} consists of an infinite sequence of eigenvalues $(\beta_n)_{n\in\bbN}$, each with finite algebraic multiplicity, which do not accumulate in $\bbC$, for each $\delta>0$ only finitely many eigenvalues lie outside the sectors $\{z\in\bbC\colon |\arg (z\pm\pi/2)|<\delta\}$.
The eigenvalues are symmetric with respect to the origin.
The span of the $\Et$-components of the generalized eigenspaces corresponding to one half\footnote{The eigenspaces corresponding to $\pm\beta_n$ have the same dimension. So \emph{selecting one half} means selecting one half for each pair $\pm\beta_n$.} of the eigenvalues is dense in $\Hspace_0(\curlg;\geocs)$ and $\Hspace_0^{-1/2}(\curlg;\geocs)$.
%
\end{theorem}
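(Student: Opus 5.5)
The plan is to transfer \Cref{thm:completeness-X} to \Cref{evprob:evp-full} by means of the equivalence in \Cref{lem:evp-equi}. First, \Cref{lem:evp-equi} gives a bijection between eigenvalues $\lam$ of \Cref{evprob:evp-bounded} and pairs $\pm\beta$ of wavenumbers with $\lam=-\beta^2$. This immediately yields that the wavenumbers are symmetric with respect to the origin. The map $\beta\mapsto-\beta^2$ is proper, so non-accumulation and discreteness of $(\lam_n)$ transfer to $(\beta_n)$. For the sector statement: if $|\arg\lam|<\delta$, then $-\beta^2$ lies in a small sector around the positive real axis. Hence $\beta^2$ lies near the negative real axis, so $\beta$ lies in a sector of half-opening $\delta/2$ around $\pm i$ (this is the intended reading of the sectors around the imaginary directions in the statement). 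Thus only finitely many $\beta_n$ lie outside.

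Next I will treat multiplicities and Jordan chains. The operator pencil \eqref{eq:modal-Etw-operator-II} is linear in $\lam$, while the full problem \eqref{eq:modal-full} is linear in $\beta$. I would show that a Jordan chain for $\lam$ at $\lam_n\neq0$ corresponds to Jordan chains at $\beta=\pm\sqrt{-\lam_n}$ of the same length. The argument uses that the local substitution $\lam=-\beta^2$ is biholomorphic near $\beta_n\neq0$, and that the elimination steps of this section are holomorphic in $\beta$ and invertible for $\beta\neq0$ (they use \Cref{ass:cut-off} for the Schur complement). Consequently the algebraic multiplicities agree and are finite, and the spaces for $+\beta_n$ and $-\beta_n$ have equal dimension. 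A possible eigenvalue $\lam=0$, i.e.\ $\beta=0$, needs a separate check. I would argue that it contributes at most a finite-dimensional root space, and handle it by the same Keldysh finiteness.

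For completeness, I choose one sign per pair. The root vectors $(\bv,w)$ of \Cref{evprob:evp-bounded} span a dense subspace of $\Xspace^1$ by \Cref{thm:completeness-X}. The $\Et$-components of the corresponding root vectors of \Cref{evprob:evp-full} are $\bv+\ome^{-1}\gradg w$, possibly up to lower-order chain terms that stay inside the span. The map $(\bv,w)\mapsto\bv+\ome^{-1}\gradg w$ is an isomorphism $\Xspace^1\to\Hspace_0(\curlg;\geocs)$ by the regular decomposition of \Cref{lem:tracespace-decomp-orth-Hone}. Density in $\Hspace_0(\curlg;\geocs)$ follows. For $\Hspace_0^{-1/2}(\curlg;\geocs)$ I would either use the dense continuous embedding $\Hspace_0(\curlg;\geocs)\hookrightarrow\Hspace_0^{-1/2}(\curlg;\geocs)$, or the analogous bounded map $\Xspace^{1/2}\to\Hspace_0^{-1/2}(\curlg;\geocs)$ together with density in $\Xspace^{1/2}$.

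The main obstacle is the Jordan chain correspondence. A chain of the linear-in-$\lam$ pencil must be translated into chains of the $\beta$-problem. This requires tracking the derivatives of $\En(\beta)$ and the other eliminated components, and verifying that the $\Et$-components of the $\beta$-chains span the same space as the $\lam$-chain vectors. I would first try to do this by writing the full problem as a holomorphic pencil $T(\beta)$ equivalent, via holomorphic invertible factors for $\beta\neq0$, to $\mathcal{T}(-\beta^2)$ with $\mathcal{T}(\lam)=\Aopb+\Bopb-\lam\Kopb$. The standard invariance of partial multiplicities under such equivalences and under locally biholomorphic reparametrization would then finish the argument.
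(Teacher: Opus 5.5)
Your proposal is correct and follows the paper's route. The paper's proof consists only of citing \Cref{thm:completeness-X}, \Cref{lem:evp-equi}, the regular decompositions \Cref{lem:tracespace-decomp-orth-Hone} and \Cref{lem:tracespace-decomp-orth} (the latter is your second option for $\Hspace_0^{-1/2}(\curlg;\geocs)$ via $\Xspace^{1/2}$), and \Cref{rem:ass-top-flat}, which supplies \Cref{ass:Gamma-top}.

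Your treatment of Jordan chains via an equivalence of holomorphic operator functions is more careful than the paper's, which argues only at the eigenvector level. Note that the eliminations of $\Ht,\Hn,\En$ never divide by $\beta$, so the equivalence with $\Aopb+\Bopb+\beta^2\Kopb$ also holds at $\beta=0$; there the substitution is not locally biholomorphic, so the partial multiplicities double.
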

\begin{proof}
Follows from \Cref{thm:completeness-X}, \Cref{lem:evp-equi}, \Cref{lem:tracespace-decomp-orth}, \Cref{lem:tracespace-decomp-orth-Hone} and \Cref{rem:ass-top-flat}.
\end{proof}
Next, let us discuss the radiation condition for \Cref{prob:MWext}.
To this end we have to split the wavenumbers in one \emph{outgoing} half and one complementary \emph{incoming} half.
For non-real wavenumbers $\beta$ the ones with $\Im\beta>0$ are defined to be outgoing.
For real wavenumbers $\beta$ the group velocity is the criterion of choice.
To avoid technical discussions occurring for non-trivial Jordan chains, we make the following assumption.
To treat the case of non-trivial Jordan chains a more refined analysis is necessary, and we refer to the discussions in \cite{KostyuchenkoOrazov81,KostyuchenkoShkalikov83} on related quadratic stencils.
\begin{assumption}\label{ass:groupvel}
Let $\ome_0:=\ome$.
We assume that each \emph{real} wavenumber $\beta(\ome_0)$ is semi simple.
Let $N\in\bbN$ be it's multiplicity.
We further assume that there exists $\delta>0$ such that for $\ome\in(\ome_0-\delta,\ome_0+\delta)\not\ni\{0\}$ there exist exactly $N$ (respecting multiplicity) eigenvalues $\beta_n(\ome)$, $n=1,\dots,N$ in $(\ome_0-\delta,\ome_0+\delta)$ and the functions $\ome\to\beta_n(\ome)$, $n=1,\dots,N$ are differentiable.
Finally, we assume that $\frac{d\beta_n(\ome)}{d\ome}\neq0$ for $\ome\in(\ome_0-\delta,\ome_0+\delta)$ and that either $\ome\frac{d\beta_n(\ome)}{d\ome}>0$ for all $n=1,\dots,N$ or $\ome\frac{d\beta_n(\ome)}{d\ome}<0$ for all $n=1,\dots,N$.
\end{assumption}
\begin{definition}[Radiation Condition]\label{def:RC}
Let \Cref{ass:cut-off} and \Cref{ass:groupvel} be satisfied.
Let $2\Npr$, $\Npr\in\Nz$ be the number of real wavenumbers $\pm\beta_n$, $n=1,\dots,\Npr$ with corresponding eigenelements $\mathbf{E}_{T,n}$, where $\ome\frac{d\beta_n(\ome)}{d\ome}>0$.
The wavenumbers $+\beta_n$, $n=1,\dots,\Npr$ are called outgoing.
A function $\Eff\in\Hspace_{\tloc}(\curlF;\geoext)$ is called outgoing (or equivalently, to satisfy the radiation condition), if its tangential part admits a decomposition $\Eff_T(\xf)=\sum_{n=1}^{\Npr} c_n\mathbf{E}_{T,n}(\xt)e^{i\beta_n z}+\Eff_{T,\mathrm{ev}}(\xf)$ with $c_n\in\bbC$, $n=1,\dots,\Npr$ and $\Eff_{T,\mathrm{ev}}\in \Hspace(\curlF;\geoext)$.
\end{definition}
\begin{remark}
Wavenumbers $\beta\in\bbR$ for which the phase velocity $\frac{\beta_n(\ome)}{\ome}$ and group velocity $\frac{d\beta_n(\ome)}{d\ome}$ have the same/opposite sign, i.e., $\pm\frac{\beta_n(\ome)}{\ome}\frac{d\beta_n(\ome)}{d\ome}>0$, are called forward/backward propagating.
\end{remark}

\section{The \Calderon operator}\label{sec:Calderon}

In this section we construct a solution $\Eff$ to \Cref{prob:MWext}.
This result then enables us to define the \Calderon operator
\begin{align*}
\calop\Ef_{T,0}:=-\nvf_{\geoext}\times\mucs^{-1}\curlF\Eff, \qquad \calop\in\BLO(\Hspace_0^{-1/2}(\curl_{\geobdtrans};\geobdtrans), \Hspace_0^{-1/2}(\div_{\geobdtrans};\geobdtrans)),
\end{align*}
appearing in Problems \ref{prob:MWint} and \ref{prob:MWintweak}.
Note that another convenient notion is to work with the Electric-to-Magnetic operator $\EtM$, for which we have the relationship
\begin{align*}
\calop=i\ome\EtM.
\end{align*}
Our approach is to construct in \Cref{subsec:sqroot} a square root of the unbounded version of the operator $K^{-1}(\Aopb+\Bopb)$, and to construct in \Cref{subsec:semigroup} the solution $\Eff$ to \Cref{prob:MWext} by means of a holomorphic semi-group.
In \Cref{subsec:calderon} we establish additional properties of $\calop$, which we will use in \Cref{sec:interior} to analyze \Cref{prob:MWintweak}.
We refer to \Cref{sec:app_operatortheory} for details on the notation and a details on the applied operator theoretical frameworks.

\subsection{Construction of an operator square root}\label{subsec:sqroot}

In order to distinguish between the bounded and unbounded versions of operators we equip the latter with underlines.
Further, we use the abbreviated notation $\cf=(\bfv,w)^\top\in\Xspace^0$ and let $\Xspaceu^0:=\Xspace^0$ be equipped with the equivalent scalar product
\begin{align*}
\spl \cf,\cf\tf \spr_{\Xspaceu^0}:=\spl \mu^{-1}\bfv,\bfv\tf \spr_{\Lspace^2(\geocs)}
+\spl \eps w,w\tf \spr_{L^2(\geocs)},
\qquad \cf=(\bfv,w)^\top,\cf\tf=(\bfv\tf,w\tf)^\top \in \Xspaceu^0,
\end{align*}
which is motivated by $\spl \cf,\cf\tf \spr_{\Xspaceu^0}=\spl K\cf,\cf\tf\spr_{\Xspace^1}$ for $\cf,\cf\tf\in\Xspace^1$.
We consider the elliptic, symmetric, positive sesquilinear form $\ases(\plh,\plh)$ with domain $\Xspace^1\subset\Xspaceu^0$
defined by
\begin{align*}
\ases(\cf,\cf\tf):=\spl \Aopb \cf,\cf\tf \spr_{\Xspace^1},\qquad \cf,\cf\tf \in\Xspace^1.
\end{align*}
By means of \Cref{prop:repr_thm_second} let $\Aopu\colon \opdom(\Aopu)\subset\Xspaceu^0\to\Xspaceu^0$ be the invertible selfadjoint operator associated to $\ases(\plh,\plh)$, i.e.,
\begin{align*}
\ases(\cf,\cf\tf)&=\spl \Aopu \cf,\cf\tf \spr_{\Xspaceu^0}, \qquad \forall \cf\in\opdom(\Aopu),\forall\cf\tf \in\Xspace^1,\\
\ases(\cf,\cf\tf)&=\spl \Aopu^{1/2} \cf,\Aopu^{1/2}\cf\tf \spr_{\Xspaceu^0}, \qquad \forall \cf,\cf\tf \in\opdom(\Aopu^{1/2})=\Xspace^1.
\end{align*}
It follows from the compact embedding $\Xspace^1\hookrightarrow\Xspaceu^0$ that $\Aopu$ has a compact resolvent.
Further, we consider the sesquilinear form $\bses(\plh,\plh)$ with domain $\Xspace^1$ defined by
\begin{align*}
\bses(\cf,\cf\tf):=\spl \Bopb \cf,\cf\tf \spr_{\Xspace^1},\qquad \cf,\cf\tf \in\Xspace^1.
\end{align*}
It follows from the definition \eqref{eq:def_operators} of $\Bop$ that the form $\bses(\plh,\plh)$ is $1/2$-form subordinate to $\ases(\plh,\plh)$, i.e., there exists a constant $C>0$ such that
\begin{align*}
|\bses(\cf)| \leq C \, \ases(\cf)^{p} \, \|\cf\|_{\Xspaceu^0}^{2(1-p)} \quad\forall \cf \in\Xspace^1, \quad p=1/2.
\end{align*}
Thus the form $\qses(\plh,\plh):=\ases(\plh,\plh)+\bses(\plh,\plh)$ is elliptic, and by means of \Cref{prop:repr_thm_first} let $\Qopu\colon\opdom(\Qopu)\subset\Xspaceu^0\to\Xspaceu^0$ be the quasi m-$\theta$-accretive operator ($\theta\in[0,\pi/2)$) with compact resolvent satisfying
\begin{align*}
\qses(\cf,\cf\tf)=\spl \Qopu\cf,\cf\tf \spr_{\Xspaceu^0} \qquad \forall\cf\in\opdom(\Qopu), \forall\cf\tf \in\Xspace^1.
\end{align*}
We would like to work with fractional powers of $\Qopu$.
However, $\Qopu$ is in general not sectorial.
To employ the functional calculus for sectorial operators nonetheless, we work with a modification $\tilde\Qopu$ of $\Qopu$ as follows.
Let $\tilde\beta_n$, $n=1,\dots,\tNpr$, $0\leq\tNpr\leq\Npr$ be the \emph{distinct} real, outgoing wavenumbers with respective Riesz projections $P_n:=\frac{1}{2\pi i} \int_{\Upsilon_n} (\lam-\Qopu)^{-1} \dd\lam$,
where $\Upsilon_n$ is a positively oriented, sufficiently small circle with center $-\tilde\beta_n^2$.
Let $\alpha_n>0$ be pair-wise distinct and $\alpha_n^2\notin\spec(\Qopu)$, $n=1,\dots,\tNpr$.
Then, we define
\begin{align*}
\tilde\Qopu:=\Qopu+\sum_{n=1}^{\Npr} (\tilde\beta_n^2+\alpha_n^2) P_n.
\end{align*}
To estimate the resolvent of $\tilde\Qopu$ we introduce $F(\lam)\in\BLO(\Xspaceu^0)$ defined by
\begin{align*}
F(\lam):=(\lam-\Aopu)^{1/2} \sum_{n=1}^{\Npr} (\tilde\beta_n^2+\alpha_n^2) P_n (\lam-\Aopu)^{1/2}
\end{align*}
and $B(\lam)\in\BLO(\Xspaceu^0)$ as in \Cref{subsubsec:perturbed_form}
\begin{align*}
\spl B(\lam)\cf,\cf \spr_{\Xspaceu^0} = \bses((\lam-\Aopu)^{-1/2}\cf,(\lam-\Aopu)^{-*/2}\cf\tf) \quad\forall\cf,\cf\tf\in\Xspaceu^0.
\end{align*}
We have the following auxiliary estimate.
\begin{lemma}\label{lem:FB-estimate}
For each $\theta\in(0,\pi)$ there exists $C_\theta>0$ such that $\|B(\lam)\|_{\BLO(\Xspaceu^0)}$, $\|F(\lam)\|_{\BLO(\Xspaceu^0)} \leq C_\theta \|(\lam-\Aopu)^{-1/2}\|_{\BLO(\Xspaceu^0)}$ for all $\lam\in\bbC\setminus\overline{\Sector_\theta}$.
\end{lemma}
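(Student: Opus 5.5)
The plan is to reduce both estimates to scalar bounds from the functional calculus of the selfadjoint, positive, invertible operator $\Aopu$, whose spectrum lies in $[a_0,\infty)$ for some $a_0>0$. For $\lam\in\bbC\setminus\overline{\Sector_\theta}$ elementary geometry gives $|\lam-t|\geq c_\theta\max\{|\lam|,t\}$ for all $t\geq0$. The spectral theorem then yields three facts:
\begin{align*}
\|(\lam-\Aopu)^{-1/2}\|_{\BLO(\Xspaceu^0)}&\leq c_\theta^{-1/2}a_0^{-1/2},\\
\|\Aopu^{1/2}(\lam-\Aopu)^{-1/2}\|_{\BLO(\Xspaceu^0)}&\leq \sup_{t\geq a_0} t^{1/2}|\lam-t|^{-1/2}\leq c_\theta^{-1/2},
\end{align*}
and the same bounds hold for $(\lam-\Aopu)^{-*/2}=(\overline\lam-\Aopu)^{-1/2}$, since $\overline\lam$ lies outside the sector as well. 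Since $\opdom(\Aopu^{1/2})=\Xspace^1$ with $\|\cf\|_{\Xspace^1}\simeq\|\Aopu^{1/2}\cf\|_{\Xspaceu^0}$, the second bound says that $(\lam-\Aopu)^{-1/2}$ maps $\Xspaceu^0\to\Xspace^1$ uniformly in $\lam$.

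For $B(\lam)$, I would first sharpen the $1/2$-subordination into a two-sided bilinear bound
\begin{align*}
|\bses(\cf,\cf\tf)|\leq C\big(\|\cf\|_{\Xspace^1}\|\cf\tf\|_{\Xspaceu^0}+\|\cf\|_{\Xspaceu^0}\|\cf\tf\|_{\Xspace^1}\big).
\end{align*}
To obtain it, I would inspect \eqref{eq:def_operators} entry by entry. The entries $\embLVec^*\MepsVec\embLVec$ and $\embLVec^*\embLVec$ are of order zero. The off-diagonal entry $\embLVec^*\MepsVec\DGrad$ carries one derivative on $w$ only, and its transposed counterpart carries one derivative on the test function. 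In the entries containing $(\DGrad^*\MmuiVec\DGrad)^{-1}\DGrad^*$, this factor is bounded from $\Lspace^2(\geocs)$ to $H^1_0(\geocs)$, so only one derivative in total falls on the pair $(\cf,\cf\tf)$; for these terms I place the $\Xspace^1$-norm on whichever factor carries the derivative.

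With this bound, I insert $\cf=(\lam-\Aopu)^{-1/2}\mathbf f$ and $\cf\tf=(\lam-\Aopu)^{-*/2}\mathbf g$. Each product then contains one factor bounded by $c_\theta^{-1/2}\|\mathbf f\|$ (respectively $\|\mathbf g\|$), coming from the $\Xspace^1$-bound, and one factor bounded by $\|(\lam-\Aopu)^{-1/2}\|\,\|\mathbf g\|$ (respectively $\|\mathbf f\|$). This gives the claim for $B(\lam)$.

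For $F(\lam)$, I would use that $P:=\sum_n(\tilde\beta_n^2+\alpha_n^2)P_n$ has finite rank. Its range consists of root vectors of $\Qopu$, hence lies in $\opdom(\Qopu)\subset\Xspace^1$, and the range of $P^*$ consists of root vectors of $\Qopu^*$, hence also lies in $\Xspace^1$. So $P=\sum_k\spl\plh,\mathbf g_k\spr\mathbf h_k$ with $\mathbf g_k,\mathbf h_k\in\Xspace^1$. The displayed definition with powers $+1/2$ cannot satisfy the asserted bound, because $\|(\lam-\Aopu)^{1/2}\mathbf h_k\|$ grows like $|\lam|^{1/2}$. I therefore read $F(\lam)$ with the powers $-1/2$, in analogy with $B(\lam)$; this is also how it enters the resolvent identity $\lam-\tilde\Qopu=(\lam-\Aopu)^{1/2}(I-B(\lam)-F(\lam))(\lam-\Aopu)^{1/2}$. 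Under this reading, $\|F(\lam)\|\leq\|P\|\,\|(\lam-\Aopu)^{-1/2}\|^2$, and the first uniform bound above turns one factor into a constant. Alternatively one can write $F(\lam)\mathbf f=\sum_k\spl(\lam-\Aopu)^{-1/2}\mathbf f,\mathbf g_k\spr(\lam-\Aopu)^{-1/2}\mathbf h_k$ and estimate each term using $\mathbf g_k,\mathbf h_k\in\Xspace^1$.

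I expect the main obstacle to be the bilinear bound for $\bses$. One has to verify carefully that no entry of $\Bopb$, in particular $\Bopb_{22}$ and the lower-left entry involving $(\DGrad^*\MmuiVec\DGrad)^{-1}$, places a full derivative on both arguments simultaneously; this is exactly the point that requires the $H^1_0$-mapping property of that inverse.
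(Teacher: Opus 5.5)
Your proposal is correct and follows essentially the paper's argument. The paper likewise bounds $B(\lambda)$ by noting that every term of $\Bopb$ carries a factor $\embLVec$ on the right or $\embLVec^*$ on the left (exactly your $\Xspace^1\times\Xspaceu^0$ / $\Xspaceu^0\times\Xspace^1$ bound for $\bses$), and it bounds $F(\lambda)$ through the boundedness of the finite-rank Riesz projections. Your reading of $F(\lambda)$ with exponents $-1/2$ is the right one, since it is forced by the factorization $(\lambda-\tilde\Qopu)^{-1}=(\lambda-\Aopu)^{-1/2}(1-B(\lambda)-F(\lambda))^{-1}(\lambda-\Aopu)^{-1/2}$ used in the proof of \Cref{lem:tQ-sectorial}; the $+1/2$ in the displayed definition is a typo.
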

\begin{proof}
Note that $0\notin\spec(\Aopu)$.
The claim for $F(\lam)$ follows from its definition of the boundedness of the Riesz projections.
For the second claim we note that the definition of $B(\lam)$ goes back to \eqref{eq:def_operators}.
Since each term in the definition of $B$ involves at least one factor $\embLVec$ on the right or $\embLVec^*$ on the left, the claim follows.
\end{proof}
\begin{lemma}\label{lem:tQ-sectorial}
The operator $\tilde\Qopu\colon\opdom(\tilde\Qopu)=\opdom(\Qopu)\subset\Xspaceu^0\to\Xspaceu^0$ is sectorial.
\end{lemma}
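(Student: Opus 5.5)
To show $\tilde\Qopu$ is sectorial we must prove that $\spec(\tilde\Qopu)$ lies in the closure of some sector $\Sector_{\theta'}$ with $\theta'<\pi$. We also need $\|\lam(\lam-\tilde\Qopu)^{-1}\|_{\BLO(\Xspaceu^0)}$ to be uniformly bounded for $\lam$ outside that sector. The plan is to factor the resolvent through $(\lam-\Aopu)^{1/2}$, using the framework of \Cref{subsubsec:perturbed_form}.

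\emph{Step 1: the large-$|\lam|$ regime.} For $\lam\in\bbC\setminus\overline{\Sector_\theta}$ write
\begin{align*}
\lam-\tilde\Qopu=(\lam-\Aopu)^{1/2}\big(I-B(\lam)-F(\lam)\big)(\lam-\Aopu)^{1/2}.
\end{align*}
This is meant in the form sense on $\Xspace^1=\opdom(\Aopu^{1/2})$, with the sign conventions of \Cref{subsubsec:perturbed_form}; the $F$-term carries the bounded finite-rank perturbation. Since $\Aopu$ is selfadjoint and positive, the functional calculus gives $\|(\lam-\Aopu)^{-1/2}\|\le C_\theta|\lam|^{-1/2}$ on that set. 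By \Cref{lem:FB-estimate}, $\|B(\lam)\|+\|F(\lam)\|\le C|\lam|^{-1/2}$. Hence there is $R>0$ such that for $|\lam|>R$ the Neumann series inverts $I-B(\lam)-F(\lam)$ with norm at most $2$, and
\begin{align*}
\|(\lam-\tilde\Qopu)^{-1}\|\le 2\|(\lam-\Aopu)^{-1/2}\|^2\le 2C_\theta^2|\lam|^{-1}.
\end{align*}
This already gives the sectorial resolvent bound outside $\overline{\Sector_\theta}\cup\{|\lam|\le R\}$.

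\emph{Step 2: the bounded remainder.} It remains to treat the compact set $\{|\lam|\le R\}\setminus\Sector_{\theta'}$ for a suitable $\theta'\in(\theta,\pi)$. Since $\tilde\Qopu$ is a bounded perturbation of $\Qopu$, it has compact resolvent, so its spectrum consists of isolated eigenvalues. It therefore suffices to show that $\spec(\tilde\Qopu)$ does not meet $(-\infty,0]$; finitely many eigenvalues in the disc then fit inside some sector $\Sector_{\theta'}$. Continuity of the resolvent on the compact remainder then yields a uniform bound for $\lam(\lam-\tilde\Qopu)^{-1}$ there.

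To locate the spectrum, use that $\Qopu$ commutes with its Riesz projections $P_n$, and note the sum runs over the distinct wavenumbers. On $\ran P_n$ the operator $\tilde\Qopu$ acts as $\Qopu+\tilde\beta_n^2+\alpha_n^2$, whose only spectral point is $-\tilde\beta_n^2+\tilde\beta_n^2+\alpha_n^2=\alpha_n^2>0$. On the complementary invariant subspace, $\tilde\Qopu=\Qopu$. Its spectrum there consists of the values $\lam=-\beta^2$ from \Cref{lem:evp-equi}, excluding the real outgoing $\beta$. The remaining real $\beta$ are the incoming $-\tilde\beta_n$, but they have the same $\lam=-\tilde\beta_n^2$ and were therefore removed by $P_n$ as well.

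\emph{Step 3: excluding the negative axis (the main obstacle).} The remaining wavenumbers $\beta$ are non-real, and we need $-\beta^2\notin(-\infty,0]$, i.e.\ $\beta^2\notin[0,\infty)$. This holds automatically since non-real $\beta$ with $\beta^2\ge0$ is impossible. The point $\lam=0$ corresponds to $\beta=0$, which is excluded because $0\in\spec(\Qopu)$ would contradict the invertibility built into the Schur complement under \Cref{ass:cut-off}. I expect the delicate part to be justifying that the Riesz projections remove exactly the real-$\beta$ spectrum with the correct multiplicities; here \Cref{ass:groupvel} is used to make the $P_n$ well defined and finite rank. A second delicate point is that the form-sense factorization is valid with $F(\lam)$ bounded.
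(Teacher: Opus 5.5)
Your Steps 1 and 2 follow the paper's proof. You factor $\lam-\tilde\Qopu$ through $(\lam-\Aopu)^{1/2}$ and use \Cref{lem:FB-estimate} to get $\|B(\lam)\|+\|F(\lam)\|<1/2$ outside $\Sector_\theta\cup B_{r_\theta}(0)$. You then handle the bounded remainder by discreteness of $\spec(\tilde\Qopu)$. Your Step 3 spells out what the paper compresses into ``since $\spec(\tilde\Qopu)$ is discrete, we can find $\theta$ with $\spec(\tilde\Qopu)\subset\Sector_\theta$''. Your treatment of the negative eigenvalues is correct: on $\ran P_n$ the shift sends $-\tilde\beta_n^2$ to $\alpha_n^2$, and $\pm\tilde\beta_n$ give the same $\lam$. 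Your treatment of non-real $\beta$ is also correct.

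The one step that fails is the exclusion of $\lam=0$. \Cref{ass:cut-off} only makes $\DGrad^*\MmuiVec\DGrad-\ome^2\embL^*\Meps\embL$ invertible, i.e.\ the Dirichlet problem for $\En$ needed to form the Schur complement. It says nothing about the kernel of $\Aopb+\Bopb$. At $\beta=0$ the $\En$-equation decouples. Then $\Qopu\cf=0$ has a solution $\cf\neq0$ if and only if there is $\Et\in\Hspace_0(\curlg;\geocs)\setminus\{0\}$ with $\Curlg\mucsinv\curlg\Et=\ome^2\epscs\Et$. Setting $u:=\mucsinv\curlg\Et$, this is equivalent to $\ome$ being a cut-off frequency of the \emph{second} kind, which is not excluded.

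What actually rules out $\beta=0$ is \Cref{ass:groupvel}. The wavenumbers are symmetric about the origin, so branches through $\beta=0$ come in pairs $\pm\beta_n(\ome)$. Their values of $\ome\,\frac{d\beta_n}{d\ome}$ have opposite signs, contradicting the required common sign. For $\ome\notin\bbR$ there are no real wavenumbers at all, by \Cref{rem:beta-real-ome-compl}.

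Relatedly, you misplace the role of \Cref{ass:groupvel}. The $P_n$ are well defined and of finite rank simply because $\Qopu$ has compact resolvent. The assumption is needed for two things. First, for every real wavenumber $\beta$ one of $\pm\beta$ is outgoing, so every negative eigenvalue $-\beta^2$ of $\Qopu$ equals some $-\tilde\beta_n^2$ and is removed. Second, $\beta=0$ does not occur. With that justification replaced, your argument is complete.
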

\begin{proof}
For $\lam\notin\spec(\tilde\Qopu)\cup\Rp$ we have due to \Cref{subsubsec:perturbed_form} that
\begin{align*}
(\lam-\tilde\Qopu)^{-1}=(\lam-\Aopu)^{-1/2} (1-B(\lam)-F(\lam))^{-1} (\lam-\Aopu)^{-1/2}.
\end{align*}
Due to \Cref{lem:FB-estimate} for each $\theta\in(0,\pi)$ there exists $r_\theta>0$ such that $\|B(\lam)\|_{\BLO(\Xspaceu^0)}+\|F(\lam)\|_{\BLO(\Xspaceu^0)}<1/2$ for all $\lam\in\bbC\setminus(\Sector_\theta\cup B_{r_\theta}(0))$ and hence $\|\lam(\lam-\tilde\Qopu)\|_{\BLO(\Xspaceu^0)} \leq 2\|\lam(\lam-\Aopu)\|_{\BLO(\Xspaceu^0)}$ for such $\lam$.
Since $\spec(\tilde\Qopu)$ is discrete, we can find $\theta\in(0,\pi)$ such that $\spec(\tilde\Qopu)\subset\Sector_\theta$.
Hence the claim follows.
\end{proof}
Now, due to \Cref{lem:tQ-sectorial} the fractal powers of $\tilde\Qopu$ are well-defined (see \Cref{subsubsec:functcalc}) and subsequently we set
\begin{align*}
\Qopu^\gamma:=\tilde\Qopu^\gamma+\sum_{n=1}^{\Npr} (e^{-\sgn(\tilde\beta_n)\gamma\pi i}|\tilde\beta_n|^{2\gamma}-\alpha_n^{2\gamma}) P_n, \quad \gamma=[-1,0\text{$)$}\cup\text{$($}0,1].
\end{align*}
Note that the minus in the exponent of $e^{-\sgn(\tilde\beta_n)\gamma\pi i}$ will later cancel out by considering the operator $-\Qopu^{\gam}$.
\begin{lemma}\label{lem:powerproduct}
It holds that $\Qopu^{\gam_1}\Qopu^{\gam_2}=\Qopu^{\gam_1+\gam_2}$ for all $\gam_1,\gam_2>0$.
\end{lemma}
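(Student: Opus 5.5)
The plan is to reduce the claim to two facts: the semigroup property of fractional powers of the sectorial operator $\tilde\Qopu$, and the way the Riesz projections $P_n$ interact with everything. By definition
\begin{align*}
\Qopu^\gamma=\tilde\Qopu^\gamma+\sum_{n} c_n(\gamma) P_n,\qquad c_n(\gamma):=e^{-\sgn(\tilde\beta_n)\gamma\pi i}|\tilde\beta_n|^{2\gamma}-\alpha_n^{2\gamma}.
\end{align*}
Multiplying out $\Qopu^{\gam_1}\Qopu^{\gam_2}$ gives four kinds of terms: $\tilde\Qopu^{\gam_1}\tilde\Qopu^{\gam_2}$, the two mixed terms $\tilde\Qopu^{\gam_1}P_n$ and $P_n\tilde\Qopu^{\gam_2}$, and the products $P_nP_m$. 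For the first, the functional calculus of \Cref{subsubsec:functcalc} gives $\tilde\Qopu^{\gam_1}\tilde\Qopu^{\gam_2}=\tilde\Qopu^{\gam_1+\gam_2}$ for $\gam_1,\gam_2>0$. I would take this as a standard property of fractional powers of sectorial operators, see \cite{Haase06book}, with the product understood on the natural domain.

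The key structural step is to establish three facts about the projections.
\begin{itemize}
\item The $P_n$ are mutually disjoint projections: $P_nP_m=\delta_{nm}P_n$, since the circles $\Upsilon_n$ enclose disjoint parts of the spectrum.
\item Each $P_n$ has finite rank, because $\Qopu$ has compact resolvent.
\item Each $P_n$ commutes with $\Qopu$, hence with $\tilde\Qopu$ and its resolvent.
\end{itemize}
On $\operatorname{ran}P_n$ the operator $\Qopu$ has the single eigenvalue $-\tilde\beta_n^2$. Assuming semisimplicity, which is \Cref{ass:groupvel}, $\Qopu$ acts there as $-\tilde\beta_n^2 I$. So $\tilde\Qopu$ acts on $\operatorname{ran}P_n$ as $\alpha_n^2 I$.

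Since $\tilde\Qopu$ is reduced by the decomposition $\operatorname{ran}P_n\oplus\ker P_n$, the resolvent integral defining $\tilde\Qopu^\gamma$ splits along it. This yields $\tilde\Qopu^\gamma P_n=P_n\tilde\Qopu^\gamma=\alpha_n^{2\gamma}P_n$. Consequently
\begin{align*}
\Qopu^\gamma=\tilde\Qopu^\gamma\Big(I-\sum_n P_n\Big)+\sum_n e^{-\sgn(\tilde\beta_n)\gamma\pi i}|\tilde\beta_n|^{2\gamma}P_n.
\end{align*}
This is a direct-sum decomposition with respect to $I-\sum_nP_n$ and the $P_n$.

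The claim then follows blockwise. On the complement, use the semigroup property of $\tilde\Qopu^\gamma$ together with the commutation with $I-\sum P_n$. On each $\operatorname{ran}P_n$, the scalars multiply as
\begin{align*}
e^{-\sgn\gam_1\pi i}|\tilde\beta_n|^{2\gam_1}\cdot e^{-\sgn\gam_2\pi i}|\tilde\beta_n|^{2\gam_2}=e^{-\sgn(\gam_1+\gam_2)\pi i}|\tilde\beta_n|^{2(\gam_1+\gam_2)}.
\end{align*}
Here $\sgn$ abbreviates $\sgn(\tilde\beta_n)$.

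I expect the main obstacle to be the rigorous identity $\tilde\Qopu^\gamma P_n=\alpha_n^{2\gamma}P_n$, including domain issues for unbounded powers. I would first show that $\tilde\Qopu$ restricted to the finite-dimensional space $\operatorname{ran}P_n$ equals $\alpha_n^2 I$, which needs semisimplicity or else a Jordan-block version. I would then show that the restriction of the sectorial functional calculus to a reducing subspace is the functional calculus of the restriction. That second point I would verify on the resolvent level first, then for the regularized Dunford integrals.
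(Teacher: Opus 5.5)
Your proposal is correct and follows essentially the paper's route. The paper also expands the product and combines $P_nP_m=\delta_{nm}P_n$, $\tilde\Qopu^{\gam}P_n=P_n\tilde\Qopu^{\gam}=\alpha_n^{2\gam}P_n$ and $\tilde\Qopu^{\gam_1}\tilde\Qopu^{\gam_2}=\tilde\Qopu^{\gam_1+\gam_2}$ before collecting the scalar coefficients; you first rewrite $\Qopu^\gam$ in block form, which is only a cosmetic difference, and you rightly make explicit the semisimplicity from \Cref{ass:groupvel} behind $\tilde\Qopu^{\gam}P_n=\alpha_n^{2\gam}P_n$, which the paper uses tacitly.
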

\begin{proof}
We compute that
\begin{align*}
\Qopu^{\gam_1}\Qopu^{\gam_2}
&=\bigg(\tilde\Qopu^{\gam_1}+\sum_{n=1}^{\Npr} (e^{-\sgn(\tilde\beta_n)\gam_1\pi i}|\tilde\beta_n|^{2\gam_1}-\alpha_n^{2\gam_1}) P_n\bigg)
\bigg(\tilde\Qopu^{\gam_2}+\sum_{n=1}^{\Npr} (e^{-\sgn(\tilde\beta_n)\gam_2\pi i}|\tilde\beta_n|^{2\gam_2}-\alpha_n^{2\gam_2}) P_n\bigg)\\
&=\tilde\Qopu^{\gam_1}\tilde\Qopu^{\gam_2}
+\sum_{n=1}^{\Npr} (e^{-\sgn(\tilde\beta_n)\gam_1\pi i}|\tilde\beta_n|^{2\gam_1}-\alpha_n^{2\gam_1}) (e^{-\sgn(\tilde\beta_n)\gam_2\pi i}|\tilde\beta_n|^{2\gam_2}-\alpha_n^{2\gam_2}) P_n\\
&+\sum_{n=1}^{\Npr} (e^{-\sgn(\tilde\beta_n)\gam_1\pi i}|\tilde\beta_n|^{2\gam_1}-\alpha_n^{2\gam_1}) \alpha_n^{2\gam_2} P_n
+\sum_{n=1}^{\Npr} (e^{-\sgn(\tilde\beta_n)\gam_2\pi i}|\tilde\beta_n|^{2\gam_2}-\alpha_n^{2\gam_2}) \alpha_n^{2\gam_1} P_n\\
&=\tilde\Qopu^{\gam_1+\gam_2}
+\sum_{n=1}^{\Npr} (e^{-\sgn(\tilde\beta_n)(\gam_1+\gam_2)\pi i}|\tilde\beta_n|^{2(\gam_1+\gam_2)}-\alpha_n^{2(\gam_1+\gam_2)}) P_n
=\Qopu^{\gam_1+\gam_2}.
\end{align*}
\end{proof}
\begin{lemma}\label{lem:Q-sqrt-diff}
For each $\gam\in(0,1)$ the operator $\Gopu_\gam:=\Qopu^{-\gam}-\Aopu^{-\gam}$ satisfies $\Aopu^{s_1}\Gopu_{\gam}\Aopu^{s_2}\in\BLO(\Xspace^0)$ for all $s_1,s_2\in[0,1/2]$, $s_1+s_2<1/2+\gam$.
\end{lemma}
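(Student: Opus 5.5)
Since $\Qopu^{-\gam}$ is defined from the sectorial operator $\tilde\Qopu$ plus finite-rank corrections, I would first split
\begin{align*}
\Gopu_\gam=\big(\tilde\Qopu^{-\gam}-\Aopu^{-\gam}\big)+\sum_{n=1}^{\Npr} \big(e^{\sgn(\tilde\beta_n)\gam\pi i}|\tilde\beta_n|^{-2\gam}-\alpha_n^{-2\gam}\big) P_n .
\end{align*}
The finite-rank part is harmless. The ranges of $P_n$ are finite-dimensional generalized eigenspaces of $\Qopu$, which lie in $\opdom(\Qopu)\subset\Xspace^1=\opdom(\Aopu^{1/2})$. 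The adjoint projections have ranges in $\opdom(\Qopu^*)\subset\Xspace^1$ as well. Hence $\Aopu^{s_1}P_n\Aopu^{s_2}$ is bounded for all $s_1,s_2\in[0,1/2]$, using the finite rank on both sides.

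For the main part, I represent the negative fractional powers by the Balakrishnan/Cauchy integral from the functional calculus of \Cref{subsubsec:functcalc}:
\begin{align*}
\tilde\Qopu^{-\gam}-\Aopu^{-\gam}=\frac{1}{2\pi i}\int_{\Gamma}\lam^{-\gam}\big((\lam-\tilde\Qopu)^{-1}-(\lam-\Aopu)^{-1}\big)\dd\lam .
\end{align*}
Here $\Gamma$ runs around a sector containing both spectra and avoids a small ball around $0$; this is possible since $0\notin\spec(\tilde\Qopu)\cup\spec(\Aopu)$ by the choice of $\alpha_n$. Using the factorization from the proof of \Cref{lem:tQ-sectorial}, the resolvent difference becomes
\begin{align*}
(\lam-\Aopu)^{-1/2}\big[(1-B(\lam)-F(\lam))^{-1}-1\big](\lam-\Aopu)^{-1/2}
=(\lam-\Aopu)^{-1/2}(1-B-F)^{-1}(B+F)(\lam-\Aopu)^{-1/2}.
\end{align*}
On the far part of $\Gamma$ one has $\|(1-B-F)^{-1}\|\le 2$. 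By \Cref{lem:FB-estimate}, $\|B(\lam)+F(\lam)\|\lesssim|\lam|^{-1/2}$. Since $\Aopu$ is selfadjoint and positive, spectral calculus gives $\|\Aopu^{s}(\lam-\Aopu)^{-1/2}\|\lesssim|\lam|^{s-1/2}$ for $\lam\in\Gamma$ and $s\in[0,1/2]$, and these operators commute with $\Aopu^{s}$.

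Inserting $\Aopu^{s_1}$ on the left and $\Aopu^{s_2}$ on the right, the integrand is bounded in norm by
\begin{align*}
C|\lam|^{-\gam}|\lam|^{s_1-1/2}|\lam|^{-1/2}|\lam|^{s_2-1/2}=C|\lam|^{s_1+s_2-\gam-3/2}.
\end{align*}
This is integrable at infinity exactly when $s_1+s_2<1/2+\gam$, which is the stated condition. The compact near part of $\Gamma$ contributes a bounded operator trivially, since there $\Aopu^{s}(\lam-\Aopu)^{-1/2}$ is bounded and the resolvents are uniformly bounded.

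The step I expect to need the most care is rigour rather than estimates. The integral converges in operator norm only after multiplication by $\Aopu^{s_1}$ and $\Aopu^{s_2}$. So I would first verify the identity $\Aopu^{s_1}\Gopu_\gam\Aopu^{s_2}\cf=\int\dots\cf$ for $\cf$ in a core, for instance $\opdom(\Aopu)$. Then I would use that $\Aopu^{s_1}$ is closed and extend by density. A second point to check is that the representation $\lam^{-\gam}$ for $\tilde\Qopu^{-\gam}$ matches the definition in \Cref{subsubsec:functcalc}, including the branch choice, for invertible sectorial operators. Finally, the bound on $(1-B-F)^{-1}$ near the spectrum is only needed on $\Gamma$ and not uniformly, which avoids any difficulty.
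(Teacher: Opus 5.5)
Your proposal is correct and follows essentially the same route as the paper. Both arguments split off the finite-rank Riesz-projection terms, factor the resolvent difference through $B(\lam)+F(\lam)$, and use \Cref{lem:FB-estimate} together with $\|\Aopu^{s}(\lam-\Aopu)^{-1/2}\|\lesssim|\lam|^{s-1/2}$ to get the integrability exponent $s_1+s_2-\gam-3/2<-1$. The differences are cosmetic: the paper represents $\tilde\Qopu^{-\gam}-\Aopu^{-\gam}$ by the real-axis Balakrishnan formula \eqref{eq:frac-power-neg} (your contour collapsed onto $\lam=-t$, $t>0$), and it leaves implicit the finite-rank part and the closedness/density justification that you spell out.
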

\begin{proof}
We compute that
\begin{align*}
\Qopu^{-\gam}-\Aopu^{-\gam}=
\tilde\Qopu^{-\gam}-\Aopu^{-\gam}
+\sum_{n=1}^{\tNpr} (e^{-\sgn(\tilde\beta_n)(-\gam\pi i)}|\tilde\beta_n|^{-2\gam}-\alpha_n^{-2\gam}) P_n
\end{align*}
and by means of \eqref{eq:frac-power-neg} that
\begin{align*}
\tilde\Qopu^{-\gam}-\Aopu^{-\gam}
&=\frac{\sin\gam\pi}{\pi} \int_0^\infty t^{-\gam} \big((t+\Aopu)^{-1}-(t+\tilde\Qopu)^{-1}\big)\,\dd t\\
&=\frac{\sin\gam\pi}{\pi} \int_0^\infty t^{-\gam} (t+\Aopu)^{-1/2} \big(1-(1+B(-t)+F(-t))^{-1}\big) (t+\Aopu)^{-1/2}\,\dd t\\
&=\frac{\sin\gam\pi}{\pi} \int_0^\infty t^{-\gam} (t+\Aopu)^{-1/2} \big(B(-t)+F(-t)\big) (1+B(-t)+F(-t))^{-1} (t+\Aopu)^{-1/2}\,\dd t.
\end{align*}
For each $s\in[0,1/2]$ there exists $C_s>0$ such that $\|\Aopu^{s}(t+\Aopu)^{-1/2}\|_{\BLO(\Xspace^0)} \leq C_s (t+1)^{s-1/2}$.
Hence the claim follows by means of \Cref{lem:FB-estimate}.
\end{proof}
\begin{lemma}\label{lem:Q-sqrt-Ainv-sqrt}
It holds that $\Qopu^{-\gam}\Aopu^{\gam}$, $\Aopu^{\gam}\Qopu^{-\gam}$, $\Qopu^{\gam}\Aopu^{-\gam}$, $\Aopu^{-\gam}\Qopu^{\gam}$ $\in\BLO(\Xspaceu^0)$ for each $\gam\in(0,1/2]$.
\end{lemma}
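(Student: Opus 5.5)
The plan is to reduce all four statements to Lemma~\ref{lem:Q-sqrt-diff} together with the identity $\Qopu^{\gam}\Qopu^{-\gam}=I$ from Lemma~\ref{lem:powerproduct}, suitably extended to include negative exponents.

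\emph{Step 1: the operator $\Qopu^{-\gam}\Aopu^{\gam}$.} On $\opdom(\Aopu^{\gam})$ I write
\begin{align*}
\Qopu^{-\gam}\Aopu^{\gam}=I+\Gopu_\gam\Aopu^{\gam}.
\end{align*}
For $\gam\in(0,1/2]$, take $s_1=0$ and $s_2=\gam$. Then $s_2\in[0,1/2]$ and $s_1+s_2=\gam<1/2+\gam$, so Lemma~\ref{lem:Q-sqrt-diff} shows that $\Gopu_\gam\Aopu^{\gam}$ is bounded. Since $\opdom(\Aopu^\gam)$ is dense, $\Qopu^{-\gam}\Aopu^{\gam}$ extends to an element of $\BLO(\Xspaceu^0)$.

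\emph{Step 2: the operator $\Aopu^{\gam}\Qopu^{-\gam}$.} Here I write
\begin{align*}
\Aopu^{\gam}\Qopu^{-\gam}=I+\Aopu^{\gam}\Gopu_\gam,
\end{align*}
and apply Lemma~\ref{lem:Q-sqrt-diff} with $s_1=\gam$ and $s_2=0$. This shows that the range of $\Qopu^{-\gam}$ lies in $\opdom(\Aopu^\gam)$ and that the composition is bounded. By the closed graph theorem this identity holds genuinely, not only on a dense subset.

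\emph{Step 3: the operators with positive powers of $\Qopu$.} This is the main obstacle. I intend to show that $\Qopu^{-\gam}$ is injective with $(\Qopu^{-\gam})^{-1}=\Qopu^{\gam}$. For $\tilde\Qopu$ this is the standard functional calculus for sectorial operators with $0\in\rho(\tilde\Qopu)$, as in \Cref{subsubsec:functcalc}. The modification terms act only on the finite-dimensional Riesz ranges $P_n$. On these ranges the scalars $e^{\mp\sgn(\tilde\beta_n)\gam\pi i}|\tilde\beta_n|^{\pm2\gam}$ are reciprocal, and they replace $\alpha_n^{\pm2\gam}$. Because $P_n$ commutes with $\tilde\Qopu$, the same computation as in Lemma~\ref{lem:powerproduct} then gives $\Qopu^{\gam}\Qopu^{-\gam}=I$ and $\Qopu^{-\gam}\Qopu^{\gam}=I$ on $\opdom(\Qopu^\gam)$.

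With this in hand, Step 2 shows that $\Aopu^{\gam}\Qopu^{-\gam}$ is a bounded bijection onto its range. To get the remaining two operators I need a bounded inverse, and I would try two routes. The first is a symmetric argument: swap the roles of $\Aopu$ and $\Qopu$, and use the resolvent identity in Lemma~\ref{lem:Q-sqrt-diff}. In that identity $(1+B+F)^{-1}$ is uniformly bounded for large $t$, so the same integral gives $\Qopu^{s}$-weighted bounds on $\Gopu_\gam$ once $\opdom(\Qopu^{1/2})=\Xspace^1=\opdom(\Aopu^{1/2})$ is known. The second route is more direct: since $\gam\le 1/2$, it suffices to prove $\opdom(\Qopu^{\gam})=\opdom(\Aopu^{\gam})$ with equivalent graph norms. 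That equality follows by interpolation between $\gam=0$ and $\gam=1/2$. The $1/2$ case is the Kato square root property for the form $\qses$, and this holds here because Step 2 makes $\Aopu^{1/2}\Qopu^{-1/2}$ bounded and, symmetrically, makes $\Qopu^{1/2}\Aopu^{-1/2}$ bounded. Once the domains agree, the closed graph theorem yields that $\Qopu^{\gam}\Aopu^{-\gam}$ and $\Aopu^{-\gam}\Qopu^{\gam}$ are bounded. Finally, the finite-rank corrections involving $P_n$ do not affect boundedness, so the statements for $\tilde\Qopu$ transfer to $\Qopu$.
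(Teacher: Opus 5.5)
\textbf{There is a genuine gap in Step 3.} Your Steps 1 and 2 match the paper's first line. You also correctly see that what remains is to show that the injective operators $I+\Aopu^{\gam}\Gopu_\gam$ and $I+\Gopu_\gam\Aopu^{\gam}$ have bounded inverses. Step 3 does not supply this.

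Route 1 presupposes $\opdom(\Qopu^{1/2})=\Xspace^1=\opdom(\Aopu^{1/2})$. For $\gam=1/2$ this is exactly what the lemma asserts, so the route is circular.

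Route 2 rests on the claim that Step 2 ``symmetrically'' makes $\Qopu^{1/2}\Aopu^{-1/2}$ bounded. No such symmetry exists. \Cref{lem:Q-sqrt-diff} only gives $\Aopu$-weighted bounds on $\Gopu_\gam$, so it yields only the inclusion $\opdom(\Qopu^{\gam})\subset\opdom(\Aopu^{\gam})$ and never the reverse. Boundedness of $\Qopu^{1/2}\Aopu^{-1/2}$ is itself one of the four claims. Even if the case $\gam=1/2$ were settled, interpolating down to $\gam<1/2$ would require $\opdom(\tilde\Qopu^{\gam})$ to be the complex interpolation space. That needs, e.g., bounded imaginary powers of the non-selfadjoint $\tilde\Qopu$, which you do not address.

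\textbf{The missing idea is compactness.} \Cref{lem:Q-sqrt-diff} has slack: it allows $s_1+s_2<1/2+\gam$, while you only use $s_1+s_2=\gam$. Hence for any $0<\epsilon<1/2$ you can write $\Aopu^{\gam}\Gopu_\gam=(\Aopu^{\gam}\Gopu_\gam\Aopu^{\epsilon})\Aopu^{-\epsilon}$ and $\Gopu_\gam\Aopu^{\gam}=\Aopu^{-\epsilon}(\Aopu^{\epsilon}\Gopu_\gam\Aopu^{\gam})$. The bracketed factors are bounded. The factor $\Aopu^{-\epsilon}$ is compact, because $\Aopu$ is selfadjoint, positive and has compact resolvent. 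So both products are compact, and $I+\Aopu^{\gam}\Gopu_\gam$ and $I+\Gopu_\gam\Aopu^{\gam}$ are Fredholm of index zero.

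The first operator is injective, since $\Qopu^{-\gam}$ and $\Aopu^{\gam}$ are. The second has dense range $\opdom(\Qopu^{\gam})$, so by closed range and index zero it is bijective. Both therefore have bounded inverses. Using $\Qopu^{\gam}\Qopu^{-\gam}=I$ (your Step 3 identity), these inverses are identified as $\Qopu^{\gam}\Aopu^{-\gam}$ and $\Aopu^{-\gam}\Qopu^{\gam}$. This is the paper's argument, and it needs no domain identification, interpolation, or Kato square root theory.
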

\begin{proof}
It follows from \Cref{lem:Q-sqrt-diff} that $\Qopu^{-\gam}\Aopu^{\gam}=1+\Gopu_\gam\Aopu^{\gam} \in \BLO(\Xspaceu^0)$ and $\Aopu^{\gam}\Qopu^{\gam}=1+\Aopu^{\gam}\Gopu_\gam \in \BLO(\Xspaceu^0)$.
Since $\Aopu^\del$, $\del>0$ has a compact resolvent it holds that $\Qopu^{-\gam}\Aopu^{\gam}$, $\Aopu^{\gam}\Qopu^{-\gam}$ are Fredholm with index zero.
On the other hand $\Qopu^{-\gam}\Aopu^{\gam}$, $\Aopu^{\gam}\Qopu^{-\gam}$ are injective and hence they have bounded inverses $\Aopu^{-\gam}\Qopu^{\gam}$, $\Qopu^{\gam}\Aopu^{-\gam}$.
\end{proof}
\begin{lemma}\label{lem:Q-sqrt-resolvent-estimate}
For each $\theta\in(0,\pi)$ there exist constants $r_\theta, C_\theta>0$ such that
$\|(\lambda-\Qopu^{1/2})^{-1}\|_{\BLO(\Xspaceu^0)} \leq C_\theta\|(\lambda-\Aopu^{1/2})^{-1}\|_{\BLO(\Xspaceu^0)}$ for all $\lam\in\overline{\Sector_\theta\cup B_{r_\theta}(0)}$.
\end{lemma}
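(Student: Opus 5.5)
The plan is to compare the two resolvents through a factorization of $\lambda-\Qopu^{1/2}$ around $\lambda-\Aopu^{1/2}$. The needed mapping properties of $\Qopu^{1/2}-\Aopu^{1/2}$ come from \Cref{lem:Q-sqrt-diff} and \Cref{lem:Q-sqrt-Ainv-sqrt}. The set $\overline{\Sector_\theta\cup B_{r_\theta}(0)}$ is treated in two parts: the closed ball $\overline{B_{r_\theta}(0)}$, and the closed sector outside that ball. Throughout, the inequality is read with the convention that the right-hand side is $+\infty$ at points of $\spec(\Aopu^{1/2})$. I do not prove that $\lambda\notin\spec(\Qopu^{1/2})$ wherever the right-hand side is finite; that is exactly the hard step flagged below.

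\emph{The ball.} First I note $0\notin\spec(\Aopu^{1/2})$, since $\Aopu$ is invertible and positive. Also $0\notin\spec(\Qopu^{1/2})$, since by \Cref{lem:powerproduct} and the construction $\Qopu^{-1/2}$ is a bounded inverse. Both spectra are closed, so I choose $r_\theta>0$ smaller than the distance from $0$ to either spectrum. On the compact set $\overline{B_{r_\theta}(0)}$ both resolvents are then continuous, hence bounded, say $\|(\lambda-\Qopu^{1/2})^{-1}\|\le M$. On the other side, $\|(\lambda-\Aopu^{1/2})^{-1}\|_{\BLO(\Xspaceu^0)}=1/\operatorname{dist}(\lambda,\spec\Aopu^{1/2})\ge 1/(r_\theta+a_1)$ by selfadjointness, where $a_1:=\min\spec(\Aopu^{1/2})$. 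Hence the estimate holds on the ball with $C_\theta:=M(r_\theta+a_1)$.

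\emph{The sector.} Set $D:=\Aopu^{1/2}-\Qopu^{1/2}$. I first show that $D\Aopu^{-1/2+\delta}\in\BLO(\Xspaceu^0)$ for some small $\delta>0$. To get this, write $\Qopu^{1/2}\Aopu^{-1/2}=(\Qopu^{-1/2}\Aopu^{1/2})^{-1}=(1+\Gopu_{1/2}\Aopu^{1/2})^{-1}$ by \Cref{lem:Q-sqrt-Ainv-sqrt}. Then
\begin{align*}
D\Aopu^{-1/2}=1-(1+\Gopu_{1/2}\Aopu^{1/2})^{-1}=\Qopu^{1/2}\Aopu^{-1/2}\,\Gopu_{1/2}\Aopu^{1/2}.
\end{align*}
\Cref{lem:Q-sqrt-diff} with $s_1=\delta$, $s_2=1/2$, $\gam=1/2$ gives boundedness of $\Aopu^{\delta}\Gopu_{1/2}\Aopu^{1/2}$. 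This lets $\Aopu^{\delta}$ be commuted through by the same argument, yielding the claim. Off $\spec(\Aopu^{1/2})$ we then have
\begin{align*}
\lambda-\Qopu^{1/2}=\big(1+D\Aopu^{-1/2+\delta}\,\Aopu^{1/2-\delta}(\lambda-\Aopu^{1/2})^{-1}\big)(\lambda-\Aopu^{1/2}),
\end{align*}
so $(\lambda-\Qopu^{1/2})^{-1}=(\lambda-\Aopu^{1/2})^{-1}(1+T(\lambda))^{-1}$, where $T(\lambda):=D\Aopu^{-1/2+\delta}\Aopu^{1/2-\delta}(\lambda-\Aopu^{1/2})^{-1}$. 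The estimate therefore reduces to a uniform bound on $(1+T(\lambda))^{-1}$.

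\emph{Main obstacle.} The difficulty is bounding $(1+T(\lambda))^{-1}$ uniformly on the part of the closed sector close to $\spec(\Aopu^{1/2})\subset\Rp$. Away from the positive real axis, spectral calculus for the selfadjoint $\Aopu$ gives $\|\Aopu^{1/2-\delta}(\lambda-\Aopu^{1/2})^{-1}\|\lesssim|\lambda|^{-2\delta}$. So $\|T(\lambda)\|<1/2$ for $|\lambda|$ large, and a Neumann series gives the bound with constant $2$. Near the positive axis $T(\lambda)$ need not be small, so there I would argue differently. $T(\lambda)$ is compact, because $\Aopu^{-\delta}$ is compact and $\Aopu^{1/2}(\lambda-\Aopu^{1/2})^{-1}$ is bounded. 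Hence $1+T(\lambda)$ is Fredholm of index zero, and its injectivity is equivalent to $\lambda\notin\spec(\Qopu^{1/2})$.

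What remains is a uniform bound. I would try to show that $\|(1+T(\lambda))^{-1}\|$ is controlled by $\|(\lambda-\Aopu^{1/2})^{-1}\|\operatorname{dist}(\lambda,\spec\Aopu^{1/2})$, i.e.\ by a constant. To do so I would decompose with respect to the spectral projections of $\Aopu^{1/2}$ onto a window around $\Re\lambda$ versus its complement. On the complement, $T(\lambda)$ is small by the decay above. On the window, which is finite dimensional, the decay $\|D\Aopu^{-1/2+\delta}\Aopu^{1/2-\delta}\|$ restricted to it is of order $|\lambda|^{-2\delta}$ relative to $|\lambda|$. This should give a uniform bound for large $|\lambda|$ outside small neighbourhoods of the finitely many spectral points in any bounded region; those remaining points are handled by the same continuity and compactness argument as for the ball.
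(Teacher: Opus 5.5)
\textbf{The gap is the region you work on.} You take the set literally as $\overline{\Sector_\theta\cup B_{r_\theta}(0)}$. On the part of the sector near $\Rp$ the inequality is false, so your ``main obstacle'' is not a hard step but an impossible one.

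\begin{itemize}
\item Since $\Aopu^{1/2}$ is selfadjoint with $\spec(\Aopu^{1/2})\subset\Rp$, the right-hand side is finite at every non-real $\lam$ and at every real $\lam\notin\spec(\Aopu^{1/2})$.
\item The left-hand side is $+\infty$ on $\spec(\Qopu^{1/2})$. The eigenvalues of $\Qopu^{1/2}$ are the roots $-i\beta$ of $-\beta^2$ for the outgoing wavenumbers. They are in general non-real, and by \Cref{thm:completeness-X} all but finitely many lie in every sector around $\Rp$. The real ones have no reason to coincide with eigenvalues of $\Aopu^{1/2}$.
\item In your notation: at any $\lam_0\in\spec(\Qopu^{1/2})\setminus\spec(\Aopu^{1/2})$, the index-zero Fredholm operator $1+T(\lam_0)$ has a nontrivial kernel. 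Hence $\|(1+T(\lam))^{-1}\|\to\infty$ as $\lam\to\lam_0$, and no spectral-window argument can give a uniform bound.
\end{itemize}

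The lemma must be read with $\lam\in\bbC\setminus\overline{\Sector_\theta\cup B_{r_\theta}(0)}$, i.e.\ for large $|\lam|$ away from $\Rp$. This is the region of \Cref{lem:tQ-sectorial}, to whose proof the paper refers. It is also what \Cref{cor:Q-sqrt-quasisectorial,cor:Q-sqrt-estforboundedsemigroup} need; there the bounded leftover region is handled by the location of $\spec(\Qopu^{1/2})$.

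\textbf{On the correct region, your argument works with two repairs.} Your ``away from the positive real axis'' step is already a complete proof. It coincides with the paper's route: factor around the resolvent of $\Aopu^{1/2}$, show the perturbation is $O(|\lam|^{-2\del})$ outside the sector, and invert by a Neumann series with constant $2$. Drop the ball part and the near-axis discussion.

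\emph{First repair.} You have $(\Qopu^{-1/2}\Aopu^{1/2})^{-1}=\Aopu^{-1/2}\Qopu^{1/2}$, not $\Qopu^{1/2}\Aopu^{-1/2}$, so your formula for $D\Aopu^{-1/2}$ has its factors in the wrong order. The correct identity is $D\Aopu^{-1/2}=1-\Qopu^{1/2}\Aopu^{-1/2}=\Qopu^{1/2}\Gopu_{1/2}$. Hence $D\Aopu^{-1/2+\del}=(\Qopu^{1/2}\Aopu^{-1/2})(\Aopu^{1/2}\Gopu_{1/2}\Aopu^{\del})$. This is bounded by \Cref{lem:Q-sqrt-Ainv-sqrt} together with \Cref{lem:Q-sqrt-diff} for $\gam=s_1=1/2$, $s_2=\del<1/2$. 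No commuting of $\Aopu^{\del}$ is required.

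\emph{Second repair.} To conclude $\lam\notin\spec(\Qopu^{1/2})$ from the factorization, record that $\Qopu^{1/2}$ and $\Aopu^{1/2}$ have the same domain, again by \Cref{lem:Q-sqrt-Ainv-sqrt}. Then $\lam-\Qopu^{1/2}=(1+T(\lam))(\lam-\Aopu^{1/2})$ is a bijection onto $\Xspaceu^0$ as soon as $\|T(\lam)\|\le 1/2$. Your estimate gives this for $|\lam|\ge r_\theta$.
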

\begin{proof}
Follows along the lines of the proof of \Cref{lem:tQ-sectorial}.
\end{proof}
\begin{corollary}\label{cor:Q-sqrt-quasisectorial}
For each $\lam_0>0$ there exits $\theta_0\in(0,\pi/2)$ such that the operator $\Qopu^{1/2}+\lam_0$ is sectorial with angle $\theta_0$.
\end{corollary}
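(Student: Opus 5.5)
Since $\Aopu$ is selfadjoint, positive and invertible, $\Aopu^{1/2}$ is selfadjoint with spectrum in $[c,\infty)$ for some $c>0$. Hence $\|(\lam-\Aopu^{1/2})^{-1}\|_{\BLO(\Xspaceu^0)}=\operatorname{dist}(\lam,\spec(\Aopu^{1/2}))^{-1}$. The plan is to combine this with \Cref{lem:Q-sqrt-resolvent-estimate}, which transfers the resolvent bound to $\Qopu^{1/2}$ outside a sector and a ball. The remaining compact region is then handled through discreteness of the spectrum of $\Qopu^{1/2}$.

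First I would locate $\spec(\Qopu^{1/2})$. By construction, $\Qopu^{1/2}=\tilde\Qopu^{1/2}+\sum_n(\dots)P_n$, and the eigenvalues of $\tilde\Qopu^{1/2}$ are square roots with positive real part; these are the principal branch values of the sectorial operator $\tilde\Qopu$. On the ranges of the $P_n$ the modification replaces $\alpha_n$ by $e^{-\sgn(\tilde\beta_n)\pi i/2}|\tilde\beta_n|=\mp i|\tilde\beta_n|$, which lies on the imaginary axis and not at $0$. Thus $\spec(\Qopu^{1/2})$ is discrete, since it inherits a compact resolvent from $\tilde\Qopu$ via \Cref{lem:Q-sqrt-Ainv-sqrt}. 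It is contained in $\{\Re z\ge0\}\setminus\{0\}$, the only eigenvalues on the imaginary axis are the finitely many $\mp i|\tilde\beta_n|$, and it is contained in every closed sector $\overline{\Sector_\theta}$ with $\theta>0$ up to finitely many points. After shifting by $\lam_0>0$, the spectrum of $\Qopu^{1/2}+\lam_0$ lies in $\{\Re z\ge\lam_0\}$. Together with discreteness and the asymptotic sector inclusion, this gives $\spec(\Qopu^{1/2}+\lam_0)\subset\Sector_{\theta_1}$ for some $\theta_1<\pi/2$.

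Next I would fix $\theta\in(\theta_1,\pi/2)$ and derive the sectorial estimate $\|\mu(\mu+\Qopu^{1/2}+\lam_0)^{-1}\|\le M$ for $\mu$ outside $\Sector_{\pi-\theta}$, i.e.\ for $-\mu-\lam_0$ in the complement of $\Sector_\theta$ shifted by $-\lam_0$. I would split into two cases. For $|\mu|$ large, $\lam:=-\mu-\lam_0$ lies outside $\overline{\Sector_{\theta'}\cup B_{r_{\theta'}}(0)}$ for a slightly smaller $\theta'$. Then \Cref{lem:Q-sqrt-resolvent-estimate} gives the bound $C\operatorname{dist}(\lam,[c,\infty))^{-1}\lesssim|\lam|^{-1}\lesssim|\mu|^{-1}$. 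For $|\mu|$ bounded, the set of admissible $\mu$ is compact and disjoint from $\spec(-\Qopu^{1/2}-\lam_0)$, so the resolvent is continuous there and hence bounded.

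The main obstacle is the direction of the sectors in \Cref{lem:Q-sqrt-resolvent-estimate}. The statement as written, with $\lam\in\overline{\Sector_\theta\cup B_{r_\theta}(0)}$, must be read as $\lam$ in the complement, analogously to \Cref{lem:tQ-sectorial}. To handle this, I would redo that argument: write $(\lam-\Qopu^{1/2})^{-1}$ as a perturbation of $(\lam-\Aopu^{1/2})^{-1}$, using $\Qopu^{1/2}=\Aopu^{1/2}(1+\Aopu^{-1/2}(\Qopu^{1/2}-\Aopu^{1/2}))$. I would then use \Cref{lem:Q-sqrt-Ainv-sqrt} and \Cref{lem:Q-sqrt-diff} to show that the perturbation term has norm $<1/2$ for $|\lam|$ large away from the positive axis. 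A second, minor point is ensuring the purely imaginary eigenvalues $\mp i|\tilde\beta_n|$ become strictly interior to the sector after the shift; this holds because they are finitely many, so any $\lam_0>0$ moves them to $\lam_0\mp i|\tilde\beta_n|$ with argument bounded away from $\pi/2$.
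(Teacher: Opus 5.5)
Your proposal is correct and follows the paper's route. The paper's one-line proof combines the location of $\spec(\Qopu^{1/2})$ with \Cref{lem:Q-sqrt-resolvent-estimate}: the spectrum is discrete, lies in the closed right half-plane, meets $i\bbR$ only in the finitely many points $-i\tilde\beta_n$, and lies asymptotically inside every sector; the lemma must be read, as you do, for $\lam$ in the complement of $\overline{\Sector_\theta\cup B_{r_\theta}(0)}$. Your split into large $|\mu|$ plus a compact remainder of the resolvent set is the detail the paper leaves implicit. One slip: the bound on $\|\mu(\mu+\Qopu^{1/2}+\lam_0)^{-1}\|$ is needed for $\mu\in\Sector_{\pi-\theta}$ (i.e.\ $-\mu\notin\overline{\Sector_\theta}$), not for $\mu$ outside $\Sector_{\pi-\theta}$; your ``i.e.'' clause and your large-$|\mu|$ case already use the correct set.
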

\begin{proof}
Follows from the location of the spectrum of $\Qopu^{1/2}$ and \Cref{lem:Q-sqrt-resolvent-estimate}.
\end{proof}
\begin{corollary}\label{cor:Q-sqrt-estforboundedsemigroup}
There exists $\theta\in(0,\pi/2)$ such that $\spec(\Qopu^{1/2})\setminus\Sector_\theta\subset i\bbR$ and that
\begin{align*}
\sup_{\lam\in e^{\pm i\theta}\Rpz} \|\lam(\lam-\Qopu^{1/2})^{-1}\|_{\BLO(\Xspaceu^0)}<\infty.
\end{align*}
\end{corollary}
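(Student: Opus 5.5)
The plan splits the spectrum of $\Qopu^{1/2}$ into the finitely many real, outgoing points and a remainder that lies in a sector, and then estimates the resolvent on the two rays $e^{\pm i\theta}\Rpz$ separately near and far from the origin.

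First I locate $\spec(\Qopu^{1/2})$. The spectrum of $\Qopu$ is discrete and consists of the eigenvalues $\lam_n=-\beta_n^2$ of \Cref{evprob:evp-bounded}. By \Cref{thm:completeness-X}, for every $\delta>0$ only finitely many of them lie outside $\{|\arg z|<\delta\}$.

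By construction of $\Qopu^{\gam}$, with $\gam=1/2$:
\begin{itemize}
\item on the range of $P_n$ the operator acts as $e^{-\sgn(\tilde\beta_n)\pi i/2}|\tilde\beta_n|=\mp i|\tilde\beta_n|\in i\bbR$;
\item on the complementary part it coincides with $\tilde\Qopu^{1/2}$. Its spectrum is $\{z^{1/2}\}$ over $\spec(\tilde\Qopu)\setminus\{\alpha_n^2\}$, by the spectral mapping theorem for the sectorial functional calculus, together with the values $\alpha_n$ discarded on $\operatorname{ran}P_n$.
\end{itemize}
The remaining eigenvalues of $\Qopu$ are not on $(-\infty,0]$, since $\beta$ non-real means $\lam\notin\Rm$, and $\lam=0$ is excluded. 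Hence $\tilde\Qopu$ has no spectrum on $(-\infty,0]$. Its principal square roots therefore lie in the open right half plane, and only finitely many lie outside any fixed small sector. So I can pick $\theta\in(0,\pi/2)$ with $\spec(\Qopu^{1/2})\setminus\Sector_\theta\subset i\bbR$, consisting of the finitely many points $\mp i|\tilde\beta_n|$. In particular the rays $e^{\pm i\theta}\Rpz$ miss the spectrum; note that $0\notin\spec$ as well.

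Second, I prove the resolvent bound on the rays, splitting into large and small $|\lam|$.
\begin{itemize}
\item \emph{Large $|\lam|$.} Use \Cref{lem:Q-sqrt-resolvent-estimate} with angle $\theta$: for $\lam$ on the rays with $|\lam|\ge r$,
\begin{align*}
\|\lam(\lam-\Qopu^{1/2})^{-1}\|\le C_\theta|\lam|\,\|(\lam-\Aopu^{1/2})^{-1}\|.
\end{align*}
Since $\Aopu^{1/2}$ is selfadjoint and positive, $|\lam|\,\|(\lam-\Aopu^{1/2})^{-1}\|\le 1/\sin\theta$ there.
\item \emph{Small $|\lam|$.} On the compact segment $\{\lam\in e^{\pm i\theta}[0,r]\}$ the resolvent is continuous, because the segment avoids the spectrum. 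So $\lam(\lam-\Qopu^{1/2})^{-1}$ is uniformly bounded there.
\end{itemize}
Combining both ranges gives the finite supremum.

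The main obstacle is the large-$|\lam|$ step, which depends on \Cref{lem:Q-sqrt-resolvent-estimate} being valid on the rays at an angle $\theta<\pi/2$. That lemma is derived like \Cref{lem:tQ-sectorial}: one writes the resolvent via $(\lam-\Aopu^{1/2})^{-1/2}$ factors and a perturbation $1-\tilde B(\lam)-\tilde F(\lam)$ whose norm decays like $\|(\lam-\Aopu^{1/2})^{-1/2}\|$. If the stated domain of validity is awkward, I would reprove it directly. Start from $(\lam-\Qopu^{1/2})^{-1}=(\lam+\Qopu^{1/2})(\lam^2-\Qopu)^{-1}$. Note that $\lam^2$ lies on rays $e^{\pm 2i\theta}\Rpz$, which stay outside a sector $\Sector_{\theta'}$ containing the bulk of $\spec(\tilde\Qopu)$ once $\theta$ is small. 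Then use \Cref{lem:FB-estimate} together with \Cref{lem:Q-sqrt-Ainv-sqrt} to control $\Qopu^{1/2}(\lam^2-\Qopu)^{-1}$ by $C/|\lam|$. The finite-rank $P_n$ part is handled explicitly, since its spectral points $\mp i|\tilde\beta_n|$ are at positive distance from the rays.
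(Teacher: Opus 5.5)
Your proposal is correct and follows the paper's own one-line argument. You locate $\spec(\Qopu^{1/2})$, invoke \Cref{lem:Q-sqrt-resolvent-estimate} for large $|\lambda|$ on the rays, and you add the step the paper leaves implicit: the bounded part of the rays is handled by continuity of the resolvent on a compact set that avoids the spectrum.

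Two small points need care. First, the rays $e^{\pm i\theta}\Rpz$ lie on $\partial\Sector_\theta$, so apply the lemma with some angle $\theta_1<\theta$; its domain must be read as $\bbC\setminus\overline{\Sector_{\theta_1}\cup B_{r_{\theta_1}}(0)}$. Second, your claim $0\notin\spec(\Qopu)$ needs a word of justification. Under \Cref{ass:cut-off}, a wavenumber $\beta=0$ would carry a Jordan chain, which contradicts the semi-simplicity in \Cref{ass:groupvel}; the paper relies on this implicitly when it forms negative powers of $\Qopu$.
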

\begin{proof}
Follows from the location of the spectrum of $\Qopu^{1/2}$ and \Cref{lem:Q-sqrt-resolvent-estimate}.
\end{proof}

\subsection{Analysis of the exterior problem}\label{subsec:semigroup}

Due to \Cref{cor:Q-sqrt-quasisectorial} the operator $-\Qopu^{1/2}$ generates an exponentially bounded holomorphic semigroup $T\colon \Sector_\theta\to\Xspaceu^0$ (for some angle $\theta\in(0,\pi/2)$), and due to \Cref{cor:Q-sqrt-estforboundedsemigroup} the restricted group $T\colon (0,+\infty)\to\Xspaceu^0$ is bounded.
In particular,
\begin{align}\label{eq:solution_semigroup}
T(z)\cf_0=\sum_{n=1}^{\tNpr} e^{i\tilde\beta_n z} P_n\cf_0
+\int_{-\partial\Sector_{\theta}} e^{\lam z} (\lam+\Qopu^{1/2})^{-1} \cf_0 \,\dd\lam.
\end{align}
with $\theta\in(0,\pi/2)$ as in \Cref{cor:Q-sqrt-estforboundedsemigroup}, $\cf_0\in\Xspaceu^0$, and $-\Sector_\theta$ is oriented going from $-e^{i\theta}\infty$ to $-e^{-i\theta}\infty$.
\begin{lemma}\label{lem:solution_semigroup}
For each $\cf_0\in\Xspaceu^0$ the function $\cf(z):=T(z)\cf_0\colon (0,\infty)\to \Xspaceu^0$ satisfies
\begin{enumerate}[I)]
\item\label{item:continous} $\Qopu\cf, \Aopu^{1/2}\cf, \cf, \cf', \cf'' \in C\big( (z_1,z_2); \Xspaceu^0 \big)$ for all $0<z_1<z_2<\infty$;
\item\label{item:ODE} $\Qopu\cf(z)-\cf''(z)=0$ for all $z>0$;
\item\label{item:IV} $\lim_{z\to0+} \|\cf(z)-\cf_0\|_{\Xspaceu^0}=0$;
\item\label{item:RC} $\cf(\plh)$ satisfies the radiation condition
\begin{align}
\cf(z)=\sum_{n=1}^{\tNpr} e^{i\tilde\beta_n z} P_n\cf_0+ \cf_\mathrm{ev}(z) \qquad\text{with}\qquad
\|\cf_\mathrm{ev}(z)\|_{\Xspace^1}+\|\cf_\mathrm{ev}'(z)\|_{\Xspace^0}\leq Ce^{-\del z}
\end{align}
for some constants $C,\del>0$.
\end{enumerate}
If $\cf_0\in\Xspace^{1/2}$, then it further holds that
\begin{enumerate}[I)]
\setcounter{enumi}{4}
\item\label{item:Sobolev} $\cf\in L^2\big( (0,z_1); \Xspace^1 \big)$ and $\cf\in H^1\big( (0,z_1); \Xspaceu^0 \big)$ for all $z_1>0$;
\item\label{item:IVplus} $\lim_{z\to0+} \|\cf(z)-\cf_0\|_{\Xspace^{1/2}}=0$.
\end{enumerate}
\end{lemma}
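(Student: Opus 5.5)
We split $\cf_0=\sum_{n=1}^{\tNpr}P_n\cf_0+P_\mathrm{ev}\cf_0$ with $P_\mathrm{ev}:=1-\sum_n P_n$. The Riesz projections commute with $\Qopu$, $\tilde\Qopu$ and all powers $\Qopu^\gam$, so $T(z)$ acts separately on the two parts. On the finite-dimensional range of $P_n$ (semi-simple by \Cref{ass:groupvel}) we have $\Qopu=-\tilde\beta_n^2$, and by the definition of $\Qopu^{1/2}$ we get $-\Qopu^{1/2}P_n=i\tilde\beta_nP_n$. Hence $T(z)P_n\cf_0=e^{i\tilde\beta_n z}P_n\cf_0$, which is smooth in $z$ with values in the (finite-dimensional) range of $P_n$, contained in $\opdom(\Qopu)$. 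Items \ref{item:continous}--\ref{item:IV} for this part are immediate, since $(e^{i\tilde\beta_nz})''=-\tilde\beta_n^2e^{i\tilde\beta_nz}$.

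For the remaining part we use that $-\Qopu^{1/2}$ restricted to $\opran(P_\mathrm{ev})$ has spectrum in a sector $\Sector_\theta$ with $\theta<\pi/2$, bounded away from $0$ and $i\bbR$ by \Cref{cor:Q-sqrt-estforboundedsemigroup}. Hence it generates a holomorphic semigroup that is exponentially decaying: $\|T(z)P_\mathrm{ev}\|\le Ce^{-\del z}$, obtained by shifting the contour in \eqref{eq:solution_semigroup} to $\Re\lam\le-\del$.

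Standard holomorphic-semigroup theory then gives the following:
\begin{itemize}
\item $T(z)$ maps into $\opdom(\Qopu^{k/2})$ for every $k$, with $\|\Qopu^{k/2}T(z)P_\mathrm{ev}\|\le C_k z^{-k}e^{-\del z}$;
\item $\cf'=-\Qopu^{1/2}\cf$ and $\cf''=\Qopu^{1/2}\Qopu^{1/2}\cf=\Qopu\cf$, the last identity by \Cref{lem:powerproduct};
\item all of these are continuous on $(z_1,z_2)$;
\item strong continuity at $0$ yields \ref{item:IV}.
\end{itemize}
For $\Aopu^{1/2}\cf$ we write $\Aopu^{1/2}=(\Aopu^{1/2}\Qopu^{-1/2})\Qopu^{1/2}$ and use \Cref{lem:Q-sqrt-Ainv-sqrt}. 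Since $\opdom(\Aopu^{1/2})=\Xspace^1$, this also gives the $\Xspace^1$ bound in \ref{item:RC}: $\|\cf_\mathrm{ev}(z)\|_{\Xspace^1}\le C\|\Qopu^{1/2}T(z)P_\mathrm{ev}\cf_0\|\le Ce^{-\del z}$ for $z\ge1$. The derivative bound follows similarly; small $z$ is irrelevant there, or we absorb it into the constant after restricting to $z\geq 1$.

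For \ref{item:Sobolev} and \ref{item:IVplus} with $\cf_0\in\Xspace^{1/2}$, the key step is to identify $\Xspace^{1/2}$ with $\opdom(\Aopu^{1/4})$ (complex interpolation between $\Xspace^0$ and $\Xspace^1=\opdom(\Aopu^{1/2})$). We then transfer this to $\opdom(\Qopu^{1/4})$ via \Cref{lem:Q-sqrt-Ainv-sqrt} with $\gam=1/4$. The $L^2((0,z_1);\Xspace^1)$ estimate then reduces to the square-function estimate $\int_0^{z_1}\|\Qopu^{1/2}T(z)\cf_0\|^2\dd z\le C\|\Qopu^{1/4}\cf_0\|^2$. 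For the selfadjoint $\Aopu$ this is spectral calculus; for $\Qopu^{1/2}+\lam_0$ (sectorial of angle $<\pi/2$ by \Cref{cor:Q-sqrt-quasisectorial}) I would derive it from a bounded $H^\infty$-calculus. The plan is to get that calculus by comparing with $\Aopu^{1/2}$ through the resolvent relation of \Cref{lem:Q-sqrt-resolvent-estimate} and the perturbation estimate of \Cref{lem:Q-sqrt-diff}, which shows $\Qopu^{-1/2}-\Aopu^{-1/2}$ is smoothing of order more than $1/2$. As a fallback, I would write $\cf=e^{-z\Aopu^{1/2}}\cf_0+\text{(Duhamel term)}$ with the perturbation $\Qopu^{1/2}-\Aopu^{1/2}$, which is relatively bounded of lower order, and estimate the Duhamel term by Young's inequality.

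The $H^1((0,z_1);\Xspaceu^0)$ statement is the same estimate, since $\cf'=-\Qopu^{1/2}\cf$. Item \ref{item:IVplus} is strong continuity of $T$ restricted to the invariant space $\opdom(\Qopu^{1/4})$, whose graph norm is equivalent to the $\Xspace^{1/2}$ norm. I expect the main obstacle to be this maximal-regularity/square-function step for the non-selfadjoint $\Qopu^{1/2}$, together with the precise identification of $\Xspace^{1/2}$ as an interpolation space.
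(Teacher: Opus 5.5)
Your arguments for \ref{item:continous})--\ref{item:RC}) and \ref{item:IVplus}) are essentially the paper's:
\begin{itemize}
\item holomorphic-semigroup regularity with $\cf''=\Qopu^{1/2}\Qopu^{1/2}\cf=\Qopu\cf$;
\item boundedness of $\Aopu^{1/2}\Qopu^{-1/2}$ from \Cref{lem:Q-sqrt-Ainv-sqrt};
\item a contour shift for the exponential decay of the non-propagating part;
\item strong continuity of $T$ on $\opdom(\Qopu^{1/4})$, whose graph norm is equivalent to that of $\opdom(\Aopu^{1/4})=\Xspace^{1/2}$.
\end{itemize}
Restricting the $\Xspace^1$-bound in \ref{item:RC}) to $z\geq1$ is not only convenient but necessary. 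One only has $\|\cf_\mathrm{ev}(z)\|_{\Xspace^1}=O(z^{-1})$ as $z\to0+$, and a bound uniform as $z\to0+$ would force $\cf_0\in\Xspace^1$. The paper's ``follows the same way'' does not make this explicit. (Minor slip: it is $\Qopu^{1/2}$, not $-\Qopu^{1/2}$, whose spectrum on $\ran P_\mathrm{ev}$ lies in $\Sector_\theta$.)

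The gap is \ref{item:Sobolev}), which you leave as a plan. For the $H^\infty$-calculus route you give no argument, and it is not needed. The paper compares with $\Aopu^{1/2}$ through the second resolvent identity $(\lam+\Qopu^{1/2})^{-1}=(\lam+\Aopu^{1/2})^{-1}+(\lam+\Qopu^{1/2})^{-1}(\Aopu^{1/2}-\Qopu^{1/2})(\lam+\Aopu^{1/2})^{-1}$. The first term yields $\psi_1$, handled by the spectral theorem for the selfadjoint $\Aopu$; the correction $\psi_2$ is estimated with \Cref{lem:Q-sqrt-diff,lem:Q-sqrt-Ainv-sqrt}. That is the Laplace-domain form of your fallback, so the fallback is the right idea.

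However, ``estimate the Duhamel term by Young's inequality'' fails if done with operator norms. The lemmas give only $\|(\Aopu^{1/2}-\Qopu^{1/2})x\|\leq C\|\Aopu^{1/4}x\|$: write $\Aopu^{1/2}-\Qopu^{1/2}=(\Qopu^{1/2}\Aopu^{-1/2})(\Aopu^{1/2}\Gopu_{1/2}\Aopu^{1/4})\Aopu^{1/4}$ on $\opdom(\Aopu^{1/2})$. There is no gain into any $\opdom(\Aopu^{\eta})$, $\eta>0$. So after applying $\Aopu^{1/2}$ you face the kernel $\|\Aopu^{1/2}e^{-t\Aopu^{1/2}}\|\sim t^{-1}$, which is not integrable.

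To close it, proceed as follows.
\begin{itemize}
\item Note that $f:=(\Aopu^{1/2}-\Qopu^{1/2})\cf\in L^\infty((0,z_1);\Xspaceu^0)$, because $\|\Aopu^{1/4}\cf(z)\|=\|\Aopu^{1/4}\Qopu^{-1/4}T(z)\Qopu^{1/4}\cf_0\|\leq C\|\Aopu^{1/4}\cf_0\|$.
\item Write $\cf(z)=e^{-z\Aopu^{1/2}}\cf_0+\int_0^z e^{-(z-s)\Aopu^{1/2}}f(s)\,\dd s$.
\item Apply Young's inequality mode by mode in an eigenbasis of $\Aopu^{1/2}$ with eigenvalues $\kappa_k$, where the kernels $\kappa_k e^{-\kappa_k t}$ have $L^1(\Rp)$-norm $1$. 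This is maximal $L^2$-regularity of the selfadjoint $\Aopu^{1/2}$.
\end{itemize}
Together with $\int_0^{z_1}\|\Aopu^{1/2}e^{-z\Aopu^{1/2}}\cf_0\|^2\,\dd z\leq\frac12\|\Aopu^{1/4}\cf_0\|^2$ and $\cf'=-\Qopu^{1/2}\cf$, this gives \ref{item:Sobolev}). It only requires $f\in L^2((0,z_1);\Xspaceu^0)$, not the pointwise-in-$z$ bound on $\psi_2$ that the paper derives.
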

\begin{proof}
\emph{Proof of \ref{item:continous}).}\quad
Since $T(\cdot)$ forms a holomorphic semigroup we have that $\cf\in C\big((z_1,z_2);\Xspaceu^0\big)$ and $\phi'=\Qopu^{1/2}\phi$ in $(z_1,z_2)$.
Exploiting \eqref{eq:solution_semigroup} it follows that $\cf',\cf''\in C\big((z_1,z_2);\Xspaceu^0\big)$ too.
Thence, due to $\Qopu^{1/2}\cf=\cf'$ and $\Qopu\cf=\cf''$ also $\Qopu^{1/2}\cf,\Qopu\cf\in C\big((z_1,z_2);\Xspaceu^0\big)$.
At last $\Aopu^{1/2}\cf \in C\big((z_1,z_2);\Xspaceu^0\big)$ due to $\Aopu^{1/2}\Qopu^{-1/2}\in\BLO(\Xspaceu^0)$ by means of \Cref{lem:Q-sqrt-Ainv-sqrt}.\\
\emph{Proof of \ref{item:ODE}).}\quad
Follows from \ref{item:continous}) and $\cf''=\Qopu^{1/2}\cf'=\Qopu^{1/2}\Qopu^{1/2}\cf=\Qopu\cf$.\\
\emph{Proof of \ref{item:IV}).}\quad
Follows from the semigroup property of $T(\cdot)$.\\
\emph{Proof of \ref{item:RC}).}\quad
There exists $\del_0>0$ such that $\|\lam(\lam-\del+\Qopu^{1/2})^{-1}\|_{\Xspace^0}$ is bounded on $-\partial\Sector_\theta$ for all $\del\in[0,\del_0)$.
Hence $z\mapsto e^{\del z} \int_{-\partial\Sector_{\theta}} e^{\lam z} (\lam+\Qopu^{1/2})^{-1} \cf_0 \,\dd\lam
=\int_{-\partial\Sector_{\theta}+\del} e^{\lam z} (\lam-\del+\Qopu^{1/2})^{-1} \cf_0 \,\dd\lam
=\int_{-\partial\Sector_{\theta}} e^{\lam z} (\lam-\del+\Qopu^{1/2})^{-1} \cf_0 \,\dd\lam
$ is bounded on $z\in(0,\infty)$.
This yields $\|\cf_\mathrm{ev}(z)\|_{\Xspace^0}\leq Ce^{-\del z}$.
The claim for $\|\cf_\mathrm{ev}(z)\|_{\Xspace^1}$ and $\|\cf'_\mathrm{ev}(z)\|_{\Xspace^0}$ follows the same way.\\
\emph{Proof of \ref{item:Sobolev}).}\quad
It suffices to prove the claim for the second term in \eqref{eq:solution_semigroup}.
Since $\Aopu$ is selfadjoint and positive it holds that $\psi_1(z):=\int_{-\partial\Sector_{\theta}} e^{\lam z} (\lam+\Aopu^{1/2})^{-1} \cf_0 \,\dd\lam \in L^2\big( (0,z_1); \Xspace^1 \big) \cap H^1\big( (0,z_1); \Xspaceu^0 \big)$.
On the other hand
\begin{align*}
\psi_2(z)&:=\int_{-\partial\Sector_{\theta}} e^{\lam z} \big( (\lam+\Qopu^{1/2})^{-1} -(\lam+\Aopu^{1/2})^{-1} \big) \cf_0 \,\dd\lam\\
&=\int_{-\partial\Sector_{\theta}} e^{\lam z} (\lam+\Qopu^{1/2})^{-1} (\Aopu^{1/2}-\Qopu^{1/2}) (\lam+\Aopu^{1/2})^{-1} \cf_0 \,\dd\lam\\
&=\int_{-\partial\Sector_{\theta}} e^{\lam z} (\lam+\Qopu^{1/2})^{-1}\Qopu^{1/2}\Aopu^{-1/2}\Aopu^{1/2} (\Qopu^{-1/2}-\Aopu^{-1/2}) \Aopu^{1/2} (\lam+\Aopu^{1/2})^{-1} \Aopu^{-1/2}\Aopu^{1/2}\cf_0 \,\dd\lam,
\end{align*}
where $\Qopu^{1/2}\Aopu^{-1/2}\Aopu^{1/2} (\Qopu^{-1/2}-\Aopu^{-1/2}) \Aopu^{1/2}\in \BLO(\Xspace^0)$ due to \Cref{lem:Q-sqrt-diff,lem:Q-sqrt-Ainv-sqrt}.
Since $\Aopu^{1/2}\cf_0\in\Xspaceu^0$ we estimate that
$\|\psi_2'(z)\|_{\Xspaceu^0}+\|\Aopu^{1/2}\psi_2(z)\|_{\Xspaceu^0} \leq C_1 \int_{-\partial\Sector_{\theta}} (1+|\lam|)^{-5/4}\,\dd\lam<C_2<\infty$ with some $z\in[0,z_1]$-uniform constants $C_1,C_2>0$.
We deduce that $\psi_2',\Aopu^{1/2}\psi_2\in C\big([0,z_1];\Xspaceu^0\big)$ and the claim follows.\\
\emph{Proof of \ref{item:IVplus}).}\quad
We exploit \eqref{eq:solution_semigroup} and note that it suffices to show
\begin{align*}
\lim_{z\to0+} \Big\|\Aopu^{1/4} \int_{-\partial\Sector_{\theta}} e^{\lam z} (\lam+\Qopu^{1/2})^{-1} \cf_0 \,\dd\lam - \Aopu^{1/4} (1-P)\cf_0\Big\|_{\Xspaceu^0}=0
\end{align*}
where $P:=\sum_{n=1}^{\tNpr} P_n$.
We compute that
\begin{align*}
\Aopu^{1/4} \int_{-\partial\Sector_{\theta}} e^{\lam z} (\lam+\Qopu^{1/2})^{-1} \cf_0 \,\dd\lam
=\Aopu^{1/4}\Qopu^{-1/4} \int_{-\partial\Sector_{\theta}} e^{\lam z} (\lam+\Qopu^{1/2})^{-1} \Qopu^{1/4} \cf_0 \,\dd\lam
\end{align*}
and $\Aopu^{1/4} (I-P)\cf_0=\Aopu^{1/4}\Qopu^{-1/4} (1-P)\Qopu^{1/4}\cf_0$.
Due to \Cref{lem:Q-sqrt-Ainv-sqrt} we have $\Aopu^{1/4}\Qopu^{-1/4}\in\BLO(\Xspaceu^0)$.
Thus the claim can be deduced from \ref{item:IV}) applied to $\Qopu^{1/4}\cf_0\in\Xspaceu^0$.
\end{proof}
For $\bfE_{T,0}\in\Hspace^{-1/2}_0(\curl_{\geobdtrans};\geobdtrans)\sim\Hspace^{-1/2}_0(\curl_{\geocs};\geocs)$ let $\cf_0=(\bv_0,w_0)^\top$ be such that $\bfE_{T,0}=\bv_0+\frac{1}{\ome}\gradg w_0$ and $(\bv(z),w(z))^\top:=\cf(z)$ be the solution provided by and \eqref{eq:solution_semigroup} and \Cref{lem:solution_semigroup}, and
\begin{subequations}\label{eq:solution_hcurlloc}
\begin{align}
	\Eff:=(\Et,\En)^\top, \qquad \Et:=\bv+\frac{1}{\ome}\gradg w,
\end{align}
\begin{align}\label{eq:def-En-sol}
\begin{aligned}
\En&:=\sum_{n=1}^{\tNpr} \frac{i\tilde\beta_n}{\ome} e^{i\tilde\beta_n z} (\Aopu_W-\ome^2)^{-1}\Aopu_W P_W P_n \cf_0\\
&+\int_{-\partial\Sector_{\theta}} \frac{\lam}{\ome} e^{\lam z} (\Aopu_W-\ome^2)^{-1}\Aopu_W P_W (\lam+\Qopu^{1/2})^{-1} \cf_0 \,\dd\lam,
\end{aligned}
\end{align}
\end{subequations}
where $P_W$ takes the $\Wspace$-component of a $\Xspace$-vector, i.e., $P_W\cf=w$ for $\cf=(\bv,w)^\top$ and $\Aopu_W$ is the second diagonal block component of $\Aopu$.
\begin{theorem}\label{thm:sol-ext}
Let the assumptions of \Cref{subsec:geometry,subsec:parameters}, \Cref{ass:cut-off} and \Cref{ass:groupvel} be satisfied.
Then, the function $\Eff$ defined by \eqref{eq:solution_hcurlloc} satisfies $\Eff\in\Hspace_{\tloc}(\curlF;\geoext)$ and solves \Cref{prob:MWext}.
\end{theorem}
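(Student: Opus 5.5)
The plan is to verify, in turn, the four requirements of \Cref{prob:MWext}: regularity $\Eff\in\Hspace_{\tloc}(\curlF;\geoext)$, the PDE \eqref{eq:MWextPDE}, the boundary and initial conditions \eqref{eq:MWextBC}--\eqref{eq:MWextIC}, and the radiation condition. Everything is read off from \Cref{lem:solution_semigroup} applied to $\cf_0=(\bv_0,w_0)^\top$. The first observation is that $\cf_0\in\Xspace^{1/2}$. This follows from the regular decomposition of $\Hspace^{-1/2}_0(\curlg;\geocs)$ (\Cref{lem:tracespace-decomp-orth}), which gives $\bv_0\in\Vspace^{1/2}$ and $w_0\in H^{1/2}_{00}(\geocs)$. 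Hence items \ref{item:Sobolev}) and \ref{item:IVplus}) are available.

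\emph{Step 1 (PDE).} I would first show the equation in weak form on each slab $\geocs\times(z_1,z_2)$. Concretely, test \eqref{eq:MWextPDE} with $\Eff\tf(\xf)=\Ef\tf(\xt)\eta(z)$, where $\eta\in C_0^\infty(\Rp)$ and $\Ef\tf=(\Et\tf,\En\tf)\in\Hspace_0(\curlg;\geocs)\times H^1_0(\geocs)$. Using \eqref{eq:curlofTPfunction} with $i\beta$ replaced by $\partial_z$, the 3D form splits into the cross-sectional forms of \eqref{eq:modal-EtEz-weak}, with $\beta^2\to-\partial_z^2$ and $i\beta\to\partial_z$. The identity $\Qopu\cf=\cf''$ from item \ref{item:ODE}) is, by construction (reversing the Schur-complement and the $\diag(I,-I+\Koptmp\Aoptmp^{-1})$ manipulations leading to \eqref{eq:modal-Etw-operator-II}), exactly the $(\bv,w)$-equations after eliminating $\En$. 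It remains to check that the $\En$ defined in \eqref{eq:def-En-sol} is the corresponding Schur-complement recovery, i.e., that it solves the third row of \eqref{eq:modal-EtwEz-operator} with $i\beta\to\partial_z$. Formally this holds because $\partial_z$ acts on $e^{\lam z}$ as multiplication by $\lam$ and on $e^{i\tilde\beta_nz}$ as $i\tilde\beta_n$, and $(\Aopu_W-\ome^2)^{-1}\Aopu_W$ is precisely the operator in \Cref{lem:evp-equi}. The remaining components of the 3D equation (those tested with gradients) then follow as in \Cref{lem:evp-equi}.

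\emph{Step 2 (regularity, BC, IC).} From item \ref{item:Sobolev}) we get $\cf\in L^2((0,z_1);\Xspace^1)\cap H^1((0,z_1);\Xspaceu^0)$. This yields $\Et\in L^2((0,z_1);\Hspace_0(\curlg;\geocs))$, and $\partial_z\Et$ is controlled in $L^2$ modulo gradients. For $\En$ I would show $\En\in L^2((0,z_1);H^1_0(\geocs))$, using the same contour-splitting as for $\psi_2$ together with the bound $|\lam|\,\|(\lam+\Qopu^{1/2})^{-1}\Qopu^{-1/2}\|$. Then $\curlF\Eff=(\Curlg\En-\rotm\partial_z\Et,\curlg\Et)$ is in $L^2$ locally, and the tangential condition \eqref{eq:MWextBC} is inherited from $\Vspace^1$ and $H^1_0$. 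The initial condition \eqref{eq:MWextIC} follows from item \ref{item:IVplus}): the convergence $\cf(z)\to\cf_0$ in $\Xspace^{1/2}$ gives $\Et(z)\to\bfE_{T,0}$ in $\Hspace^{-1/2}_0(\curlg)$, and this identifies the tangential trace on $\geobdtrans$ via the partial trace theory of \Cref{sec:ttrspaces}.

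\emph{Step 3 (radiation).} Item \ref{item:RC}) gives the decomposition $\cf=\sum e^{i\tilde\beta_nz}P_n\cf_0+\cf_\mathrm{ev}$. The ranges of the $P_n$ are eigenvectors by the semisimplicity in \Cref{ass:groupvel}, and they map to the outgoing modes $\mathbf{E}_{T,n}$ through \Cref{lem:evp-equi}. The exponential decay of $\cf_\mathrm{ev}$ in $\Xspace^1$ and of $\cf'_\mathrm{ev}$ in $\Xspace^0$, together with an analogous decay estimate for the evanescent part of $\En$, gives $\Eff_{T,\mathrm{ev}}\in\Hspace(\curlF;\geoext)$.

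I expect the main obstacle to be Step 1 together with the regularity of $\En$ near $z=0$. Only $\cf_0\in\Xspace^{1/2}$ is available, so the factor $\lam$ in \eqref{eq:def-En-sol} must be absorbed by $\Qopu^{1/2}$. My approach would be to rewrite $\lam(\lam+\Qopu^{1/2})^{-1}=1-\Qopu^{1/2}(\lam+\Qopu^{1/2})^{-1}$. After this, the $\Aopu_W$-weighted $w$-component is handled using \Cref{lem:Q-sqrt-Ainv-sqrt} and the gain $\Aopu_W^{1/2}$ coming from $(\Aopu_W-\ome^2)^{-1}\Aopu_W$.
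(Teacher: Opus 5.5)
Your overall architecture matches the paper's. You reduce \eqref{eq:MWextPDE} to $(\Qopu-\partial_{zz})\cf=0$ with $\En$ as the Schur-complement recovery, obtain \eqref{eq:MWextIC} from item~\ref{item:IVplus}) plus trace continuity, and obtain \eqref{eq:MWextRC} from item~\ref{item:RC}). However, the regularity step, which you yourself single out as the crux, rests on a false claim.

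Comparing \eqref{eq:def-En-sol} with the $z$-derivative of \eqref{eq:solution_semigroup} gives $\En=\frac1\ome(\Aopu_W-\ome^2)^{-1}\Aopu_W\,\partial_z w$. The operator $(\Aopu_W-\ome^2)^{-1}\Aopu_W=I+\ome^2(\Aopu_W-\ome^2)^{-1}$ is the identity plus a smoothing operator, so it yields no ``gain $\Aopu_W^{1/2}$''. Hence $\En\in L^2((0,z_1);H^1_0(\geocs))$ would require $\partial_z w\in L^2((0,z_1);H^1_0(\geocs))$, i.e.\ roughly $w_0\in\opdom(\Aopu_W^{3/4})$. This is more than $w_0\in H^{1/2}_{00}(\geocs)$ provides. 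Rewriting $\lam(\lam+\Qopu^{1/2})^{-1}=1-\Qopu^{1/2}(\lam+\Qopu^{1/2})^{-1}$ only re-expresses $\cf'$ through $\Qopu^{1/2}\cf$ and cannot produce the missing half derivative.

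Concretely, take a homogeneous guide, Dirichlet eigenpairs $(\lambda_k,u_k)$, and $\bfE_{T,0}=\sum_k c_k\lambda_k^{-1/2}\gradg u_k$. Then $\En(z)\approx\sum_k c_ku_ke^{-\sqrt{\lambda_k}z}$, so $\|\En\|^2_{L^2(0,z_1;H^1)}\sim\sum_k|c_k|^2\lambda_k^{1/2}$, whereas $\|\bfE_{T,0}\|^2_{\Hspace_0^{-1/2}(\curlg;\geocs)}\sim\sum_k|c_k|^2\lambda_k^{-1/2}$.

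The same obstruction hits the part $\frac1\ome\rotm\gradg\partial_z w$ of $\rotm\partial_z\Et$ that you leave ``modulo gradients''. Neither $\Curlg\En$ nor $\rotm\partial_z\Et$ lies in $\Lspace^2_{\tloc}$ separately. The paper only claims $\En\in L^2$, which is the sharp level.

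The missing idea is the cancellation between these two terms. Use the correct sign from \eqref{eq:curlofTPfunction}: the first component of $\curlF\Eff$ is $\Curlg\En+\rotm\partial_z\Et$, and with your minus sign nothing cancels. Together with $\rotm\gradg=-\Curlg$ this gives
\begin{align*}
\Curlg\En+\rotm\partial_z\Et=\rotm\partial_z\bv+\Curlg\Big(\En-\frac1\ome\partial_z w\Big),\qquad
\En-\frac1\ome\partial_z w=\ome(\Aopu_W-\ome^2)^{-1}\partial_z w.
\end{align*}
By item~\ref{item:Sobolev}), $\partial_z\cf\in L^2((0,z_1);\Xspaceu^0)$. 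Moreover $(\Aopu_W-\ome^2)^{-1}$ maps $L^2(\geocs)$ boundedly into $H^1_0(\geocs)$; this is where \Cref{ass:cut-off} enters. Hence both terms lie in $\Lspace^2_{\tloc}(\geoext)$.

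For the same reason, \eqref{eq:MWextBC} cannot be inherited from ``$\En\in H^1_0$''. Instead, use the splitting
\begin{align*}
\Eff=(\bv,0)^\top+\frac1\ome\gradF w+\big(0,0,\ome(\Aopu_W-\ome^2)^{-1}\partial_z w\big)^\top .
\end{align*}
Each summand has vanishing tangential trace on $\partial\geocs\times(0,z_1)$, for three reasons:
\begin{itemize}
\item $\tv\cdot\bv=0$ on $\partial\geocs$;
\item $w\in L^2((0,z_1);H^1_0(\geocs))\cap H^1((0,z_1);L^2(\geocs))$;
\item $(\Aopu_W-\ome^2)^{-1}\partial_z w\in L^2((0,z_1);H^1_0(\geocs))$.
\end{itemize}
With these two corrections, the rest of your plan goes through as in the paper.
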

\begin{proof}
The proof rests on exploiting \Cref{lem:solution_semigroup}.
We proceed by proving that $\Eff\in\Lspace^2_{\tloc}(\geoext)$, $\curlF\Eff\in\Lspace^2_{\tloc}(\geoext)$ and that $\Eff$ satisfies \eqref{eq:MWextPDE}, \eqref{eq:MWextBC}, \eqref{eq:MWextIC}, \eqref{eq:MWextRC}.\\
\emph{Step 1$\colon$ $\Eff\in\Lspace^2_{\tloc}(\geoext)$.}\quad
Since $\cf\in L^2\big((0,z_1);\Xspace^1\big)$ it follows that $\Et\in\Lspace^2\big(\geocs\times(0,z_1)\big)$ for each $z_1>0$.
In addition, because $(\Aopu-\ome^2)^{-1}\Aopu\in\BLO(\Xspaceu^0)$ it follows that $\En\in L^2\big(\geocs\times(0,z_1)\big)$ for each $z_1>0$ due to $\cf\in H^1\big((0,z_1);\Xspaceu^0\big)$.\\
\emph{Step 2$\colon$ $\curlF\Eff\in\Lspace^2_{\tloc}(\geoext)$.}\quad
Note that $\curlF\Eff=\bpm \Curlg\En+\partial_z\rotm\Et \\ \curlg\Et \epm$ (compare with \eqref{eq:curlofTPfunction}).
Since $\curlg\Et=\curlg\bv$, $\curlg\in\BLO(\Vspace^1,\Lspace^2(\geocs))$ and \Cref{lem:solution_semigroup} it follows that $\curlg\Et\in L^2_{\tloc}(\geoext)$.
Due to $\bv\in H^1((0,z_1);\Vspace^0)$ for each $z_1>0$ we have that $\partial_z\rotm\bv=\rotm\partial_z\bv\in (L^2_{\tloc}(\geoext))^2$.\\
\emph{Step 3$\colon$ $\Eff$ satisfies \eqref{eq:MWextPDE} and \eqref{eq:MWextBC}.}\quad
Recall that \eqref{eq:MWextPDE}-\eqref{eq:MWextBC} read
\begin{subequations}\label{eq:MaxwellPDEproof}
\begin{align}
\label{eq:MaxwellPDEproof-Et}
\Curlg\mucs^{-1}\curlg\Et-\omega^2\epscs\Et-\partial_{zz}\mucs^{-1}\Et+\partial_z\mucs^{-1}\gradg\En &= 0 \quad\text{in }\geoext,\\
\label{eq:MaxwellPDEproof-En}
-\divg\mucs^{-1}\gradg\En-\omega^2\epscs\En+\partial_z\divg(\mucs^{-1}\Et) &= 0 \quad\text{in }\geoext,\\
\label{eq:MaxwellPDEproof-EtBc}
\tv\cdot\Et &=0 \quad\text{on }\partial\geocs\times\Rp,\\
\label{eq:MaxwellPDEproof-EnBc}
\En &=0 \quad\text{on }\partial\geocs\times\Rp.
\end{align}
\end{subequations}
$\En$ solves \eqref{eq:MaxwellPDEproof-En}, \eqref{eq:MaxwellPDEproof-EnBc} due to its definition \eqref{eq:def-En-sol}.
Plugging \eqref{eq:MaxwellPDEproof-En}, \eqref{eq:MaxwellPDEproof-EnBc} into \eqref{eq:MaxwellPDEproof-Et}, \eqref{eq:MaxwellPDEproof-EtBc} and expressing $\Et=\bv+\frac{1}{\ome}\gradg w$ ``$\Eff$ solves \eqref{eq:MaxwellPDEproof}'' is equivalent to
\begin{align*}
\bpm 1\\&\big(-\frac{1}{\ome\eps}(1-\ome^2\Aopu_W^{-1})\eps\big)^{-1} \epm
\bigg( \Qopu-\partial_{zz} \bigg) \bpm \bv\\w \epm = 0,
\end{align*}
i.e., $(\Qopu-\partial_{zz}) \cf=0$, $\cf=\bpm \bv\\w \epm$ for each $z>0$.
Now, we proceed in a classical fashion:
First we consider the integral part of \eqref{eq:solution_semigroup}.
\Cref{eq:solution_semigroup} allows to exchange (differential) operators and integrals.
We compute that
\begin{align*}
(\Qopu-\partial_{zz}) e^{\lam z}(\lam+\Qopu^{1/2})^{-1}
=(\Qopu-\lam^2) e^{\lam z}(\lam+\Qopu^{1/2})^{-1}
&=(\Qopu^{1/2}-\lam)(\Qopu^{1/2}+\lam) e^{\lam z}(\lam+\Qopu^{1/2})^{-1}\\
&=(\Qopu^{1/2}-\lam)e^{\lam z}
\end{align*}
and note that $\int_{-\Sector_\theta} (\Qopu^{1/2}-\lam)e^{\lam z}\,\dd\lam=0$ due to the Cauchy integral theorem.
The finite sum in \eqref{eq:solution_semigroup} is handled analogously.
Thus the claim is proven.\\
\emph{Step 4$\colon$ $\Eff$ satisfies \eqref{eq:MWextIC}.}\quad
For each $\del,z_1>0$ the shift operator $(\tau_\del\Eff)((x,y,z)^\top):=\Eff((x,y,z+\del)^\top)$ is continuous on $\Hspace_{\partial\geocs\times(0,z_1)}(\curlF;\geocs\times(0,z_1))$.
The partial tangential component trace operator $\tr_{T,\geobdtrans}\colon\Hspace_{\partial\geocs\times(0,z_1)}(\curlF;\geocs\times(0,z_1)) \to \Hspace_0^{-1/2}(\curlg;\geobdtrans)$ is continuous too (see \Cref{sec:ttrspaces}).
Thus it remains to show that $\lim_{\del\to0+}\|\bfE_{T,0}-\tr_{T,\geobdtrans}\tau_\del\Eff\|_{\Hspace_0^{-1/2}(\curlg;\geobdtrans)}=0$.
We recall that $\bfE_{T,0}=\bv_0+\frac{1}{\ome}\gradg w_0$, $\Et=\bv+\frac{1}{\ome}\gradg w$ and that the decomposition $\Hspace_0^{-1/2}(\curlg;\geocs)=\Vspace^{1/2}+\gradg \Wspace^{1/2}$ is stable (see \Cref{rem:ass-top-flat,lem:tracespace-decomp-orth,lem:interpolation-calH}).
Hence the claim follows from \Cref{item:IVplus} of \Cref{lem:solution_semigroup} (for which we recall that $\Xspace^{1/2}=\Vspace^{1/2}\times\Wspace^{1/2}$).\\
\emph{Step 5$\colon$ $\Eff$ satisfies \eqref{eq:MWextRC}.}\quad
This readily follows from \eqref{eq:solution_hcurlloc} and \Cref{item:RC} of \Cref{lem:solution_semigroup}.
\end{proof}
We conclude the existence and boundedness of a \Calderon map in a rather abstract form.
\begin{corollary}\label{cor:calop-boundedness}
For $\bfE_{T,0}\in\Hspace_0^{-1/2}(\curl_{\geobdtrans};\geobdtrans)$ let $\Eff$ be the solution to \Cref{prob:MWext} provided by \Cref{thm:sol-ext}.
Then the \Calderon operator defined by $\calop\Ef_{T,0}:=-\nvf_{\geoext}\times\mucs^{-1}\curlF\Eff$ is bounded: $\calop\in\BLO(\Hspace_0^{-1/2}(\curl_{\geobdtrans};\geobdtrans), \Hspace_0^{-1/2}(\div_{\geobdtrans};\geobdtrans))$.
In addition, \Cref{prob:MWradiation} and \Cref{prob:MWint} are equivalent.
\end{corollary}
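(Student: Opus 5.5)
The plan is to read off the boundedness of $\calop$ from the construction in \Cref{thm:sol-ext}, using the partial tangential trace theory of \Cref{sec:ttrspaces}, and then to derive the equivalence of the two problems from the uniqueness of outgoing exterior solutions.

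\emph{Boundedness.} Fix $z_1>0$ and a cut-off $\chi\in C^\infty([0,\infty))$ with $\chi=1$ near $0$ and $\chi=0$ for $z\geq z_1/2$.
\begin{itemize}
\item The map $\bfE_{T,0}\mapsto \cf_0=(\bv_0,w_0)^\top$ is bounded $\Hspace_0^{-1/2}(\curlg;\geocs)\to\Xspace^{1/2}$, by the stability of the decomposition $\Hspace_0^{-1/2}(\curlg;\geocs)=\Vspace^{1/2}+\gradg\Wspace^{1/2}$ (\Cref{rem:ass-top-flat,lem:tracespace-decomp-orth,lem:interpolation-calH}).
\item Revisiting items \ref{item:Sobolev}) and \ref{item:IVplus}) of \Cref{lem:solution_semigroup} quantitatively gives
\begin{align*}
\|\cf\|_{L^2((0,z_1);\Xspace^1)}+\|\cf\|_{H^1((0,z_1);\Xspaceu^0)}\leq C\|\cf_0\|_{\Xspace^{1/2}}.
\end{align*}
In that proof the constants come from uniformly bounded contour integrals applied to $\Aopu^{1/2}\cf_0$, resp.\ $\Aopu^{1/4}\cf_0$, and are linear in $\cf_0$.
\item Steps 1--2 of \Cref{thm:sol-ext} then yield $\|\chi\Eff\|_{\Hspace(\curlF;\geocs\times(0,z_1))}\leq C\|\bfE_{T,0}\|_{\Hspace_0^{-1/2}(\curlg)}$. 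The $\En$ part is controlled through \eqref{eq:def-En-sol} and the boundedness of $(\Aopu_W-\ome^2)^{-1}\Aopu_W$.
\end{itemize}

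Next, $\Eff$ solves \eqref{eq:MWextPDE}. Hence $\mucs^{-1}\curlF\Eff\in\Hspace(\curlF;\geocs\times(0,z_1))$, with $\curlF\mucs^{-1}\curlF\Eff=\ome^2\eps\Eff$, and its norm is bounded by the same data. Moreover $\Eff_T=0$ on the lateral boundary. The twisted tangential trace $\nvf\times(\plh)$ onto $\geobdtrans$ maps $\Hspace(\curlF;\geocs\times(0,z_1))$, tested against fields with vanishing tangential trace on $\partial\geocs\times(0,z_1)$, continuously into the dual of $\Hspace_0^{-1/2}(\curlg;\geobdtrans)$. This dual is $\Hspace_0^{-1/2}(\divg;\geobdtrans)$, by the partial trace duality of \Cref{sec:ttrspaces}. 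To make this rigorous I would define $\calop\bfE_{T,0}$ through the Green formula
\begin{align*}
\spl \calop\bfE_{T,0},\bfu_T\spr_{\geobdtrans}=\spl\mucs^{-1}\curlF\Eff,\curlF\bfU\spr-\ome^2\spl\eps\Eff,\bfU\spr,
\end{align*}
where $\bfU$ is a bounded lifting of $\bfu_T$ supported in $z<z_1/2$. The right-hand side is independent of the lifting by the PDE. Boundedness then follows at once from the estimates above.

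\emph{Equivalence of Problems \ref{prob:MWradiation} and \ref{prob:MWint}.}
\begin{itemize}
\item \emph{Global $\Rightarrow$ interior.} The restriction of a global solution to $\geoint$ satisfies \eqref{eq:MWintPDE} and \eqref{eq:MWintHBC}. Its restriction to $\geoext$ is an outgoing solution of \Cref{prob:MWext} with data $\Eff_T|_{\geobdtrans}$. If that exterior solution is unique, it coincides with the constructed one, and continuity of the normal trace across $\geobdtrans$ gives \eqref{eq:MWintTBC}.
\item \emph{Interior $\Rightarrow$ global.} Glue the interior solution to the exterior solution of \Cref{thm:sol-ext} with data $\Eff_T|_{\geobdtrans}$. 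Both the tangential traces and the traces of $\nvf\times\muinv\curlF\Eff$ match, so the glued field lies in $\Hspace_{\tloc}(\curlF;\geo)$ and solves \eqref{eq:MWradiationPDE} weakly across $\geobdtrans$.
\end{itemize}

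\emph{Main obstacle.} The key step is uniqueness for \Cref{prob:MWext}: an outgoing solution with $\Eff_T=0$ on $\geobdtrans$ must vanish. I would try the following.
\begin{itemize}
\item Reduce to $\cf(z)$ via the decomposition $\Et=\bv+\ome^{-1}\gradg w$ and the elimination of $\En$ by Schur complement, as in \Cref{sec:modalEVP}. This gives a weak solution of $\Qopu\cf-\cf''=0$ with $\cf(0)=0$.
\item Expand $\cf$ in the generalized eigenvectors of $\Qopu$ using the completeness of \Cref{thm:completeness-X}. For each spectral component one gets a scalar ODE, whose solutions are combinations of $e^{\pm\sqrt{\lam}z}$ (or $e^{\pm i\beta z}$ for real wavenumbers).
\item The radiation condition together with $\Eff_{T,\mathrm{ev}}\in\Hspace(\curlF;\geoext)$ excludes the growing and the incoming components. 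The initial condition $\cf(0)=0$ then kills the rest.
\end{itemize}
Since the root vectors need not form a Riesz basis, a cleaner route is to apply the Riesz projections $P_n$ and the complementary spectral projection directly to the ODE. On the range of $I-\sum_n P_n$ one then uses the semigroup generated by $-\Qopu^{1/2}$ together with an energy argument, based on sectoriality (\Cref{cor:Q-sqrt-quasisectorial}), to exclude $L^2$ solutions built from $e^{+\Qopu^{1/2}z}$.
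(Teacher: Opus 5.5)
Your Green-formula construction is sound. It is the natural way to flesh out the paper's proof, which consists only of a reference to \Cref{thm:sol-ext}: the pairing is independent of the lifting and bounded, so you obtain $\calop\in\BLO\big(\Hspace_0^{-1/2}(\curl_{\geobdtrans};\geobdtrans),\Hspace_0^{-1/2}(\curl_{\geobdtrans};\geobdtrans)'\big)$.

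\textbf{Target space.} The step that fails is identifying this dual with $\Hspace_0^{-1/2}(\div_{\geobdtrans};\geobdtrans)$. \Cref{sec:ttrspaces} contains no such result. \Cref{def:dualitypairing-partial} merely introduces a pairing, and \Cref{lem:tracespace-intbyparts} requires both fields to lie in $\Hspace_{\Gamma_-}(\curlF;\Omega)$. But $\mucs^{-1}\curlF\Eff=i\ome\Hff$ satisfies only $\nvf\cdot\mu\Hff=0$ on the wall. In the Buffa--Ciarlet partial trace theory one expects this dual to be the restriction space $\Hspace^{-1/2}(\div_{\geobdtrans};\geobdtrans)$, with no condition on $\partial\geocs$. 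The range of $\calop$ genuinely leaves $\Hspace_0^{-1/2}(\div_{\geobdtrans};\geobdtrans)$, as the following example shows.
Take $\geocs=(0,a)\times(0,b)$, $\epscs=\mucs=1$, $0<\ome<\min\{\pi/a,\pi/b\}$ (no real wavenumbers), and $\bfE_{T,0}=(0,\sin(\pi x/a))^\top$. Then $\cf_0=(\bfE_{T,0},0)^\top$ is an eigenvector of $\Qopu$ with eigenvalue $\kappa^2:=(\pi/a)^2-\ome^2>0$. The constructed solution is $\Eff=(\bfE_{T,0}e^{-\kappa z},0)^\top$, and $\calop\bfE_{T,0}=\kappa\bfE_{T,0}$. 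On the side $y=0$ its normal component is $-\kappa\sin(\pi x/a)\neq0$, so its $\tr_{\nvt_+}$-trace does not vanish.
Hence, with the definitions of \Cref{sec:ttrspaces}, the stated target cannot be reached by any argument. What your argument proves, and what \Cref{prob:MWintweak} actually uses, is boundedness into the dual of $\Hspace_0^{-1/2}(\curl_{\geobdtrans};\geobdtrans)$.

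A smaller point: in item \ref{item:Sobolev}) of \Cref{lem:solution_semigroup}, the estimate of the perturbation term $\psi_2$ uses $\Aopu^{1/2}\cf_0\in\Xspaceu^0$, i.e.\ $\Xspace^1$ data. Your quantitative bound by $\|\cf_0\|_{\Xspace^{1/2}}$ therefore needs an additional smoothing estimate there.

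\textbf{Equivalence.} You correctly observe that the direction global $\Rightarrow$ interior requires uniqueness for \Cref{prob:MWext}. \Cref{thm:sol-ext} is an existence result only, and the paper does not address uniqueness. Your sketch points the right way but leaves the decisive steps open.
\begin{enumerate}
\item[(i)] You must reduce an arbitrary outgoing $\Hspace_{\tloc}(\curlF;\geoext)$-solution to $\cf''=\Qopu\cf$, $\cf(0)=0$, working from the weak form. Eliminating $\En$ needs $z$-regularity that is not given a priori. Note that $\Eff_{T,\mathrm{ev}}\in\Hspace(\curlF;\geoext)$ already yields $\partial_z\Eff_{T,\mathrm{ev}}\in\Lspace^2$.
\item[(ii)] Let $P:=\sum_n P_n$. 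On $\ran P_n$, semi-simplicity (\Cref{ass:groupvel}) gives $P_n\cf=a e^{i\tilde\beta_n z}+b e^{-i\tilde\beta_n z}$. The radiation condition forces $b=0$, and $\cf(0)=0$ then forces $a=0$.
\item[(iii)] On $\ran(I-P)$ you can replace the unspecified energy argument as follows. Set $\psi:=\cf'+\Qopu^{1/2}\cf$; then $\psi'=\Qopu^{1/2}\psi$, hence $\psi(z_0)=T(z-z_0)\psi(z)$ for $z>z_0$. Once (i) supplies the needed regularity, combine the exponential decay of $T$ on $\ran(I-P)$ (item \ref{item:RC}) with square integrability of $\psi$ near infinity. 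This gives $\psi=0$, whence $\cf(z)=T(z)\cf(0)=0$.
\end{enumerate}
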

\begin{proof}
The claim follows from \Cref{thm:sol-ext}.
\end{proof}

\subsection{Analysis of the \Calderon operator}\label{subsec:calderon}

In order to analyze the well-posedness of \Cref{prob:MWintweak} we need in addition to its boundedness additional properties of $\calop$.
To this end we work with the more explicit representation of $\calop$ in terms of Dunford-Riesz integrals.
In preparation, let
\begin{align}\label{eq:Tscbd-def}
\Tscbd\Et:=\bv-\frac{1}{\overline{\ome}}\gradg w
\qquad\text{for }\Et=\bv+\frac{1}{\ome}\gradg w, \quad \bv\in\Vspace^{1/2}, w\in\Wspace^{1/2},
\end{align}
for which we have that $\Tscbd, \Tscbd^{-1}\in\BLO(\Hspace_0^{-1/2}(\curl_{\geobdtrans};\geobdtrans))$.
\begin{theorem}\label{thm:calop-decomp}
Let the assumptions of \Cref{subsec:geometry,subsec:parameters}, \Cref{ass:cut-off} and \Cref{ass:groupvel} be satisfied.
Then $\calop=\calop_\mathrm{sa}+\calop_\mathrm{co}$, $\calop_\mathrm{sa},\calop_\mathrm{co}\in\BLO(\Hspace_0^{-1/2}(\curl_{\geobdtrans};\geobdtrans), \Hspace_0^{-1/2}(\div_{\geobdtrans};\geobdtrans))$ where $\calop_\mathrm{co}$ is compact and
\begin{align*}
\spl \calop_\mathrm{sa}\bfE_{T,0},\Tscbd\bfE_{T,0} \spr_{\geobdtrans,-1/2,0}>0
\end{align*}
for each $\bfE_{T,0}\in\Hspace_0^{-1/2}(\curl_{\geobdtrans};\geobdtrans)\setminus\{0\}$ (and $\spl \plh,\plh \spr_{\geobdtrans,-1/2,0}$, $\Tscbd$ are defined in \Cref{def:dualitypairing-partial} and \eqref{eq:Tscbd-def} respectively).
\end{theorem}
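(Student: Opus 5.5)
The plan is to write $\calop$ explicitly through the semigroup representation of \Cref{thm:sol-ext}, replace $\Qopu^{1/2}$ by $\Aopu^{1/2}$ to get $\calop_\mathrm{sa}$, and show that everything left over is compact.

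\emph{Explicit formula.} On $\geobdtrans$ the outward normal of $\geoext$ is $-\mathbf{e}_z$. By the formula for $\curlF\Eff$ in Step 2 of the proof of \Cref{thm:sol-ext}, the tangential part of $-\nvf_{\geoext}\times\mucs^{-1}\curlF\Eff$ is $\rotm$ applied to $\mucs^{-1}(\Curlg\En+\partial_z\rotm\Et)|_{z=0}$, up to sign conventions. Since $\cf'=-\Qopu^{1/2}\cf$, the derivative $\partial_z\Et|_{z=0}$ is obtained from $-\Qopu^{1/2}\cf_0$ mapped by $(\bv,w)\mapsto\bv+\ome^{-1}\gradg w$. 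Likewise, \eqref{eq:def-En-sol} at $z=0$ gives
\[
\En|_{z=0}=-\ome^{-1}(\Aopu_W-\ome^2)^{-1}\Aopu_W P_W\Qopu^{1/2}\cf_0 ,
\]
after evaluating the Dunford integral and the finite sum. Hence $\calop\bfE_{T,0}$ is a fixed bounded "trace-type" map $\mathcal{R}$ applied to $\Qopu^{1/2}\cf_0$, with $\cf_0=(\bv_0,w_0)$. Here $\mathcal{R}$ collects the rotation, the multiplication by $\mucs^{-1}$, and the factor $(\Aopu_W-\ome^2)^{-1}\Aopu_W$ on the $W$-component. Boundedness from $\Xspace^{1/2}$ to $\Hspace_0^{-1/2}(\divg)$ follows as in \Cref{cor:calop-boundedness}.

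\emph{Splitting.} I will decompose as follows:
\begin{itemize}
\item Write $\Qopu^{1/2}=\Aopu^{1/2}+(\Qopu^{1/2}-\Aopu^{1/2})$.
\item Write $(\Aopu_W-\ome^2)^{-1}\Aopu_W=I+\ome^2(\Aopu_W-\ome^2)^{-1}$.
\item Write the finite-rank Riesz projection part separately.
\end{itemize}
I then define $\calop_\mathrm{sa}$ as the part built with $\Aopu^{1/2}$ and the identity, i.e.\ the same formula with $\Qopu$ replaced by $\Aopu$ and the cut-off correction dropped. Everything else goes into $\calop_\mathrm{co}$. The identity
\[
\Qopu^{1/2}-\Aopu^{1/2}=-\Qopu^{1/2}\big(\Qopu^{-1/2}-\Aopu^{-1/2}\big)\Aopu^{1/2}
\]
holds. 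Combined with \Cref{lem:Q-sqrt-diff} for $\gam=1/2$ (allowing $s_1+s_2<1$) and with \Cref{lem:Q-sqrt-Ainv-sqrt}, it shows that this difference maps $\Xspace^{1/2}=\opdom(\Aopu^{1/4})$ into $\opdom(\Aopu^{1/4+\epsilon})$ for some $\epsilon>0$. This gains regularity over what $\mathcal{R}$ requires, so compact embedding of the interpolation scale (compact resolvent of $\Aopu$) gives compactness. The term $\ome^2(\Aopu_W-\ome^2)^{-1}$ gains a full order, and the projection part has finite rank; both are compact.

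\emph{Positivity.} This is the main obstacle. For $\calop_\mathrm{sa}$ I will compute $\spl\calop_\mathrm{sa}\bfE_{T,0},\Tscbd\bfE_{T,0}\spr_{\geobdtrans,-1/2,0}$ using the definition of the partial duality pairing in \Cref{def:dualitypairing-partial}. The rotation turns the $\Hspace(\divg)$-pairing into an $\Lspace^2$-type pairing against $\bfE_{T,0}$. Since $\Aopu$ is block diagonal, with blocks acting on $\bv$ and on $w$, the pairing should split into two terms:
\begin{itemize}
\item the $\bv$-term gives $\spl\Aopu_V^{1/2}\bv_0,\bv_0\spr_{\Xspaceu^0}$;
\item the $w$-term, coming from $\ome^{-1}\gradg w$ in $\partial_z\Et$ and from $\Curlg\En$, should combine into $\ome^{-1}\overline{\ome}^{-1}\spl\Aopu_W^{1/2}w_0,w_0\spr$ times a positive factor.
\end{itemize}
In the second term, the conjugation and the sign flip $-1/\overline{\ome}$ in $\Tscbd$ are designed precisely to make the cross term real and positive. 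The cross terms between $\bv$ and $w$ should vanish by $\divg\mucsinv\bv=0$ and the $\Xspaceu^0$-orthogonality (\Cref{lem:tracespace-decomp-orth}). The sum is then $\|\Aopu^{1/4}\cf_0\|^2>0$ for $\cf_0\neq0$.

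I expect the delicate point to be tracking the signs and conjugations so that the $w$-contribution indeed comes out as $|\ome|^{-2}\|\Aopu_W^{1/4}w_0\|^2$ rather than with a complex factor. If it does not, I would move the offending lower-order piece into $\calop_\mathrm{co}$.
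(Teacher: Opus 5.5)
Your splitting of the $\Wspace$-component is where the argument breaks; it invalidates both the positivity claim and the compactness claim. With $\psi:=\Qopu^{1/2}\cf_0$, your own formulas give $\partial_z\Et|_{z=0}=-(P_{\Vspace}\psi+\ome^{-1}\gradg P_W\psi)$ and $\En|_{z=0}=-\ome^{-1}(\Aopu_W-\ome^2)^{-1}\Aopu_W P_W\psi$. Hence
\[
\calop\bfE_{T,0}=-\mucs^{-1}\big(\partial_z\Et-\gradg\En\big)=\mucs^{-1}P_{\Vspace}\psi-\ome\,\mucs^{-1}\gradg(\Aopu_W-\ome^2)^{-1}P_W\psi .
\]
The gradient parts of $\partial_z\Et$ and $\gradg\En$ do not ``combine''; they cancel at leading order.

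\emph{Positivity fails.} If you replace $(\Aopu_W-\ome^2)^{-1}\Aopu_W$ by $I$ and $\Qopu$ by $\Aopu$, your $\calop_\mathrm{sa}$ reduces to $\bfE_{T,0}\mapsto\mucs^{-1}\Aopu_{\Vspace}^{1/2}\bv_0$. This vanishes on every pure gradient $\bfE_{T,0}=\ome^{-1}\gradg w_0$, so strict positivity cannot hold.

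\emph{Compactness fails.} The term you discard as compact, $\ome\,\mucs^{-1}\gradg(\Aopu_W-\ome^2)^{-1}P_W\psi$, is the entire principal part on gradients. The gain from $(\Aopu_W-\ome^2)^{-1}$ is undone: pairing this field with a gradient test field $\ome^{-1}\gradg w\tf$ and integrating by parts brings in $\divg\mucsinv\gradg$ of the potential. This produces $\Aopu_W(\Aopu_W-\ome^2)^{-1}=I+\ome^2(\Aopu_W-\ome^2)^{-1}$, whose identity part is not compact.

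A homogeneous TM check makes this concrete. Take $\Et=\ome^{-1}\gradg w_n$, with $w_n$ a Dirichlet eigenfunction of eigenvalue $\lambda_n$, and set $k^2=\ome^2\epscs\mucs$. One finds $\calop\Et=-\mucs^{-1}k^2\ome^{-1}(\lambda_n-k^2)^{-1/2}\gradg w_n$ and $\spl\calop\Et,\Tscbd\Et\spr_{\geobdtrans,-1/2,0}=\epscs\lambda_n(\lambda_n-k^2)^{-1/2}\|w_n\|^2_{L^2(\geocs)}$. This stays comparable to $\|\Et\|^2_{\Hspace_0^{-1/2}(\curlg;\geocs)}$ as $n\to\infty$, yet in your splitting all of it sits in $\calop_\mathrm{co}$.

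What is missing is the order of operations used in the paper:
\begin{enumerate}
\item Split off the finite-rank Riesz part and form $\ome^{-1}w'-\En=-\ome(\Aopu_W-\ome^2)^{-1}w'$.
\item Test against $\bv_0\tf+\ome^{-1}\gradg w\tf$ and integrate by parts, using the divergence constraint. The gradient term then becomes $\spl\epscs\,\tfrac{\lam}{\ome}\Aopu_W(\Aopu_W-\ome^2)^{-1}\ome^2 P_W(\lam+\Qopu^{1/2})^{-1}\cf_0,\ome^{-1}w\tf\spr$ under the Dunford integral.
\item Only now split off $\ome^2(\Aopu_W-\ome^2)^{-1}$ as compact and replace $\Qopu^{1/2}$ by $\Aopu^{1/2}$.
\end{enumerate}
Inserting $w\tf=-(\ome/\overline{\ome})w_0$ from $\Tscbd$ makes the powers of $\ome$ cancel exactly, so there is no $|\ome|^{-2}$ factor. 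The $w_0$-term then has the same sign as the $\bv_0$-term; both are positive expressions in $\Aopu_{\Vspace}^{1/2}$ and $\Aopu_W^{1/2}$, with weights $\mucs^{-1}$ and $\epscs$.

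The rest of your plan matches the paper: the finite-rank Riesz part, and the regularity gain of $\Qopu^{1/2}-\Aopu^{1/2}$ via \Cref{lem:Q-sqrt-diff,lem:Q-sqrt-Ainv-sqrt}. One correction: that gain should be measured relative to $\opdom(\Aopu^{-1/4})$, where $\Aopu^{1/2}\cf_0$ lives. Mapping $\Xspace^{1/2}$ into $\opdom(\Aopu^{1/4+\epsilon})$ does not follow from \Cref{lem:Q-sqrt-diff}.
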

\begin{proof}
To start with we recall the notation $\cf_0=(\bv_0,w_0)^\top$, $\cf_0\tf=(\bv_0\tf,w_0\tf)^\top$.
Let $\Eff=(\Et,\En)^\top$, $\Et=\bv+\frac{1}{\ome}\gradg w$, $\cf=(\bv,w)^\top$ be as in \Cref{cor:calop-boundedness}.
Since $\curlF\Eff=\bpm\Curlg\En+\rotm\partial_z\Et\\\curlg\Et\epm$ we have (identifying the tangential space on $\geobdtrans$ with $\bbC^2$) that
\begin{align*}
\calop\bfE_{T,0}=-\mucs^{-1}(\partial_z\Et-\gradg\En)=-\mucs^{-1}(\bv'+\gradg(\frac{1}{\ome}w'-\En)).
\end{align*}
We further compute that
\begin{align*}
\calop\bfE_{T,0}=
\calop_{\mathrm{co},1}\bfE_{T,0}
&-\int_{-\partial\Sector_\theta} \frac{\lam}{\mucs} P_{\Vspace} (\lam+\Qopu^{1/2})^{-1} \cf_0 \,\dd\lam\\
&+\int_{-\partial\Sector_\theta} \frac{1}{\mucs} \gradg \frac{\lam}{\ome} (\Aopu_W-\ome^2)^{-1}\ome^2 P_W (\lam+\Qopu^{1/2})^{-1} \cf_0 \,\dd\lam
\end{align*}
where $\calop_{\mathrm{co},1}\bfE_{T,0}:=
-\sum_{n=1}^{\tNpr} \frac{i\tilde\beta_n}{\mucs} P_{\Vspace} P_n \cf_0
+\sum_{n=1}^{\tNpr} \frac{1}{\mucs} \gradg \frac{i\tilde\beta_n}{\ome} (\Aopu_W-\ome^2)^{-1}\ome^2 P_W P_n \cf_0$.
Since its range is finite dimensional the operator $\calop_{\mathrm{co},1}$ is compact.
Exploiting the divergence-free of constraint of $\bv_0,\bv_0\tf$ and integration by parts we continue to compute that
\begin{align*}
\big\spl \calop\bfE_{T,0}-\calop_{\mathrm{co},1} \bfE_{T,0},& \bv_0\tf+\frac{1}{\ome}\gradg w\tf \big\spr_{\Lspace^2(\geocs)}
=\bigg\spl \frac{-1}{\mucs} \int_{-\partial\Sector_\theta} \lam P_{\Vspace} (\lam+\Qopu^{1/2})^{-1} \cf_0 \,\dd\lam,\bv_0\tf \bigg\spr_{\Lspace^2(\geocs)}\\
&+\bigg\spl \epscs \int_{-\partial\Sector_\theta} \frac{\lam}{\ome} \Aopu_W(\Aopu_W-\ome^2)^{-1} \ome^2 P_{\Wspace} (\lam+\Qopu^{1/2})^{-1} \cf_0 \,\dd\lam, \frac{1}{\ome} w_0\tf \bigg\spr_{L^2(\geocs)},
\end{align*}
where for the moment we use $\Lspace^2(\geocs)$, $L^2(\geocs)$ as pivot spaces or assume $\bv_0, w_0$ to be smooth enough.
As in the proof of \Cref{lem:solution_semigroup}.\ref{item:Sobolev}) we have that $(\lam+\Qopu^{1/2})^{-1}=(\lam+\Aopu^{1/2})^{-1}+(\lam+\Qopu^{1/2})^{-1} Z (\lam+\Aopu^{1/2})^{-1}$ with $Z\in\BLO(\Xspace^0)$.
Thus
\begin{align*}
\int_{-\partial\Sector_\theta} \lam P_{\Vspace} (\lam+\Qopu^{1/2})^{-1} \cf_0 \,\dd\lam
&=\int_{-\partial\Sector_\theta} \lam P_{\Vspace} (\lam+\Aopu^{1/2})^{-1} \cf_0 \,\dd\lam+\tilde\calop_{\mathrm{co},2} \cf_0 
\end{align*}
where $\tilde \calop_{\mathrm{co},2} \cf_0 := \int_{-\partial\Sector_\theta} \lam P_{\Vspace} (\lam+\Qopu^{1/2})^{-1} Z (\lam+\Aopu^{1/2})^{-1}\cf_0 \,\dd\lam$.
Exploiting resolvent estimates for $\Qopu^{1/2}$ and $\Aopu^{1/2}$ we obtain that $\tilde\calop_{\mathrm{co},2}\in\BLO(\Xspace^{1/2},\Vspace^0)$.
Thus the operator $\calop_\mathrm{co,2}\in\BLO\big(\Hspace_0^{-1/2}(\curl_{\geobdtrans};\geobdtrans),$ $\Hspace_0^{-1/2}(\div_{\geobdtrans};\geobdtrans)\big)$ induced by the form $\bigg\spl \frac{-1}{\mucs} \int_{-\partial\Sector_\theta} \lam P_{\Vspace} (\lam+\Qopu^{1/2})^{-1} Z (\lam+\Aopu^{1/2})^{-1/2}\cf_0 \,\dd\lam,\bv_0\tf \bigg\spr_{\Lspace^2(\geocs)}$ is compact.
In a similar fashion we obtain that the operators $\calop_\mathrm{co,3}, \calop_\mathrm{co,4}\in\BLO\big(\Hspace_0^{-1/2}(\curl_{\geobdtrans};\geobdtrans),$ $\Hspace_0^{-1/2}(\div_{\geobdtrans};\geobdtrans)\big)$ induced by
\begin{align*}
\spl \calop_{\mathrm{co},3}\bfE_{T,0},\bfE_{T,0}\tf &\spr_{\geobdtrans,-1/2,0}:=\\
&\bigg\spl \epscs \int_{-\partial\Sector_\theta} \frac{\lam}{\ome} \Aopu_W(\Aopu_W-\ome^2)^{-1} \ome^2 P_{\Wspace} (\lam+\Qopu^{1/2})^{-1} Z (\lam+\Aopu^{1/2})^{-1} \cf_0 \,\dd\lam, \frac{1}{\ome} w_0\tf \bigg\spr_{L^2(\geocs)}
\end{align*}
and
\begin{align*}
\spl \calop_{\mathrm{co},4}\bfE_{T,0},\bfE_{T,0}\tf \spr_{\geobdtrans,-1/2,0}:=
\bigg\spl \epscs \int_{-\partial\Sector_\theta} \lam \ome(\Aopu_W-\ome^2)^{-1} \ome^2 P_{\Wspace} (\lam+\Aopu^{1/2})^{-1} \cf_0 \,\dd\lam, \frac{1}{\ome} w_0\tf \bigg\spr_{L^2(\geocs)}
\end{align*}
are compact.
The claim follows now with $\calop_{\mathrm{co}}:=\sum_{n=1}^4 \calop_{\mathrm{co},n}$ and
\begin{align*}
\spl \calop_\mathrm{sa}\bfE_{T,0},\Tscbd\bfE_{T,0} \spr_{\geobdtrans,-1/2,0}
&=\bigg\spl \frac{-1}{\mucs} \int_{-\partial\Sector_\theta} \lam (\lam+\Aopu_V^{1/2})^{-1}\,\dd\lam \, \bv_0, \bv_0 \bigg\spr\\
&-\bigg\spl \epscs \int_{-\partial\Sector_\theta} \lam (\lam+\Aopu_W^{1/2})^{-1}\,\dd\lam \, w_0, w_0 \bigg\spr,
\end{align*}
where we have exploited that $\Aopu^{1/2}$ is block diagonal yielding $P_{\Vspace} (\lam+\Aopu^{1/2})^{-1} \cf_0=(\lam+\Aopu_{\Vspace}^{1/2})^{-1} \bv_0$, $P_{\Wspace} (\lam+\Aopu^{1/2})^{-1} \cf_0=(\lam+\Aopu_{\Wspace}^{1/2})^{-1} w_0$.
The positivity of $\spl \calop_\mathrm{sa}\bfE_{T,0},\Tscbd\bfE_{T,0} \spr_{\geobdtrans,-1/2,0}$ follows from the selfadjointness and positivity of $\Aopu^{1/2}_{\Vspace}$ and $\Aopu^{1/2}_{\Wspace}$.
\end{proof}

\section{Stability analysis of the interior problem}\label{sec:interior}

Let
\begin{align*}
\Vint:=\{\Eint\in \Hspace_{\bci}(\curlF;\geoint)\colon \,\,\, \divF\Eint=0 \text{ in }\geoint, \quad \tr_{T,\geobdtrans}\Eint\in\Vspace^{1/2} \}.
\end{align*}
Then, due to \Cref{lem:tracespace-decomp-orth-Omega} we have the stable decomposition
\begin{align*}
\Hspace_{\bci}(\curlF;\geoint)=\Vint\oplus\gradF H^1_{\bci}(\geoint),
\end{align*}
and henceforth use the notation $\Eint=\vint+\frac{1}{\ome}\gradF\wint$, $\vint\in\Vint$, $\wint\in H^1_{\bci}(\geoint)$ and like-wise $\Eint\tf=\vint\tf+\frac{1}{\ome}\gradF\wint\tf$ for $\Eint,\Eint\tf\in \Hspace_{\bci}(\curlF;\geoint)$.
Let
\begin{align*}
\Tscvol\Eint:=\vint-\frac{1}{\overline{\ome}}\gradF\wint,
\end{align*}
for which we have $\Tscvol, \Tscvol^{-1} \in \BLO(\Hspace_{\bci}(\curlF;\geoint))$.
We note the crucial property
\begin{align*}
\tr_{T,\geobdtrans}\Tscvol=\Tscbd\tr_{T,\geobdtrans}.
\end{align*}
We consider the $\Hspace_{\bci}(\curlF;\geoint)$-bounded sesquilinear form $\asesint(\plh,\plh)$ appearing in \Cref{prob:MWint}:
\begin{align*}
\asesint(\Eint,\Eint\tf):=\spl \muinv\curlF\Eint,\curlF\Eint\tf \spr_{\geoint}
-\ome^2\spl \eps\Eint,\Eint\tf \spr_{\geoint}
+\spl \calop \Eint_T,\Eint_T\tf \spr_{\geobdtrans},\qquad
\Eint,\Eint\tf\in\Hspace_{\bci}(\curlF;\geoint).
\end{align*}
Let $\Aopint\in\BLO(\Hspace_{\bci}(\curlF;\geoint))$ be the associated Riesz representation, i.e.,
\begin{align*}
\spl \Aopint\Eint,\Eint\tf \spr_{\Hspace_{\bci}(\curlF;\geoint)}=\asesint(\Eint,\Eint\tf) \qquad\forall\Eint,\Eint\tf\in\Hspace_{\bci}(\curlF;\geoint).
\end{align*}
We obtain one of our main results.
\begin{theorem}\label{thm:wTc}
Let the assumptions of \Cref{subsec:geometry,subsec:parameters}, \Cref{ass:cut-off} and \Cref{ass:groupvel} be satisfied.
Then, $\Aopint$ is weakly $\Tscvol$-coercivity, i.e., there exists a compact $K\in\BLO(\Hspace_{\bci}(\curlF;\geoint))$ such that $\Tscvol^*\Aopint+K$ is coercive.
In particular, $\Aopint$ is Fredholm with index zero.
\end{theorem}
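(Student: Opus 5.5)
We plan to test the form $\asesint$ with $\Tscvol\Eint$ and split every contribution into a coercive part plus a compact remainder. Write $\Eint=\vint+\frac{1}{\ome}\gradF\wint$, so that $\Tscvol\Eint=\vint-\frac{1}{\overline{\ome}}\gradF\wint$ and $\curlF\Tscvol\Eint=\curlF\vint=\curlF\Eint$. The terms are then:
\begin{align*}
\spl \muinv\curlF\Eint,\curlF\Tscvol\Eint \spr_{\geoint}&=\spl \muinv\curlF\vint,\curlF\vint \spr_{\geoint},\\
-\ome^2\spl \eps\Eint,\Tscvol\Eint \spr_{\geoint}&=-\ome^2\spl\eps\vint,\vint\spr_{\geoint}+\spl\eps\gradF\wint,\gradF\wint\spr_{\geoint}+\text{mixed terms}.
\end{align*}
The mixed terms are $\ome\spl\eps\vint,\gradF\wint\spr-\ome\spl\eps\gradF\wint,\vint\spr$, up to conjugation factors. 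The first line is nonnegative, and the gradient contribution is coercive on $\gradF H^1_{\bci}(\geoint)$, since $\|\gradF\wint\|$ controls $\wint$ by Poincar\'e. Every term containing an undifferentiated factor $\vint$ is compact as soon as $\Vint$ embeds compactly into $\Lspace^2(\geoint)$. We expect this compact embedding to hold for divergence-free fields with $\curlF\vint\in\Lspace^2$, tangential trace zero on $\geobddir$, and the additional control $\tr_{T,\geobdtrans}\vint\in\Vspace^{1/2}$ on $\geobdtrans$. This is the Weber-type compactness for mixed boundary conditions on Lipschitz domains; we would take it from \Cref{sec:ttrspaces}, or from \Cref{lem:tracespace-decomp-orth-Omega} together with \cite{BauerPaulySchomburg16}. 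To complete coercivity in the $\Vint$-component we add the compact term $\spl(\ome^2\eps+1)\vint,\vint\spr$ into $K$. Then $\|\curlF\vint\|^2+\|\vint\|^2\gtrsim\|\vint\|^2_{\Hspace(\curlF)}$.

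For the boundary term we use $\tr_{T,\geobdtrans}\Tscvol=\Tscbd\tr_{T,\geobdtrans}$ and \Cref{thm:calop-decomp}. This gives
\begin{align*}
\spl \calop\Eint_T,\Tscvol\Eint_T\spr_{\geobdtrans}=\spl\calop_\mathrm{sa}\Eint_T,\Tscbd\Eint_T\spr_{\geobdtrans,-1/2,0}+\spl\calop_\mathrm{co}\Eint_T,\Tscbd\Eint_T\spr_{\geobdtrans,-1/2,0}.
\end{align*}
The first term is positive, so it can be dropped from a lower bound. The second is compact, since $\calop_\mathrm{co}$ is compact, while $\tr_{T,\geobdtrans}$ and $\Tscbd$ are bounded.

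Collecting these pieces, we define $K$ through the Riesz representation of the sum of three contributions: the $\Lspace^2$-mixed and $\vint$-mass terms, the $\calop_\mathrm{co}$ term, and $\spl\vint,\vint\spr$. Compactness of $K$ follows from the compactness of the projection $\Eint\mapsto\vint$ into $\Lspace^2$, which is bounded by the stable decomposition, and from compactness of $\calop_\mathrm{co}$. Then $\Re\spl(\Tscvol^*\Aopint+K)\Eint,\Eint\spr\geq c(\|\vint\|^2_{\Hspace(\curlF)}+\|\gradF\wint\|^2)\geq c'\|\Eint\|^2$, and coercivity follows by stability of the decomposition. Fredholmness with index zero is then immediate. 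Indeed, $\Tscvol^*\Aopint$ is a compact perturbation of an invertible operator, hence Fredholm of index zero, and $\Tscvol^*$ is invertible.

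We expect the main obstacle to be the compact embedding $\Vint\hookrightarrow\Lspace^2(\geoint)$ in this mixed setting. It requires nonsmooth Lipschitz $\geoint$, a partial boundary condition, and only $\Vspace^{1/2}$-trace control on $\geobdtrans$. Note that $\divF\vint=0$ is imposed with the unweighted divergence, whereas $\eps$ appears in the mass term; this is harmless because we only use compactness, not an orthogonality relation. Our first attempt would be as follows. We extend $\vint$ across $\geobdtrans$ into the waveguide using the trace in $\Vspace^{1/2}$, so that $\vint$ becomes a field on a larger Lipschitz domain with full boundary condition. We then apply the classical Weber compactness theorem for $\Hspace_0(\curlF)\cap\Hspace(\divF)$.
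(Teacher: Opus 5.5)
Your proposal is correct and follows essentially the paper's route. You split $\asesint(\Eint,\Tscvol\Eint\tf)$ into a coercive part (the curl term, the $\eps$-weighted gradient term and $\calop_\mathrm{sa}$) and a compact part (the $\vint$-mass, mixed and $\calop_\mathrm{co}$ terms), using \Cref{thm:calop-decomp} and the compact embedding $\Vint\hookrightarrow\Lspace^2(\geoint)$, which is exactly \Cref{lem:HcurlDecompCompact}. The paper proves that lemma from an $\Lspace^2$ bound on the full tangential trace plus Costabel's $\Hspace^{1/2}$-regularity, not by your extension idea. Your extra compact term $\spl\vint,\vint\spr$ is a useful refinement: it removes the need for the Friedrichs-type inequality on $\Vint$ that the paper's bare claim ``$a_1$ is coercive'' tacitly uses.
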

\begin{proof}
We compute that
\begin{align*}
\asesint(\Eint,\Tscvol\Eint\tf)=a_1(\Eint,\Tscvol\Eint\tf)+a_2(\Eint,\Tscvol\Eint\tf)
\end{align*}
where
\begin{align*}
a_1(\Eint,\Tscvol\Eint\tf)&:=
\spl \muinv\curlF\vint,\curlF\vint\tf \spr_{\geoint}
+\spl \eps\gradF\wint,\gradF\wint\tf \spr_{\geoint}
+\spl \calop_\mathrm{sa} \Eint_T,\Tscbd\Eint_T\tf \spr_{\geobdtrans},\\
a_2(\Eint,\Tscvol\Eint\tf)&:=
-\ome^2\spl \eps\vint,\vint\tf \spr_{\geoint}
-\ome\spl \eps\gradF\wint,\vint\tf \spr_{\geoint}
+\ome\spl \eps\vint,\gradF\wint\tf \spr_{\geoint}
+\spl \calop_\mathrm{co} \Eint_T,\Tscbd\Eint_T\tf \spr_{\geobdtrans}.
\end{align*}
The operator corresponding to form $a_1(\plh,\plh)$ is coercive.
The operator corresponding to form $a_2(\plh,\plh)$ is compact due to \Cref{thm:calop-decomp} and \Cref{lem:HcurlDecompCompact}.
\end{proof}
\begin{proposition}\label{prop:resolventset}
Let the assumptions of \Cref{subsec:geometry,subsec:parameters} be satisfied.
Then, the operator $\Aopint$ is coercive for all $\ome\in i\Rp$.
\end{proposition}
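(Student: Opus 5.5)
For $\ome=i\kappa$ with $\kappa>0$ we have $-\ome^2=\kappa^2>0$, so the two volume terms of $\asesint$ give
\begin{align*}
\spl \muinv\curlF\Eint,\curlF\Eint \spr_{\geoint}+\kappa^2\spl \eps\Eint,\Eint\spr_{\geoint}\geq \min\{\mumax^{-1},\kappa^2\epsmin\}\|\Eint\|^2_{\Hspace(\curlF;\geoint)}.
\end{align*}
This is already coercive on $\Hspace_{\bci}(\curlF;\geoint)$. It therefore suffices to show that the boundary term has nonnegative real part:
\begin{align*}
\Re\spl\calop\Eint_T,\Eint_T\spr_{\geobdtrans}\geq0 .
\end{align*}
Coercivity of $\Aopint$ then follows with the constant above, and no $\Tscvol$ twist is needed.

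I would prove this sign from the exterior solution rather than from the Dunford integrals. For $\ome\in i\Rp$ a cut-off frequency is impossible, since $-\divg\mucsinv\gradg+\kappa^2\epscs$ is coercive on $H^1_0(\geocs)$, so \Cref{ass:cut-off} holds. By \Cref{rem:beta-real-ome-compl} there are no real wavenumbers, hence $\Npr=0$ and \Cref{ass:groupvel} is vacuous. Consequently the exterior solution $\Eff$ of \Cref{thm:sol-ext} has no propagating part. By \Cref{lem:solution_semigroup}\ref{item:RC}) it satisfies $\Eff\in\Hspace(\curlF;\geoext)$ and decays exponentially in $z$.

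Next I test the exterior equation \eqref{eq:MWextPDE} with $\Eff$ on the truncated domain $\geocs\times(0,L)$ and integrate by parts. The lateral boundary term vanishes because $\Eff_T=0$ there. The term at $z=L$ tends to $0$ as $L\to\infty$ by the exponential decay. The term at $z=0$ is exactly the duality pairing $\spl\calop\Eff_{T,0},\Eff_{T,0}\spr_{\geobdtrans}$, using $\calop\Ef_{T,0}=-\nvf_{\geoext}\times\mucs^{-1}\curlF\Eff$ and the partial trace duality of \Cref{def:dualitypairing-partial}. This gives
\begin{align*}
\spl\calop\Eff_{T,0},\Eff_{T,0}\spr_{\geobdtrans}=\spl \mucs^{-1}\curlF\Eff,\curlF\Eff\spr_{\geoext}+\kappa^2\spl\eps\Eff,\Eff\spr_{\geoext}\geq0 .
\end{align*}
The sign and orientation convention of the normal has to be checked against the definition of $\calop$, but the outcome should be that the pairing equals this positive energy.

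The main technical point is justifying the integration by parts with traces only in $\Hspace_0^{-1/2}(\curlg)$. I would first do it for smooth data $\bfE_{T,0}$, or equivalently $\cf_0\in\Xspace^1$, using the regularity from \Cref{lem:solution_semigroup}\ref{item:Sobolev}). I would then pass to general data by density, relying on the boundedness of $\calop$ from \Cref{cor:calop-boundedness}; both sides are continuous in $\bfE_{T,0}$ because the energy is bounded by the trace norm. A further subtlety is that \Cref{prop:resolventset} assumes only the basic geometric and material hypotheses, so I have to confirm that the semigroup construction of \Cref{sec:Calderon} applies for $\ome\in i\Rp$. For that $\Qopu$ must be well defined with $\Sop$ as in \eqref{eq:DefSop}, which is covered by the observation above that no cut-off frequency occurs. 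Once this is settled, $\Re\asesint(\Eint,\Eint)\geq c\|\Eint\|^2$ holds and the proposition follows.
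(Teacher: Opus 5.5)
Your proof is correct, but it obtains the key sign $\Re\spl\calop\bfE_{T,0},\bfE_{T,0}\spr_{\geobdtrans}\geq0$ by a different route than the paper.

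The paper argues spectrally. For $\ome\in i\Rp$, multiplying \eqref{eq:modal-Etw-operator} by $\diag(I,-I)$ makes both sides of the modal pencil selfadjoint, with a positive right-hand side. Hence suitably scaled eigenfunctions form Riesz bases of $\Xspace^s$, all wavenumbers are semi-simple, and the outgoing ones lie in $i\Rp$. Positivity of $\calop$ is then read off from its modal expansion, a computation the paper only asserts.

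You replace all of this by the exterior energy identity
\begin{align*}
\spl\calop\bfE_{T,0},\bfE_{T,0}\spr_{\geobdtrans}=\spl\mucs^{-1}\curlF\Eff,\curlF\Eff\spr_{\geoext}+\kappa^2\spl\eps\Eff,\Eff\spr_{\geoext}.
\end{align*}
The sign does work out. With $\nvf_{\geoext}=(0,0,-1)^\top$ on $\geobdtrans$ and $\calop\bfE_{T,0}=-\nvf_{\geoext}\times\mucs^{-1}\curlF\Eff$, the Green formula on $\geoext$ gives exactly this identity.

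Your route is more elementary and self-contained. It needs only two ingredients:
\begin{itemize}
\item the exterior solution lies in $\Hspace(\curlF;\geoext)$, which follows from \Cref{lem:solution_semigroup}\ref{item:Sobolev}) near $z=0$ together with \ref{item:RC}) for large $z$, since $\Npr=0$;
\item a Green formula.
\end{itemize}
It avoids Riesz bases and semisimplicity entirely. It also yields strict positivity, since zero energy forces $\Eff=0$ and hence $\bfE_{T,0}=0$. The paper's route gives more structure in return: the location of the wavenumbers on $i\Rp$ and an explicit modal representation of $\calop$.

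Two minor technical points:
\begin{itemize}
\item $\mucs^{-1}\curlF\Eff$ has no vanishing tangential trace on the lateral boundary, so \Cref{lem:tracespace-intbyparts} does not apply literally. What you need is the full-boundary Green formula of \cite{BuffaCostabelSheen02}, combined with \Cref{lem:tracespace-pw} to reduce the boundary pairing to $\geobdtrans$.
\item The limit $L\to\infty$ only requires square-integrability of the tails; equivalently, test with $\chi(z/L)\Eff$ for a cut-off $\chi$. So neither the exponential decay nor your density argument is really needed.
\end{itemize}
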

\begin{proof}
We note that for $\ome\in i\Rp$ after multiplication of \eqref{eq:modal-Etw-operator} with $\diag(I_{\Vspace^1},-I_{\Wspace^1})$ the operator on the left is selfadjoint and the operator on the right is selfadjoint and positive.
Hence, the appropriately scaled eigenfunctions form Riesz bases in $\Xspace^s$, $s=0,1/2,1$ and all outgoing wavenumbers $\beta_n$ are semi simple and satisfy $\beta_n\in i\Rp$.
It can then be computed that $\calop$ is positive and the claim can be deduced.
\end{proof}
\begin{remark}\label{rem:wellposedness}
Now, by means of the Fredholm theorem for holomorphic operator functions we could almost deduce from \Cref{thm:wTc} and \Cref{prop:resolventset} the existence of a discrete set $D\subset\bbR$ such that the operator $\Aopint$ is bijective for all $\ome\in\bbR\setminus D$.
However, in our analysis we excluded by means of \Cref{ass:groupvel} the existence of nontrivial Jordan chains of guided modes.
Since we have no result on the discreteness of such frequencies, we cannot conclude the desired claim.
Nonetheless, we note that \Cref{ass:groupvel} was only introduced to reduce the technical effort of our analysis and we do not expect that the handling of nontrivial Jordan chains poses an impregnable challenge.
\end{remark}

\appendix

\section{Tangential trace spaces}\label{sec:ttrspaces}

In order to deal with a transparent boundary we have to discuss the matching of tangential traces and hence need to discuss the respective trace spaces.
However, when building tangential traces from different sides the definitions of the respective tangential trace spaces a priori do not coincide.
Hence, we need to establish that no obscurities arise.

In particular, in this section we discuss the tangential trace space of $\Hspace_{\Gamma_-}(\curlF;\Omega)$, i.e., the subspace of $\Hspace(\curlF;\Omega)$ with a homogeneous trace on the part $\Gamma_-$ of the boundary $\Gamma$, for domains $\Omega$ having only Lipschitz regularity.
For boundary conditions on the entire boundary the trace space is rigorously analyzed in \cite{Tartar97,BuffaCostabelSheen02}.
For Lipschitz polyhedra and partial boundary conditions the analysis is conducted in \cite{BuffaCiarlet01a,BuffaCiarlet01b}.
Here, we close this gap and discuss that no obstacles occur when generalizing the analysis of \cite{BuffaCiarlet01a,BuffaCiarlet01b}.
A detail is that we have to work with a non-standard Sobolev space $\Hvar^1(\partial\Gamma_-)$.
At the end we include some complementary technical results, which are applied in the manuscript.\\

Note, that for convenience we denote in this section our geometric objects as $\Omega$, $\Gamma$ and $\Sigma$ which do not coincide with the notation introduced in \Cref{sec:setting}, i.e., $\geoint\sim\Omega$, $\partial\geoint\sim\Gamma$, $\geobddir\sim\Gamma_-$, $\geobdtrans=\geocs\times\{0\}\sim\Gamma_+$.

\subsection{Sobolev spaces}

We consider all vector spaces over the scalar body $\bbC$.
For Lipschitz domains $\Omega\subset\bbR^d$, $d=1,2,3$ and Lipschitz manifolds $\Gamma_0\subset\Gamma:=\partial \Omega$ let $L^2(\Omega)$, $H^s(\Omega)$, $H^s_0(\Omega)$, $H^s_{00}(\Omega)$, $s\in\bbR$ and $L^2(\Gamma_0)$, $H^t(\Gamma_0)$, $H^t_0(\Gamma_0)$, $H^t_{00}(\Gamma_0)$, $t\in[-1,1]$ be the convenient Lebesgue and Sobolev spaces, see, e.g., \cite{McLean00,AgranovichBook}.
For a space $X$ it's dual space is denoted as $X'$ and we recall that $H^s(\Omega)'=H^{-s}_{00}(\Omega)$, $H_{00}^s(\Omega)'=H^{-s}(\Omega)$, $H^t(\Gamma_0)'=H^{-t}_{00}(\Gamma_0)$, $H_{00}^t(\Gamma_0)'=H^{-t}(\Gamma_0)$ with pivot spaces $L^2(\Omega)$ and $L^2(\Gamma_0)$ respectively.
For Lipschitz domains $\Omega\subset\bbR^3$ and Lipschitz manifolds $\Gamma_0\subset\Gamma$ let $\Lspace^2(\Omega):=(L^2(\Omega))^3$, $\Hspace^s(\Omega):=(H^s(\Omega))^3$, $\Hspace^s(\Omega):=(H^s_0(\Omega))^3$, $\Hspace^s(\Omega):=(H^s_{00}(\Omega))^3$, $s\in\bbR$ and $\Lspace^2(\Gamma_0):=(L^2(\Gamma_0))^3$, $\Hspace^t(\Gamma_0):=(H^t(\Gamma_0))^3$, $\Hspace^t_0(\Gamma_0):=(H^t_0(\Gamma_0))^3$, $\Hspace^t_{00}(\Gamma_0):=(H^t_{00}(\Gamma_0))^3$, $t\in[-1,1]$.
For $X\subset L^2$ let $X_*:=\{u\in X\colon \spl u,1\spr_X=0\}$ and $X'_*:=\{f\in X'\colon \spl f,1 \spr_{X'\times X}=0\}$.
We denote the convenient differential operators in 3D as follows
\[
\gradF u:=(\partial_x u, \partial_y u, \partial_z u)^\top,\qquad
\curlF \bfu:=\gradF\times\bfu,\qquad
\divF\bfu :=\gradF\cdot\bfu.
\]
We abbreviate scalar and vectorial hermitian $L^2$ scalar products and hermitian duality pairings with (possibly tangential) $L^2$ pivot space over a Lipschitz domain $D$ or a Lipschitz surface $S$ as $\spl\plh,\plh\spr_D$ and $\spl\plh,\plh\spr_S$ respectively.
Henceforth, if not otherwise specified, let the norm of a Banach space $X$ be implicitly defined as the graph norm or lifting norm respectively induced by the definition of $X$, e.g.,
\begin{align*}
\Hspace(\curlF;\Omega)&:=\{\bfu\in\Lspace^2(\Omega)\colon\curlF\bfu\in\Lspace^2(\Omega)\},\qquad
\|\bfu\|_{\Hspace(\curlF;\Omega)}^2:=\|\bfu\|_{\Lspace^2(\Omega)}^2+\|\curlF\bfu\|_{\Lspace^2(\Omega)}^2,
\end{align*}
or
\begin{align*}
\Hvar^{3/2}(\Gamma):=\tr H^2(\Omega),\qquad
\|u\|_{\small\Hvar^{3/2}(\Gamma)}:=\inf \{\|v\|_{H^2(\Omega)}\colon v\in H^2(\Omega) \text{ and } \tr v=u\}.
\end{align*}
Further, let $B_r:=\{\xf\in\bbR^3\colon |\xf|<r\}$, $r>0$ and
\begin{align*}
\Hspace_{\tloc}(\curlF;\Omega)&:=\{\bfu\colon \Omega\to\bbC^3 \text{ such that } \bfu|_{\Omega\cap B_r}\in \Hspace(\curlF;\Omega\cap B_r) \text{ for all } r>0\},\\
\Lspace^2_{\tloc}(\Omega)&:=\{\bfu\colon \Omega\to\bbC^3 \text{ such that } \bfu|_{\Omega\cap B_r}\in \Lspace^2(\Omega\cap B_r) \text{ for all } r>0\}.
\end{align*}

\subsection{Tangential traces of $\Hspace(\curlF;\Omega)$}

In the remainder let $\Omega\subset\bbR^3$ be a bounded Lipschitz domain with outward unit normal vector $\nvf$ and $\Gamma_-$, $\Gamma_+$ be a Lipschitz dissection of it's boundary $\partial\Omega=\Gamma=\Gamma_+\dot\cup\Gamma_-\dot\cup \Sigma$, $\Sigma:=\ol{\Gamma_+}\cap\overline{\Gamma_-}$.
We recall that the scalar trace operator $\tr$, the tangential component trace operator $\trtan$ and it's rotated version $\trtanR$ defined as
\begin{align*}
\tr u:=u|_{\Gamma},\qquad \trtan\bfu:=\bfu_T:=\nvf\times\bfu|_{\Gamma}\times\nvf,\qquad \trtanR\bfu:=-\nvf\times\bfu|_{\Gamma}.
\end{align*}
For $S\in\{\Gamma,\Gamma_+,\Gamma_-\}$ let
\begin{align*}
\Lspace^2_T(S)&:=\{\bfu\in\Lspace^2(S)\colon \nvf\cdot\bfu=0\},\\
\Hspace^{1/2}_{T}(S)&:=\{\nvf\times\bfu\times\nvf\colon\bfu\in\Hspace^{1/2}(S)\},
&\Hspace^{1/2}_{T,00}(S)&:=\{\nvf\times\bfu\times\nvf\colon\bfu\in\Hspace^{1/2}_{00}(S)\},\\
\Hspace^{1/2}_{RT}(S)&:=\{-\nvf\times\bfu\colon\bfu\in\Hspace^{1/2}(S)\},
&\Hspace^{1/2}_{RT,00}(S)&:=\{-\nvf\times\bfu\colon\bfu\in\Hspace^{1/2}_{00}(S)\},
\end{align*}
where we adopt the notion to interpret tangential vectors as two-dimensional objects \cite{BuffaCostabelSheen02}.
Note that $\Hspace^{1/2}_{T}(\Gamma)=V_\pi$, $\Hspace^{1/2}_{RT}(\Gamma)=V_\gam$ with $V_\pi, V_\gam$ as in \cite{BuffaCostabelSheen02}.
Consider the surface curl and surface gradient operators $\nabla_\Gamma\colon H^1(\Gamma)\to\Lspace^2_T(\Gamma)$ and $\Curl_\Gamma\colon H^1(\Gamma)\to\Lspace^2_T(\Gamma)$.
On $H^2(\Omega)$ we have that $\tr_T\gradF u=\nabla_{\Gamma} \tr u$ and $\tr_{RT}\gradF u=\Curl_{\Gamma} \tr u$.
On the non-standard space%
\footnote{Note the non-standard symbol $\Hvar$ to distinguish it from the convenient Sobolev space $H^{3/2}(\Gamma)$ (defined, e.g., via the Sobolev–Slobodeckij norm).}
$\Hvar^{3/2}(\Gamma)$ we have that
\begin{align*}
\Curl_{\Gamma} \in \BLO( \Hvar^{3/2}(\Gamma), \Hspace^{1/2}_{RT}(\Gamma)),\qquad
-\nabla_{\Gamma} \in \BLO( \Hvar^{3/2}(\Gamma), \Hspace^{1/2}_{T}(\Gamma)).
\end{align*}
Subsequently, let
\begin{align*}
\curl_{\Gamma} \in \BLO(\Hspace^{1/2}_{RT}(\Gamma)', \Hvar^{3/2}(\Gamma)'),\qquad
\div_{\Gamma} \in \BLO(\Hspace^{1/2}_{T}(\Gamma)', \Hvar^{3/2}(\Gamma)'),
\end{align*}
be the respective dual operators.
Thence, with
\begin{align*}
\Hspace^{-1/2}(\curl_{\Gamma};\Gamma)&:=\{\bflam\in \Hspace^{1/2}_{RT}(\Gamma)'\colon  \curl_{\Gamma}\bflam\in H^{-1/2}(\Gamma)\},\\
\Hspace^{-1/2}(\div_{\Gamma};\Gamma)&:=\{\bflam\in \Hspace^{1/2}_{T}(\Gamma)'\colon  \div_{\Gamma}\bflam\in H^{-1/2}(\Gamma)\},
\end{align*}
as in \cite[Thm.~4.1]{BuffaCostabelSheen02} it holds that
\begin{align*}
\trtan\in\BLO(\Hspace(\curlF;\Omega),\Hspace^{-1/2}(\curl_{\Gamma};\Gamma)),\qquad
\trtanR\in\BLO(\Hspace(\curlF;\Omega),\Hspace^{-1/2}(\div_{\Gamma};\Gamma)).
\end{align*}
%

\subsection{Partial tangential traces of $\Hspace(\curlF;\Omega)$}

Let
\begin{align*}
\Hspace^{-1/2}(\curl_{\Gamma_\pm};\Gamma_\pm)
&:=\{\bflam\in \Hspace^{1/2}_{RT,00}(\Gamma_\pm)'\colon \curl_{\Gamma_\pm}\bflam \in H^{1/2}_{00}(\Gamma_\pm)'\},\\
\Hspace^{-1/2}(\div_{\Gamma_\pm};\Gamma_\pm)
&:=\{\bflam\in \Hspace^{1/2}_{T,00}(\Gamma_\pm)'\colon \div_{\Gamma_\pm}\bflam \in H^{1/2}_{00}(\Gamma_\pm)'\},
\end{align*}
where
\begin{align*}
\curl_{\Gamma_\pm} \in \BLO(\Hspace^{1/2}_{RT,00}(\Gamma_\pm)', \Hvar^{3/2}_{00}(\Gamma_\pm)'),\qquad
\div_{\Gamma_\pm} \in \BLO(\Hspace^{1/2}_{T,00}(\Gamma_\pm)', \Hvar^{3/2}_{00}(\Gamma_\pm)'),
\end{align*}
are defined as the dual operators of 
\begin{align*}
\Curl_{\Gamma_\pm} \in \BLO( \Hvar^{3/2}_{00}(\Gamma_\pm), \Hspace^{1/2}_{RT,00}(\Gamma_\pm)),\qquad
-\nabla_{\Gamma_\pm} \in \BLO( \Hvar^{3/2}_{00}(\Gamma_\pm), \Hspace^{1/2}_{T,00}(\Gamma_\pm)).
\end{align*}
Then, in a natural way acting on $\Hspace(\curlF;\Omega)$ the restrictions $\tr_{T,\Gamma_\pm}$, $\tr_{RT,\Gamma_\pm}$ of $\trtan$, $\trtanR$ to $\Gamma_\pm$ are well-defined and bounded as operators into the spaces $\Hspace^{-1/2}(\curl_{\Gamma_\pm};\Gamma_\pm)$ and $\Hspace^{-1/2}(\div_{\Gamma_\pm};\Gamma_\pm)$, i.e.,
\begin{align*}
\tr_{T,\Gamma_\pm} \in \BLO(\Hspace(\curlF;\Omega), \Hspace^{-1/2}(\curl_{\Gamma_\pm};\Gamma_\pm)),\qquad
\tr_{RT,\Gamma_\pm} \in \BLO(\Hspace(\curlF;\Omega), \Hspace^{-1/2}(\div_{\Gamma_\pm};\Gamma_\pm)).
\end{align*}

\subsection{Stronger partial tangential spaces}

Let
\begin{align*}
\Hspace^{-1/2}_{00}(\curl_{\Gamma_\pm};\Gamma_\pm)
&:=\{\bfu\in (\Hspace^{1/2}_{RT}(\Gamma_\pm))'\colon \curl_{\Gamma_\pm}\bfu \in (H^{1/2}(\Gamma_\pm))'\},\\
\Hspace^{-1/2}_{00}(\div_{\Gamma_\pm};\Gamma_\pm)
&:=\{\bfu\in (\Hspace^{1/2}_{T}(\Gamma_\pm))'\colon \div_{\Gamma_\pm}\bfu \in (H^{1/2}(\Gamma_\pm))'\},
\end{align*}
and note that
\begin{align*}
\Curl_{\Gamma_\pm} \in \BLO( \Hvar^{3/2}(\Gamma_\pm), \Hspace^{1/2}_{RT}(\Gamma_\pm)),\qquad
-\nabla_{\Gamma_\pm} \in \BLO( \Hvar^{3/2}(\Gamma_\pm), \Hspace^{1/2}_{T}(\Gamma_\pm)).
\end{align*}
For smooth $\Omega$ it holds that $\Hspace^{1/2}_{T}(\Gamma_\pm)=\Hspace^{1/2}_{RT}(\Gamma_\pm)$, the spaces $\Hspace^{-1/2}_{T}(\Gamma_\pm)$, $\Hspace^{-1/2}_{T,00}(\Gamma_\pm)$ have a meaningful definition, it holds that $\Hspace^{-1/2}_{T,00}(\Gamma_\pm)=\Hspace^{1/2}_{T}(\Gamma_\pm)'$, $\Hspace^{-1/2}_{T}(\Gamma_\pm)=\Hspace^{1/2}_{T,00}(\Gamma_\pm)'$ and thus
\begin{align*}
\Hspace^{-1/2}(\curl_{\Gamma_\pm};\Gamma_\pm)
&=\{\bflam\in \Hspace^{-1/2}_{T}(\Gamma_\pm)\colon \curl_{\Gamma_\pm}\bflam \in H^{-1/2}(\Gamma_\pm)\},\\
\Hspace^{-1/2}_{00}(\curl_{\Gamma_\pm};\Gamma_\pm)
&=\{\bflam\in \Hspace^{-1/2}_{T,00}(\Gamma_\pm)\colon \curl_{\Gamma_\pm}\bflam \in H^{-1/2}_{00}(\Gamma_\pm)\},\\
\Hspace^{-1/2}(\div_{\Gamma_\pm};\Gamma_\pm)
&=\{\bflam\in \Hspace^{-1/2}_{T}(\Gamma_\pm)\colon \div_{\Gamma_\pm}\bflam \in H^{-1/2}(\Gamma_\pm)\},\\
\Hspace^{-1/2}_{00}(\div_{\Gamma_\pm};\Gamma_\pm)
&=\{\bflam\in \Hspace^{-1/2}_{T,00}(\Gamma_\pm)\colon \div_{\Gamma_\pm}\bflam \in H^{-1/2}_{00}(\Gamma_\pm)\},
\end{align*}
which motivates the choice of indices in the foregoing definitions.
Also note that compared to the notation of \cite{BuffaCiarlet01a,BuffaCiarlet01b} the indices of these spaces are reversed%
\footnote{When comparing with \cite{BuffaCiarlet01a,BuffaCiarlet01b} note that therein $D$ is assumed to be a Lipschitz polyhedron and subsequently more specified characterizations of the spaces $\Hspace^{1/2}_{T}(\Gamma_\pm)$, $\Hspace^{1/2}_{RT}(\Gamma_\pm)$ are used.}.
Since $\Hvar^{3/2}(\Gamma_\pm)\subset H^s(\Gamma_\pm)$, $s\in(1/2,1)$ the trace operator $\tr_{\Sigma}\in \BLO(\Hvar^{3/2}(\Gamma_\pm), H^{s-1/2}(\Sigma))$ is well-defined and bounded.
Hence, let
\begin{align*}
\Hvar^1(\Sigma):=\tr_{\Sigma} \Hvar^{3/2}(\Gamma_\pm),
\end{align*}
$\nvt_\pm$ and $\tv_\pm$ be the unit normal and tangential vectors on $\Gamma_\pm$, where $\nvt_\pm$ is outward of $\Gamma_\pm$ and $\tv_\pm$ is such that $(\nvf,\nvt_\pm,\tv_\pm)$ forms a positively orientated coordinate system.
Note that $\nvt_+=-\nvt_-$ and $\tv_+=-\tv_-$.
Following the lines of the proof of \cite[Prop.~3.14]{BuffaCiarlet01a} the surface tangential trace $\tr_{tv_+}\bflam:=\tv_\pm\cdot\bflam|_{\Sigma}$ and the surface normal trace $\tr_{\nvt_\pm}\bflam:=\nvt_\pm\cdot\bflam|_{\Sigma}$ (defined for smooth $\bflam$)
admit continuous extensions
\begin{align*}
\tr_{\tv_\pm}\in\BLO(\Hspace^{-1/2}_{00}(\curl_{\Gamma_\pm};\Gamma_\pm), \Hvar^1(\Sigma)'),\qquad
\tr_{\nvt_\pm}\in\BLO(\Hspace^{-1/2}_{00}(\div_{\Gamma_\pm};\Gamma_\pm), \Hvar^1(\Sigma)'),
\end{align*}
which satisfy
\begin{align*}
\spl \bflam, \Curl_{\Gamma_\pm} \phi \spr_{\Hspace^{1/2}_{RT}(\Gamma_\pm)'\times \Hspace^{1/2}_{RT}(\Gamma_\pm)}
-\spl \curl_{\Gamma_\pm} \bflam, \phi \spr_{H^{1/2}(\Gamma_\pm)'\times H^{1/2}(\Gamma_\pm)}
&=-\spl \tr_{\tv_\pm}\bflam, \tr \phi \spr_{\small\Hvar^1(\Sigma)'\times\Hvar^1(\Sigma)},\\
\spl \bflam, \nabla_{\Gamma_\pm} \phi \spr_{\Hspace^{1/2}_T(\Gamma_\pm)'\times \Hspace^{1/2}_T(\Gamma_\pm)}
+\spl \div_{\Gamma_\pm} \bflam, \phi \spr_{H^{1/2}(\Gamma_\pm)'\times H^{1/2}(\Gamma_\pm)}
&=\spl \tr_{\nvt_\pm}\bflam, \tr \phi \spr_{\small\Hvar^1(\Sigma)'\times\Hvar^1(\Sigma)},
\end{align*}
for all $\phi\in\Hvar^{3/2}(\Gamma_+)$.
The following result holds.
\begin{lemma}\label{lem:tracespace-pw}
If $\bflam_{\pm}\in \Hspace^{-1/2}_{00}(\curl_{\Gamma_\pm};\Gamma_\pm)$ and $\tr_{\tv_+}\bflam_{+}+\tr_{\tv_-}\bflam_-=0$, then with $\bflam(\bfv):=\bflam_{+}(\bfv|_{\Gamma_+})+\bflam_-(\bfv|_{\Gamma_-})$, $\bfv\in \Hspace^{1/2}_{RT}(\Gamma)$ it holds that $\bflam\in\Hspace^{-1/2}(\curl_{\Gamma};\Gamma)$ and
\[
\|\bflam\|_{\Hspace^{-1/2}(\curl_{\Gamma};\Gamma)} \leq \sqrt{2}\big( \|\bflam_{+}\|_{\Hspace^{-1/2}_{00}(\curl_{\Gamma+};\Gamma+)}+\|\bflam_-\|_{\Hspace^{-1/2}_{00}(\curl_{\Gamma_-};\Gamma_-)} \big).
\]
Like-wise,
if $\bflam_{\pm}\in \Hspace^{-1/2}_{00}(\div_{\Gamma_\pm};\Gamma_\pm)$ and $\tr_{\nvt_+}\bflam_{+}+\tr_{\nvt_-}\bflam_-=0$, then with $\bflam(\bfv):=\bflam_{+}(\bfv|_{\Gamma_+})+\bflam_-(\bfv|_{\Gamma_-})$, $\bfv\in \Hspace^{1/2}_{RT}(\Gamma)$ it holds that $\bflam\in\Hspace^{-1/2}(\div_{\Gamma};\Gamma)$ and
\[
\|\bflam\|_{\Hspace^{-1/2}(\div_{\Gamma};\Gamma)} \leq \sqrt{2}\big( \|\bflam_{+}\|_{\Hspace^{-1/2}_{00}(\div_{\Gamma+};\Gamma+)}+\|\bflam_-\|_{\Hspace^{-1/2}_{00}(\div_{\Gamma_-};\Gamma_-)} \big).
\]
\end{lemma}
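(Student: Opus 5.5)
We verify the definition of $\Hspace^{-1/2}(\curl_{\Gamma};\Gamma)$ directly: first that $\bflam\in\Hspace^{1/2}_{RT}(\Gamma)'$, then that $\curl_\Gamma\bflam$, a priori only in $\Hvar^{3/2}(\Gamma)'$, extends to an element of $H^{-1/2}(\Gamma)$. The $\div$ case is identical, with $\nabla_{\Gamma_\pm}$ and the normal traces $\tr_{\nvt_\pm}$ replacing $\Curl_{\Gamma_\pm}$ and $\tr_{\tv_\pm}$, so we only treat $\curl$.

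\emph{Step 1 (the functional).} For $\bfv\in\Hspace^{1/2}_{RT}(\Gamma)$, write $\bfv=-\nvf\times\bfu$ with $\bfu\in\Hspace^{1/2}(\Gamma)$. The restriction $\bfu|_{\Gamma_\pm}$ lies in $\Hspace^{1/2}(\Gamma_\pm)$ and depends boundedly on $\bfu$, so $\bfv|_{\Gamma_\pm}\in\Hspace^{1/2}_{RT}(\Gamma_\pm)$ with norm at most $\|\bfv\|_{\Hspace^{1/2}_{RT}(\Gamma)}$, using the quotient (lifting) norms. Hence $|\bflam(\bfv)|\le(\|\bflam_+\|+\|\bflam_-\|)\|\bfv\|$, i.e. $\bflam\in\Hspace^{1/2}_{RT}(\Gamma)'$.

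\emph{Step 2 (the curl).} Take $\phi\in\Hvar^{3/2}(\Gamma)$. Its restrictions satisfy $\phi|_{\Gamma_\pm}\in\Hvar^{3/2}(\Gamma_\pm)$, because $\Hvar^{3/2}(\Gamma_\pm)$ consists of restrictions of traces of $H^2(\Omega)$, with compatible quotient norms. Both restrictions have the same trace $\tr_\Sigma\phi\in\Hvar^1(\Sigma)$, since both are restrictions of the single function $\phi$. Applying the Green formula stated before the lemma on each side gives
\begin{align*}
\spl\curl_\Gamma\bflam,\phi\spr
=\spl\bflam,\Curl_\Gamma\phi\spr
=\sum_\pm\Big(\spl\curl_{\Gamma_\pm}\bflam_\pm,\phi|_{\Gamma_\pm}\spr-\spl\tr_{\tv_\pm}\bflam_\pm,\tr_\Sigma\phi\spr\Big).
\end{align*}
The hypothesis $\tr_{\tv_+}\bflam_++\tr_{\tv_-}\bflam_-=0$ cancels the $\Sigma$-terms. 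What remains is $\sum_\pm\spl\curl_{\Gamma_\pm}\bflam_\pm,\phi|_{\Gamma_\pm}\spr$, with $\curl_{\Gamma_\pm}\bflam_\pm\in H^{1/2}(\Gamma_\pm)'$. Since $\|\phi|_{\Gamma_\pm}\|_{H^{1/2}(\Gamma_\pm)}\le\|\phi\|_{H^{1/2}(\Gamma)}$, this is bounded by $(\|\curl\bflam_+\|+\|\curl\bflam_-\|)\|\phi\|_{H^{1/2}(\Gamma)}$. By density of $\Hvar^{3/2}(\Gamma)$ in $H^{1/2}(\Gamma)$, $\curl_\Gamma\bflam$ extends to an element of $H^{-1/2}(\Gamma)=H^{1/2}(\Gamma)'$.

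\emph{Step 3 (the norm bound).} Squaring the graph norm gives $\|\bflam\|^2\le(a_++a_-)^2+(b_++b_-)^2$, where $a_\pm$ and $b_\pm$ denote the $\Hspace^{1/2}_{RT}(\Gamma_\pm)'$ and $H^{1/2}(\Gamma_\pm)'$ parts of the norms of $\bflam_\pm$. Since $(a_++a_-)^2+(b_++b_-)^2\le 2\big(\|\bflam_+\|+\|\bflam_-\|\big)^2$, where $\|\bflam_\pm\|^2=a_\pm^2+b_\pm^2$, the constant $\sqrt2$ follows; $\sqrt2$ is a safe, non-sharp choice.

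\emph{Main obstacle.} The delicate point is Step 2: justifying that the Green formula on $\Gamma_\pm$, stated for $\phi\in\Hvar^{3/2}(\Gamma_\pm)$, applies to the restrictions of a global $\phi$. This requires $\Curl_\Gamma\phi|_{\Gamma_\pm}=\Curl_{\Gamma_\pm}(\phi|_{\Gamma_\pm})$, which we verify by density of smooth $\phi$, i.e. traces of smooth $H^2$ functions. It also requires that $\Hvar^1(\Sigma)$ is the same space from both sides, so that the cancellation is meaningful. The latter holds because $\Hvar^1(\Sigma)$ is defined as the trace space of $\Hvar^{3/2}(\Gamma_\pm)$, and both restrictions come from one $H^2(\Omega)$ lift.
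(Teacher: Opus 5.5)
Your proposal is correct and follows essentially the same route as the paper. You bound the glued functional through its restrictions to $\Gamma_\pm$, then apply the Green formula on each $\Gamma_\pm$ to $\phi\in\Hvar^{3/2}(\Gamma)$, and cancel the $\Sigma$-terms using $\tr_{\tv_+}\bflam_++\tr_{\tv_-}\bflam_-=0$. Beyond the paper's ``the claim follows'', you spell out the density extension of $\curl_\Gamma\bflam$ to $H^{1/2}(\Gamma)$ and the norm constant, which by Minkowski's inequality could even be taken as $1$.
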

\begin{proof}
We only present the proof for the first case.
It follows from it's definition that $\bflam\in\Hspace^{1/2}_{RT}(\Gamma)'$ and $\|\bflam\|_{\Hspace^{1/2}_{RT}(\Gamma)'} \leq \|\bflam_{+}\|_{\Hspace^{1/2}_{RT}(\Gamma_+)'} + \|\bflam_{-}\|_{\Hspace^{1/2}_{RT}(\Gamma_-)'}$.
In addition, we have that
\begin{align}
\label{eq:pwcurl}
\begin{aligned}
\spl \curl_{\Gamma} \bflam,v\spr_{\small\Hvar^{3/2}(\Gamma)'\times\Hvar^{3/2}(\Gamma)}
&=\spl \bflam,\Curl_{\Gamma} v\spr_{\Hspace^{1/2}_{RT}(\Gamma)'\times\Hspace^{1/2}_{RT}(\Gamma)}\\
&\hspace{-25mm}=\spl \bflam_{+},\Curl_{\Gamma_+} v|_{\Gamma_+}\spr_{\Hspace^{1/2}_{RT}(\Gamma_+)'\times\Hspace^{1/2}_{RT}(\Gamma_+)}
+\spl \bflam_-,\Curl_{\Gamma_-} v|_{\Gamma_-}\spr_{\Hspace^{1/2}_{RT}(\Gamma_-)'\times\Hspace^{1/2}_{RT}(\Gamma_-)}\\
&\hspace{-25mm}=\spl \curl_{\Gamma_+} \bflam_{+},v|_{\Gamma_+}\spr_{\small\Hvar^{1/2}(\Gamma_+)'\times\Hvar^{1/2}(\Gamma_+)}
+\spl \curl_{\Gamma_-} \bflam_{-},v|_{\Gamma_-}\spr_{\small\Hvar^{3/2}(\Gamma_-)'\times\Hvar^{3/2}(\Gamma_-)}\\
&\hspace{-25mm}+\spl \tr_{\tv_+} \bflam_{+}, \tr_{\Sigma} v|_{\Gamma_+}\spr_{\small\Hvar^{1}(\Sigma)'\times\Hvar^{1}(\Sigma)}
+\spl \tr_{\tv_-} \bflam_{-}, \tr_{\Sigma} v|_{\Gamma_-}\spr_{\small\Hvar^{1}(\Sigma)'\times\Hvar^{1}(\Sigma)}\\
&\hspace{-25mm}=\spl \curl_{\Gamma_+} \bflam_{+},v|_{\Gamma_+}\spr_{\small\Hvar^{1/2}(\Gamma_+)'\times\Hvar^{1/2}(\Gamma_+)}
+\spl \curl_{\Gamma_-} \bflam_{-},v|_{\Gamma_-}\spr_{\small\Hvar^{1/2}(\Gamma_-)'\times\Hvar^{1/2}(\Gamma_-)}
\end{aligned}
\end{align}
for all $v\in\Hvar^{3/2}(\Gamma)$, from which the claim follows.
\end{proof}
\begin{lemma}\label{lem:tvtraces-pw}
If $\bflam\in \Hspace^{-1/2}(\curl_{\Gamma};\Gamma)$ and 
$\bflam|_{\Gamma_\pm}\in \Hspace^{-1/2}_{00}(\curl_{\Gamma_\pm};\Gamma_\pm)$,
then $\tr_{\tv_+}\bflam|_{\Gamma+}+\tr_{\tv_-}\bflam|_{\Gamma_-}=0$.
Like-wise,
If $\bflam\in \Hspace^{-1/2}(\div_{\Gamma};\Gamma)$ and 
$\bflam|_{\Gamma_\pm}\in \Hspace^{-1/2}_{00}(\div_{\Gamma_\pm};\Gamma_\pm)$,
then $\tr_{\nvt_+}\bflam|_{\Gamma+}+\tr_{\nvt_-}\bflam|_{\Gamma_-}=0$.
\end{lemma}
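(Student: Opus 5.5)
We prove the first claim; the second (for $\div$) follows verbatim with $\nabla_{\Gamma_\pm}$, $\tr_{\nvt_\pm}$ in place of $\Curl_{\Gamma_\pm}$, $\tr_{\tv_\pm}$. The plan is to run the computation \eqref{eq:pwcurl} from the proof of \Cref{lem:tracespace-pw} backwards. Set $\bflam_\pm:=\bflam|_{\Gamma_\pm}$ and fix $v\in\Hvar^{3/2}(\Gamma)$. By definition of $\curl_\Gamma$ as a dual operator, and by splitting the pairing over $\Gamma=\Gamma_+\cup\Sigma\cup\Gamma_-$ (legitimate because $\Sigma$ has surface measure zero and the restriction to $\Gamma_\pm$ is defined by duality), we get
\begin{align*}
\spl \curl_{\Gamma}\bflam,v\spr
=\spl \bflam_+,\Curl_{\Gamma_+} v|_{\Gamma_+}\spr+\spl \bflam_-,\Curl_{\Gamma_-} v|_{\Gamma_-}\spr .
\end{align*}
Applying the Green formula for $\tr_{\tv_\pm}$ on each side, which is available since $\bflam_\pm\in\Hspace^{-1/2}_{00}(\curl_{\Gamma_\pm};\Gamma_\pm)$ and $v|_{\Gamma_\pm}\in\Hvar^{3/2}(\Gamma_\pm)$, we obtain
\begin{align*}
\spl \curl_{\Gamma}\bflam,v\spr
=\sum_\pm\spl \curl_{\Gamma_\pm}\bflam_\pm,v|_{\Gamma_\pm}\spr
-\spl \tr_{\tv_+}\bflam_++\tr_{\tv_-}\bflam_-,\tr_\Sigma v\spr_{\Hvar^1(\Sigma)'\times\Hvar^1(\Sigma)},
\end{align*}
where we use that $\tr_\Sigma v|_{\Gamma_+}=\tr_\Sigma v|_{\Gamma_-}$ for traces of $\Hvar^{3/2}(\Gamma)$ functions.

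The next step is to show that the volume terms reproduce $\curl_\Gamma\bflam$ itself. Since $\curl_\Gamma\bflam\in H^{-1/2}(\Gamma)$, we need to identify $\curl_{\Gamma_\pm}\bflam_\pm$ with the restriction of $\curl_\Gamma\bflam$ to $\Gamma_\pm$. We check this on test functions $v$ supported in $\Gamma_\pm$ with vanishing trace on $\Sigma$, where both sides agree by the first identity. Both $\curl_{\Gamma_\pm}\bflam_\pm$ and $(\curl_\Gamma\bflam)|_{\Gamma_\pm}$ lie in $H^{1/2}(\Gamma_\pm)'=H^{-1/2}_{00}(\Gamma_\pm)$ (respectively $H^{-1/2}(\Gamma_\pm)$), so they coincide by density of such $v$ in $H^{1/2}_{00}(\Gamma_\pm)$. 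It remains to argue that $\sum_\pm\spl(\curl_\Gamma\bflam)|_{\Gamma_\pm},v|_{\Gamma_\pm}\spr=\spl\curl_\Gamma\bflam,v\spr$. This holds because $\curl_\Gamma\bflam$ is then an element of $H^{-1/2}(\Gamma)$ whose restrictions lie in the dual of $H^{1/2}(\Gamma_\pm)$, so it is the sum of its two pieces: an $H^{-1/2}(\Gamma)$ functional cannot carry mass on the Lipschitz curve $\Sigma$, as $H^{1/2}(\Gamma)$ functions have no trace there.

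Subtracting, we arrive at $\spl \tr_{\tv_+}\bflam_++\tr_{\tv_-}\bflam_-,\tr_\Sigma v\spr=0$ for all $v\in\Hvar^{3/2}(\Gamma)$. To conclude, we must know that $\tr_\Sigma\Hvar^{3/2}(\Gamma)$ exhausts (or is dense in) $\Hvar^1(\Sigma)=\tr_\Sigma\Hvar^{3/2}(\Gamma_\pm)$. For this, take $\phi_+\in\Hvar^{3/2}(\Gamma_+)$, which is by definition a trace of some $H^2(\Omega)$ function, and note that its trace on the whole $\Gamma$ is a $v\in\Hvar^{3/2}(\Gamma)$ with $\tr_\Sigma v=\tr_\Sigma\phi_+$. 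Hence the functional vanishes on all of $\Hvar^1(\Sigma)$.

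The main obstacle is the middle step: justifying that $\spl\curl_\Gamma\bflam,v\spr$ splits into the two partial pairings with no residual contribution on $\Sigma$. This requires care with the non-standard spaces $\Hvar^{3/2}$. We would first try to prove it via the density of functions vanishing near $\Sigma$ in $H^{1/2}(\Gamma)$, combined with the continuity of both sides in the $H^{1/2}$ norm of $v$.
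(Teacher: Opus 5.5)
Your proposal is correct and is essentially the paper's argument: the paper likewise takes the first four lines of \eqref{eq:pwcurl} with $\bflam_\pm:=\bflam|_{\Gamma_\pm}$, and uses $\curl_{\Gamma_\pm}(\bflam|_{\Gamma_\pm})=(\curl_{\Gamma}\bflam)|_{\Gamma_\pm}$ so that only the $\Sigma$-terms survive. You make explicit what the paper leaves implicit, namely the identification of the partial curls, the splitting of the global pairings, and the surjectivity of $\tr_\Sigma$ from $\Hvar^{3/2}(\Gamma)$ onto $\Hvar^1(\Sigma)$; for the splitting, the right justification is the density in $H^{1/2}(\Gamma)$ of functions vanishing near $\Sigma$ rather than $\Sigma$ having measure zero, and this density argument applies equally to your first splitting of $\spl\bflam,\Curl_\Gamma v\spr$.
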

\begin{proof}
When only present the proof for the first statement.
Since $\curl_{\Gamma_\pm}(\bflam|_{\Gamma_\pm})=(\curl_{\Gamma}\bflam)|_{\Gamma_\pm}$, the claim follows from the first four lines of \eqref{eq:pwcurl} where $\bflam_{\pm}:=\bflam|_{\pm\Gamma}$.
\end{proof}

\subsection{Tangential traces of $\Hspace_{\Gamma_-}(\curlF;\Omega)$}

The operator $\tr$ applied to $\bfu\in\Hspace^1(\Omega)$ is interpreted component-wise.
Let
\begin{align*}
H_{\Gamma_-}^1(\Omega)&:=\{u\in H^1(\Omega)\colon \tr u=0 \text{ on }\Gamma_-\},\\
\Hspace_{\Gamma_-}^1(\Omega)&:=\{\bfu\in\Hspace^1(\Omega)\colon \tr\bfu=0 \text{ on }\Gamma_-\},\\
\Hspace_{\Gamma_-}(\curlF;\Omega)&:=\{\bfu\in\Hspace(\curlF;\Omega)\colon (\trtan\bfu)|_{\Gamma_-}=0 \text{ in } \Hspace^{1/2}_{RT,00}(\Gamma_-)'\},
\end{align*}
and we refer to \cite[Thm.~4.5]{BauerPaulySchomburg16}
for alternative, equivalent characterizations of these spaces.
In particular, $H^1_{\Gamma_-}(\Omega)=\ol{C^\infty_{\Gamma_-}(\Omega)}^{H^1(\Omega)}$, $\Hspace^1_{\Gamma_-}(\Omega)=\ol{\bfC^\infty_{\Gamma_-}(\Omega)}^{\Hspace^1(\Omega)}$, $\Hspace_{\Gamma_-}(\curlF;\Omega)=\ol{\bfC^\infty_{\Gamma_-}(\Omega)}^{\Hspace(\curlF;\Omega)}$ where
\begin{align*}
C^\infty_{\Gamma_-}(\Omega):=\{u|_{\Omega} \colon u\in C^\infty(\bbR^3) \text{ and } \dist(\supp u, \Gamma_-)>0\},\qquad
\bfC^\infty_{\Gamma_-}(\Omega):=(C^\infty_{\Gamma_-}(\Omega))^3.
\end{align*}
As discussed previously $\tr_{T,\Gamma_\pm}$ and $\tr_{RT,\Gamma_\pm}$ are well-defined on $\Hspace(\curlF;\Omega)$.
However, acting on $\Hspace_{\Gamma_-}(\curlF;\Omega)$ the partial traces $\tr_{T,\Gamma_\pm}$ and $\tr_{RT,\Gamma_\pm}$ map continuously into the stronger spaces
\begin{align*}
\Hspace^{-1/2}_{0}(\curl_{\Gamma_\pm};\Gamma_\pm)
&:=\{\bflam\in\Hspace^{-1/2}_{00}(\curl_{\Gamma_\pm};\Gamma_\pm)\colon \tr_{\tv_\pm}\bflam=0 \text{ on }\Sigma\},\\
\Hspace^{-1/2}_{0}(\div_{\Gamma_\pm};\Gamma_\pm)
&:=\{\bflam\in\Hspace^{-1/2}_{00}(\div_{\Gamma_\pm};\Gamma_\pm)\colon \tr_{\nvt_\pm}\bflam=0 \text{ on }\Sigma\}.
\end{align*}
The following results hold.
\begin{lemma}\label{lem:tracespace}
It holds that
\begin{align*}
\tr_{T,\Gamma_\pm} \in \BLO( \Hspace_{\Gamma_-}(\curlF;\Omega), \Hspace^{-1/2}_{0}(\curl_{\Gamma_\pm};\Gamma_\pm)),\quad
\tr_{RT,\Gamma_\pm} \in \BLO( \Hspace_{\Gamma_-}(\curlF;\Omega), \Hspace^{-1/2}_{0}(\div_{\Gamma_\pm};\Gamma_\pm)),
\end{align*}
$\tr_{T,\Gamma_+}$, $\tr_{RT,\Gamma_+}$ are surjective and admit continuous right-inverses.
\end{lemma}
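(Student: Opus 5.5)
The plan is to follow the strategy of \cite[Prop.~3.14 and Thm.~3.13]{BuffaCiarlet01a}, adapted to Lipschitz domains by working with the non-standard spaces $\Hvar^{3/2}(\Gamma_\pm)$ and $\Hvar^1(\Sigma)$. The full-boundary results of \cite{BuffaCostabelSheen02} serve as the base. The proof has three parts: first that the traces land in the stronger space $\Hspace^{-1/2}_{00}(\curl_{\Gamma_\pm};\Gamma_\pm)$, then that the trace along $\Sigma$ vanishes, and finally surjectivity for $\Gamma_+$.

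\emph{Mapping property.} Let $\bfu\in\Hspace_{\Gamma_-}(\curlF;\Omega)$. By \cite[Thm.~4.1]{BuffaCostabelSheen02}, $\bflam:=\trtan\bfu\in\Hspace^{-1/2}(\curl_\Gamma;\Gamma)$, with $\curl_\Gamma\bflam=\tr(\nvf\cdot\curlF\bfu)\in H^{-1/2}(\Gamma)$. Since $\bflam|_{\Gamma_-}=0$, every $\bfv\in\Hspace^{1/2}_{RT}(\Gamma_+)$ can be extended by an arbitrary $\Hspace^{1/2}_{RT}(\Gamma)$-extension. This gives $\bflam|_{\Gamma_+}\in\Hspace^{1/2}_{RT}(\Gamma_+)'$, i.e.\ the $00$-dual rather than merely $\Hspace^{1/2}_{RT,00}(\Gamma_+)'$.

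For $\phi\in\Hvar^{3/2}(\Gamma_+)$ I take a lifting $v\in\Hvar^{3/2}(\Gamma)$, the trace of some $V\in H^2(\Omega)$. Green's formula then gives
\[
\langle\bflam,\Curl_\Gamma v\rangle=\int_\Omega\curlF\bfu\cdot\gradF V,
\]
and the right-hand side is bounded by $\|\bfu\|_{\Hspace(\curlF)}\|V\|_{H^1}$. This shows $\curl_{\Gamma_+}(\bflam|_{\Gamma_+})\in H^{1/2}(\Gamma_+)'$, independently of the chosen extension because $\bflam$ vanishes on $\Gamma_-$. The case $\Gamma_-$ is trivial, since there the trace is zero. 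The rotated trace $\tr_{RT,\Gamma_\pm}$ is handled identically with $\nabla_\Gamma$ in place of $\Curl_\Gamma$.

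\emph{Vanishing trace on $\Sigma$.} This follows from \Cref{lem:tvtraces-pw}: $\tr_{\tv_+}\bflam|_{\Gamma_+}=-\tr_{\tv_-}\bflam|_{\Gamma_-}=0$.

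\emph{Surjectivity and right-inverse.} Given $\bflam_+\in\Hspace^{-1/2}_0(\curl_{\Gamma_+};\Gamma_+)$, I extend it by zero on $\Gamma_-$. By \Cref{lem:tracespace-pw}, using $\tr_{\tv_+}\bflam_+=0$, the extension $\bflam$ lies in $\Hspace^{-1/2}(\curl_\Gamma;\Gamma)$, with norm bounded by $\sqrt2\|\bflam_+\|$. The continuous right-inverse of $\trtan$ from \cite[Thm.~4.1]{BuffaCostabelSheen02} yields $\bfu\in\Hspace(\curlF;\Omega)$ with $\trtan\bfu=\bflam$. Then $(\trtan\bfu)|_{\Gamma_-}=0$, so $\bfu\in\Hspace_{\Gamma_-}(\curlF;\Omega)$, and the composition of these maps is the desired bounded right-inverse.

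The main obstacle is the justification of the traces $\tr_{\tv_\pm}$, $\tr_{\nvt_\pm}$ and their Green-type identities on merely Lipschitz $\Gamma_\pm$, which the statement takes for granted. This requires that the space $\Hvar^1(\Sigma)=\tr_\Sigma\Hvar^{3/2}(\Gamma_\pm)$ is independent of the side $\pm$, so that the pairing in \Cref{lem:tracespace-pw} makes sense. I would establish this by noting that both sides arise as traces of $H^2(\Omega)$-functions restricted to $\Sigma$. I would also check that the extension-by-zero argument really gives the full $\Hspace^{1/2}_{RT}(\Gamma_+)'$ bound, which relies on $\Hspace^{1/2}_{RT}(\Gamma)|_{\Gamma_+}=\Hspace^{1/2}_{RT}(\Gamma_+)$ together with a bounded extension operator.
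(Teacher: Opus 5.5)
Your proposal is correct and follows essentially the same route as the paper. You restrict $\trtan\bfu$ to $\Gamma_+$ using its vanishing on $\Gamma_-$ to land in $\Hspace^{-1/2}_{00}(\curl_{\Gamma_+};\Gamma_+)$, invoke \Cref{lem:tvtraces-pw} to kill the tangential trace on $\Sigma$, and get the right-inverse from that of $\trtan$ in \cite{BuffaCostabelSheen02}; your zero-extension via \Cref{lem:tracespace-pw} simply spells out what the paper compresses into ``deduced from the respective properties of $\tr_T$'', and your volume Green formula is equivalent to the paper's use of $\curl_\Gamma\trtan\bfu\in H^{-1/2}(\Gamma)$.
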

\begin{proof}
We only present the proof for the first case.
Note that $\tr_{T,\Gamma_-}=0$.
Since
\begin{align*}
\spl \tr_{T,\Gamma_+}\bfu,\bfv|_{\Gamma_+} \spr_{\Hspace^{1/2}_{RT}(\Gamma_+)'\times \Hspace^{1/2}_{RT}(\Gamma_+)}&= \spl \tr_{T}\bfu, \bfv \spr_{\Hspace^{1/2}_{RT}(\Gamma)'\times \Hspace^{1/2}_{RT}(\Gamma)} \quad\forall\bfv\in\Hspace^{1/2}_{RT}(\Gamma),\\
\spl \curl_{\Gamma_+}\tr_{T,\Gamma_+}\bfu,v|_{\Gamma_+} \spr_{H^{1/2}(\Gamma_+)'\times H^{1/2}(\Gamma_+)}&=\spl \curl_{\Gamma}\tr_{T}\bfu,v \spr_{H^{1/2}(\Gamma)'\times H^{1/2}(\Gamma)} \quad\forall v\in H^{1/2}(\Gamma),
\end{align*}
it follows that $\tr_{T,\Gamma_\pm} \in \BLO( \Hspace_{\Gamma_-}(\curlF;\Omega), \Hspace^{-1/2}_{00}(\curl_{\Gamma_\pm};\Gamma_\pm))$.
Due to \Cref{lem:tvtraces-pw} we have that indeed $\tr_{T,\Gamma_\pm} \in \BLO( \Hspace_{\Gamma_-}(\curlF;\Omega), \Hspace^{-1/2}_{0}(\curl_{\Gamma_\pm};\Gamma_\pm))$.
The surjectivity and existence of a continuous right-inverse are deduced from the respective properties of $\tr_T$.
\end{proof}
Let
\begin{align*}
\bfC^\infty_{T,00}(\Gamma_+)&:=\tr_{T,\Gamma_+} \bfC^\infty_{\Gamma_-}
=\{\nvf\times\bfu|_{\Gamma_+}\times\nvf\colon \bfu\in\bfC^\infty_{\Gamma_-}\},\\
\bfC^\infty_{RT,00}(\Gamma_+)&:=\tr_{RT,\Gamma_+} \bfC^\infty_{\Gamma_-}
=\{-\nvf\times\bfu|_{\Gamma_+}\colon \bfu\in\bfC^\infty_{\Gamma_-}\}.
\end{align*}
\begin{lemma}\label{lem:tracespace-density}
The space $\bfC^\infty_{T,00}(\Gamma_+)$ is dense in $\Hspace^{-1/2}_{0}(\curl_{\Gamma_+};\Gamma_+)$ and $\bfC^\infty_{RT,00}(\Gamma_+)$ is dense in $\Hspace^{-1/2}_{0}(\div_{\Gamma_+};\Gamma_+)$.
\end{lemma}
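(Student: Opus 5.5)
The density claim will follow from the trace lemma together with a density statement in the volume space. The key ingredients are the surjectivity of $\tr_{T,\Gamma_+}$ with a continuous right-inverse from \Cref{lem:tracespace}, and the density of $\bfC^\infty_{\Gamma_-}(\Omega)$ in $\Hspace_{\Gamma_-}(\curlF;\Omega)$ recalled from \cite[Thm.~4.5]{BauerPaulySchomburg16}. I only treat the first claim; the second is identical with $\tr_{RT,\Gamma_+}$ and $\Hspace^{-1/2}_{0}(\div_{\Gamma_+};\Gamma_+)$ in place of $\tr_{T,\Gamma_+}$ and $\Hspace^{-1/2}_{0}(\curl_{\Gamma_+};\Gamma_+)$.

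Let $\bflam\in\Hspace^{-1/2}_{0}(\curl_{\Gamma_+};\Gamma_+)$ and $\epsilon>0$. By \Cref{lem:tracespace} there is a bounded right-inverse $R$ of $\tr_{T,\Gamma_+}$. Set $\bfu:=R\bflam\in\Hspace_{\Gamma_-}(\curlF;\Omega)$, so that $\tr_{T,\Gamma_+}\bfu=\bflam$. By the density result just quoted, we choose $\bfu_\epsilon\in\bfC^\infty_{\Gamma_-}(\Omega)$ with $\|\bfu-\bfu_\epsilon\|_{\Hspace(\curlF;\Omega)}<\epsilon$. Since $\tr_{T,\Gamma_+}\in\BLO(\Hspace_{\Gamma_-}(\curlF;\Omega),\Hspace^{-1/2}_{0}(\curl_{\Gamma_+};\Gamma_+))$ by \Cref{lem:tracespace}, we obtain
\begin{align*}
\|\bflam-\tr_{T,\Gamma_+}\bfu_\epsilon\|_{\Hspace^{-1/2}_{0}(\curl_{\Gamma_+};\Gamma_+)}\leq \|\tr_{T,\Gamma_+}\|\,\epsilon .
\end{align*}
Moreover $\tr_{T,\Gamma_+}\bfu_\epsilon\in\bfC^\infty_{T,00}(\Gamma_+)$ by the definition of that space. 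As $\epsilon>0$ was arbitrary, density follows.

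The main point to check is the surjectivity and right-inverse part of \Cref{lem:tracespace}; its proof only says this is ``deduced from the respective properties of $\tr_T$''. To make this rigorous, extend $\bflam$ by zero to $\Gamma_-$, i.e.\ take $\bflam_+=\bflam$ and $\bflam_-=0$ in \Cref{lem:tracespace-pw}. This is legitimate because $\tr_{\tv_+}\bflam=0$, so the compatibility condition $\tr_{\tv_+}\bflam_++\tr_{\tv_-}\bflam_-=0$ holds. \Cref{lem:tracespace-pw} then gives an element of $\Hspace^{-1/2}(\curl_\Gamma;\Gamma)$ whose norm is controlled by that of $\bflam$. Apply the continuous right-inverse of $\tr_T$ from \cite[Thm.~4.1]{BuffaCostabelSheen02} to this extension. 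The resulting field has vanishing tangential trace on $\Gamma_-$, hence lies in $\Hspace_{\Gamma_-}(\curlF;\Omega)$, and its partial trace on $\Gamma_+$ is $\bflam$. Its norm is bounded by a constant times the norm of $\bflam$ in $\Hspace^{-1/2}_{0}(\curl_{\Gamma_+};\Gamma_+)$. This yields the right-inverse $R$ used above, and the argument is complete.
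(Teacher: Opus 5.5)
Your proof is correct and matches the paper's: the paper also deduces density from the surjectivity and continuity of $\tr_{T,\Gamma_+}$ in \Cref{lem:tracespace}, combined with the density of $\bfC^\infty_{\Gamma_-}(\Omega)$ in $\Hspace_{\Gamma_-}(\curlF;\Omega)$. Your zero-extension argument via \Cref{lem:tracespace-pw} and the right-inverse of $\tr_T$ from \cite{BuffaCostabelSheen02} spells out the surjectivity step, which the paper's proof of \Cref{lem:tracespace} only asserts. For density you need only surjectivity, not boundedness of the right-inverse.
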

\begin{proof}
Follows from \Cref{lem:tracespace} and the density of $\bfC^\infty_{\Gamma_-}$ in $\Hspace_{\Gamma_-}(\curlF;\Omega)$.
\end{proof}
\begin{lemma}\label{lem:tracespace-inherent}
Let $\Omega^j$, $j=1,2$ be two bounded Lipschitz domains with respective Lipschitz dissections $\partial\Omega^j=\Gamma_+^j \dot\cup \Gamma_- \dot\cup \Sigma^j$, $\Sigma^j:=\overline{\Gamma_+^j}\cap\overline{\Gamma_-^j}$, $j=1,2$ such that
$\Gamma_+^1=\Gamma_+^2=:\Gamma_+$ with $\nvf^1=\fraks\nvf^2$, $\fraks=+1$ or $\fraks=-1$.
Then
\begin{align*}
\Hspace^{-1/2}_{0}(\curl_{\Gamma_+^1};\Gamma_+^1)
&= \Hspace^{-1/2}_{0}(\curl_{\Gamma_+^2};\Gamma_+^2)
=:\Hspace^{-1/2}_{0}(\curl_{\Gamma_+};\Gamma_+),\\
\Hspace^{-1/2}_{0}(\div_{\Gamma_+^1};\Gamma_+^1)
&= \Hspace^{-1/2}_{0}(\div_{\Gamma_+^2};\Gamma_+^2)
=:\Hspace^{-1/2}_{0}(\div_{\Gamma_+};\Gamma_+),
\end{align*}
and
\begin{align*}
\curl_{\Gamma_+^1}=\fraks\curl_{\Gamma_+^2},\qquad
\div_{\Gamma_+^1}=\fraks\div_{\Gamma_+^2},
\end{align*}
i.e., $\Hspace^{-1/2}_{0}(\curl_{\Gamma_+};\Gamma_+)$ and $\Hspace^{-1/2}_{0}(\div_{\Gamma_+};\Gamma_+)$ are independent of the Lipschitz manifold $\Gamma$ in which $\Gamma_+$ is embedded in.
\end{lemma}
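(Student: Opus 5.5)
The plan is to trace every ingredient in the definition of $\Hspace^{-1/2}_{0}(\curl_{\Gamma_+^j};\Gamma_+^j)$ back to objects that live on $\Gamma_+$ alone, and then compare. The definition rests on three things: the dual space $\Hspace^{1/2}_{RT}(\Gamma_+)'$, the operator $\curl_{\Gamma_+}$ obtained by duality from $\Curl_{\Gamma_+}$ on $\Hvar^{3/2}(\Gamma_+)$, and the trace $\tr_{\tv_+}$ on $\Sigma^j$. I first check that each of these depends only on $\Gamma_+$ and, in the case of the rotated objects, on the orientation $\nvf$. For $\Hspace^{1/2}_{T}(\Gamma_+)$ and $\Hspace^{1/2}_{RT}(\Gamma_+)$ this holds because $\Hspace^{1/2}(\Gamma_+)$ is intrinsic: it can be defined via charts of $\Gamma_+$, or equivalently as restrictions from any Lipschitz surface containing $\Gamma_+$. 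A sign flip $\nvf^1=\fraks\nvf^2$ leaves $\nvf\times\bfu\times\nvf$ unchanged and multiplies $-\nvf\times\bfu$ by $\fraks$. Hence the spaces coincide, and their duals coincide as sets with equivalent norms.

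The substantive point is $\Hvar^{3/2}(\Gamma_+)$. It is defined as restrictions to $\Gamma_+$ of traces of $H^2(\Omega^j)$, so a priori it depends on $j$. I would show that $\tr H^2(\Omega^1)|_{\Gamma_+}=\tr H^2(\Omega^2)|_{\Gamma_+}$ with equivalent norms. Take $u\in H^2(\Omega^1)$. The two domains lie on opposite or the same sides of $\Gamma_+$, and each is locally a Lipschitz graph domain near $\Gamma_+$. Using a partition of unity subordinate to a cover of $\ol{\Gamma_+}$ by such charts, together with a Stein extension operator (bounded $H^2(\Omega^1)\to H^2(\bbR^3)$ for Lipschitz domains), I extend $u$ to $\bbR^3$ and then restrict to $\Omega^2$. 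Since $\tr$ on $\Gamma_+$ is the same from both sides for $H^2(\bbR^3)$ functions, this gives the inclusion together with a norm bound, and symmetry gives the reverse. For $\Hvar^{3/2}_{00}$, and for $\Hvar^1(\Sigma^j)$, I would use the same Stein argument. Note that $\Sigma^1=\Sigma^2=\partial\Gamma_+$, since $\Sigma^j=\ol{\Gamma_+}\cap\ol{\Gamma_-^j}$ is the relative boundary of $\Gamma_+$.

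With the spaces identified, the operators are compared. Since $\Curl_{\Gamma_+}\phi=-\nvf\times\nabla_{\Gamma_+}\phi$ and $\nabla_{\Gamma_+}$ is intrinsic, $\Curl_{\Gamma_+^1}=\fraks\Curl_{\Gamma_+^2}$. By duality, $\curl_{\Gamma_+^1}=\fraks\curl_{\Gamma_+^2}$ when both act on $\Hspace^{1/2}_{RT}(\Gamma_+)'$; the pairing there involves $\nvf$ twice, once via $\Hspace_{RT}$ and once via $\Curl$, and I would check this sign bookkeeping carefully. For $\div$ the same argument applies with $\nabla_{\Gamma_+}$, giving $\div_{\Gamma_+^1}=\fraks\div_{\Gamma_+^2}$ after tracking the pairing. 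It follows that the conditions $\curl\bflam\in H^{1/2}(\Gamma_+)'$ (respectively $\div\bflam\in H^{1/2}(\Gamma_+)'$) agree, so the two $\Hspace^{-1/2}_{00}$ spaces coincide.

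Finally, the trace conditions are compared. The Green-type identity defining $\tr_{\tv_+}$ involves only $\Gamma_+$-objects and $\tr_\Sigma$. Under the orientation change $\tv_+=\rotm$-type quantities flip by $\fraks$, and so does the identity. Therefore $\tr_{\tv_+}\bflam=0$ is independent of $j$, and the same holds for $\tr_{\nvt_+}$. I expect the main obstacle to be the identification of $\Hvar^{3/2}(\Gamma_+)$ and $\Hvar^1(\Sigma)$ across the two domains. I would handle it by the Stein extension and localization argument above, with care near $\Sigma$, where both $\Omega^j$ are only Lipschitz. If that turns out to be delicate, I would instead characterize the condition $\tr_{\tv_+}\bflam=0$ through density (\Cref{lem:tracespace-density}), as closures of the smooth space $\bfC^\infty_{T,00}(\Gamma_+)$. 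That space is clearly independent of $j$, since it consists of smooth tangential fields supported away from $\Sigma$.
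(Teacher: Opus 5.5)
Your main route is sound for the two set equalities and for $\curl_{\Gamma_+^1}=\fraks\curl_{\Gamma_+^2}$, but it differs from the paper's.

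The paper never compares $\Hvar^{3/2}(\Gamma_+)$ across the two domains. It uses three facts:
\begin{itemize}
\item $H^{1/2}(\Gamma_+)$, and hence $\Hspace^{1/2}_{RT}(\Gamma_+)'$, is intrinsic (Sobolev--Slobodeckij norm);
\item on the $j$-independent space $\bfC^\infty_{T,00}(\Gamma_+)$ the two curls are classical and differ by $\fraks$, so the two graph norms agree there;
\item by \Cref{lem:tracespace-density}, applied in each $\Omega^j$, both spaces are the closure of this common set under this common norm inside $\Hspace^{1/2}_{RT}(\Gamma_+)'$.
\end{itemize}
That is your fallback, once you add the agreement of the norms.

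You instead identify $\Hvar^{3/2}(\Gamma_+)$ by a Stein extension. Since that operator is global, the partition of unity is superfluous. From it you get $\Curl_{\Gamma_+^1}=\fraks\Curl_{\Gamma_+^2}$, hence $\curl_{\Gamma_+^1}=\fraks\curl_{\Gamma_+^2}$ by duality. You also get $\tr^{(1)}_{\tv_+}=\fraks\,\tr^{(2)}_{\tv_+}$ from the Green identity and $\tr_\Sigma\Hvar^{3/2}(\Gamma_+)=\Hvar^1(\Sigma)$. Your route bypasses the density lemma and the trace surjectivity behind it. It also proves more: $\Hvar^{3/2}(\Gamma_+)$, $\Hvar^1(\Sigma)$ and, up to $\fraks$, the tangential trace on $\Sigma$ do not depend on $\Omega^j$ (cf.\ \Cref{rem:localization}). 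The price is the extension theorem.

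Two details on your route:
\begin{itemize}
\item Your worry that $\nvf$ enters the pairing twice is unfounded. The pairing is the $\Lspace^2_T(\Gamma_+)$-extension on a set of fields that is the same for both signs of $\nvf$. The normal enters only through $\Curl_{\Gamma_+}=-\nvf\times\nabla_{\Gamma_+}$.
\item The remark that $\Hvar^{3/2}_{00}$ is handled by the same Stein argument is unjustified. Extending a function that vanishes on $\Gamma_-^1$ gives no control of its trace on $\Gamma_-^2$. It is also unnecessary: $\bflam\in\Hspace^{-1/2}_{0}(\curl_{\Gamma_+};\Gamma_+)$ if and only if $\phi\mapsto\spl\bflam,\Curl_{\Gamma_+}\phi\spr$ is $H^{1/2}(\Gamma_+)$-bounded on $\Hvar^{3/2}(\Gamma_+)$. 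So only $\Hvar^{3/2}(\Gamma_+)$ has to be identified.
\end{itemize}

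One assertion in your proposal is wrong: tracking the pairing for $\div$ does not produce the factor $\fraks$. As you observe yourself, $\Hspace^{1/2}_{T}(\Gamma_+)$ and $\nabla_{\Gamma_+}$ are unchanged under $\nvf\mapsto-\nvf$. So the duality definition gives $\div_{\Gamma_+^1}=\div_{\Gamma_+^2}$. On a flat piece of $\Gamma_+$ both equal $\partial_x\lambda_x+\partial_y\lambda_y$, whichever side the domain lies on. Likewise $\nvt_+$ does not flip with $\nvf$, so $\tr_{\nvt_+}$ is the same for both domains; by contrast $\tv_+$ and $\tr_{\tv_+}$ do flip.

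Hence, as an identity between operators on the common space, $\div_{\Gamma_+^1}=\fraks\div_{\Gamma_+^2}$ is false for $\fraks=-1$: it would force $\div_{\Gamma_+}\bflam=0$ for every $\bflam$. What does pick up $\fraks$ is the composition $\div_{\Gamma_+}\tr_{RT,\Gamma_+}$, because $\tr_{RT,\Gamma_+}=-\nvf\times(\cdot)$ itself flips. The paper's proof treats only the curl case and does not address this.

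The set equality for the $\div$ spaces is unaffected. The defining conditions $\div_{\Gamma_+}\bflam\in H^{1/2}(\Gamma_+)'$ and $\tr_{\nvt_+}\bflam=0$ are insensitive to signs. Your argument does prove this equality, with $\div_{\Gamma_+^1}=\div_{\Gamma_+^2}$ in place of the stated relation.
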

\begin{proof}
We only present the proof for the first case.
The space $H^{1/2}(\Gamma_+)$ admits a $\Gamma$-independent characterization by means of the Sobolev-Slobodeckij norm, and hence does $\Hspace^{1/2}_{RT}(\Gamma_+)'$.
For $\bflam\in\bfC^\infty_{T,00}(\Gamma_+)$ we have that $\curl_{\Gamma_+^1}\bflam=\fraks\curl_{\Gamma_+^2}\bflam$ and thus the claim follows from the density of $\bfC^\infty_{T,00}(\Gamma_+)$.
\end{proof}
\begin{remark}\label{rem:localization}
Note that although \Cref{lem:tracespace-inherent} yields that $\Hspace^{-1/2}_{0}(\curl_{\Gamma_+};\Gamma_+)$ and $\Hspace^{-1/2}_{0}(\div_{\Gamma_+};\Gamma_+)$ are independent of the \emph{particular} larger Lipschitz manifold $\Gamma\supset \Gamma^+$, their definitions, the operators $\curl_{\Gamma_+}$, $\div_{\Gamma_+}$ and $\tr_{\tv_+}$, $\tr_{\nvt_+}$ to be more precise, still involve a lifting to a volumetric domain $\Omega$.
However, for (piece-wise) smooth $\Gamma_+$ equivalent localized definitions of $\Hvar^{3/2}(\Gamma_+)$, $\Hvar^1(\Sigma)$ exist and hence localized definitions of $\Hspace^{-1/2}_{0}(\curl_{\Gamma_+};\Gamma_+)$ and $\Hspace^{-1/2}_{0}(\div_{\Gamma_+};\Gamma_+)$ are possible.
\end{remark}
We recall the bounded extensions
\begin{align*}
\nabla_{\Gamma} &\in \BLO( H^{1/2}(\Gamma), \Hspace^{1/2}_{RT}(\Gamma)' ),
&\Curl_{\Gamma} &\in \BLO( H^{1/2}(\Gamma), \Hspace^{1/2}_{T}(\Gamma)' ),\\
\nabla_{\Gamma_+} &\in \BLO( H^{1/2}_{00}(\Gamma_+), \Hspace^{1/2}_{RT}(\Gamma_+)' ),
&\Curl_{\Gamma_+} &\in \BLO( H^{1/2}_{00}(\Gamma_+), \Hspace^{1/2}_{T}(\Gamma_+)' ).
\end{align*}
We call an algebraic decomposition of a Banach space \emph{regular}, if it is a topological decomposition (see, e.g., \cite[Def.~3.4]{HallaHohage21} for a definition).
The following is a consequence of \cite{PaulySchomburg22}.
\begin{lemma}\label{lem:tracespace-decomp-reg}
The decompositions
\begin{align*}
\Hspace^{-1/2}_{0}(\curl_{\Gamma_+};\Gamma_+)&=
\Hspace^{1/2}_{T,00}(\Gamma_+)
\oplus \nabla_{\Gamma_+} H^{1/2}_{00}(\Gamma_+),\\
\Hspace^{-1/2}_{0}(\div_{\Gamma_+};\Gamma_+)&=
\Hspace^{1/2}_{RT,00}(\Gamma_+)
\oplus \Curl_{\Gamma_+} H^{1/2}_{00}(\Gamma_+)
\end{align*}
are regular.
\end{lemma}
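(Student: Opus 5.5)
The natural strategy is to lift the known regular decomposition of the full-boundary space $\Hspace^{-1/2}(\curl_{\Gamma};\Gamma)$ to the partial space with vanishing tangential trace on $\Sigma$, using the volumetric space $\Hspace_{\Gamma_-}(\curlF;\Omega)$ and \Cref{lem:tracespace} as the bridge. I only describe the $\curl$ case; the $\div$ case follows by rotation with $\nvf\times$, which maps $\Hspace^{1/2}_{T,00}(\Gamma_+)$ onto $\Hspace^{1/2}_{RT,00}(\Gamma_+)$ and $\nabla_{\Gamma_+}$ onto $\Curl_{\Gamma_+}$.

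First, I show that both summands lie in $\Hspace^{-1/2}_{0}(\curl_{\Gamma_+};\Gamma_+)$ with continuous embeddings. For $\nabla_{\Gamma_+}H^{1/2}_{00}(\Gamma_+)$, extend $\phi\in H^{1/2}_{00}(\Gamma_+)$ by zero to $H^{1/2}(\Gamma)$ and lift it to $u\in H^1_{\Gamma_-}(\Omega)$. Then $\gradF u\in\Hspace_{\Gamma_-}(\curlF;\Omega)$ and $\tr_{T,\Gamma_+}\gradF u=\nabla_{\Gamma_+}\phi$, so \Cref{lem:tracespace} applies. For $\Hspace^{1/2}_{T,00}(\Gamma_+)$, I argue analogously: lift $\nvf\times\bfu\times\nvf$ with $\bfu\in\Hspace^{1/2}_{00}(\Gamma_+)$ to $\Hspace^1_{\Gamma_-}(\Omega)$. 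Next, the sum is direct. An element of the intersection is curl-free, since it has the form $\nabla_{\Gamma_+}\phi$. It is also $H^{1/2}$-regular with vanishing trace on $\Sigma$. I would rule out nonzero such elements with a Hodge-type argument, or else correct the decomposition by a finite-dimensional cohomology space. The latter should only be needed if the topology of $\Gamma_+$ is nontrivial, and in the application $\Gamma_+\sim\geocs$ this is handled by \Cref{rem:ass-top-flat}.

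The main step is surjectivity with a bounded splitting, and I plan to carry it out in the volume. Given $\bflam\in\Hspace^{-1/2}_{0}(\curl_{\Gamma_+};\Gamma_+)$, take $\bfE:=R\bflam\in\Hspace_{\Gamma_-}(\curlF;\Omega)$ with the continuous right inverse $R$ from \Cref{lem:tracespace}. Then apply the regular decomposition for mixed boundary conditions from \cite{PaulySchomburg22} (cf.\ \cite{BauerPaulySchomburg16}): $\bfE=\bfz+\gradF u$ with $\bfz\in\Hspace^1_{\Gamma_-}(\Omega)$ and $u\in H^1_{\Gamma_-}(\Omega)$, both depending continuously on $\bfE$. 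Taking $\tr_{T,\Gamma_+}$ gives $\bflam=(\nvf\times\tr\bfz\times\nvf)|_{\Gamma_+}+\nabla_{\Gamma_+}(\tr u)|_{\Gamma_+}$. Since $\tr\bfz$ and $\tr u$ vanish on $\Gamma_-$, their restrictions lie in $\Hspace^{1/2}_{00}(\Gamma_+)$ and $H^{1/2}_{00}(\Gamma_+)$, so the two pieces lie in the claimed summands. The composition of $R$, the volumetric decomposition operators and the traces is bounded, so the decomposition is topological. In the direct-sum case this also gives the bounded projections directly.

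I expect the main obstacle to be verifying that the cited mixed-boundary regular decomposition holds for a general Lipschitz dissection $\Gamma_\pm$ and a domain of arbitrary topology, with $\Hspace^1_{\Gamma_-}$ regularity. Here I would rely on the weak Lipschitz pair assumptions in \cite{PaulySchomburg22}. A second point to check is that restricting the trace of an $\Hspace^1_{\Gamma_-}(\Omega)$ field to $\Gamma_+$ really lands in $\Hspace^{1/2}_{00}(\Gamma_+)$. This is standard, since zero extension across $\Sigma$ gives an $H^{1/2}(\Gamma)$ function.
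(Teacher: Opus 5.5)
Your splitting argument is the paper's proof: right inverse from \Cref{lem:tracespace}, then the volumetric decomposition \eqref{eq:decomp_Hcurldir} of \cite{PaulySchomburg22}, then $\tr_{T,\Gamma_+}$. Together with your first step (both summands embed continuously), this is all that is needed.

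The step that fails is your plan to prove that the sum is direct. It is not direct, and the obstruction has nothing to do with topology. Take $\Phi\in C^\infty_c(\bbR^3)$ whose support does not meet $\ol{\Gamma_-}$, and set $\phi:=\Phi|_{\Gamma_+}\in H^{1/2}_{00}(\Gamma_+)$. Then $\nabla_{\Gamma_+}\phi=\tr_{T,\Gamma_+}\gradF\Phi$ is the tangential part of $(\gradF\Phi)|_{\Gamma_+}\in\Hspace^{1/2}_{00}(\Gamma_+)$. Hence it lies in $\Hspace^{1/2}_{T,00}(\Gamma_+)$ and also in $\nabla_{\Gamma_+}H^{1/2}_{00}(\Gamma_+)$. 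So the intersection is infinite-dimensional. No Hodge-type argument can exclude it, and no finite-dimensional cohomology correction repairs it. \Cref{rem:ass-top-flat} is beside the point here, and in any case $\geocs$ may have holes.

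The same happens in the volume. For every $u\in C^\infty_c(\Omega)$ we have $\gradF u\in\Hspace^1_{\Gamma_-}(\Omega)\cap\gradF H^1_{\Gamma_-}(\Omega)$, so \eqref{eq:decomp_Hcurldir} is not direct either, even though it is written with $\oplus$. Therefore ``regular'' with $\oplus$ has to be read as a stable, non-unique splitting. That means bounded linear maps $P_1$ into $\Hspace^{1/2}_{T,00}(\Gamma_+)$ and $P_2$ into $H^{1/2}_{00}(\Gamma_+)$ with $\bflam=P_1\bflam+\nabla_{\Gamma_+}P_2\bflam$ for all $\bflam\in\Hspace^{-1/2}_{0}(\curl_{\Gamma_+};\Gamma_+)$. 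This is exactly what the paper's one-line proof yields.

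Your composition of the trace right inverse, the bounded volumetric component maps and the traces gives these maps. Drop the directness step and the remark about projections ``in the direct-sum case'', and your proof is complete.
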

\begin{proof}
The claim follows from \Cref{lem:tracespace} and the regular decomposition \cite[Lem.~4.6]{PaulySchomburg22}
\begin{align}\label{eq:decomp_Hcurldir}
\Hspace_{\Gamma_-}(\curlF;\Omega)=\Hspace^1_{\Gamma_-}(\Omega)\oplus\gradF H^1_{\Gamma_-}(\Omega).
\end{align}
\end{proof}
\begin{definition}\label{def:dualitypairing-partial}
For $\bflam\in\Hspace^{-1/2}_0(\div_{\Gamma_+};\Gamma_+)$ and $\bflam\tf\in\Hspace^{-1/2}_0(\curl_{\Gamma_+};\Gamma_+)$ we define the duality pairings
\begin{align*}
\spl \bflam, \bflam\tf \spr_{\Gamma_+,-1/2,0}&:=
\spl \bfv, \bfv\tf \spr_{\Lspace^2_T(\Gamma_+)}
+\spl \curl_{\Gamma_+}\bfv, w\tf \spr_{H^{1/2}(\Gamma_+)'\times H^{1/2}(\Gamma_+)}\\
&\hspace{26mm}-\spl w, \div_{\Gamma_+}\bfv\tf \spr_{H^{1/2}(\Gamma_+)\times H^{1/2}(\Gamma_+)'},\\
\spl \bflam\tf, \bflam \spr_{\Gamma_+,-1/2,0}&:=\overline{\spl \bflam, \bflam' \spr_{\Gamma_+,-1/2,0}},
\end{align*}
with decomposition $\bflam=\bv+\Curl_{\Gamma_+} w$, $\bflam\tf=\bv\tf+\nabla_{\Gamma_+} w\tf$, $\bv\in\Hspace^{1/2}_{T,00}(\Gamma_+)$, $\bv\tf\in\Hspace^{1/2}_{RT,00}(\Gamma_+)$, $w,w\tf\in H^{1/2}_{00}(\Gamma_+)$ as in \Cref{lem:tracespace-decomp-reg}.
\end{definition}
\begin{lemma}\label{lem:tracespace-intbyparts}
It holds that
\begin{align*}
\spl \curlF\bfu,\bfu\tf\spr_\Omega -\spl \bfu,\curlF\bfu\tf\spr_\Omega
=&-\spl \tr_{RT,\Gamma_+}\bfv,\tr_{T,\Gamma_+}\bfv\tf\spr_{\Gamma_{+,-1/2,0}}
\end{align*}
for all $\bfu,\bfu\tf\in\Hspace_{\Gamma_-}(\curlF;\Omega)$.
\end{lemma}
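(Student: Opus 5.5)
The plan is to prove the identity first for smooth fields and then extend it by density and continuity. Concretely, I take $\bfu,\bfu\tf\in\bfC^\infty_{\Gamma_-}(\Omega)$. Since $\Omega$ is Lipschitz, the classical Green formula for the curl holds:
\begin{align*}
\spl \curlF\bfu,\bfu\tf\spr_\Omega-\spl \bfu,\curlF\bfu\tf\spr_\Omega=\int_{\Gamma}(\nvf\times\bfu)\cdot\overline{\bfu\tf}\,\dd s .
\end{align*}
The integrand vanishes on $\Gamma_-$, because the supports stay away from $\Gamma_-$, and $\Sigma$ has surface measure zero. The right-hand side therefore reduces to
\begin{align*}
\int_{\Gamma_+}(\nvf\times\bfu)\cdot\overline{\bfu\tf}\,\dd s=-\spl \tr_{RT,\Gamma_+}\bfu,\tr_{T,\Gamma_+}\bfu\tf\spr_{\Lspace^2_T(\Gamma_+)} .
\end{align*}
Here I use $\nvf\times\bfu\cdot\overline{\bfu\tf}=\nvf\times\bfu\cdot\overline{\bfu\tf_T}$ and $\tr_{RT}=-\nvf\times\cdot$. (I read the $\bfv,\bfv\tf$ in the statement as $\bfu,\bfu\tf$.)

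Next I check that, for such smooth traces, the pairing $\spl\cdot,\cdot\spr_{\Gamma_+,-1/2,0}$ of \Cref{def:dualitypairing-partial} agrees with the $\Lspace^2_T(\Gamma_+)$ pairing. Write $\bflam=\bv+\Curl_{\Gamma_+}w$ and $\bflam\tf=\bv\tf+\nabla_{\Gamma_+}w\tf$ as in \Cref{lem:tracespace-decomp-reg}. Expanding $\spl\bflam,\bflam\tf\spr_{\Lspace^2_T(\Gamma_+)}$ bilinearly leaves the term $\spl\bv,\bv\tf\spr$, which is already in the definition. The term $\spl\bv,\nabla_{\Gamma_+}w\tf\spr$ becomes $\spl\curl_{\Gamma_+}\bv,w\tf\spr$ up to the sign convention linking the rotated and unrotated spaces. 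The term $\spl\Curl_{\Gamma_+}w,\bv\tf\spr$ becomes $-\spl w,\div_{\Gamma_+}\bv\tf\spr$. Finally, $\spl\Curl_{\Gamma_+}w,\nabla_{\Gamma_+}w\tf\spr=0$ by $\curl\nabla=0$.

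Each of these integrations by parts produces no boundary term on $\Sigma$, because $w,w\tf\in H^{1/2}_{00}(\Gamma_+)$ and $\bv,\bv\tf$ carry the $00$ condition. For $w,w\tf$ this is immediate once they are approximated by functions vanishing near $\Sigma$. I will do this with the Green formulas for $\tr_{\tv_+}$ and $\tr_{\nvt_+}$ stated before \Cref{lem:tracespace-pw}. That gives the definition of $\spl\bflam,\bflam\tf\spr_{\Gamma_+,-1/2,0}$ exactly.

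For the extension step, the left-hand side of the identity is continuous on $\Hspace_{\Gamma_-}(\curlF;\Omega)^2$. By \Cref{lem:tracespace}, $\tr_{RT,\Gamma_+}$ and $\tr_{T,\Gamma_+}$ are bounded into $\Hspace^{-1/2}_0(\div_{\Gamma_+};\Gamma_+)$ and $\Hspace^{-1/2}_0(\curl_{\Gamma_+};\Gamma_+)$ respectively. The pairing of \Cref{def:dualitypairing-partial} is jointly continuous on these spaces because the decomposition of \Cref{lem:tracespace-decomp-reg} is regular, and each summand is bounded in the respective dual pairings. Density of $\bfC^\infty_{\Gamma_-}(\Omega)$ in $\Hspace_{\Gamma_-}(\curlF;\Omega)$, recalled before \Cref{lem:tracespace}, then yields the identity for all $\bfu,\bfu\tf$.

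I expect the main obstacle to be the middle step. For a general Lipschitz $\Gamma_+$, the components $\bv,w$ of a smooth trace are not themselves smooth, so the cross terms cannot be integrated by parts classically. To get around this, I would use density in the decomposed spaces. I approximate $w,w\tf$ in $H^{1/2}_{00}(\Gamma_+)$ by traces of $C^\infty_{\Gamma_-}$ functions. Each remaining cross term is then handled using only the defining duality $\curl_{\Gamma_+}=(\Curl_{\Gamma_+})'$, $\div_{\Gamma_+}=(-\nabla_{\Gamma_+})'$ on $\Hvar^{3/2}_{00}$, extended by continuity to $H^{1/2}_{00}$-arguments. In the same step I need to make sure the sign conventions for $\nvf$ and for the $T$ versus $RT$ rotation match the minus sign in the claim.
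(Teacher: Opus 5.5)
Your proof is correct, but it is organized differently from the paper's. The paper gives no argument beyond the pointer to \cite[(35)--(36)]{BuffaCostabelSheen02}. The leftover $\bfv,\bfv\tf$ in the statement and \Cref{lem:tracespace-intbyparts-two} suggest, though do not confirm, the intended route. Insert the regular decomposition \eqref{eq:decomp_Hcurldir}, $\bfu=\bfv+\gradF w$, $\bfu\tf=\bfv\tf+\gradF w\tf$ with $\bfv,\bfv\tf\in\Hspace^1_{\Gamma_-}(\Omega)$ and $w,w\tf\in H^1_{\Gamma_-}(\Omega)$, into the left-hand side. Then integrate by parts term by term, using $\curlF\gradF=0$, Green's formula for $\Hspace^1$ fields, and the identification of $\nvf\cdot\curlF\bfv$ with $\div_{\Gamma}\tr_{RT}\bfv$. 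Since $\tr_{RT,\Gamma_+}\bfu=\tr_{RT,\Gamma_+}\bfv+\Curl_{\Gamma_+}\tr w$ and $\tr_{T,\Gamma_+}\bfu\tf=\tr_{T,\Gamma_+}\bfv\tf+\nabla_{\Gamma_+}\tr w\tf$ are themselves decompositions as in \Cref{lem:tracespace-decomp-reg}, the boundary terms are directly the three terms of the pairing. That route needs neither density nor continuity of the pairing.

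Your density route uses only the classical Green formula for smooth fields. In exchange it needs density, joint continuity of the pairing, and a consistency check on smooth traces. That check is easier than you expect. For $\bfu,\bfu\tf\in\bfC^\infty_{\Gamma_-}(\Omega)$ the restrictions to $\Gamma_+$ lie in $\Hspace^{1/2}_{00}(\Gamma_+)$. Hence $\tr_{RT,\Gamma_+}\bfu\in\Hspace^{1/2}_{RT,00}(\Gamma_+)$ and $\tr_{T,\Gamma_+}\bfu\tf\in\Hspace^{1/2}_{T,00}(\Gamma_+)$, so you may take $w=w\tf=0$. This gives the $\Lspace^2_T(\Gamma_+)$ pairing at once, and your ``main obstacle'' disappears.

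This shortcut uses that the pairing does not depend on the chosen decomposition. That deserves a line, because the sums in \Cref{lem:tracespace-decomp-reg} are not actually direct: e.g.\ $\nabla_{\Gamma_+}$ of the trace of a function in $C^\infty_{\Gamma_-}(\Omega)$ lies in both summands. Independence follows from $\div_{\Gamma_+}\Curl_{\Gamma_+}=0$ and the duality between $\curl_{\Gamma_+}$ and $\Curl_{\Gamma_+}$.

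Finally, done carefully, your cross terms come out as $-\spl\div_{\Gamma_+}\bv,w\tf\spr$ and $\spl w,\curl_{\Gamma_+}\bv\tf\spr$, as in \Cref{lem:tracespace-intbyparts-two}. \Cref{def:dualitypairing-partial} as printed has $\bv$ and $\bv\tf$ interchanged in these two terms, and its component spaces swapped. That is the mismatch you attributed to sign conventions.
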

\begin{proof}
Follows like in \cite[(35)-(36)]{BuffaCostabelSheen02}.
\end{proof}
Let $\sigma\in L^\infty(\Gamma_+,\Rp)$ such that also $\sigma^{-1}\in L^\infty(\Gamma_+,\Rp)$ and $\Sigma_n$, $n=0,\dots,N_\Sigma$ be the $N_\Sigma+1$ connected parts of $\Sigma$ with $\Sigma_0$ being the exterior part.
Further, let $\phi_n\in H^1(\Gamma_+)$, $n=1,\dots,N_\Sigma$ be the solutions to
\begin{align*}
-\div_{\Gamma_+}\sigma^{-1}\nabla_{\Gamma_+} \phi_n=0 \quad\text{in }\Gamma_+,\qquad
\phi_n|_{\Sigma_m}=\delta_{nm} \quad\text{for all }m=0,\dots,N_\Sigma.
\end{align*}
Thence, let
\begin{align*}
\calK_\sigma(\Gamma_+)&:=\Span \{\phi_n, n=1,\dots,N_\Sigma\},\\
\calH^{0}_\sigma(\Gamma_+)&:=H^1_*(\Gamma_+),\\
\calH^{1/2}_\sigma(\Gamma_+)&:=\{u\in H^1_*(\Gamma_+)\colon \quad \div_{\Gamma_+}\sigma\nabla_{\Gamma_+}u\in H^{1/2}(\Gamma_+)' \quad\text{and}\quad \tr_{\nvt_+}\sigma\nabla_{\Gamma_+}u=0\},\\
\calH^{1}_\sigma(\Gamma_+)&:=\{u\in H^1_*(\Gamma_+)\colon \quad \div_{\Gamma_+}\sigma\nabla_{\Gamma_+}u\in L^{2}(\Gamma_+) \hspace{4.65mm}\quad\text{and}\quad \tr_{\nvt_+}\sigma\nabla_{\Gamma_+}u=0\}.
\end{align*}
\begin{assumption}\label{ass:Gamma-top}
There exists a simply connected Lipschitz domain $\Omega_0$ such that $\Gamma_+\subset\partial\Omega_0$.
\end{assumption}
\begin{remark}\label{rem:ass-top-flat}
	Note that if $\Gamma_+$ is flat, i.e., $\Gamma_+=\Upsilon\times\{z\}$, $\Upsilon\subset(-c,c)^2, c>0$, $z\in\bbR$, then \Cref{ass:Gamma-top} is satisfied, because $\Omega_0=(-c,c)^2\times(z,z+1)$ is a suitable simply connected domain.
\end{remark}
\begin{lemma}\label{lem:tracespace-decomp-orth}
Let \Cref{ass:Gamma-top} be satisfied.
Then, the decompositions
\begin{align*}
\Hspace^{-1/2}_{0}(\curl_{\Gamma_+};\Gamma_+)&=
\sigma\Curl_{\Gamma_+}\calH^{1/2}_\sigma(\Gamma_+)
\oplus \nabla_{\Gamma_+} \calK_\sigma(\Gamma_+)
\oplus \nabla_{\Gamma_+} H^{1/2}_{00}(\Gamma_+),\\
\Hspace^{-1/2}_{0}(\div_{\Gamma_+};\Gamma_+)&=
\sigma\nabla_{\Gamma_+}\calH^{1/2}_\sigma(\Gamma_+)
\oplus \Curl_{\Gamma_+} \calK_\sigma(\Gamma_+)
\oplus \Curl_{\Gamma_+} H^{1/2}_{00}(\Gamma_+)
\end{align*}
are regular.
\end{lemma}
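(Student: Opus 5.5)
The plan is to start from the regular decomposition of \Cref{lem:tracespace-decomp-reg} and refine its first summand. I will only treat the first decomposition; the second one follows by applying the rotation (on a flat or locally flat $\Gamma_+$, $\rotm$ maps $\nabla_{\Gamma_+}$ to $-\Curl_{\Gamma_+}$ and the curl space to the div space). So let $\bflam\in\Hspace^{-1/2}_{0}(\curl_{\Gamma_+};\Gamma_+)$.

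\textbf{Step 1: the curl equation.} Consider $f:=\curl_{\Gamma_+}\bflam\in H^{1/2}(\Gamma_+)'$. I look for $u\in H^1_*(\Gamma_+)$ with
\[
\curl_{\Gamma_+}\sigma\Curl_{\Gamma_+}u=-\div_{\Gamma_+}\sigma\nabla_{\Gamma_+}u=f
\]
in a Neumann sense, i.e.\ $\tr_{\nvt_+}\sigma\nabla_{\Gamma_+}u=0$. This is equivalent to $\tr_{\tv_+}\sigma\Curl_{\Gamma_+}u=0$.

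The compatibility condition $\spl f,1\spr=0$ must be checked. It follows from the Green formula for $\tr_{\tv_+}$ stated before \Cref{lem:tracespace-pw}, tested with $\phi=1$, together with $\tr_{\tv_+}\bflam=0$. One needs $1\in\Hvar^{3/2}(\Gamma_+)$, which is true since $1$ is the trace of a constant.

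Lax--Milgram on $H^1_*(\Gamma_+)$ then gives a unique $u$, bounded by $\|f\|_{H^{1/2}(\Gamma_+)'}$. Since $-\div_{\Gamma_+}\sigma\nabla_{\Gamma_+}u=f\in H^{1/2}(\Gamma_+)'$ and the Neumann trace vanishes, $u\in\calH^{1/2}_\sigma(\Gamma_+)$. Moreover $\sigma\Curl_{\Gamma_+}u\in\Hspace^{-1/2}_0(\curl_{\Gamma_+};\Gamma_+)$: it lies in $\Lspace^2_T(\Gamma_+)$, its curl is $f$, and its tangential trace on $\Sigma$ vanishes.

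\textbf{Step 2: the curl-free remainder.} The field $\bflam_0:=\bflam-\sigma\Curl_{\Gamma_+}u$ lies in $\Hspace^{-1/2}_0(\curl_{\Gamma_+};\Gamma_+)$ and is curl-free. By \Cref{lem:tracespace-decomp-reg}, $\bflam_0=\bfv+\nabla_{\Gamma_+}w$ with $\bfv\in\Hspace^{1/2}_{T,00}(\Gamma_+)$, $w\in H^{1/2}_{00}(\Gamma_+)$, and then $\curl_{\Gamma_+}\bfv=0$.

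\textbf{Step 3: the topological step (expected main obstacle).} I need to show that a curl-free $\bfv\in\Hspace^{1/2}_{T,00}(\Gamma_+)$ with vanishing $\tr_{\tv_+}$ is of the form $\nabla_{\Gamma_+}(\phi+w')$ with $\phi\in\calK_\sigma(\Gamma_+)$ and $w'\in H^{1/2}_{00}(\Gamma_+)$. Here \Cref{ass:Gamma-top} is used.

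First, extend $\bfv$ by zero to $\partial\Omega_0$. By \Cref{lem:tracespace-pw} the extension is a curl-free element of $\Hspace^{-1/2}(\curl_{\partial\Omega_0};\partial\Omega_0)$. Since $\Omega_0$ is simply connected and $\partial\Omega_0$ is connected, it is a surface gradient $\nabla_{\partial\Omega_0}\psi$ with $\psi\in H^{1/2}(\partial\Omega_0)$. I would justify this via the Hodge/Helmholtz decomposition of the trace space \cite{BuffaCostabelSheen02}, using the trivial first cohomology of the boundary of a simply connected Lipschitz domain.

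On each component of $\partial\Omega_0\setminus\overline{\Gamma_+}$ the function $\psi$ has zero gradient, hence is constant. Subtracting the constant attached to $\Sigma_0$ leaves $\psi|_{\Gamma_+}$ with locally constant values $c_n$ on the $\Sigma_n$. Then
\[
\psi|_{\Gamma_+}-\sum_{n=1}^{N_\Sigma} c_n\phi_n\in H^{1/2}_{00}(\Gamma_+).
\]
The delicate point is to make these ``boundary values on $\Sigma_n$'' rigorous in the $H^{1/2}$ setting. I would do this through the extension $\psi$ on $\partial\Omega_0$: $\psi-\sum_n c_n\tilde\phi_n$, with $\tilde\phi_n$ suitable $H^1$ extensions of the $\phi_n$ that are constant on each component of $\partial\Omega_0\setminus\overline{\Gamma_+}$, vanishes outside $\Gamma_+$. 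Its restriction to $\Gamma_+$ is therefore in $H^{1/2}_{00}(\Gamma_+)$.

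\textbf{Step 4: directness and stability.} For directness, suppose
\[
\sigma\Curl_{\Gamma_+}u+\nabla_{\Gamma_+}\phi+\nabla_{\Gamma_+}w=0 .
\]
Testing with $\Curl_{\Gamma_+}u$ and using the Neumann condition together with the Green formulas gives $\spl\sigma\Curl_{\Gamma_+}u,\Curl_{\Gamma_+}u\spr=0$, hence $u=0$ because $u$ has zero mean. Then $\phi+w$ is constant; comparing traces on $\Sigma_0$ and $\Sigma_n$ gives $\phi=0$ and $w=0$.

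For stability, every step is bounded: Lax--Milgram in Step 1, \Cref{lem:tracespace-decomp-reg} in Step 2, and finite dimensionality of $\calK_\sigma$ in Step 3. Together with the closed graph theorem this shows the decomposition is topological, i.e.\ regular.
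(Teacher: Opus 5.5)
Your proposal is correct and follows essentially the paper's proof. The Neumann problem yields the $\sigma\Curl_{\Gamma_+}\calH^{1/2}_\sigma(\Gamma_+)$ part. The curl-free remainder is extended by zero via \Cref{lem:tracespace-pw} to the boundary of a simply connected domain and written as a surface gradient by \cite[Thm.~5.1]{BuffaCostabelSheen02}. Subtracting an element of $\calK_\sigma(\Gamma_+)$ that matches the constant values on the components of $\Sigma$ then leaves an $H^{1/2}_{00}(\Gamma_+)$ potential.

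Your preliminary split via \Cref{lem:tracespace-decomp-reg} is a harmless detour, and your checks of compatibility, directness and stability supply details the paper omits. The only slip is the claim that $\partial\Omega_0$ is connected. This fails in general (e.g.\ for a spherical shell), but it is not needed: the connected set $\Gamma_+$ lies in a single boundary component, and that component has genus zero because $\Omega_0$ is simply connected.
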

\begin{proof}
We only present the proof for the first case.
Without loss of generality we assume that $\Omega$ is simply connected.
For $\bflam\in\Hspace^{-1/2}_{0}(\curl_{\Gamma_+};\Gamma_+)$ let $v\in H^1_*(\Gamma_+)$ be the solution to
\[
\spl \sigma\Curl_{\Gamma_+} v,\Curl_{\Gamma_+} v\tf \spr_{\Lspace^2_T(\Gamma_+)} = \spl \bflam,\Curl_{\Gamma_+} v\tf \spr_{\Lspace^2_T(\Gamma_+)}
\]
for all $v\tf\in H^1_*(\Gamma_+)$, i.e., $\curl_{\Gamma_+}\sigma\Curl_{\Gamma_+} v=\curl_{\Gamma_+}\bflam \in H^{1/2}(\Gamma_+)'$ and $0=$ $\tr_{\nvt_+}\sigma\nabla_{\Gamma_+}v$ $=$ $\tr_{\tv_+}\sigma\Curl_{\Gamma_+}u$.
Thence $v\in\calH^{1/2}_\sigma(\Gamma_+)$.
Let $\widetilde\bflam$, $\widetilde v$ be the extenions by zero to $\Gamma$ of $\bflam$, $v$ respectively.
Due to \Cref{lem:tracespace-pw} we have $\widetilde\bflam-\sigma\Curl_{\Gamma_+}\widetilde v\in\Hspace^{-1/2}(\curl_\Gamma;\Gamma)$ and $\curl_\Gamma( \widetilde\bflam-\sigma\Curl_{\Gamma_+}\widetilde v)=0$.
It follows from \cite[Thm.~5.1]{BuffaCostabelSheen02} that there exists $y\in H^{1/2}(\Gamma)$ such that $\widetilde\bflam-\sigma\Curl_{\Gamma_+}\widetilde v=\nabla_\Gamma y$.
Let us now prove that $(\nabla_\Gamma y)|_{\Gamma_-}=0$ implies that $y_n:=y|_{\Gamma_-,n}$, $n=1,\dots,N$ are constant with $\Gamma_{-,n}$, $n=1,\dots,N$ being the $N\in\bbN$ connected parts of $\Gamma_-$.
Let $\bfq\in\Hspace^{1/2}_{RT,00}(\Gamma_{-,n})$ and $\tilde\bfq$ be it's extension by zero to $\Gamma$.
Then, $0=\spl \nabla_{\Gamma} y, \tilde\bfq \spr_{\Hspace^{1/2}_{RT}(\Gamma)'\times\Hspace^{1/2}_{RT}(\Gamma)} = -\spl y_n,\div_{\Gamma_-,n}\bfq\spr_{H^{1/2}(\Gamma_{-,n})\times H^{1/2}(\Gamma_{-,n})'}$.
Since $\div_{\Gamma_-,n}\Hspace^{1/2}_{RT,00}(\Gamma_{-,n})=\div_{\Gamma_-,n}\big(\Hspace^{1/2}_{RT,00}(\Gamma_{-,n})\oplus\Curl_{\Gamma_{-,n}} H^{1/2}_{00}(\Gamma_{-,n})\big)=\div_{\Gamma_-,n}\Hspace^{-1/2}_{RT,0}(\div_{\Gamma_{-,n}};\Gamma_{-,n})=H^{1/2}(\Gamma_{-,n})'_*$ it follows that $y_n$ is constant.
Now, let $z\in\calK_\sigma(\Gamma_+)$ be such that $z=y$ on $\Sigma$, and $\tilde z\in H^1(\Gamma)$ be defined by $\tilde z|_{\Gamma_+}=z$, $\tilde z|_{\Gamma_-}=y|_{\Gamma_-}$.
We have $y-\tilde z\in H^{1/2}(\Gamma)$ and $(y-\tilde z)|_{\Gamma_-}=0$, i.e., $w:=(y-\tilde z)|_{\Gamma_+}\in H^{1/2}_{00}(\Gamma_+)$.
Thence $\bflam=\sigma\Curl_{\Gamma_+}v+\nabla_{\Gamma_+}z+\nabla_{\Gamma_+}w$ is the claimed decomposition.
\end{proof}
\begin{lemma}\label{lem:tracespace-decomp-orth-Hone}
Let \Cref{ass:Gamma-top} be satisfied.
Then, the decompositions
\begin{align*}
\Hspace_{0}(\curl_{\Gamma_+};\Gamma_+)&=
\sigma\Curl_{\Gamma_+}\calH^1_\sigma(\Gamma_+)
\oplus \nabla_{\Gamma_+} \calK_\sigma(\Gamma_+)
\oplus \nabla_{\Gamma_+} H^{1}_{0}(\Gamma_+),\\
\Hspace_{0}(\div_{\Gamma_+};\Gamma_+)&=
\sigma\nabla_{\Gamma_+}\calH^1_\sigma(\Gamma_+)
\oplus \Curl_{\Gamma_+} \calK_\sigma(\Gamma_+)
\oplus \Curl_{\Gamma_+} H^{1}_{0}(\Gamma_+),\\
\Lspace^2_T(\Gamma_+)&=
\sigma\Curl_{\Gamma_+}\calH^0_\sigma(\Gamma_+)
\oplus \nabla_{\Gamma_+} \calK_\sigma(\Gamma_+)
\oplus \nabla_{\Gamma_+} H^{1}_{0}(\Gamma_+),\\
\Lspace^2_T(\Gamma_+)&=
\sigma\nabla_{\Gamma_+}\calH^0_\sigma(\Gamma_+)
\oplus \Curl_{\Gamma_+} \calK_\sigma(\Gamma_+)
\oplus \Curl_{\Gamma_+} H^{1}_{0}(\Gamma_+)
\end{align*}
are regular.
\end{lemma}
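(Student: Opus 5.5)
The plan is to follow the proof of \Cref{lem:tracespace-decomp-orth}, replacing the trace-space regularity $-1/2$ by $0$ (i.e.\ $\Lspace^2$-based spaces). I only present the first decomposition in detail. The second follows by rotation, since $\Curl_{\Gamma_+}$ and $\nabla_{\Gamma_+}$ are interchanged by $\rotm$ on the flat (or locally rotated) tangent bundle. The two $\Lspace^2_T$ decompositions follow by the same argument with the curl constraint dropped.

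\emph{Construction.} Let $\bflam\in\Hspace_0(\curl_{\Gamma_+};\Gamma_+)$. Define $v\in H^1_*(\Gamma_+)$ as the unique solution of
\[
\spl \sigma\Curl_{\Gamma_+} v,\Curl_{\Gamma_+} v\tf \spr_{\Lspace^2_T(\Gamma_+)} = \spl \bflam,\Curl_{\Gamma_+} v\tf \spr_{\Lspace^2_T(\Gamma_+)}\qquad\forall v\tf\in H^1_*(\Gamma_+).
\]
Existence and uniqueness follow from Lax--Milgram, using the Poincar\'e inequality on $H^1_*(\Gamma_+)$ and $\sigma,\sigma^{-1}\in L^\infty$. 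The resulting bound is $\|v\|_{H^1}\le C\|\bflam\|_{\Lspace^2}$. The variational equation says exactly that $\curl_{\Gamma_+}\sigma\Curl_{\Gamma_+}v=\curl_{\Gamma_+}\bflam\in L^2(\Gamma_+)$, together with the natural boundary condition $\tr_{\tv_+}(\sigma\Curl_{\Gamma_+}v-\bflam)=0$. Since $\tr_{\tv_+}\bflam=0$ by definition of the space with subscript $0$, we get $\tr_{\nvt_+}\sigma\nabla_{\Gamma_+}v=0$, where I identify the rotated quantities as in the paper. Hence $v\in\calH^1_\sigma(\Gamma_+)$, with
\[
\|\curl_{\Gamma_+}\sigma\Curl_{\Gamma_+}v\|_{L^2}\le\|\bflam\|_{\Hspace(\curl)}.
\]
The remainder $\bfr:=\bflam-\sigma\Curl_{\Gamma_+}v$ lies in $\Lspace^2_T(\Gamma_+)$, is curl-free, and satisfies $\tr_{\tv_+}\bfr=0$.

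\emph{Main step: identify the curl-free remainder as a gradient.} This is where \Cref{ass:Gamma-top} enters, and it is the step I expect to be the main obstacle, because of the topology of $\Gamma_+$ and the weak trace $\tr_{\tv_+}$. I would extend $\bfr$ by zero to $\partial\Omega_0$, with $\Omega_0$ simply connected. By \Cref{lem:tracespace-pw} (in its $\Lspace^2$-version, which holds verbatim since $\tr_{\tv_+}\bfr=0$), the extension is an $\Lspace^2_T$ field with vanishing surface curl on $\partial\Omega_0$. Since $\partial\Omega_0$ is connected and has trivial first cohomology, \cite[Thm.~5.1]{BuffaCostabelSheen02} (or its $L^2$ analogue) gives $y\in H^1(\partial\Omega_0)$ with $\tilde\bfr=\nabla y$. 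As in the previous proof, $\nabla y=0$ on $\partial\Omega_0\setminus\Gamma_+$, so $y$ is constant on each connected component of the complement. These constants are the boundary values on $\Sigma_m$; after subtracting the value on $\Sigma_0$, the function $z\in\calK_\sigma(\Gamma_+)$ with $z|_{\Sigma_m}=y|_{\Sigma_m}$ is uniquely defined. Then $w:=y|_{\Gamma_+}-z\in H^1(\Gamma_+)$ vanishes on $\Sigma$, i.e.\ $w\in H^1_0(\Gamma_+)$. This yields $\bflam=\sigma\Curl_{\Gamma_+}v+\nabla_{\Gamma_+}z+\nabla_{\Gamma_+}w$.

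\emph{Directness and stability.} Suppose $\sigma\Curl v+\nabla z+\nabla w=0$. Testing with $\Curl v\tf$ and integrating by parts (the gradient terms are orthogonal to $\Curl H^1_*$ because $w|_\Sigma=0$ and $z$ is constant on each $\Sigma_m$, while $\Curl v\tf$ has zero net flux through each closed curve $\Sigma_m$) gives $v=0$. Then $\nabla(z+w)=0$, so $z+w$ is constant; since it vanishes on $\Sigma_0$ it is zero, and the boundary values on $\Sigma_m$ force $z=0$ and hence $w=0$. For continuity of the projections, $v$ is bounded by Lax--Milgram. The finite-dimensional component $z$ is bounded through the bounded functionals $y|_{\Sigma_m}$, which can be written as fluxes $\spl\bfr,\cdot\spr$ against fixed test fields. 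Finally, $w$ is bounded by the Poincar\'e inequality from $\|\nabla w\|\le\|\bflam\|+\|\sigma\Curl v\|+\|\nabla z\|$. Hence all three projections are continuous, so the decomposition is regular.
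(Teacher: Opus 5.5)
Your proposal is correct and follows essentially the same route as the paper, whose proof simply refers back to the argument of \Cref{lem:tracespace-decomp-orth}: Lax--Milgram for the $\sigma\Curl_{\Gamma_+}$-component, zero extension of the curl-free remainder to the boundary of the simply connected $\Omega_0$, representation as a surface gradient, and splitting off the $\calK_\sigma(\Gamma_+)$-part via the constant boundary values on the $\Sigma_m$; your explicit directness and stability arguments supply details the paper leaves implicit. The only minor slip is the claim that $\partial\Omega_0$ is connected, which need not hold for a simply connected $\Omega_0$, but it suffices to work on the component of $\partial\Omega_0$ containing $\Gamma_+$, which has trivial first cohomology.
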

\begin{proof}
The proof proceeds in a similar fashion to the proof of \Cref{lem:tracespace-decomp-orth}.
\end{proof}
Let us briefly recall the concept of interpolation of Hilbert spaces, see. e.g., \cite{BerghLoefstroem76,Bramble93}.
Consider a Hilbert space $H_0$ and a densly defined, positive, selfadjoint $\Lambda\colon \opdom(\Lambda)\to H_0$.
Let $H_1:=\opdom$, $\|\plh\|_{H_1}:=\|\Lambda\plh\|_{H_0}$.
Then, for $s\in(0,1)$ the $s$-Hilbert space interpolation of $H_0$ and $H_1$ is defined as
\begin{align*}
[H_0,H_1]_{H,s}:=\opdom(A^s),\qquad \|\plh\|_{[H_0,H_1]_{H,s}}:=\|\Lambda^s\plh\|_{H_0}.
\end{align*}
Now, for $t>0$, $u\in H_0$ let $K(t,u):=\inf_{u=u_0+u_1,u_0\in H_0,u_1\in H_1} \sqrt{\|u_0\|_{H_0}^2+t^2\|u_1\|_{H_1}^2}$.
Thence, for $s\in(0,1)$, $p\in [1,+\infty)$ the real $(s,p)$-interpolation space of $H_0$ and $H_1$ is defined as
\begin{align*}
[H_0,H_1]_{K,s,p}:=\{u\in H_0\colon \|u\|_{[H_0,H_1]_{K,s,p}}<+\infty\},\quad
\|u\|_{[H_0,H_1]_{K,s,p}}:=\bigg( \int_0^\infty t^{-sp} K^p(t,u)\frac{\dd t}{t} \bigg)^{1/p}.
\end{align*}
It is known that $[H_0,H_1]_{H,s}=[H_0,H_1]_{K,s,2}$ and $\|\plh\|_{[H_0,H_1]_{H,s}}= \sqrt{\frac{2}{\pi}\sin(\pi s)} \|\plh\|_{[H_0,H_1]_{K,s,2}}$.
\begin{lemma}\label{lem:interpolation-calH}
The space $\calH^{1/2}_\sigma(\Gamma_+)$ equals the $1/2$-Hilbert interpolation space of $\calH^{0}_\sigma(\Gamma_+)$ and $\calH^{1}_\sigma(\Gamma_+)$, i.e.,
\begin{align*}
\calH^{1/2}_\sigma(\Gamma_+) = [\calH^0_\sigma(\Gamma_+),\calH^1_\sigma(\Gamma_+)]_{H,1/2} = [\calH^0_\sigma(\Gamma_+),\calH^1_\sigma(\Gamma_+)]_{K,1/2,2}
\end{align*}
with equivalent norms.
\end{lemma}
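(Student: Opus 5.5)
The plan is to realise $\calH^0_\sigma(\Gamma_+)$ and $\calH^1_\sigma(\Gamma_+)$ as the scale of domains of a single positive selfadjoint operator. Then the Hilbert interpolation space at $s=1/2$ is the form domain of a suitable operator, or the domain of its square root, and it remains to identify that with $\calH^{1/2}_\sigma(\Gamma_+)$. The equality $[\cdot,\cdot]_{H,1/2}=[\cdot,\cdot]_{K,1/2,2}$ is the general fact recalled just before the lemma, so only the first equality needs proof.

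\emph{Step 1 (the operator).} Take as pivot space $H_0:=\calH^0_\sigma(\Gamma_+)=H^1_*(\Gamma_+)$ with scalar product $\spl \sigma\nabla_{\Gamma_+}u,\nabla_{\Gamma_+}u\tf\spr_{\Lspace^2_T(\Gamma_+)}$. This is equivalent to the $H^1$ norm by the Poincar\'e inequality on mean-free functions. On $H_0$ consider the Neumann operator $L u:=-\div_{\Gamma_+}\sigma\nabla_{\Gamma_+}u$ with the natural Neumann condition $\tr_{\nvt_+}\sigma\nabla_{\Gamma_+}u=0$. Let $\Lambda$ be its realisation in $H_0$, i.e.\ $\opdom(\Lambda)=\{u\in H_0\colon Lu\in H^1_*(\Gamma_+)\}$. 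Its square root is best described in a shifted way: $\Lambda^{1/2}$ acts as the map $H_0\ni u\mapsto L u$ into $H^1(\Gamma_+)'_*$ (or $L^2_*$), composed with the isometry between $L^2_*$ and $H^1_*$ given by the inverse Neumann operator $L^{-1}$.

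Concretely, $L$ is an isometric isomorphism $H^1_*\to (H^1)'_*$, $L^2_*\to\{u\in H^1_*\colon Lu\in L^2\}$, etc. Hence the family $\calH^s_\sigma=L^{-1}(H^{s}(\Gamma_+)'_*)$ for $s=0,1$ (reading $H^0{}'=L^2$, $H^1{}'=(H^1)'$) is the image under one fixed isomorphism $L^{-1}$ of the pair $(H^1(\Gamma_+)'_*,L^2_*(\Gamma_+))$. One should check that the Neumann condition $\tr_{\nvt_+}\sigma\nabla u=0$ encoded via the Green formula above \Cref{lem:tracespace-pw} is exactly what makes $L^{-1}$ onto. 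Interpolation commutes with isomorphisms, so
\[
[\calH^0_\sigma,\calH^1_\sigma]_{H,1/2}=L^{-1}\big[(H^1(\Gamma_+))'_*,L^2_*(\Gamma_+)\big]_{1/2}.
\]

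\emph{Step 2 (interpolation of the dual scale).} It remains to show $[(H^1(\Gamma_+))'_*,L^2_*(\Gamma_+)]_{1/2}=(H^{1/2}(\Gamma_+))'_*$. By duality of interpolation for Hilbert couples, this reduces to $[L^2(\Gamma_+),H^1(\Gamma_+)]_{1/2}=H^{1/2}(\Gamma_+)$ on the Lipschitz manifold $\Gamma_+$, with the mean-value constraint handled by the complemented one-dimensional subspace of constants. That identity is standard for Lipschitz domains and their flat or Lipschitz pieces, see \cite{McLean00}. In our application $\Gamma_+$ is flat (\Cref{rem:ass-top-flat}), so it is just the result for Lipschitz domains in $\bbR^2$. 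Applying $L^{-1}$ then yields exactly $\{u\in H^1_*\colon \div_{\Gamma_+}\sigma\nabla u\in H^{1/2}(\Gamma_+)',\ \tr_{\nvt_+}\sigma\nabla u=0\}=\calH^{1/2}_\sigma(\Gamma_+)$, with norms equivalent to the graph norm.

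\emph{Main obstacle.} I expect the main obstacle to be Step 1's identification of the range of $L$ on each level together with the boundary condition. One must verify that $u\in H^1_*$ with $Lu\in H^{1/2}(\Gamma_+)'$ and vanishing weak normal trace is equivalent to $Lu$ (as a functional on all of $H^1$, not only $H^1_0$) lying in $H^{1/2}(\Gamma_+)'_*$. For this I would use the Green identity defining $\tr_{\nvt_+}$: the functional $\phi\mapsto\spl\sigma\nabla u,\nabla\phi\spr$ equals $\spl -\div\sigma\nabla u,\phi\spr$ plus the boundary term, which vanishes precisely under the Neumann condition. A further subtlety is density of $\Hvar^{3/2}(\Gamma_+)$ in $H^{1/2}(\Gamma_+)$, needed to extend this from smooth test functions.
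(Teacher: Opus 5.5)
Your proposal follows the paper's route. The Neumann map $u\mapsto f_u$, $\langle f_u,\phi\rangle=\langle\sigma\nabla_{\Gamma_+}u,\nabla_{\Gamma_+}\phi\rangle$, identifies $\calH^s_\sigma(\Gamma_+)$ isomorphically with $H^{1-s}(\Gamma_+)'_*$ for $s=0,1/2,1$. One then interpolates the dual scale, $[H^1(\Gamma_+)'_*,L^2_*(\Gamma_+)]_{1/2}=H^{1/2}(\Gamma_+)'_*$, and transfers back. Your duality argument in Step~2 and your Green-identity discussion of the $s=1/2$ level give more detail than the paper's proof does.

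The preliminary description of $\Lambda$ in Step~1 is off. An operator with domain $\{u\in H_0\colon Lu\in H^1_*(\Gamma_+)\}$ does not have $\calH^1_\sigma(\Gamma_+)$ as its domain. Also, the indexing should read $\calH^s_\sigma=L^{-1}(H^{1-s}(\Gamma_+)'_*)$, not $L^{-1}(H^{s}(\Gamma_+)'_*)$. Neither slip affects the argument, since you never use $\Lambda$ and the isomorphism-invariance of the interpolation functor is all that is needed.
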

\begin{proof}
For $u\in\calH^0_\sigma(\Gamma_+)$ let $f_u\in H^1(\Gamma_+)'_*$ be defined by
\[
\spl f_u,\plh\spr_{H^1(\Gamma_+)'\times H^1(\Gamma_+)}:=\spl \sigma\nabla_{\Gamma_+}u,\nabla_{\Gamma_+}\plh\spr_{\Lspace^2_T(\Gamma_+)}.
\]
Note that on $\calH^s_\sigma(\Gamma_+)$ we have the norm equivalences $\|f_u\|_{H^{s-1}(\Gamma_+)'} \sim \|u\|_{\calH^s_\sigma(\Gamma_+)}$, $s=0,1/2,1$ and the map $u\mapsto f_u$ is a bijection $\calH^s_\sigma(\Gamma_+) \to H^{s-1}(\Gamma_+)'_*$, $s=0,1/2,1$.
Let $f\mapsto u_f$ be it's inverse, i.e., $u_f\in\calH^0_\sigma(\Gamma_+)$ is the solution to
\[
\spl \sigma\nabla_{\Gamma_+}u_f,\nabla_{\Gamma_+}u\tf\spr_{\Lspace^2_T(\Gamma_+)}=\spl f_u,u\tf \spr_{H^1(\Gamma_+)'\times H^1(\Gamma_+)}
\]
for all $u\tf\in\calH^0_\sigma(\Gamma_+)$.
Indeed, for $s=1/2,1$ the previous equality implies $-\div_{\Gamma_+}\sigma\nabla_{\Gamma_+} u_f=f_u$ and $\tr_{\nvt_+}u_f=0$.
Now, applying the real interpolation method we obtain that $H^{1/2}(\Gamma_+)'_*=[L^2_*(\Gamma_+), H^{1}(\Gamma_+)'_*]_{K,1/2,2}$ and thus $\calH^{1/2}_\sigma(\Gamma_+) = [\calH^0_\sigma(\Gamma_+),\calH^1_\sigma(\Gamma_+)]_{K,1/2,2}$ with the norm equivalence $\|\plh\|_{\calH^{1/2}_\sigma(\Gamma_+)} \sim \|\plh\|_{[\calH^0_\sigma(\Gamma_+),\calH^1_\sigma(\Gamma_+)]_{K,1/2,2}}$.
Subsequently the equivalence to the Hilbert interpolation space follows too.
\end{proof}
Let us recall that for Hilbert spaces $X,Y$ the Schatten class $\calK_p(X,Y)$, $p\geq1$ is the space of compact operators $A\in L(X,Y)$ such that the sequence of singular values $(s_n(A))_{n\in\bbN}$ is in $\ell_p(\bbN)$.
An operator $A\in L(X,Y)$ is said to be of finite order, if there exist $p\geq1$ with $A\in\calK_p(X,Y)$.
\begin{lemma}\label{lem:HoneLtwoFO}
The embedding $H^s(\Gamma_+)\hookrightarrow L^2(\Gamma_+)$, $s\in(0,1]$ is of finite order.
\end{lemma}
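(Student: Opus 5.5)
The plan is to derive the finite-order property from Weyl-type eigenvalue asymptotics, by realising the embedding through a selfadjoint operator with compact resolvent. First reduce to $s=1$. For $s\in(0,1)$ the embedding $H^1(\Gamma_+)\hookrightarrow L^2(\Gamma_+)$ factors as $H^1(\Gamma_+)\hookrightarrow H^s(\Gamma_+)\hookrightarrow L^2(\Gamma_+)$. That factorisation is the wrong direction for a direct reduction, so instead I will estimate the singular values of $H^s\hookrightarrow L^2$ directly via interpolation. On a bounded Lipschitz set $\Gamma_+$ (flat in our application, $\Gamma_+\sim\geocs\subset\bbR^2$), let $\Lambda$ be the Neumann Laplacian plus identity, i.e.\ the selfadjoint operator in $L^2(\Gamma_+)$ associated with the form $\spl\nabla u,\nabla v\spr+\spl u,v\spr$ on $H^1(\Gamma_+)$. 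Then $\opdom(\Lambda^{1/2})=H^1(\Gamma_+)$ with equal norms. The singular values of $H^1\hookrightarrow L^2$ are exactly $\mu_n^{-1/2}$, where $\mu_n$ are the eigenvalues of $\Lambda$.

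\emph{Step 1 (eigenvalue growth for $s=1$).} I need a polynomial lower bound $\mu_n\geq cn^{\alpha}$ for some $\alpha>0$; for Lipschitz domains in $\bbR^d$ the natural target is $\alpha=2/d$. I will obtain it by comparing the Neumann eigenvalues of $\Gamma_+$ with Neumann eigenvalues of a box via a bounded extension operator $E\colon H^1(\Gamma_+)\to H^1_0(Q)$ for a cube $Q\supset\Gamma_+$, which exists by Stein/Calder\'on extension for Lipschitz domains. By the min-max principle,
\[
\mu_n(\Gamma_+)=\min_{\dim V=n}\max_{u\in V}\frac{\|u\|_{H^1(\Gamma_+)}^2}{\|u\|_{L^2(\Gamma_+)}^2}.
\]
Mapping $V$ by $E$ and using $\|Eu\|_{H^1(Q)}\leq C\|u\|_{H^1}$ and $\|Eu\|_{L^2(Q)}\geq\|u\|_{L^2(\Gamma_+)}$ gives $\mu_n(\Gamma_+)\geq C^{-2}\lambda_n^{D}(Q)$, where $\lambda_n^D(Q)$ are the explicitly known Dirichlet eigenvalues of the cube. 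These behave like $n^{2/d}$. Hence $s_n\lesssim n^{-1/d}$, and the embedding lies in $\calK_p$ for every $p>d$. Equivalently, one can use the approximation-number characterisation $s_{n+1}(A)=\inf_{\mathrm{rank} F\leq n}\|A-F\|$ with $A$ the embedding composed with $E$.

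\emph{Step 2 (general $s$).} For $s\in(0,1)$ I will use that $H^s(\Gamma_+)=[L^2,H^1]_{s}=\opdom(\Lambda^{s/2})$ with equivalent norms, which is standard interpolation for Lipschitz domains via the extension operator. Then the embedding $H^s\hookrightarrow L^2$ is, up to an isomorphism, $\Lambda^{-s/2}$, with singular values $\mu_n^{-s/2}\lesssim n^{-s/d}$. This puts it in $\calK_p$ for $p>d/s$. If $\Gamma_+$ is a genuinely curved Lipschitz surface rather than flat, the same argument applies after localising with a Lipschitz partition of unity and charts, since a finite sum of finite-order operators is of finite order.

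The main obstacle is the identification of $H^s(\Gamma_+)$, defined via Sobolev--Slobodeckij norms, with the interpolation space $\opdom(\Lambda^{s/2})$ on a merely Lipschitz set. I would handle it by citing McLean's interpolation results for Lipschitz domains \cite{McLean00}. Alternatively, I would avoid interpolation altogether by applying the approximation-number argument directly: extend by $E_s\colon H^s(\Gamma_+)\to H^s(\bbR^d)$ with compact support in $Q$, then use the Fourier series on the torus $Q$, where the singular values of $H^s\hookrightarrow L^2$ are explicitly $(1+|k|^2)^{-s/2}$.
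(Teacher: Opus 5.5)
Your argument is correct, but it follows a different route from the paper. The paper cuts $\overline{\Gamma_+}$ into finitely many pairwise disjoint pieces $\Gamma_{+,n}$, each the image of $[0,1]^2$ under a bi-Lipschitz coordinate map. It flattens each piece (such maps preserve $H^s$ for $s\in(0,1]$) and imports the finite-order property on the square from \cite{Halla23MSteklov}. It then reassembles via $J=\sum_n \iota_n J_n R_n$, where $R_n$ is the restriction $H^s(\Gamma_+)\to H^s(\Gamma_{+,n})$, $J_n$ is the embedding on the piece, and $\iota_n$ is extension by zero in $L^2$. Stability of finite order under bounded factors and finite sums concludes the proof.

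Since the gluing happens only in $L^2$, the paper's proof needs no extension operator, no interpolation identity and no partition of unity, and curved $\Gamma_+$ is no harder than flat. What it pays is an unproved geometric claim (the decomposition into bi-Lipschitz squares) and dependence on an external model result.

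You instead compare $\Gamma_+$ globally with a cube. Stein extension plus min--max gives $\mu_n\gtrsim n^{2/d}$, and $H^s(\Gamma_+)=\opdom(\Lambda^{s/2})$ then gives membership in $\calK_p$ for every $p>d/s$ (here $d=2$). This is an explicit exponent the paper does not record. Your second variant (extension to $H^s(\bbR^d)$ supported in $Q$, then the torus embedding with singular values $(1+|k|^2)^{-s/2}$) is even more economical, since it bypasses both the Neumann eigenvalue comparison and the interpolation identification.

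Two minor points. First, the opening sentence about reducing to $s=1$ is abandoned at once and should be deleted. Second, for curved $\Gamma_+$ the localization is cleanest if you first extend from $\Gamma_+$ to the closed surface $\partial\Omega$ and only then use charts. Restriction $H^s(\partial\Omega)\to H^s(\Gamma_+)$ is a bounded surjection between Hilbert spaces, hence has a bounded linear right inverse. Every localized function then lies in $H^s(\bbR^2)$ with compact support, and no regularity of the pulled-back pieces of $\Gamma_+$ is required.
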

\begin{proof}
Let $(\Gamma_{+,n})_{n=1}^N$, $N\in\bbN$ with pairwise-disjoint Lipschitz submanifolds $\Gamma_{+,n}\subset\Gamma_+$ such that $\overline{\Gamma_+}=\bigcup \overline{\Gamma_{+,n}}$ and Lipschitz coordinate maps $a_n\colon [0,1]^2\to\Gamma_n$.
Then, by flattening $\Gamma_{+,n}$ and applying \cite[Thm.~2.2, Lem.~2.3]{Halla23MSteklov} we obtain that the embeddings $H^s(\Gamma_{+,n})\hookrightarrow L^2(\Gamma_{+,n})$ are of finite order.
By identifying $L^2(\Gamma_+)\sim \bigoplus_{n=1}^N L^2(\Gamma_{+,n})$ and applying \cite[Lem.~2.3]{Halla23MSteklov} we obtain that $H^s(\Gamma_+)\hookrightarrow L^2(\Gamma_+)$ is of finite order.
\end{proof}
\begin{lemma}\label{lem:Vfiniteorder}
Let \Cref{ass:Gamma-top} be satisfied.
Then, the embedding $\big( \sigma\Curl_{\Gamma_+}\calH^1_\sigma(\Gamma_+), \|\plh\|_{\Hspace(\curl_{\Gamma_+};\Gamma_+)} \big) \hookrightarrow\Lspace^2_T(\Gamma_+)$ is of finite order.
\end{lemma}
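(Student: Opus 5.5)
The plan is to reduce the embedding of $\sigma\Curl_{\Gamma_+}\calH^1_\sigma(\Gamma_+)$ into $\Lspace^2_T(\Gamma_+)$ to the scalar finite-order embedding of \Cref{lem:HoneLtwoFO}, via the potential $u$ with $\bfv=\sigma\Curl_{\Gamma_+}u$. For $u\in\calH^1_\sigma(\Gamma_+)$ we have $\|\bfv\|_{\Lspace^2_T(\Gamma_+)}\sim\|u\|_{H^1(\Gamma_+)}$ by the mean-zero normalization and Poincar\'e. Moreover $\curl_{\Gamma_+}\bfv$ equals $\pm\div_{\Gamma_+}\sigma\nabla_{\Gamma_+}u$ (after $\sigma$ is moved to the appropriate place, as in the definition of $\calH^1_\sigma$). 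Hence $\|\bfv\|_{\Hspace(\curl_{\Gamma_+};\Gamma_+)}\sim\|u\|_{\calH^1_\sigma(\Gamma_+)}$. Since $\sigma\Curl_{\Gamma_+}$ is an isomorphism $\calH^s_\sigma(\Gamma_+)\to\sigma\Curl_{\Gamma_+}\calH^s_\sigma(\Gamma_+)$, with $\Lspace^2_T$ norm for $s=0$ and $\Hspace(\curl)$ norm for $s=1$, the embedding in question is unitarily equivalent, up to bounded factors, to the embedding $\calH^1_\sigma(\Gamma_+)\hookrightarrow\calH^0_\sigma(\Gamma_+)=H^1_*(\Gamma_+)$. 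Schatten classes are stable under composition with bounded operators, so it suffices to show that this scalar embedding is of finite order.

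For that, I would use the map $u\mapsto f_u$ from the proof of \Cref{lem:interpolation-calH}. It is a bijection $\calH^s_\sigma(\Gamma_+)\to H^{s-1}(\Gamma_+)'_*$ with equivalent norms for $s=0,1$, i.e. onto $H^1(\Gamma_+)'_*$ and onto $L^2_*(\Gamma_+)$ respectively. Conjugating with it, the embedding $\calH^1_\sigma\hookrightarrow\calH^0_\sigma$ becomes the embedding $L^2_*(\Gamma_+)\hookrightarrow H^1(\Gamma_+)'_*$. This is the adjoint, restricted to mean-zero subspaces, of $H^1(\Gamma_+)\hookrightarrow L^2(\Gamma_+)$. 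By \Cref{lem:HoneLtwoFO} with $s=1$ the latter lies in some $\calK_p$. Adjoints have the same singular values, and restriction or compression to closed (finite-codimensional) subspaces does not increase singular values. Hence the conjugated embedding is in $\calK_p$, and by the first paragraph so is the original one.

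The main obstacle is bookkeeping rather than depth. One must check that the constraint $\tr_{\nvt_+}\sigma\nabla_{\Gamma_+}u=0$ in $\calH^1_\sigma$ makes $f_u$ surject onto all of $L^2_*(\Gamma_+)$, which is just Neumann solvability with mean-zero data. One must also check that $\|\curl_{\Gamma_+}\bfv\|_{L^2}$ really controls $\|\div_{\Gamma_+}\sigma\nabla_{\Gamma_+}u\|_{L^2}$ with the placement of $\sigma$ used here. If the $\sigma$ placement causes trouble, I would instead compare directly with $\calH^1_{\sigma^{-1}}$, since only the norm equivalence matters.
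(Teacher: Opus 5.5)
Your argument is correct, but it reaches \Cref{lem:HoneLtwoFO} by a different mechanism than the paper.

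The paper argues spectrally, following Delitsyn. It recasts finite order as summability of the eigenvalues of $\spl\curl_{\Gamma_+}\sigma\Curl_{\Gamma_+}v,\curl_{\Gamma_+}\sigma\Curl_{\Gamma_+}v\tf\spr=\lambda\spl\sigma\Curl_{\Gamma_+}v,\sigma\Curl_{\Gamma_+}v\tf\spr$ on $\calH^1_\sigma(\Gamma_+)$. It then uses the Courant--Fischer principle to replace the right-hand side by the equivalent form $\spl\Curl_{\Gamma_+}v,\sigma\Curl_{\Gamma_+}v\tf\spr$. Finally, invoking \Cref{lem:tracespace-decomp-orth-Hone}, it identifies the resulting eigenpairs with those of the Neumann problem $\spl\sigma\Curl_{\Gamma_+}\tilde v,\Curl_{\Gamma_+}v\tf\spr=\tilde\lambda\spl\tilde v,v\tf\spr$ on $H^1_*(\Gamma_+)$, to which \Cref{lem:HoneLtwoFO} applies.

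You stay at the operator level instead. Conjugating with the isomorphisms $\sigma\Curl_{\Gamma_+}$ and $u\mapsto f_u$ turns the embedding into $L^2_*(\Gamma_+)\hookrightarrow H^1(\Gamma_+)'_*$. This is a compression of the adjoint of $H^1(\Gamma_+)\hookrightarrow L^2(\Gamma_+)$, and the estimates $s_n(BTC)\le\|B\|\,\|C\|\,s_n(T)$ and $s_n(T^*)=s_n(T)$ finish the job.

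What your route buys:
\begin{itemize}
\item It needs no min-max comparison and no eigenpair correspondence.
\item It yields the same Schatten index as \Cref{lem:HoneLtwoFO} with $s=1$.
\item It uses neither \Cref{ass:Gamma-top} nor \Cref{lem:tracespace-decomp-orth-Hone}. You only need Neumann solvability on $H^1_*(\Gamma_+)$, the Green formula defining $\tr_{\nvt_+}$, and connectedness of $\Gamma_+$ for Poincar\'e, which the paper also uses tacitly.
\end{itemize}
The paper's route, in turn, makes the spectral picture explicit: the $\lambda_n$ are comparable to the Neumann eigenvalues $\tilde\lambda_n$.

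The $\sigma$-placement worry in your last paragraph is void. Since $\sigma$ is scalar and $\Curl_{\Gamma_+}u\cdot\overline{\Curl_{\Gamma_+}\phi}=\nabla_{\Gamma_+}u\cdot\overline{\nabla_{\Gamma_+}\phi}$ pointwise, one has exactly $\curl_{\Gamma_+}(\sigma\Curl_{\Gamma_+}u)=-\div_{\Gamma_+}(\sigma\nabla_{\Gamma_+}u)$. So no detour through $\calH^1_{\sigma^{-1}}$ is needed.
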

\begin{proof}
We proceed as in \cite{Delitsyn00}.
We need to show that the ascending sequence of eigenvalues $(\lambda_n)_{n\in\bbN}$ to:
Find $(\lambda,v)\in\Rp\times \calH^1_\sigma(\Gamma_+)$ such that
\begin{align*}
\spl \curl_{\Gamma_+}\sigma\Curl_{\Gamma_+} v,\curl_{\Gamma_+}\sigma\Curl_{\Gamma_+} v\tf \spr_{L^2(\Gamma_+)}
=\lambda \spl \sigma\Curl_{\Gamma_+} v,\sigma\Curl_{\Gamma_+} v\tf \spr_{\Lspace^2_T(\Gamma_+)}
\end{align*}
for all $v\tf\in\calH^1_\sigma(\Gamma_+)$ satisfies $(1/\lambda_n)_{n\in\bbN}\in\ell_p(\bbN)$ for an arbitrary $p\geq1$.
Due to the Courant-Fischer min-max formula for eigenvalues it suffices to prove $(1/\tilde\lambda)_{n\in\bbN}\in\ell_p(\bbN)$ with $(\tilde\lam_n)_{n\in\bbN}$ being the ascending sequence of eigenvalues to: Find $(\tilde\lambda,v)\in\Rp\times \calH^1_\sigma(\Gamma_+)$ such that
\begin{align*}
\spl \curl_{\Gamma_+}\sigma\Curl_{\Gamma_+} v,\curl_{\Gamma_+}\sigma\Curl_{\Gamma_+} v\tf \spr_{L^2(\Gamma_+)}
&=\tilde\lambda \spl \Curl_{\Gamma_+} v,\sigma\Curl_{\Gamma_+} v\tf \spr_{\Lspace_T^2(\Gamma_+)}.\\&=\tilde\lambda \spl \curl_{\Gamma_+}\sigma\Curl_{\Gamma_+} v, v\tf \spr_{L^2(\Gamma_+)}
\end{align*}
for all $v\tf\in\calH^1_\sigma(\Gamma_+)$.
Thence with \Cref{lem:tracespace-decomp-orth-Hone} it can be shown that $(\tilde\lambda,v)$ is an eigenpair to the former, if and only if it is an an eingenpair to:
Find $(\tilde\lambda,\tilde v)\in\Rp\times H^1_*(\Gamma_+)$ such that
\begin{align*}
\spl \sigma\Curl_{\Gamma_+} \tilde v,\Curl_{\Gamma_+} v\tf \spr_{L^2(\Gamma_+)}
=\tilde\lambda \spl \tilde v, v\tf \spr_{L^2(\Gamma_+)}
\end{align*}
for all $v\tf\in H^1_*(\Gamma_+)$.
Now, due to \Cref{lem:HoneLtwoFO} there exists $p\geq1$ such that $(1/\tilde\lambda_n)_{n\in\bbN}\in\ell_p(\bbN)$.
Thus the claim is proven.
\end{proof}
We introduce
\begin{align*}
\calV_{\Gamma_-}(\Omega):=\{\bfu\in \Hspace_{\Gamma_-}(\curlF;\Omega)\colon \,\,\, \divF\bfu=0 \text{ in }\Omega, \quad \tr_{T,\Gamma_+}\bfu\in \sigma\Curl_{\Gamma_+}\calH^{1/2}_\sigma(\Gamma_+) \oplus \nabla_{\Gamma_+} \calK_\sigma(\Gamma_+)\}.
\end{align*}
\begin{lemma}\label{lem:tracespace-decomp-orth-Omega}
Let \Cref{ass:Gamma-top} be satisfied.
Then, the decomposition
\begin{align*}
\Hspace_{\Gamma_-}(\curlF;\Omega)=\calV_{\Gamma_-}(\Omega)\oplus\nabla H^1_{\Gamma_-}(\Omega)
\end{align*}
is regular.
\end{lemma}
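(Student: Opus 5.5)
Given $\bfu\in\Hspace_{\Gamma_-}(\curlF;\Omega)$, we construct the decomposition $\bfu=\bfv+\gradF p$ in three steps.

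\emph{Step 1 (trace splitting).} Using \Cref{lem:tracespace}, $\bflam:=\tr_{T,\Gamma_+}\bfu\in\Hspace^{-1/2}_0(\curl_{\Gamma_+};\Gamma_+)$. By \Cref{lem:tracespace-decomp-orth} we split
\[
\bflam=\sigma\Curl_{\Gamma_+}v+\nabla_{\Gamma_+}z+\nabla_{\Gamma_+}w
\]
with $v\in\calH^{1/2}_\sigma(\Gamma_+)$, $z\in\calK_\sigma(\Gamma_+)$, $w\in H^{1/2}_{00}(\Gamma_+)$. All components depend continuously on $\bflam$.

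\emph{Step 2 (gradient part).} We seek $p\in H^1_{\Gamma_-}(\Omega)$ such that $\bfv:=\bfu-\gradF p$ is divergence free and has the right trace. Two conditions must be met.

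First, the trace. Since $\tr_{T,\Gamma_+}\gradF p=\nabla_{\Gamma_+}(\tr p|_{\Gamma_+})$, it suffices to impose $\tr p|_{\Gamma_+}=w$. Then $\tr_{T,\Gamma_+}\bfv=\sigma\Curl_{\Gamma_+}v+\nabla_{\Gamma_+}z$, which lies in the prescribed space.

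Second, the divergence. We need $\Delta p=\divF\bfu$ in $H^{-1}(\Omega)$.

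To achieve both, extend $w$ by zero on $\Gamma_-$. Because $w\in H^{1/2}_{00}(\Gamma_+)$, this extension lies in $H^{1/2}(\Gamma)$. Lift it via a bounded right inverse of the trace to $p_0\in H^1_{\Gamma_-}(\Omega)$. Then solve for $p_1\in H^1_0(\Omega)$ from
\[
\spl\gradF p_1,\gradF q\spr_\Omega=\spl\bfu-\gradF p_0,\gradF q\spr_\Omega\qquad\forall q\in H^1_0(\Omega),
\]
which is uniquely solvable by Lax--Milgram (Poincar\'e). Set $p:=p_0+p_1$. This gives $\divF(\bfu-\gradF p)=0$ in $\Omega$ and $\tr p=w$ on $\Gamma_+$, $\tr p=0$ on $\Gamma_-$. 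Thus $\bfv\in\calV_{\Gamma_-}(\Omega)$, and $\|p\|_{H^1}\lesssim\|\bfu\|_{\Hspace(\curlF;\Omega)}$ follows from the continuity of each step. Moreover $\gradF p\in\Hspace_{\Gamma_-}(\curlF;\Omega)$ because $\curlF\gradF p=0$ and $p$ vanishes on $\Gamma_-$.

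\emph{Step 3 (directness).} Suppose $\gradF p\in\calV_{\Gamma_-}(\Omega)$ with $p\in H^1_{\Gamma_-}(\Omega)$. Then:
\begin{itemize}
\item $\Delta p=0$ in $\Omega$;
\item the tangential trace $\nabla_{\Gamma_+}(\tr p)$ lies in $\sigma\Curl_{\Gamma_+}\calH^{1/2}_\sigma\oplus\nabla_{\Gamma_+}\calK_\sigma$;
\item at the same time it lies in $\nabla_{\Gamma_+}H^{1/2}_{00}(\Gamma_+)$, since $p$ vanishes on $\Gamma_-$ and $\tr p|_{\Gamma_+}\in H^{1/2}_{00}(\Gamma_+)$.
\end{itemize}
By directness in \Cref{lem:tracespace-decomp-orth} this trace vanishes, so $\tr p|_{\Gamma_+}$ is locally constant. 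Since it lies in $H^{1/2}_{00}$ and matches $0$ on $\Sigma$, it is zero. Hence $p\in H^1_0(\Omega)$ is harmonic, so $p=0$. Boundedness of the projections then comes from Steps 1--2, or alternatively from the closed graph theorem once $\calV_{\Gamma_-}(\Omega)$ is known to be closed.

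\emph{Main obstacle.} The delicate point is closedness of $\calV_{\Gamma_-}(\Omega)$ in $\Hspace(\curlF;\Omega)$, equivalently the continuity of the trace splitting, together with the identification $\tr_{T,\Gamma_+}\gradF p=\nabla_{\Gamma_+}\tr p$ for $p\in H^1_{\Gamma_-}$ in the weak trace spaces. Both come from the regularity of the decomposition in \Cref{lem:tracespace-decomp-orth}. The identity is checked on the dense set $C^\infty_{\Gamma_-}(\Omega)$ and extended by continuity of $\nabla_{\Gamma_+}\in\BLO(H^{1/2}_{00}(\Gamma_+),\Hspace^{1/2}_{RT}(\Gamma_+)')$.
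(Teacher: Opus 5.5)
Your proof is correct and follows the same route as the paper. You split $\tr_{T,\Gamma_+}\bfu$ via \Cref{lem:tracespace-decomp-orth}, lift the $H^{1/2}_{00}(\Gamma_+)$-component into $H^1_{\Gamma_-}(\Omega)$ (the paper uses the harmonic extension), and remove the remaining divergence with a Poisson problem. Your correction $p_1\in H^1_0(\Omega)$ is the right choice where the paper writes $H^1_*(\Omega)$, since a Neumann-type correction would destroy the traces on $\Gamma_\pm$, and your Step 3 supplies the directness argument that the paper leaves implicit.
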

\begin{proof}
For $\bfu\in\Hspace_{\Gamma_-}(\curlF;\Omega)$ let $\tr_{T,\Gamma_+}\bfu=\sigma\Curl_{\Gamma_+} v+\nabla_{\Gamma_+}z+\nabla_{\Gamma_+}w_{\Gamma_+}$ be a decomposition as in \Cref{lem:tracespace-decomp-orth}.
Let $w_1\in H^1_{\Gamma_-}(\Omega)$ solve $-\Delta w=0$ in $\Omega$ and $w=w_{\Gamma_+}$ on $\Gamma_+$.
Let $w_2\in H^1_*(\Omega)$ solve $\spl \gradF w_2,\gradF w\tf \spr_{\Lspace^2(\Omega)}$ $=$ $\spl \bfu-\gradF w_2,\gradF w\tf \spr_{\Lspace^2(\Omega)}$ for all $w\tf\in H^1_*(\Omega)$.
Then $\bfu=\bfv+\gradF w$ with $w:=w_1+w_2$, $\bfv:=\bfu-\gradF w$ is the claimed decomposition.
\end{proof}
\begin{lemma}\label{lem:HcurlDecompCompact}
Let \Cref{ass:Gamma-top} be satisfied.
Then, the embedding $\calV_{\Gamma_-}(\Omega)\hookrightarrow\Hspace^{1/2}(\Omega)$ is bounded and hence the embedding $\calV_{\Gamma_-}(\Omega)\hookrightarrow\Lspace^2(\Omega)$ is compact.
\end{lemma}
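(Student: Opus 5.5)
The plan is to reduce the claim to the classical $\Hspace^{1/2}$-regularity result for fields in $\Hspace(\curlF)\cap\Hspace(\divF)$ with a tangential trace in a better space on a Lipschitz domain. Let $\bfu\in\calV_{\Gamma_-}(\Omega)$. By \Cref{ass:Gamma-top} we may, as in the proof of \Cref{lem:tracespace-decomp-orth}, assume without loss of generality that $\Omega$ is simply connected. This can be arranged by adjoining $\Omega_0$ across $\Gamma_+$ if necessary, or by working locally, since only the behaviour near $\Gamma_+$ matters.

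The first step is to lift the tangential trace. By the definition of $\calV_{\Gamma_-}(\Omega)$ and \Cref{lem:tracespace-decomp-orth},
\[
\tr_{T,\Gamma_+}\bfu=\sigma\Curl_{\Gamma_+}v+\nabla_{\Gamma_+}z,
\]
with $v\in\calH^{1/2}_\sigma(\Gamma_+)$ and $z\in\calK_\sigma(\Gamma_+)$. The component $\nabla_{\Gamma_+}z$ lies in a finite-dimensional space of traces of smooth fields, so it can be lifted by a fixed $\Hspace^1$ field. For the component $\sigma\Curl_{\Gamma_+}v$, the interpolation result \Cref{lem:interpolation-calH} gives
\[
v\in[\calH^0_\sigma,\calH^1_\sigma]_{H,1/2}.
\]
The target is to show that $\sigma\Curl_{\Gamma_+}v$ belongs to $\Lspace^2_T(\Gamma_+)$, with norm bounded by $\|v\|_{\calH^{1/2}_\sigma}$. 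To get this, I would observe that $\sigma\Curl_{\Gamma_+}$ maps $\calH^0_\sigma$ into $\Lspace^2_T$ and maps $\calH^1_\sigma$ into $\Hspace_0(\curl_{\Gamma_+})$. I would then interpolate this map, and also use its dual $\div$ counterpart. The resulting statement is that the tangential trace of $\bfu$ on $\Gamma$, after extension by zero on $\Gamma_-$, lies in $\Lspace^2_T(\Gamma)$, with
\[
\|\bfu_T\|_{\Lspace^2_T(\Gamma)}\lesssim\|\bfu\|_{\Hspace(\curlF;\Omega)}.
\]

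The second step is to invoke the known result on Lipschitz domains (Costabel; see also \cite{BuffaCostabelSheen02}): if $\bfu\in\Hspace(\curlF;\Omega)$, $\divF\bfu\in L^2(\Omega)$, and $\nvf\times\bfu\in\Lspace^2_T(\Gamma)$, then $\bfu\in\Hspace^{1/2}(\Omega)$ with the corresponding estimate. Here $\divF\bfu=0$, and $\tr_{T,\Gamma_-}\bfu=0$ holds by the definition of $\Hspace_{\Gamma_-}(\curlF;\Omega)$. Combining with the first step, this yields the bounded embedding $\calV_{\Gamma_-}(\Omega)\hookrightarrow\Hspace^{1/2}(\Omega)$. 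Compactness of $\calV_{\Gamma_-}(\Omega)\hookrightarrow\Lspace^2(\Omega)$ then follows from the compact embedding $\Hspace^{1/2}(\Omega)\hookrightarrow\Lspace^2(\Omega)$ on bounded Lipschitz domains.

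The main obstacle is the first step. The claim to be proven is that membership in $\sigma\Curl_{\Gamma_+}\calH^{1/2}_\sigma(\Gamma_+)$ yields an $\Lspace^2$ tangential trace, and that this trace glues with the zero trace on $\Gamma_-$ across $\Sigma$ without losing regularity. I would attempt it by writing $\sigma\Curl_{\Gamma_+}v$ as a spectral expansion with respect to the selfadjoint operator $-\div_{\Gamma_+}\sigma\nabla_{\Gamma_+}$ with the Neumann-type condition $\tr_{\nvt_+}\sigma\nabla_{\Gamma_+}v=0$. The identity
\[
\|\sigma^{1/2}\Curl_{\Gamma_+}v\|^2_{\Lspace^2_T}=\spl-\div_{\Gamma_+}\sigma\nabla_{\Gamma_+}v,v\spr
\]
shows that the $\Lspace^2$ norm is controlled by the energy norm, i.e. by $\|v\|_{\calH^0_\sigma}=\|v\|_{H^1_*}$. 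So the $\Lspace^2_T$ bound already holds on $\calH^0_\sigma(\Gamma_+)$, and the extra $1/2$ of regularity in $\calH^{1/2}_\sigma$ is exactly what supplies the $\curl$ control needed to place the zero-extended trace in $\Hspace^{-1/2}(\curl_\Gamma;\Gamma)\cap\Lspace^2_T(\Gamma)$. The gluing across $\Sigma$ is then handled by \Cref{lem:tracespace-pw}.
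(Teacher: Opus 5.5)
Your proposal is correct and follows the paper's proof. The paper likewise bounds $\|\tr_T\bfu\|_{\Lspace^2_T(\Gamma)}\le C\|\bfu\|_{\Hspace(\curlF;\Omega)}$ on $\calV_{\Gamma_-}(\Omega)$ and then invokes Costabel's $\Hspace^{1/2}$-regularity result \cite{Costabel90}. As you eventually note, this bound is immediate from three facts: $\calH^{1/2}_\sigma(\Gamma_+)\subset H^1_*(\Gamma_+)$, $\calK_\sigma(\Gamma_+)$ is finite dimensional, and the decomposition in \Cref{lem:tracespace-decomp-orth} is regular.

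The interpolation and spectral-expansion detours are unnecessary. Two side remarks should be dropped rather than repaired:
\begin{enumerate}
\item For $\sigma\in L^\infty$, $\nabla_{\Gamma_+}z$ is in general neither a trace of a smooth field nor liftable to $\Hspace^1$. Only $\nabla_{\Gamma_+}z\in\Lspace^2_T(\Gamma_+)$ is needed.
\item You cannot replace $\Omega$ by a simply connected domain in a regularity statement about fields on $\Omega$. Nor do you need to, since Costabel's theorem has no topological hypothesis.
\end{enumerate}
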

\begin{proof}
There exists $C>0$ such that $\|\tr_{T}\bfv\|_{\Lspace^2_T(\Gamma)} \leq C \|\bfv\|_{\Hspace(\curlF;\Omega)}$ for all $\bfv\in\calV_{\Gamma_-}(\Omega)$.
Hence the claims follow from \cite{Costabel90}.
\end{proof}
\begin{lemma}\label{lem:tracespace-intbyparts-two}
Let \Cref{ass:Gamma-top} be satisfied.
Then, it holds that
\begin{align*}
\spl \curlF\bfu,\bfu\tf\spr_\Omega -\spl \bfu,\curlF\bfu\tf\spr_\Omega
=&-\spl \tr_{RT,\Gamma_+}\bfv,\tr_{T,\Gamma_+}\bfv\tf\spr_{\Lspace^2_T(\Gamma_+)}\\
&-\spl \tr w,\curl_{\Gamma_+}\tr_{T,\Gamma_+}\bfv\tf\spr_{H^{1/2}(\Gamma_+)\times H^{1/2}(\Gamma_+)'}\\
&+\spl \div_{\Gamma_+}\tr_{RT,\Gamma_+}\bfv, \tr w\tf\spr_{H^{1/2}(\Gamma_+)'\times H^{1/2}(\Gamma_+)}
\end{align*}
for all $\bfu,\bfu\tf\in\Hspace_{\Gamma_-}(\curlF;\Omega)$ with $\bfu=\bfv+\nabla w$, $\bfu\tf=\bfv\tf+\nabla w\tf$ being decompositions as in \Cref{lem:tracespace-decomp-orth-Omega}.
\end{lemma}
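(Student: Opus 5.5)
The plan is to start from \Cref{lem:tracespace-intbyparts} and insert the decompositions $\bfu=\bfv+\nabla w$ and $\bfu\tf=\bfv\tf+\nabla w\tf$ from \Cref{lem:tracespace-decomp-orth-Omega}. The left-hand side is sesquilinear, and $\curlF\nabla w=0$, $\curlF\nabla w\tf=0$. Hence
\begin{align*}
\spl \curlF\bfu,\bfu\tf\spr_\Omega -\spl \bfu,\curlF\bfu\tf\spr_\Omega
=\spl \curlF\bfv,\bfv\tf\spr_\Omega -\spl \bfv,\curlF\bfv\tf\spr_\Omega
+\spl \curlF\bfv,\nabla w\tf\spr_\Omega -\spl \nabla w,\curlF\bfv\tf\spr_\Omega .
\end{align*}
I treat the three groups separately, reading the $\bfv$ on the right-hand side of \Cref{lem:tracespace-intbyparts} as the generic field.

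\emph{The $(\bfv,\bfv\tf)$-part.} Apply \Cref{lem:tracespace-intbyparts} to $\bfv,\bfv\tf\in\calV_{\Gamma_-}(\Omega)$. By \Cref{lem:HcurlDecompCompact} and its proof, $\tr_T\bfv\in\Lspace^2_T(\Gamma)$, and likewise $\tr_{RT}\bfv\in\Lspace^2_T(\Gamma)$, since both are pointwise rotations of each other. In \Cref{def:dualitypairing-partial} the decomposition of an $\Lspace^2_T$-element then has trivial gradient or curl part, or at least the pairing reduces to the $\Lspace^2_T(\Gamma_+)$ inner product. This yields the first term. Rigorously I would argue by density: take smooth fields in $\bfC^\infty_{\Gamma_-}$, where the identity is the classical Green formula, and pass to the limit using the $\Lspace^2_T$ bound on traces from \cite{Costabel90}.

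\emph{The mixed parts.} For $\spl \curlF\bfv,\nabla w\tf\spr_\Omega$ I integrate by parts in the $H(\divF)$ sense. Since $\divF\curlF\bfv=0$ and $\tr w\tf=0$ on $\Gamma_-$, this equals the pairing of the normal trace $\nvf\cdot\curlF\bfv$ on $\Gamma_+$ with $\tr w\tf\in H^{1/2}_{00}(\Gamma_+)$. Next I use the surface identity $\nvf\cdot\curlF\bfv=\curl_{\Gamma}\tr_T\bfv=-\div_\Gamma\tr_{RT}\bfv$, valid first for smooth $\bfv$ and extended by continuity via \Cref{lem:tracespace} and the boundedness of $\curl_{\Gamma_+}$, $\div_{\Gamma_+}$ into $H^{1/2}(\Gamma_+)'$. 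This gives the $\div_{\Gamma_+}\tr_{RT,\Gamma_+}\bfv$ term, with the sign fixed by the orientation conventions of $\tr_{RT}$. Analogously, $\spl \nabla w,\curlF\bfv\tf\spr_\Omega=\spl \tr w,\curl_{\Gamma_+}\tr_{T,\Gamma_+}\bfv\tf\spr$, which produces the second term with its minus sign. The pairings make sense because $\tr w\in H^{1/2}_{00}(\Gamma_+)\subset H^{1/2}(\Gamma_+)$ and because of the duality $H^{1/2}_{00}(\Gamma_+)'=H^{-1/2}$ versus $H^{1/2}(\Gamma_+)'$ recalled earlier.

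The main obstacle is the justification of the boundary identities at low regularity, namely that $\nvf\cdot\curlF\bfv$ restricted to $\Gamma_+$ coincides with $\curl_{\Gamma_+}\tr_{T,\Gamma_+}\bfv$ as an element of $H^{1/2}(\Gamma_+)'$ when tested against $H^{1/2}_{00}$ functions, and that no contributions arise on $\Sigma$. I would handle this through the density of $\bfC^\infty_{\Gamma_-}$ in $\Hspace_{\Gamma_-}(\curlF;\Omega)$ together with the continuity statements of \Cref{lem:tracespace}. The absence of $\Sigma$-terms follows from $\tr_{\tv_+}\tr_{T,\Gamma_+}\bfv=0$, which holds by the definition of $\Hspace^{-1/2}_0(\curl_{\Gamma_+};\Gamma_+)$.
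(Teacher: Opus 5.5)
Your argument is correct in outline, but it takes a different route from the paper. The paper only points to \Cref{lem:tracespace-intbyparts} and \cite[Lem.~5.6]{BuffaCostabelSheen02}, and stays on the boundary. It applies \Cref{lem:tracespace-intbyparts} to $\bfu,\bfu\tf$ themselves, inserts $\tr_{RT,\Gamma_+}\bfu=\tr_{RT,\Gamma_+}\bfv+\Curl_{\Gamma_+}\tr w$ and $\tr_{T,\Gamma_+}\bfu\tf=\tr_{T,\Gamma_+}\bfv\tf+\nabla_{\Gamma_+}\tr w\tf$ into $\spl\cdot,\cdot\spr_{\Gamma_+,-1/2,0}$, and expands the pairing; the $\Curl$--$\nabla$ term drops out because $\curl_{\Gamma_+}\nabla_{\Gamma_+}=0$.

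You split in the volume instead. The gradient--gradient contribution disappears because $\curlF\nabla w=0$. The cross terms come from the $\Hspace(\div)$ Green formula for the divergence-free field $\curlF\bfv$, combined with $\nvf\cdot\curlF\bfv=\curl_\Gamma\tr_T\bfv$ in $H^{-1/2}(\Gamma)$, and they localize to $\Gamma_+$ since $\tr w\tf$ vanishes on $\Gamma_-$. What this buys you is that you never expand the pairing of \Cref{def:dualitypairing-partial} along the splitting $\tr_{T,\Gamma_+}\bfv\tf+\nabla_{\Gamma_+}\tr w\tf$. That splitting is not the regular decomposition of \Cref{lem:tracespace-decomp-reg} on which the pairing is built. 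Your density argument for the cross terms is also fine: for fixed $\tr w\tf\in H^{1/2}_{00}(\Gamma_+)$, both sides are continuous on $\Hspace_{\Gamma_-}(\curlF;\Omega)$.

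\textbf{First repair: a sign.} With the convention $\tr_{RT}\bfv=-\nvf\times\bfv=\bfv\times\nvf$ one has $\div_\Gamma\tr_{RT}\bfv=+\nvf\cdot\curlF\bfv=\curl_\Gamma\tr_T\bfv$. On a flat piece with $\nvf=(0,0,1)^\top$ all three equal $\partial_xv_y-\partial_yv_x$. With this, $\spl\curlF\bfv,\nabla w\tf\spr_\Omega=\spl\div_{\Gamma_+}\tr_{RT,\Gamma_+}\bfv,\tr w\tf\spr$ produces exactly the plus sign of the third term. Your identity with $-\div_\Gamma\tr_{RT}\bfv$ would give the wrong sign.

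\textbf{Second repair: the $(\bfv,\bfv\tf)$-term.} This is the more substantive point. It is not true that the decomposition of an $\Lspace^2_T$-element has trivial gradient or curl part. The density argument you sketch also does not close. Fields from $\bfC^\infty_{\Gamma_-}$ converging to $\bfv$ in $\Hspace(\curlF;\Omega)$ have traces converging only in the $\Hspace^{-1/2}$-type spaces. The $\Lspace^2_T$ trace bound from \Cref{lem:HcurlDecompCompact} is available only on $\calV_{\Gamma_-}(\Omega)$, not for the approximants. So the $\Lspace^2_T(\Gamma_+)$ products of the approximating traces need not converge to the one you want. What you actually need is the consistency property
\begin{align*}
\spl\bflam,\bflam\tf\spr_{\Gamma_+,-1/2,0}=\spl\bflam,\bflam\tf\spr_{\Lspace^2_T(\Gamma_+)}
\quad\text{whenever, in addition, } \bflam,\bflam\tf\in\Lspace^2_T(\Gamma_+).
\end{align*}
It is easiest to prove this directly:
\begin{itemize}
\item Let $\bflam=\bfq+\Curl_{\Gamma_+}p$ and $\bflam\tf=\bfq\tf+\nabla_{\Gamma_+}p\tf$ be decompositions as in \Cref{def:dualitypairing-partial}. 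Then $\Curl_{\Gamma_+}p$ and $\nabla_{\Gamma_+}p\tf$ lie in $\Lspace^2_T(\Gamma_+)$.
\item Hence $p,p\tf\in H^1(\Gamma_+)\cap H^{1/2}_{00}(\Gamma_+)=H^1_0(\Gamma_+)$.
\item The duality terms in the defining formula then become ordinary $L^2$ integrals that can be integrated by parts, and the $\Curl$--$\nabla$ term vanishes.
\item So the pairing reduces to the $\Lspace^2_T(\Gamma_+)$ product.
\end{itemize}
The paper's route relies on the same property for its first term. With this lemma in place and the sign corrected, your proof is complete.
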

\begin{proof}
Follows like in \Cref{lem:tracespace-intbyparts}.
See also \cite[Lem.~5.6]{BuffaCostabelSheen02}.
\end{proof}

%

\section{Unbounded operators}\label{sec:app_operatortheory}

For the benefit of the reader we collect in this appendix several results concerning unbounded operators, which are applied in \Cref{sec:Calderon}.
Our sources are the books of Kato \cite{Kato95} and Haase \cite{Haase06book}, and an article of Agranovich \cite{Agranovich94}.
Concerning the nomenclature we follow Haase's book \cite{Haase06book}, i.e., the attribute \emph{sectorial} is used differently than in Kato's book \cite{Kato95}, see \cite[Cha.~7.1,~7.3.2]{Haase06book}.

\subsection{Operators in Banach spaces}

In the following, let $X$ be a Banach space over $\bbC$ with norm $\|\plh\|$.

\subsubsection{Sectorial operators}

For $\theta\in[0,\pi]$ let
\begin{align*}
\Sector_\theta:=\begin{cases} z\in\Cx\colon |\arg z|<\theta, &\theta\in\text{$($}0,\pi]\\
(0,+\infty), &\theta=0 \end{cases}.
\end{align*}
An operator $\Aopu\colon\opdom(\Aopu)\subset X\to X$ is called \textbf{sectorial of angle} $\mathbf{\theta}$, in short $\Aopu\in\Sect(\theta)$, if
\begin{align*}
a)~\spec(\Aopu)\subset \overline{\Sector_\theta} \qquad\text{and}\qquad b)~\Mod(\Aopu,\theta'):=\sup\{ \|\lam(\lam-\Aopu)^{-1}\|\colon \lam\in\bbC\setminus\overline{\Sector_{\theta'}}\}<\infty\,\,\forall \theta'\in(\theta,\pi).
\end{align*}
An operator $\Aopu$ is called \textbf{quasi sectorial (of angle $\mathbf{\theta}$)}, if there exists $\lam\in\bbR$ such that $\Aopu+\lam$ is sectorial (of angle $\theta$).
For a sectorial operator $\Aopu$ we define it's \textbf{spectral/sectorial angle} as
\begin{align*}
\theta_{\Aopu}:=\min\{\theta\in\text{$[$}0,\pi)\colon \Aopu\in\Sect(\theta)\}.
\end{align*}

\subsubsection{Classes of holomorphic functions}

For an open set $\Omega\subset\bbC$ let $\calO(\Omega)$ and $\calM(\Omega)$ be the spaces of \textbf{holomorphic} and \textbf{meromorphic functions} $\Omega\to\bbC$.
Let $\ome\in(0,\pi]$ and $f\in\calM(\Sector_\ome)$.
We say that $f$ has \textbf{polynomial limit} $c\in\bbC$ at $0$ if there is $\alpha>0$ such that $f(z)-c=O(|z|^\alpha)$ as $z\to0$.
We say that $f$ has polynomial limit $\infty$ at $0$ if $1/f$ has polynomial limit $0$ at $0$.
We say that $f$ has polynomial limit $c\in\bbC\cup\{\infty\}$ at $\infty$ if $f(z^{-1})$ has polynomial limit $c$ at $0$.
We say that $f$ has a \textbf{finite polynomial limit} at $0$ ($\infty$) if there is $c\in\bbC$ such that $f$ has polynomial limit $c$ at $0$ ($\infty$).
We say that $f$ is \textbf{regularly decaying} at $0$ ($\infty$) has if $f$ has polynomial limit $0$ at $0$ ($\infty$).
Let
\begin{align*}
H^\infty(\Sector_\ome):=\{f\in\calO(\Sector_\ome)\colon f\in L^\infty(\Sector_\ome)\},
\qquad
\|f\|_\infty:=\|f\|_{\infty,\Sector_\ome}:=\|f\|_{L^\infty(\Sector_\ome)}
\end{align*}
be the Banach algebra of all \textbf{bounded, holomorphic functions} on $\Sector_\ome$,
\begin{align*}
H_0^\infty(\Sector_\ome):=\{f\in H^\infty(\Sector_\ome)\colon f \text{ is regularly decaying at } 0 \text{ and at } \infty\}
\end{align*}
be the \textbf{Dunford-Riesz class} on $\Sector_\ome$ and
\begin{align*}
\calE(\Sector_\ome):=H_0^\infty(\Sector_\ome) \oplus \big\spl (1+z)^{-1} \big\spr \oplus \big\spl 1 \big\spr
\end{align*}
be the \textbf{extended Dunford-Riesz class}.
A function $f\in\calO(\Sector_\ome)$ belongs to $\calE(\Sector_\ome)$, if and only if $f$ is bounded and has finite polynomials limits at $0$ and $\infty$ \cite[Lem.~2.2.3]{Haase06book}.
We further set
\begin{align*}
H^\infty_{(0)}(\Sector_\theta):=\{f\in H^\infty(\Sector_\theta)\colon f \text{ is regularly decaying at } \infty \text{ and holomorphic at } 0\}
\end{align*}
and
\begin{align*}
\calE_\infty(\Sector_\theta):=\{f\in\calO(\Sector_\theta)\colon \exists \alpha>0 \text{ such that } f=O(|z|^{-\alpha}) \text{ as }z\to\infty\}.
\end{align*}
%

\subsubsection{Functional calculus}\label{subsubsec:functcalc}

\paragraph{Primary functional calculus}

Let $\theta\in(0,\pi)$ and $\del>0$.
We consider $\Gamma_\theta:=\partial\Sector_\theta$ oriented from $\infty e^{i\theta}$ to $\infty e^{-i\theta}$.
Further, let $B_\del(0):=\{z\in\bbC\colon|z|<\del\}$, $\Gamma_{\theta,\del,-}:=\partial (\Sector_\theta\cup B_\del(0))$ b and $\Gamma_{\theta,\del,+}:=\partial (\Sector_\theta\setminus B_\del(0))$ be oriented like $\Gamma_\theta$.
Let $A\in\Sect(\ome)$.
For $\theta\in(\ome,\pi)$ and $f\in H_0^\infty(\Sector_\theta)$ we define
\begin{align*}
f(A):=\frac{1}{2\pi i} \int_{\Gamma_{\ome'}} f(z) (z-A)^{-1} \,\dd z,
\end{align*}
where $\ome'\in(\ome,\theta)$ is arbitrary.
Properties of $H_0^\infty(\Sector_\theta)\to\BLO(X)\colon f\mapsto f(A)$ are stated in \cite[Lem.~2.3.1]{Haase06book}.
If $f\in H_{(0)}^\infty(\Sector_\theta)$ and $\del>0$ is such that $f$ is holomorphic in a neighborhood of $\overline{B_\del(0)}$, then $f(A)=\frac{1}{2\pi i} \int_{\Gamma_{\ome',\del,-}} f(z) (z-A)^{-1} \,\dd z$ \cite[Lem.~2.3.2]{Haase06book}.
For $g\in\calE(\Sector_\ome)$ we define
\begin{align*}
g(A):=f(A)+c(1+A)^{-1}+d,
\end{align*}
whenever $g=f+c(1+z)^{-1}+d$, $f\in H_0^\infty(\Sector_\theta)$, $c,d\in\bbC$.
This constitutes the \textbf{primary functional calculus} for sectorial operators $\calE(\Sector_\theta)\to\BLO(X)\colon g\mapsto g(A)$ with properties listed in \cite[Thm.~2.3.3]{Haase06book}.

\paragraph{Natural functional calculus}

Let
\begin{align*}
\calM(\Sector_\theta)_{\Aopu}:=\{f\in\calM(\Sector_\theta)\colon \exists e\in\calE(\Sector_\theta) \text{ such that } e(A) \text{ is injective and } ef\in\calE(\Sector_\theta)\}.
\end{align*}
Then, for $f\in\calM(\Sector_\theta)_{\Aopu}$ the extension technique allows us to define
\begin{align*}
f(A):=e(A)^{-1} (ef)(A),
\end{align*}
with $e\calE(\Sector_\theta)$ being an arbitrary regularizer of $f$.
For $\theta_2>\theta_1>\ome$ we have the subalgebras $\calE(\Sector_{\theta_1})\subset\calE(\Sector_{\theta_2})$ and $\calM(\Sector_{\theta_1})\subset\calM(\Sector_{\theta_2})$, and note that the respective functional calculi are consistent.
Thus we set $\calE[\Sector_\ome]:=\bigcup_{\theta\in(\ome,\pi)} \calE(\Sector_\theta)$ and $\calM[\Sector_\ome]:=\bigcup_{\theta\in(\ome,\pi)} \calM(\Sector_\theta)$ call the associated calculus the \textbf{natural functional calculus} for $\Aopu$ as a sectorial operator on $\Sector_\ome$.
We further introduce the \textbf{domain} $\calM[\Sector_\ome]_{\Aopu}$ of this calculus as
\begin{align*}
\calM[\Sector_\ome]_{\Aopu}:=\bigcup_{\theta\in(\ome,\pi)} \calM(\Sector_\theta)_{\Aopu},\qquad
H(A):=\{f\in \calM[\Sector_\ome]_{\Aopu} \colon f(A)\in\BLO(X)\}.
\end{align*}
If $\Aopu\in\Sect(\ome)$ is invertible and $\delta\in (0,1/\|\Aopu^{-1}\|)$, then we consider
\begin{align*}
\calE_\infty[\Sector_\ome]:=\bigcup_{\theta\in(\ome,\pi)} \calE_\infty(\Sector_\theta)
\end{align*}
and a functional calculus being consistent with the previous one can be defined via
\begin{align*}
f(A):=\frac{1}{2\pi i} \int_{\Gamma_{\ome',\del,+}} f(z) (z-A)^{-1} \,\dd z.
\end{align*}


\paragraph{Fractional powers}

For	$\alpha>0$ the function $f(z)=z^\alpha$ is holomorphic in $\Sector_\theta$, has a polynomial limit $0$ at $0$ and a polynomial limit $\infty$ at $\infty$.
Hence $f(A)$ is well-defined by the natural functional calculus for sectorial operators.
In particular, for $n\in\bbN$ with $n>\alpha$ it holds that $z^\alpha(1+z)^{-n}\in H^\infty_0(\Sector_\theta)$ and
\begin{align*}
\Aopu^\alpha:=(z^\alpha)(A)=(1+A)^n\bigg(\frac{z^\alpha}{(1+z)^n}\bigg) (A),
\end{align*}
which we call the \textbf{fractional power} with exponent $\alpha$ of $\Aopu$.
It holds that \cite[Prop.~3.1.1]{Haase06book}
\begin{align*}
\Aopu^{\alpha+\beta}&=\Aopu^\alpha\Aopu^\beta \forall\alpha,\beta>0;\\
\opdom(A^\gamma)&\subset\opdom(A^\alpha) \text{ for }0<\alpha<\gamma;\\
\text{if }&T\in\BLO(X) \text{ commutes with }\Aopu, \text{ then it also commutes with }\Aopu^\alpha;\\
\text{if }&\Aopu\text{ is densely defined and } \alpha\in(0,n),n\in\bbN, \text{ then the space } \opdom(\Aopu^n) \text{ is a core for } \Aopu^\alpha;\\
\spec(\Aopu^\alpha)&=\{z^\alpha\colon z\in\spec(\Aopu)\}.
\end{align*}
Furthermore \cite[Prop.~3.1.2]{Haase06book},
\begin{align*}
\Aopu^\alpha \text{ is sectorial with } \ome_{\Aopu^\alpha}=\alpha\ome_{\Aopu} \text{ for all }\alpha\in(0,\pi/\ome_{\Aopu}), \Aopu\in\Sect(\ome).
\end{align*}
and \cite[Cor.~3.1.5]{Haase06book}
\begin{align*}
(\Aopu^\alpha)^\beta=\Aopu^{\alpha\beta} \text{for all } \Aopu\in\Sect(\ome), \alpha\in(0,\pi/\ome), \beta>0.
\end{align*}
and also \cite[Prop~3.1.12]{Haase06book}
\begin{align*}
A^\alpha\afu&=\frac{\sin \alpha\pi}{\pi} \int_0^\infty t^{\alpha-1} (t+\Aopu)^{-1}\Aopu\afu\,\dd t.
\end{align*}
If $\Aopu$ is invertible, then $\Aopu^\alpha:=(z^\alpha)(\Aopu)$ is well-defined for arbitrary $\alpha\in\bbC$ and \cite[Prop~3.2.1]{Haase06book}
\begin{subequations}
\label{eq:frac-power-neg}
\begin{align}
(\Aopu^{-1})^\alpha&=(A^\alpha)^{-1};\\
A^{-\alpha}\afu&=\frac{\sin \alpha\pi}{\pi} \int_0^\infty t^{-\alpha} (t+\Aopu)^{-1}\afu\,\dd t \quad\text{for }\alpha\in(0,1).
\end{align}
\end{subequations}

\subsubsection{Semi-groups}

A \textbf{semigroup} is a strongly continuous map $T\colon(0,\infty)\to\BLO(X)$ that possesses the \textbf{semigroup property}
\begin{align*}
T(t)T(s)=T(t+s) \quad\forall t,s>0.
\end{align*}
The semigroup us called \textbf{bounded} if $\sup_{t\in(0,\infty)} \|T(t)\|<\infty$, and called \textbf{exponentially bounded} if there exist constants $M\geq1,\ome\in\bbR$ such that $\|T(t)\|\leq Me^{\ome t}$ for all $t>0$.
The number
\begin{align*}
\ome_0(T):=\inf \{\ome\in\bbR\colon \exists M>0 \text{ such that } \|T(t)\|\leq Me^{\ome t} \text{ for all }t>0\}
\end{align*}
is called the \textbf{growth bound} of $T$, and $T$ is called \textbf{exponentially stable} if $\ome_0(T)<0$.
An exponentially stable semigroup with constants $M=1$, $\ome=0$ is called a \textbf{contraction semigroup}.
The semigroup $T$ is called \textbf{quasi contractive} if there exists $\ome\geq0$ such that $t\mapsto e^{\ome t}T(t)$ is contraction semigroup.
The space $\{\afu\in X\colon \lim_{t\to0+} T(t)\afu=\afu\}$ is called the \textbf{space of strong continuity} of $T$.
If it coincides with $X$, then the semigroup is called \textbf{strongly continuous} or a \textbf{$C_0$-semigroup}.\\

For an exponentially stable semigroup $T$ with constants $(M,\ome)$ the Laplace transform of $T$ exists at least in the half-plane $\{\lam\in\bbC\colon \Re\lam>\ome\}$, i.e.,
\begin{align*}
\hat T(t)\afu:=\int_0^\infty e^{-\lam t}T(t)\afu \,\dd t, \quad\afu\in X
\end{align*}
defines a bounded operator on $X$ for each $\lam$ with $\Re\lam>\ome$,
and there exists a unique operator $\Aopu\colon\opdom(\Aopu)\subset X\to X$ such that
\begin{align*}
(\lam-A)^{-1} = \int_0^\infty e^{-\lam t}T(t) \,\dd t.
\end{align*}
The operator $A$ is called the \textbf{generator} of the semigroup $T$.
The semigroup $T$ is uniquely determined by its generator.
The space of strong continuity of $T$ coincides with $\overline{\opdom(\Aopu)}$ \cite[Cor.~A.8.3]{Haase06book}.
If $\lam\in\bbC$, then $A+\lam$ generates the semigroup $t\mapsto e^{\lam t}T(t)$.\\

Let $\theta\in(0,\pi/2]$.
A map $T\colon \Sector_\theta\to\BLO(X)$ is a called an \textbf{exponentially bounded holomorphic semigroup} of angle $\theta$, if
\begin{subequations}
\begin{align}
T(\lam)T(\mu)=T(\lam+\mu) \text{ for all }\lam,\mu\in\Sector_\theta;\\
T\colon \Sector_\theta\to\BLO(X) \text{ is holomorphic};\\
\forall\phi\in(0,\theta)\exists M_\phi\geq,w_\phi\geq0 \text{ such that } \|T(\lam)\|\leq M_\phi e^{w_\phi\Re\lam} \text{ for all }\lam\in\Sector_\phi.
\end{align}
\end{subequations}
In particular, for such $T$ the restriction $T|_{0,\infty}$ is an exponentially bounded semigroup.
If above it is possible to choose $w_\phi=0$, then $T$ is called a \textbf{bounded holomorphic semigroup} of angle $\theta$.\\

Let $\Aopu\in\Sect(\ome)$ with $\ome\in[0,\pi/2)$.
By means of the functional calculus we can define
\begin{align*}
e^{-\lam\Aopu}:=(e^{-\lam z})(\Aopu) \in \BLO(X).
\end{align*}
The family $(e^{-\lam\Aopu})_{\lam\in\Sector_{\pi/2-\ome}}$ is a bounded holomorphic semigroup \cite[Prop.~3.4.1]{Haase06book}, called the \textbf{holomorphic semigroup} generated by $-\Aopu$.
There is a one-to-one correspondence between sectorial operators and bounded holomorphic semigroups \cite[Prop.~3.4.4]{Haase06book}.

\subsection{Operators in Hilbert spaces}

Let $H$ be a Hilbert space over $\bbC$ with scalar product $\spl\plh,\plh\spr$ and associated norm $\|\plh\|$.

\subsubsection{M-Accretive operators}

We denote the \textbf{numerical range} of an operator $\Aopu\colon\opdom(\Aopu)\subset H\to H$ as
\begin{align*}
\NRW(\Aopu):=\{ \spl \Aopu \afu,\afu \spr\colon \afu\in \opdom(\Aopu), \|\afu\|=1\}.
\end{align*}
An operator $\Aopu$ is called \textbf{accretive}, if it's numerical range is contained in the closed right half-plane, i.e., $\NRW(\Aopu)\subset\overline{\Sector_{\pi/2}}$.
Note that an accretive operator does not have to be closed, which motivates the following stronger definition.
An accretive operator $\Aopu$ is called \textbf{m-accretive}, if in addition its resolvent set contains the point $-1$, i.e., $-1\in\rho(\Aopu)$.
Two equivalent characterization of m-accretive operators are
\begin{align*}
&\{z\colon\Re z<0\}\subset\rho(\Aopu) \qquad\text{and}\qquad \|(\lam+\Aopu)^{-1}\|\leq (\Re \lam)^{-1} \text{ for all } \Re\lam>0;\\
&-\Aopu \text{ generates a } C_0\text{-semigroup } (T(t))_{t\geq0} \text{ with }\|T(t)\|\leq1 \text{ for all } T\geq0.
\end{align*}
Furthermore, for $\theta\in[0,\pi/2]$ an operator $\Aopu$ is called \textbf{$\theta$-accretive}, if $\NRW(\Aopu)\subset\overline{\Sector_\theta}$, i.e.,
\begin{align*}
|\Im \spl \Aopu\afu,\afu \spr| \leq (\tan\theta) \Re \spl \Aopu\afu,\afu \spr
\quad\text{for all } \afu\in\opdom(\Aopu).
\end{align*}
A $\theta$-accretive operator $\Aopu$ is called \textbf{m-$\theta$-accretive}, if in addition $\ran(\Aopu+1)$ is dense in the Hilbert space.
Again $\Aopu$ is called \textbf{quasi (m-)($\theta$-)accretive}, if there exists $\lam\in\bbR$ such that $\Aopu+\lam$ is so.
We have the following.
\begin{proposition}[Prop.~7.1.1 of \cite{Haase06book}]
Let $\Aopu\colon\opdom(\Aopu)\subset H\to H$ be a linear operator, $\ome\in[0,\pi/2)$ and $\theta:=\pi/2-\ome$.
The following assertions are equivalent:
\begin{enumerate}
\item The operator $\Aopu$ is m-$\ome$-accretive.
\item The operators $e^{\pm i\theta}\Aopu$ are m-accretive.
\item The operator $-\Aopu$ generates a holomorphic $C_0$-semigroup $(T(z))_{z\in\Sector_\theta}$ on $\Sector_\theta$ such that $\|T(z)\|\leq1$ for all $z\in\Sector_\theta$.
\end{enumerate}
If one of these equivalent conditions is satisfied, then $\Aopu\in\Sect(\ome)$.
\end{proposition}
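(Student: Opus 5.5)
The plan is to arrange the three assertions in the cycle (1)$\Rightarrow$(2)$\Rightarrow$(3)$\Rightarrow$(2)$\Rightarrow$(1), and to read off the final claim $\Aopu\in\Sect(\ome)$ from the resolvent estimate obtained in the step (2)$\Rightarrow$(3). Throughout I will use two facts.
\begin{itemize}
\item[(a)] For every $\afu\in\opdom(\Aopu)$ and $\lam\notin\overline{\NRW(\Aopu)}$,
\begin{align*}
\|(\lam-\Aopu)\afu\|\geq \dist(\lam,\overline{\NRW(\Aopu)})\,\|\afu\|.
\end{align*}
This follows from Cauchy--Schwarz applied to $\spl(\lam-\Aopu)\afu,\afu\spr$. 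I will assume $\Aopu$ closed, as in Haase's convention for m-accretivity; otherwise one passes to the closure.
\item[(b)] On each connected component of $\bbC\setminus\overline{\NRW(\Aopu)}$, the property ``$\ran(\lam-\Aopu)=H$'' is either true everywhere or false everywhere. Indeed, by (a) and closedness $\ran(\lam-\Aopu)$ is closed. Surjectivity at $\lam_0$ then gives a bounded inverse of norm at most $1/\dist(\lam_0,\overline{\NRW(\Aopu)})$. A Neumann series yields surjectivity on a disc whose radius depends only on this distance, so the set of surjectivity points is both open and relatively closed.
\end{itemize}

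For (1)$\Leftrightarrow$(2), the geometric part is elementary. We have $\NRW(\Aopu)\subset\overline{\Sector_\ome}$ if and only if $\NRW(e^{\pm i\theta}\Aopu)=e^{\pm i\theta}\NRW(\Aopu)$ lies in the closed right half-plane, because $\theta=\pi/2-\ome$ rotates the two boundary rays of $\overline{\Sector_\ome}$ onto $i\bbR$.

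For the range conditions, first assume (1). Then $\ran(1+\Aopu)$ is dense, and it is closed by (a) since $-1\notin\overline{\Sector_\ome}$, so $-1\in\rho(\Aopu)$. The open set $\bbC\setminus\overline{\Sector_\ome}$ is connected, so (b) gives $\ran(\lam-\Aopu)=H$ for every $\lam$ in it. In particular $\lam=-e^{\mp i\theta}$ lies in this set, so $-1\in\rho(e^{\pm i\theta}\Aopu)$, which is (2). Conversely, (2) gives surjectivity of $\lam-\Aopu$ at $\lam=-e^{\mp i\theta}$. Applying (b) on the connected set $\bbC\setminus\overline{\Sector_\ome}$ transports this to $\lam=-1$, which is (1).

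For (2)$\Rightarrow$(3), the argument above together with (a) gives, for every $\lam\notin\overline{\Sector_\ome}$,
\begin{align*}
\|(\lam-\Aopu)^{-1}\|\leq 1/\dist(\lam,\overline{\Sector_\ome}).
\end{align*}
Hence $\Mod(\Aopu,\theta')\leq 1/\sin(\theta'-\ome)$, so $\Aopu\in\Sect(\ome)$; this also settles the final claim of the proposition. The functional calculus of \Cref{subsubsec:functcalc} and \cite[Prop.~3.4.1]{Haase06book} then provide the bounded holomorphic semigroup $T(z)=e^{-z\Aopu}$ on $\Sector_\theta$.

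For the contraction property, fix $z=re^{i\varphi}$ with $|\varphi|<\theta$. Then $\NRW(e^{i\varphi}\Aopu)$ lies in the right half-plane, and $e^{i\varphi}\Aopu$ is m-accretive by the same connectivity argument. By Lumer--Phillips, $-e^{i\varphi}\Aopu$ therefore generates a contraction semigroup. It remains to check that this semigroup coincides with $t\mapsto T(te^{i\varphi})$. I will verify this by comparing Laplace transforms and resolvents, using the rotated-contour representation of $e^{-z\Aopu}$. This gives $\|T(z)\|\leq1$.

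For (3)$\Rightarrow$(2), fix $|\varphi|<\theta$. The map $t\mapsto T(te^{i\varphi})$ is a contraction $C_0$-semigroup, and holomorphy in $z$ identifies its generator as $-e^{i\varphi}\Aopu$. Lumer--Phillips then makes $e^{i\varphi}\Aopu$ m-accretive, so $\Re\spl e^{i\varphi}\Aopu\afu,\afu\spr\geq0$. Letting $\varphi\to\pm\theta$ gives accretivity of $e^{\pm i\theta}\Aopu$. For the resolvent condition, the bound $\|(1+e^{i\varphi}\Aopu)^{-1}\|\leq1$ is uniform in $\varphi$, and closedness of $\Aopu$ should give $-1\in\rho(e^{\pm i\theta}\Aopu)$ in the limit.

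I expect this boundary-angle limit to be the main obstacle. The semigroup in (3) is only given on the open sector, so the range condition exactly at $\varphi=\pm\theta$ must come from a limiting argument. My first attempt is to use (b): the point $-e^{\mp i\theta}$ already lies in $\bbC\setminus\overline{\Sector_\ome}$, and $\ran(\lam-\Aopu)=H$ at interior points of that set. This avoids taking any limit of resolvents.
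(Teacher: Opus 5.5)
The paper gives no proof of this proposition; it is quoted from Haase's book. Your proposal therefore has to stand on its own, and for $\ome\in(0,\pi/2)$ it does.

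\textbf{The gap: the endpoint $\ome=0$.} The statement allows $\ome=0$, and there your step (1)$\Leftrightarrow$(2) breaks. With $\theta=\pi/2$, the half-planes $\{\Re(iw)\ge0\}$ and $\{\Re(-iw)\ge0\}$ intersect in $\bbR$, not in $\overline{\Sector_0}=[0,\infty)$. So your ``elementary'' geometric equivalence is false at $\ome=0$.
\begin{itemize}
\item Condition (2) then only says that $\Aopu$ is symmetric with $\pm i\in\rho(\Aopu)$, i.e.\ that $\Aopu$ is selfadjoint.
\item $\Aopu=-I$ satisfies (2) but not (1). It also fails (3), since the semigroup generated by $I$ is $e^{z}I$, of norm $e^{\Re z}>1$. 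Moreover $\Aopu\notin\Sect(0)$.
\item Hence (2)$\Rightarrow$(1) and (2)$\Rightarrow$(3) are false at $\ome=0$. As reproduced, the statement needs either $\ome>0$, or (2) replaced by m-accretivity of $e^{i\varphi}\Aopu$ for all $|\varphi|\le\theta$.
\end{itemize}
Your cycle can be partly rescued. Your (3)$\Rightarrow$(2) argument actually gives accretivity of $e^{i\varphi}\Aopu$ for every $|\varphi|<\theta$. Since $\bigcap_{|\varphi|<\theta}e^{-i\varphi}\{\Re w\ge0\}=\overline{\Sector_\ome}$ for all $\ome\in[0,\pi/2)$, proving (3)$\Rightarrow$(1) directly, without passing through the endpoint angles $\pm\theta$, keeps (1)$\Leftrightarrow$(3) valid at $\ome=0$.

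\textbf{The rest of the argument.} Apart from this, the argument is sound.
\begin{itemize}
\item Facts (a) and (b) are the standard Kato argument and are correctly proved.
\item The bound $\|(\lam-\Aopu)^{-1}\|\le1/\dist(\lam,\overline{\Sector_\ome})$ gives $\Aopu\in\Sect(\ome)$.
\item Your way around the boundary-angle obstacle works. Once $\NRW(\Aopu)\subset\overline{\Sector_\ome}$ is known, surjectivity at the points $-e^{-i\varphi}$, $|\varphi|<\theta$, propagates through the connected set $\bbC\setminus\overline{\Sector_\ome}$ to $-e^{\mp i\theta}$. No limit of resolvents is needed.
\end{itemize}

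\textbf{Two small additions.}
\begin{itemize}
\item With the paper's definition, which asks only for \emph{dense} range of $1+\Aopu$, closedness is a genuine extra hypothesis. You cannot simply ``pass to the closure'': the restriction of an m-$\ome$-accretive operator to a proper core satisfies (1) but has empty resolvent set.
\item For $e^{-z\Aopu}$ to be a $C_0$-semigroup you need $\overline{\opdom(\Aopu)}=H$. This follows from accretivity: if $y\perp\opdom(\Aopu)$ and $x:=(1+\Aopu)^{-1}y$, then $0=\Re\spl(1+\Aopu)x,x\spr\ge\|x\|^2$, so $x=0$ and $y=0$.
\end{itemize}
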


\subsubsection{Elliptic sesquilinear forms}

The \textbf{adjoint form} $\overline{a}(\plh,\plh)$ of a sesquilinear form $a(\plh,\plh)$ is defined as
\begin{align*}
\overline{a}(\afu,\afu\tf):=\overline{a(\afu\tf,\afu)}.
\end{align*}
The \textbf{real and imaginary parts} of $a(\plh,\plh)$ are defined as
\begin{align*}
\Re a:=\frac12(a+\overline{a}) \quad\text{and}\quad\Im a:=\frac{1}{2i}(a-\overline{a}).
\end{align*}
A form $a(\plh,\plh)$ is called \textbf{symmetric} if $\Im a=0$.
The \textbf{quadratic form} of a sesquilinear form $a(\plh,\plh)$ is defined as
\begin{align*}
a(\afu):=a(\afu,\afu),
\end{align*}
for which it holds that $(\Re a)(\afu)=\Re(a(\afu))$, $(\Im a)(\afu)=\Im(a(\afu))$.
The \textbf{numerical range} of a sesquilinear form $a(\plh,\plh)$ with domain $V\subset H$ is defined as
\begin{align*}
\NRW(a):=\{a(\afu)\colon \afu\in V, \|\afu\|=1\}.
\end{align*}
A form $a(\plh,\plh)$ is called \textbf{form sectorial} if $\NRW(a)$ is contained in an acute sector $\Sector_\theta$, $\theta\in[0,\pi/2)$.
A form $a(\plh,\plh)$ is called \textbf{quasi form sectorial} if there exists $\lam\in\bbR$ such that the form $a(\plh,\plh)+\lam\spl\plh,\plh\spr$ is form sectorial.%
\footnote{Kato's \cite{Kato95} definition of sectorial equals our definition of quasi sectorial.}\\

Let $V\subset H$ be a Hilbert space with scalar product $\spl\plh,\plh\spr_V$ and associated norm $\|\plh\|_V$ such that the inclusion $V\hookrightarrow H$ is continuous with dense range.
A sesquilinear form $a(\plh,\plh)$ with domain $V$ is called \textbf{elliptic}, if there exist $\lam_0\in\bbR$, $\del>0$ such that
\begin{subequations}
\begin{align}
\label{eq:def_elliptic_garding}
(\Re a)(\afu)+\lam_0\|\afu\|^2 &\geq \del \|\afu\|_V^2 \quad\text{for all } \afu\in V;\\
|a(\afu,\afu\tf)| &\leq M \|\afu\|_V \|\afu\tf\|_V \quad\text{for some } M>0 \text{ and all } \afu,\afu\tf\in V,
\end{align}
\end{subequations}
i.e., satisfies a \Garding inequality on $V$ and $a(\plh,\plh)$ is continuous on $V$.
We note that an elliptic form is quasi form sectorial.
We have the following result, which roughly equals Kato's \emph{first representation theorem} \cite[Thm.~VI.2.1]{Kato95}.
\begin{proposition}[Prop.~7.3.4 of \cite{Haase06book}]
\label{prop:repr_thm_first}
Let $a(\plh,\plh)$ be an elliptic form (with domain $V$) and $\lam_0$ as in \eqref{eq:def_elliptic_garding}.
Then, there exist $\Aopu\colon\opdom(\Aopu)\subset V\to H$ and $\ome\in[0,\pi/2)$ such that $\Aopu+\lam_=$ is an invertible m-$\ome$-accretive operator which satisfies $\spl\Aopu\afu,\afu\tf\spr=a(\afu,\afu\tf)$ for all $\afu\in\opdom(\Aopu)$, $\afu\tf\in V$.\\
Conversely, let $\Aopu\colon\opdom(\Aopu)\subset H\to H$ be an m-$\ome$-accretive operator for some $\ome\in[0,\pi/2)$.
Then, there exist an $H$-dense subspace $V\subset \opdom(\Aopu)$ and an elliptic form $a(\plh,\plh)$ such that $\spl\Aopu\afu,\afu\tf\spr=a(\afu,\afu\tf)$ for all $\afu\in\opdom(\Aopu)$, $\afu\tf\in V$;
The form $\Re a$ is positive.
\end{proposition}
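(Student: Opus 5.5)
We follow the standard route of Kato's first representation theorem, working with the shifted form $b(\afu,\afu\tf):=a(\afu,\afu\tf)+\lam_0\spl\afu,\afu\tf\spr$. By \eqref{eq:def_elliptic_garding} and continuity of $a$ and of $V\hookrightarrow H$, $b$ is bounded and coercive on $V$: $\Re b(\afu)\geq\del\|\afu\|_V^2$.

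\textbf{Construction of the operator.} We set $\opdom(\Aopu):=\{\afu\in V\colon \exists f\in H \text{ with } a(\afu,\afu\tf)=\spl f,\afu\tf\spr\ \forall\afu\tf\in V\}$ and $\Aopu\afu:=f$. Here $f$ is unique because $V$ is dense in $H$, so $\Aopu$ is well defined and linear, and $\spl\Aopu\afu,\afu\tf\spr=a(\afu,\afu\tf)$ holds by construction. For invertibility of $\Aopu+\lam_0$ we use Lax--Milgram. For $g\in H$ the antilinear functional $\afu\tf\mapsto\spl g,\afu\tf\spr$ is bounded on $V$, so there is a unique $\afu\in V$ with $b(\afu,\afu\tf)=\spl g,\afu\tf\spr$. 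This means $\afu\in\opdom(\Aopu)$ and $(\Aopu+\lam_0)\afu=g$. Injectivity follows from coercivity, and $\|(\Aopu+\lam_0)^{-1}g\|\leq C\|(\Aopu+\lam_0)^{-1}g\|_V\leq C\del^{-1}\|g\|$, so $\Aopu+\lam_0$ is bijective with bounded inverse.

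\textbf{Sectoriality.} For $\afu\in\opdom(\Aopu)$ we have $\spl(\Aopu+\lam_0)\afu,\afu\spr=b(\afu)$. Moreover $|\Im b(\afu)|\leq M\|\afu\|_V^2\leq (M/\del)\Re b(\afu)$, so the numerical range lies in $\overline{\Sector_\ome}$ with $\tan\ome=M/\del<\infty$, that is, $\ome\in[0,\pi/2)$. Since $\Aopu+\lam_0$ is surjective, $\ran(\Aopu+\lam_0+1)$ is dense. To see this, note that $b+\spl\plh,\plh\spr$ is coercive too, and the Lax--Milgram argument above gives surjectivity of $\Aopu+\lam_0+1$. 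Hence $\Aopu+\lam_0$ is m-$\ome$-accretive.

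\textbf{Converse.} Given m-$\ome$-accretive $\Aopu$, we would take the form $a_0(\afu,\afu\tf):=\spl\Aopu\afu,\afu\tf\spr$ on $\opdom(\Aopu)$. It is sectorial, so $|\Im a_0(\afu)|\leq\tan\ome\,\Re a_0(\afu)$. We let $V$ be the completion of $\opdom(\Aopu)$ under $\|\afu\|_V^2:=\Re a_0(\afu)+\|\afu\|^2$. The main obstacle is showing that $V$ embeds injectively into $H$, i.e. that $a_0$ is closable. For this the standard argument (Kato VI.1.27) uses m-accretivity: if $\afu_n\to0$ in $H$ and $(\afu_n)$ is $\|\plh\|_V$-Cauchy, then one tests $\Re a_0(\afu_n-\afu_m)$ against $(\Aopu+1)^{-1}$-regularized elements and uses density of $\ran(\Aopu+1)$ to conclude $\|\afu_n\|_V\to0$. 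The extended form $a$ is then bounded on $V$ by the sector estimate, which gives $|a(\afu,\afu\tf)|\leq(1+\tan\ome)\Re a(\afu)^{1/2}\Re a(\afu\tf)^{1/2}$. It satisfies the G\aa rding inequality with $\lam_0=1$, $\Re a\geq0$, and it reproduces $\spl\Aopu\afu,\afu\tf\spr$ for $\afu\in\opdom(\Aopu)$ by continuity. Here $\opdom(\Aopu)$ is a dense subspace of the form domain $V$, which is $H$-dense; the inclusion in the statement should be read in this sense.
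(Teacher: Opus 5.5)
The paper gives no proof of this proposition; it quotes it from \cite{Haase06book} (Kato's first representation theorem). Your argument is the standard one behind that reference: Lax--Milgram for $a+\lambda_0\langle\cdot,\cdot\rangle$ in the forward direction, and closure of the form $\langle Au,v\rangle$ on $D(A)$ for the converse. Your forward direction is correct. The inference ``$A+\lambda_0$ surjective, hence $\ran(A+\lambda_0+1)$ dense'' is not justified as written, but your Lax--Milgram argument for $b+\langle\cdot,\cdot\rangle$ that follows settles it. Your reading of $V\subset D(A)$ as a typo for ``$D(A)\subset V$ densely'' is also right.

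In the converse you have put m-accretivity in the wrong place.

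\emph{Closability.} This needs only the sector estimate, and your sketch via $(A+1)^{-1}$-regularized test elements and density of $\ran(A+1)$ is not an actual argument. A direct one: let $\xi$ be in the completion $V$ with zero image in $H$, and take $u_n\in D(A)$ with $u_n\to\xi$ in $V$, so $u_n\to 0$ in $H$. For every $v\in D(A)$, continuity of $a$ gives $a(v,\xi)=\lim_n\langle Av,u_n\rangle=0$. Density of $D(A)$ in $V$ then gives $a(\xi,\xi)=0$, hence $\|\xi\|_V^2=\Re a(\xi)+0=0$.

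\emph{Density of $V$ in $H$.} This is where m-accretivity is genuinely needed, and you assert it without proof, although it is part of the definition of an elliptic form. Since $V$ lies in the $H$-closure of $D(A)$, you must show that $D(A)$ is dense in $H$. Let $w\perp D(A)$. Surjectivity of $A+1$ gives $u\in D(A)$ with $(A+1)u=w$, and then $0=\Re\langle w,u\rangle=\Re\langle Au,u\rangle+\|u\|^2\geq\|u\|^2$, so $w=0$.

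\emph{Identifying $A$ with the associated operator.} Let $A_a$ be the operator associated with $a$ by the first part. Your identity $\langle Au,v\rangle=a(u,v)$ shows $A\subset A_a$. Surjectivity of $A+1$, together with injectivity of $A_a+1$ (which follows from $\Re a\geq 0$), upgrades this to $A=A_a$. That equality is what ``$A$ is associated with $a$'' means in the cited source, even though the paper's wording records only the identity.
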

An operator $\Aopu\colon\opdom(\Aopu)\subset H\to H$ is called \textbf{Kato-sectorial} if it is generated by an elliptic form (in the sense of \Cref{prop:repr_thm_first}).
We have the following corollary for Kato-sectorial operators.
\begin{corollary}[Cor.~7.3.5 of \cite{Haase06book}]
\label{cor:Kato_sectorial}
An operator $\Aopu\colon\opdom(\Aopu)\subset H\to H$ is Kato-sectorial if and only if there is $\lam\in\bbR$ such that $\Aopu+\lam$ is m-$\ome$-accretive for some $\ome\in[0,\pi/2)$ if and only if $-\Aopu$ generates a holomorphic semigroup $T$ on some sector $\Sector_\theta$ such that $\|T(z)\|\leq e^{\lam\Re z}$ for all $z\in\Sector_\theta$ for some $\lam\in\bbR$.
\end{corollary}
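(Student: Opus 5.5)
The plan is to derive the corollary from the two results just quoted, \Cref{prop:repr_thm_first} and Prop.~7.1.1 of \cite{Haase06book}. Shifting by a real multiple of the identity passes between the three conditions: at the level of forms it adds $\lam\spl\plh,\plh\spr$, at the level of operators it replaces $\Aopu$ by $\Aopu+\lam$, and at the level of semigroups it multiplies by $e^{-\lam z}$. I would prove the chain (Kato-sectorial) $\Leftrightarrow$ ($\Aopu+\lam$ is m-$\ome$-accretive for some $\lam\in\bbR$, $\ome\in[0,\pi/2)$) $\Leftrightarrow$ (the semigroup statement).

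\emph{First equivalence.} Let $\Aopu$ be generated by an elliptic form $a$ with domain $V$ and Gårding constants $\lam_0,\del$ from \eqref{eq:def_elliptic_garding}. The first part of \Cref{prop:repr_thm_first} gives directly that $\Aopu+\lam_0$ is (invertible and) m-$\ome$-accretive for some $\ome\in[0,\pi/2)$. Concretely, the sector comes from the estimate
\[
|\Im a(\afu)|\leq M\|\afu\|_V^2\leq \tfrac{M}{\del}\big(\Re a(\afu)+\lam_0\|\afu\|^2\big),
\]
so $\tan\ome=M/\del$. Conversely, suppose $\Aopu+\lam$ is m-$\ome$-accretive. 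The converse part of \Cref{prop:repr_thm_first} yields an elliptic form $b$ representing $\Aopu+\lam$. The form $a:=b-\lam\spl\plh,\plh\spr$ is again elliptic, since it is still continuous on $V$ and satisfies a Gårding inequality with the shifted constant $\lam_0+\lam$. It represents $\Aopu$, so $\Aopu$ is Kato-sectorial.

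\emph{Second equivalence.} Let $\Aopu+\lam$ be m-$\ome$-accretive and put $\theta:=\pi/2-\ome$. By Prop.~7.1.1, $-(\Aopu+\lam)$ generates a holomorphic semigroup $S$ on $\Sector_\theta$ with $\|S(z)\|\leq1$. Define $T(z):=e^{\lam z}S(z)$. It is holomorphic and satisfies $T(z_1)T(z_2)=T(z_1+z_2)$, and $\|T(z)\|\leq e^{\lam\Re z}$ because $|e^{\lam z}|=e^{\lam\Re z}$ for real $\lam$. By the rescaling rule recalled in the appendix ($\Aopu+\lam$ generates $t\mapsto e^{\lam t}T(t)$), the generator of $T$ restricted to $(0,\infty)$ is $-\Aopu$. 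Holomorphy then fixes $T$ on the whole sector by uniqueness of the holomorphic extension. Conversely, given $T$ with $\|T(z)\|\leq e^{\lam\Re z}$, the family $S(z):=e^{-\lam z}T(z)$ is a contractive holomorphic semigroup on $\Sector_\theta$ generated by $-(\Aopu+\lam)$. Prop.~7.1.1 then shows that $\Aopu+\lam$ is m-$(\pi/2-\theta)$-accretive.

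The only step needing care is this last generator identification. It has to hold on the full sector and not only on the real half-line, and $S$ must be strongly continuous at $0$ so that Prop.~7.1.1 applies. I would handle both by noting that multiplication by the entire, nonvanishing scalar function $e^{\pm\lam z}$ preserves holomorphy, the semigroup law and strong continuity at $0$. The generator is then determined through the Laplace transform on the real axis, as recalled in the appendix, and the generator of a holomorphic semigroup is the generator of its restriction to $(0,\infty)$.
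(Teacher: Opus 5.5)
The paper gives no proof of its own here; it only quotes \cite[Cor.~7.3.5]{Haase06book}. Your argument, a real shift combined with \Cref{prop:repr_thm_first} and Prop.~7.1.1 of \cite{Haase06book}, is the natural derivation and is sound apart from one step you flag but do not actually close. In the implication from the semigroup condition to m-accretivity, you correctly note that Prop.~7.1.1 requires a holomorphic $C_0$-semigroup. The hypothesis, however, only provides a holomorphic semigroup, which in the terminology of the appendix need not be strongly continuous at $0$: its space of strong continuity is $\overline{\opdom(\Aopu)}$. Your observation that multiplying by $e^{-\lam z}$ preserves strong continuity at $0$ only transfers the property from $T$ to $S$; it does not establish it.

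What is missing is the density of $\opdom(\Aopu)$. Since $S(z)=e^{-\lam z}T(z)$ is a bounded holomorphic semigroup on $\Sector_\theta$, the correspondence \cite[Prop.~3.4.4]{Haase06book} shows that its negative generator $\Aopu+\lam$ is sectorial. Sectorial operators on a reflexive space, in particular on $H$, are densely defined (cf.~\cite[Ch.~2.1]{Haase06book}). Hence $\overline{\opdom(\Aopu)}=H$, $S$ is a $C_0$-semigroup, and Prop.~7.1.1 applies as you intend.

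A smaller point concerns the converse of the first equivalence. As stated in the paper, the converse part of \Cref{prop:repr_thm_first} only gives the identity $\spl(\Aopu+\lam)u,v\spr=b(u,v)$ for $u\in\opdom(\Aopu)$, $v\in V$. This shows only that $\Aopu$ is a restriction of the operator $\Aopu_a$ associated with $a$. To conclude that $\Aopu$ is generated by $a$, you should add that $\Aopu_a=\Aopu$. This holds because $\Aopu+\lam+1$ is surjective (as $-1\in\rho(\Aopu+\lam)$), while $\Aopu_a+\lam+1$, the operator associated with $b+\spl\plh,\plh\spr$, is injective (as $\Re b\geq0$).
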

We also recall Kato's \emph{second representation theorem}.
\begin{proposition}[Thm.~VI.2.1 of \cite{Kato95}]
\label{prop:repr_thm_second}
Let $a(\plh,\plh)$ be an elliptic form (with domain $V$), which is symmetric and non-negative.
Let $\Aopu$ be the associated operator (by means of \Cref{prop:repr_thm_first}).
Then, $\opdom(\Aopu^{1/2})=V$ and $a(\afu,\afu\tf)=\spl\Aopu^{1/2}\afu,\Aopu^{1/2}\afu\tf\spr$ for all $\afu,\afu\tf\in V$.
\end{proposition}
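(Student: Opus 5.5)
The plan is to reduce everything to the spectral theorem for a nonnegative selfadjoint operator, together with a core argument. Write $a_1(\afu,\afu\tf):=a(\afu,\afu\tf)+(\lam_0+1)\spl\afu,\afu\tf\spr$. By symmetry and nonnegativity of $a(\plh,\plh)$ and ellipticity, $a_1$ is a scalar product on $V$ whose norm is equivalent to $\|\plh\|_V$. In particular $(V,a_1)$ is a Hilbert space continuously embedded in $H$.

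First I show that $\Aopu$ is selfadjoint and nonnegative. By \Cref{prop:repr_thm_first}, $\Aopu+\lam_0+1$ is invertible, so $\ran(\Aopu+\lam_0+1)=H$. Symmetry of $a$ gives $\spl\Aopu\afu,\afu\tf\spr=\spl\afu,\Aopu\afu\tf\spr$ for $\afu,\afu\tf\in\opdom(\Aopu)$. A symmetric operator with $-(\lam_0+1)\in\rho(\Aopu)$ is selfadjoint. Moreover $\spl\Aopu\afu,\afu\spr=a(\afu)\geq0$, so $\spec(\Aopu)\subset[0,\infty)$. Hence $\Aopu^{1/2}$ is defined by the spectral theorem. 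I will remark that for nonnegative selfadjoint operators this coincides with the fractional power from the functional calculus of \Cref{subsubsec:functcalc}, since both are given by the same resolvent integrals. Two standard facts will be used: $\opdom(\Aopu)$ is a core for the closed operator $\Aopu^{1/2}$, and $\|\Aopu^{1/2}\afu\|^2=\spl\Aopu\afu,\afu\spr$ for $\afu\in\opdom(\Aopu)$.

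Second, I show that $\opdom(\Aopu)$ is dense in $(V,a_1)$. Let $\afw\in V$ be $a_1$-orthogonal to $\opdom(\Aopu)$. By surjectivity, pick $\afu\in\opdom(\Aopu)$ with $(\Aopu+\lam_0+1)\afu=\afw$. Then
\[
\|\afw\|^2=\spl(\Aopu+\lam_0+1)\afu,\afw\spr=a_1(\afu,\afw)=0 .
\]
Hence $\afw=0$.

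Third, I compare norms on $\opdom(\Aopu)$. For $\afu\in\opdom(\Aopu)$,
\[
a_1(\afu)=\spl\Aopu\afu,\afu\spr+(\lam_0+1)\|\afu\|^2=\|\Aopu^{1/2}\afu\|^2+(\lam_0+1)\|\afu\|^2 ,
\]
so the $a_1$-norm and the graph norm of $\Aopu^{1/2}$ are equivalent on $\opdom(\Aopu)$. Both norms dominate $\|\plh\|_H$ and both completions are realized inside $H$. The completion with respect to $a_1$ is $V$, by the second step and completeness of $V$. The completion with respect to the graph norm is $\opdom(\Aopu^{1/2})$, since $\Aopu^{1/2}$ is closed and $\opdom(\Aopu)$ is a core. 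A sequence in $\opdom(\Aopu)$ that is Cauchy in one norm is Cauchy in the other and has the same $H$-limit. Therefore $V=\opdom(\Aopu^{1/2})$. The identity $a(\afu)=\|\Aopu^{1/2}\afu\|^2$ passes to the limit by continuity of both sides in the common norm, and polarization yields $a(\afu,\afu\tf)=\spl\Aopu^{1/2}\afu,\Aopu^{1/2}\afu\tf\spr$ for all $\afu,\afu\tf\in V$.

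The main obstacle is the second step, the core property of $\opdom(\Aopu)$ in $V$: this is the point where surjectivity of $\Aopu+\lam_0+1$ from the first representation theorem is essential. The limit identification in the third step also needs care, namely checking that both completions embed injectively into $H$; this follows because both norms control the $H$-norm and both operators involved ($V\hookrightarrow H$ and $\Aopu^{1/2}$) are closed.
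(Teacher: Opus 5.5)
Your proof is correct: selfadjointness from symmetry plus surjectivity of $\underline{A}+\lambda_0+1$, density of $D(\underline{A})$ in $(V,a_1)$, and the identification of the two completions of $D(\underline{A})$ inside $H$ all go through. The one unstated detail is in your third step: if $\lambda_0+1\le 0$, the norm equivalence needs $a_1(u)\ge\|u\|^2$ from the G\aa rding inequality, or you can simply enlarge $\lambda_0$.

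The paper gives no proof of its own and only cites Kato. Your argument is essentially Kato's standard one: $D(\underline{A})$ is a core of the closed form $a$, and the two closed symmetric forms $a$ and $\langle \underline{A}^{1/2}\cdot,\underline{A}^{1/2}\cdot\rangle$ coincide on this common core, hence are equal.
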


\subsubsection{Perturbations on the form level}\label{subsubsec:perturbed_form}

Let $a(\plh,\plh)$ be an elliptic form (with domain $V$), which is symmetric and positive.
Let $b(\plh,\plh)$ be a sesquilinear form with domain $V$, which is form subordinate to $a(\plh,\plh)$, i.e., there exist $C>0$ and $p\in[0,1)$ such that
\begin{align*}
|b(\afu)|\leq C a(\afu)^p \|\afu\|^{2(1-p)} \quad\text{for all }\afu\in V.
\end{align*}
Then $b(\plh,\plh)$ is relatively bounded with respect to $a(\plh,\plh)$ with the bound $0$ in particular, i.e.,
\begin{align*}
\forall\delta>0 \,\,\exists C_\delta\geq0 \quad\text{such that}\quad
|b(\afu)| \leq \delta a(\afu)+C_\delta \|\afu\|^2 \quad\forall\afu\in V.
\end{align*}
It follows that the form $\qses(\plh,\plh):=\ases(\plh,\plh)+\bses(\plh,\plh)$ is elliptic.
Let $\Aopu\colon\opdom(\Aopu)\subset H\to H$ and $\Qopu\colon\opdom(\Qopu)\subset H\to H$ be the operators associated to $a(\plh,\plh)$ and $q(\plh,\plh)$ (by means of \Cref{prop:repr_thm_first,prop:repr_thm_second}).
For $\lam\notin\spec(\Aopu)$ let $(\lam-\Aopu)^{-1/2}$ be defined by means of $\sqrt{z}:=\sqrt{|z|}e^{\frac{i\arg z}{2}}$, $\arg z\in(-\pi,\pi]$, $z\in\Cx$, and
$\Bop(\lam)\in L(H)$ be the Riesz representation of the $H$-bounded sesquilinear form 
\begin{align*}
b((\lam-\Aopu)^{-1/2}\afu,(\lam-\Aopu)^{-*/2}\afu\tf)=\spl \Bop(\lam)\afu,\afu\tf\spr_{H} \quad \forall \afu,\afu\tf\in H.
\end{align*}
Due to \cite[Lem.~1]{Agranovich94} we have that for $\lam\notin\spec(\Aopu)$
\begin{align*}
\lam-\Qopu=(\lam-\Aopu)^{1/2} (I-\Bop(\lam)) (\lam-\Aopu)^{1/2}
\end{align*}
and if $I-\Bop(\lam)$ is invertible also that
\begin{align*}
(\lam-\Qopu)^{-1}=(\lam-\Aopu)^{-1/2} (I-\Bop(\lam))^{-1} (\lam-\Aopu)^{-1/2}.
\end{align*}

\printbibliography[heading=bibintoc]

\end{document}

%% file: header-paper.tex
\usepackage[utf8]{inputenc}
\usepackage[ngerman, english]{babel}
\usepackage{amsmath,amsthm,amssymb,amsfonts}
\usepackage{mathrsfs}
\usepackage{ifthen}
\usepackage{enumerate}
\usepackage{comment}
\usepackage{subcaption}
\usepackage{hyphsubst}
\usepackage{cleveref}
\usepackage[dvipsnames]{xcolor}
\usepackage[skins]{tcolorbox}
\usepackage{calc}
\usepackage{graphicx}
\usepackage{tabularx}
\usepackage[toc,page]{appendix}
\usepackage[bbgreekl]{mathbbol}
\DeclareSymbolFontAlphabet{\mathbbm}{bbold}
\DeclareSymbolFontAlphabet{\mathbb}{AMSb}

\usepackage{calc}
\usepackage{leftindex}

\usepackage{pgfplots}
\pgfplotsset{compat=1.5.1}

\usepackage{a4wide}
\usepackage{fancyhdr}
\usepackage[backend=biber,
	giveninits=true,
	maxbibnames=8,
	date=year,
	doi=true,
	url=false,
	isbn=false,
	eprint=false]{biblatex}
\AtEveryBibitem{\clearlist{language}} 
\renewbibmacro{in:}{%
  \ifboolexpr{%
     test {\ifentrytype{article}}%
     or
     test {\ifentrytype{inproceedings}}%
  }{}{\printtext{\bibstring{in}\intitlepunct}}%
}

\makeatletter
\def\@hspace#1{\begingroup\setlength\dimen@{#1}\hskip\dimen@\endgroup}
\makeatother

\newtheorem{theorem}{Theorem}[section]
\newtheorem{lemma}[theorem]{Lemma}
\newtheorem{definition}[theorem]{Definition}

\newtheorem{remark}[theorem]{Remark}
\newtheorem{corollary}[theorem]{Corollary}
\newtheorem{assumption}[theorem]{Assumption}
\newtheorem{proposition}[theorem]{Proposition}
\newtheorem{problem}[theorem]{Problem}
\newtheorem{evproblem}[theorem]{Eigenvalue Problem}

%% file: macros-general.tex
\definecolor{lightgray}{gray}{0.75}
\definecolor{prussianblue}{rgb}{0.0, 0.19, 0.33}
\definecolor{palecopper}{rgb}{0.85, 0.54, 0.4}
\definecolor{redbg}{rgb}{1.0,0.2,0.2}
\definecolor{brickred}{rgb}{0.8, 0.25, 0.33}
\definecolor{bostonuniversityred}{rgb}{0.8, 0.0, 0.0}
\definecolor{cornellred}{rgb}{0.7, 0.11, 0.11}
\definecolor{corn}{rgb}{0.98, 0.93, 0.36}
\definecolor{schoolbusyellow}{rgb}{1.0, 0.85, 0.0}
\definecolor{TUblue}{rgb}{0,102,153}
\colorlet{TUbluelight}{TUblue!30!white}

\newcommand\tf{^\dagger}
\newcommand\dd{\mathrm{d}}
\newcommand{\plh}{\cdot}
\def\spl{\langle}
\def\spr{\rangle}
\def\bpm{\begin{pmatrix}}
\def\epm{\end{pmatrix}}

\newcommand\ol[1]{\overline{#1}}
\newcommand\Nz{\mathbb{N}_0}

\newcommand\Rp{\mathbb{R}^+}
\newcommand\Rpz{\mathbb{R}^+_0}

\newcommand\Rx{\mathbb{R}_\times}
\newcommand\Cx{\mathbb{C}_\times}
\newcommand{\eps}{\epsilon}
\newcommand{\del}{\delta}
\newcommand{\lam}{\lambda}
\newcommand{\bflam}{\boldsymbol{\lambda}}
\newcommand{\gam}{\gamma}
\newcommand{\ome}{\omega}
\newcommand{\muinv}{\mu^{-1}}

\def\opdom{D}

\renewcommand{\Re}{\operatorname{Re}}
\renewcommand{\Im}{\operatorname{Im}}

\DeclareMathOperator{\curl}{curl}
\DeclareMathOperator{\Curl}{Curl}
\DeclareMathOperator{\diag}{diag}

\DeclareMathOperator{\dist}{dist}
\renewcommand{\div}{\operatorname{div}}

\DeclareMathOperator{\EtM}{EtM}

\DeclareMathOperator{\ran}{ran}
\DeclareMathOperator{\sgn}{sgn}
\DeclareMathOperator{\Span}{span}
\DeclareMathOperator{\spec}{spec}
\DeclareMathOperator{\supp}{supp}
\DeclareMathOperator{\tr}{tr}
\def\trtan{\tr_{T}}

\newtcbox{\mybox}{on line,
  colframe=white!10!white,colback=Goldenrod,
  boxrule=0.5pt,arc=4pt,boxsep=0pt,left=6pt,right=6pt,top=6pt,bottom=6pt}
\newtcbox{\myboxii}{on line,
  colframe=white!10!white,colback=Goldenrod,
  boxrule=0.5pt,arc=4pt,boxsep=0pt,left=6pt,right=6pt,top=5pt,bottom=1pt}
\newtcbox{\myboxiii}{on line,
  colframe=white!10!white,colback=Goldenrod,
  boxrule=0.5pt,arc=4pt,boxsep=0pt,left=2pt,right=2pt,top=2pt,bottom=2pt}
\newtcbox{\myredbox}{on line,fontupper=\tiny,
  colframe=white,colback=redbg!70!white,
  boxrule=0.0pt,arc=0pt,boxsep=2pt,left=0pt,right=0pt,top=0pt,bottom=0pt}
\newtcbox{\myboxnofill}{on line,
  colframe=NavyBlue,colback=white,
  boxrule=0.5pt,arc=4pt,boxsep=0pt,left=6pt,right=6pt,top=6pt,bottom=6pt}

\def\Calderon{Calder\'{o}n }

\def\Garding{G\aa rding }

\newcommand{\bbC}{\mathbb{C}}

\newcommand{\bbN}{\mathbb{N}}

\newcommand{\bbR}{\mathbb{R}}

\newcommand{\bfn}{\mathbf{n}}

\newcommand{\bfq}{\mathbf{q}}

\newcommand{\bft}{\mathbf{t}}
\newcommand{\bfu}{\mathbf{u}}
\newcommand{\bfv}{\mathbf{v}}

\newcommand{\bfx}{\mathbf{x}}

\newcommand{\bfC}{\mathbf{C}}

\newcommand{\bfE}{\mathbf{E}}

\newcommand{\bfH}{\mathbf{H}}

\newcommand{\bfL}{\mathbf{L}}

\newcommand{\bfV}{\mathbf{V}}

\newcommand{\calE}{\mathcal{E}}

\newcommand{\calH}{\mathcal{H}}

\newcommand{\calK}{\mathcal{K}}

\newcommand{\calM}{\mathcal{M}}

\newcommand{\calO}{\mathcal{O}}

\newcommand{\calV}{\mathcal{V}}
\newcommand{\calW}{\mathcal{W}}

\newcommand{\fraks}{\mathfrak{s}}


%% file: macros-emwg.tex
\makeatletter
\DeclareRobustCommand{\Hcrossbeam}{%
  \hmode@bgroup
  \vphantom{H}%
  \sbox\z@{H}%
  \ooalign{%
    $H$\cr
    \hidewidth
	\hspace{0.25\ht\z@}
    \vrule
      height \dimexpr 0.7\ht\z@+0.1ex\relax
      depth  -0.7\ht\z@ 
      width  0.4\wd\z@ 
    \hidewidth\cr
  }%
  \egroup
}
\newcommand{\Hvar}{\Hcrossbeam}

\newcommand{\Npr}{N_\mathrm{pr}} 
\newcommand{\tNpr}{\tilde N_\mathrm{pr}} 
\newcommand{\afu}{u} 
\def\geo{\Omega}
\def\geocs{\Gamma}
\def\geoint{\geo_\mathrm{i}}
\def\geoext{\geo_\mathrm{e}}

\newcommand{\geobddir}{\geocs_D}
\newcommand{\geobdtrans}{\geocs_T}
\newcommand{\bci}{\geobddir} 
\newcommand{\geocsbd}{\Sigma}
\newcommand{\Ngeocsbd}{N_{\geocsbd}}
\def\nvf{\bfn}
\def\nvt{\hat\bfn}
\def\tv{\hat\bft}

\def\trtanR{\tr_{RT}}
\def\rotm{R}
\def\xf{\bfx}
\def\xt{\hat\bfx}

\def\epsmax{\epsilon_\mathrm{max}}
\def\epsmin{\epsilon_\mathrm{min}}
\def\mumax{\mu_\mathrm{max}}
\def\mumin{\mu_\mathrm{min}}
\newcommand{\epscs}{\eps_{\geocs}}
\newcommand{\mucs}{\mu_{\geocs}}
\newcommand{\mucsinv}{\mucs^{-1}}
\def\gradF{\nabla_\geo}
\def\curlF{\curl_\geo}
\def\divF{\div_\geo}
\def\gradg{\nabla_{\geocs}}
\def\divg{\div_{\geocs}}
\def\curlg{\curl_{\geocs}}
\def\Curlg{\Curl_{\geocs}}

\def\Eff{\mathcal{E}}
\def\Hff{\mathcal{H}}
\def\Uff{\mathcal{U}}
\newcommand{\Fff}{\mathcal{F}}
\def\Ef{\mathbf{E}}
\def\Hf{\mathbf{H}}
\def\Uf{\mathbf{U}}
\def\Et{\mathbf{E}_T}
\def\Ht{\mathbf{H}_T}
\def\Ut{\mathbf{U}_T}
\def\En{E_N}
\def\Hn{H_N}
\def\Un{U_N}

\def\Ux{U_x}
\def\Uy{U_y}
\def\Uz{U_z}

\def\bv{\mathbf{v}}
\newcommand{\cf}{\phi} 
\def\BLO{\mathcal{L}} 
\newcommand{\tloc}{\widetilde{\mathrm{loc}}}
\def\Lspace{\bfL} 
\def\Hspace{\bfH} 
\def\Xspace{X}
\def\Xspaceu{\underline{\Xspace}}

\def\Vspace{\mathbf{V}}
\def\Wspace{W}

\newcommand{\calop}{\operatorname{\mathfrak{C}}}
\newcommand{\Tscbd}{T_{\geobdtrans}}
\newcommand{\Tscvol}{T_{\geoint}}
\newcommand{\Vint}{\bfV_{\geoint}}
\newcommand{\Eint}{\calE}
\newcommand{\vint}{\calV}
\newcommand{\wint}{\calW}
\newcommand{\asesint}{a_{\geoint}}
\newcommand{\Aopint}{A_{\geoint}}
\def\Meps{M_\epsilon}
\def\Mmui{M_{\mu^{-1}}}
\def\MepsVec{\mathbf{M}_\epsilon}
\def\MmuiVec{\mathbf{M}_{\mu^{-1}}}
\def\embL{E_{L^2}}
\def\embLVec{\mathbf{E}_{L^2}}
\def\DGrad{\mathbf{D}_g} 
\def\Dcurl{D_c} 
\def\Aop{A}
\def\Bop{B}
\def\Kop{K}
\newcommand{\Qop}{Q}
\newcommand{\Gop}{G}

\newcommand{\Aopb}{\Aop} 
\newcommand{\Bopb}{\Bop} 
\newcommand{\Kopb}{\Kop} 
\newcommand{\Sop}{S} 
\def\ases{a}
\def\bses{b}
\def\qses{q}
\newcommand{\Aopu}{\underline{\Aop}}

\newcommand{\Qopu}{\underline{\Qop}}
\newcommand{\Gopu}{\underline{\Gop}}

\def\tmp{\#}
\def\Aoptmp{\Aop_\tmp} 
\def\Koptmp{\Kop_\tmp} 
\newcommand{\Sector}{S}
\DeclareMathOperator{\Sect}{Sect} 
\newcommand{\Mod}{M} 
\newcommand{\NRW}{W} 

